\colorlet{darkgreen}{green!50!black}
\newcommand{\abs}[1]{\lvert #1 \rvert}
\newcommand{\TC}{\mathsf{TC}}
\newcommand{\Z}{\mathbb{Z}}
\newcommand{\IRR}{\mathsf{IRR}}
\newcommand{\WP}{\mathsf{WP}}
\newcommand{\red}{\mathsf{red}}
\def\Prob{\mathop{\mathsf{Prob}}}
\newcommand{\Runs}{\mathrm{Runs}}
\newcommand{\free}{\mathop{\scalebox{1.5}{\raisebox{-0.2ex}{$\ast$}}}}
\newcommand{\MT}{\mathsf{MT}}
\newcommand{\MP}{\mathsf{MP}}
\newcommand{\FIM}{\mathsf{FIM}}
\newcommand{\PM}{\mathsf{PM}}
\newcommand{\GT}{\mathsf{GT}}
\newcommand{\DIS}{\mathsf{DIS}}
\newcommand{\EQ}{\mathsf{EQ}}
\newcommand{\IDX}{\mathsf{IDX}}
\newcommand{\AIDX}{\mathsf{augIDX}}
\newcommand{\Sch}{\mathsf{Sch}}
\newcommand{\rest}{\mathord\restriction}
\newcommand{\inv}{^{-1}}
\theoremstyle{definition}
\newtheorem{definition}{Definition}
\newtheorem{example}[definition]{Example}
\newtheorem{remark}[definition]{Remark}
\newtheorem{oproblem}[definition]{Open problem}
\theoremstyle{plain}
\newtheorem{theorem}[definition]{Theorem}
\newtheorem{lemma}[definition]{Lemma}
\newtheorem{corollary}[definition]{Corollary}
\newtheorem{claim}{Claim}
\title[Streaming algorithms for groups and semigroups]{Streaming algorithms for groups and semigroups}
\begin{document}

\author[M.~Lohrey]{Markus Lohrey\,\orcidlink{0000-0002-4680-7198}}
\email{lohrey@eti.uni-siegen.de}

\author[L.~L\"uck]{Lukas L\"uck}
\email{lukas.lueck@gmx.net}

\author[A.~Thumm]{Alexander Thumm\,\orcidlink{0009-0005-4240-2045}}
\email{alexander.thumm@uni-siegen.de}

\author[J.~Xochitemol]{Julio Xochitemol\,\orcidlink{0000-0002-5348-0402}}
\email{Julio.JXochitemol@uni-siegen.de}

\address[Markus~Lohrey, Lukas L\"uck, Alexander Thumm, Julio  Xochitemol]{Universit{\"a}t Siegen, Germany}

\begin{abstract}
We investigate deterministic and randomized streaming algorithms for word problems in finitely generated groups and semigroups. 
For this we introduce the notion of a distinguisher: a randomized streaming algorithm that processes two input words in parallel and, with high probability, reaches identical memory states if the words represent the same element, and distinct states otherwise.
We construct such distinguishers with low error probability using logarithmic~--~and in some cases doubly logarithmic~--~space.
For example, our results apply to linear semigroups and to semigroups obtained (under suitable restrictions) via standard constructions such as graph products, wreath products, and semilattice decompositions. 
In case of commutative semigroups and cancellative nilpotent semigroups, we achieve space complexity $\mathcal{O}(\log \log n)$.

We complement these upper bounds with lower bounds demonstrating that certain well-known semigroups do not admit sublinear-space distinguishers. This includes, for example, free inverse monoids of rank at least two and Thompson’s group $F$. Finally, we study randomized streaming algorithms for subgroup membership problems in free groups and their direct products.
\end{abstract}

\maketitle

 \section{Introduction}

 \subsection{Word problem for groups}
 The starting point of this work is the word problem for a finitely generated group $G$. It is defined as follows: 
 Fix a finite set of generators $\Sigma$ for $G$,
 where w.l.o.g.~$a^{-1} \in \Sigma$ for every $a \in \Sigma$. Hence, every element of $G$ can be written as a finite product of elements from $\Sigma$. The input for the word problem is a finite
 word $a_1 a_2 \cdots a_n$ over the alphabet $\Sigma$ and the question is whether this word evaluates to the group identity of $G$.
 The word problem was introduced by Dehn in 1911 \cite{Dehn11}. It is arguably the most important computational problem in group theory and has
 been studied by group theorists as well as computer scientists; see \cite{Mill92} for a survey. Since the 1970s, complexity theoretic investigations of word problems also moved into focus.
 For many important classes of groups it turned out that the word problem belongs to low-level complexity
 classes. An important result in this direction was proved by Lipton and Zalcstein \cite{LiZa77} and Simon \cite{Sim79}:
 if $G$ is a finitely generated linear group over an arbitrary field $F$ (i.e., a finitely generated group of invertible
 matrices over $F$), then the word problem for $G$ can be solved in deterministic logarithmic space.  
 Lipton and Zalcstein considered the case of a field $F$ of characteristic zero, and Simon covered the case of prime 
 characteristic. Related results can be found in \cite{KonigL18algo,Weiss16BS}.

 Language-theoretic aspects of the word problem have also been studied extensively. 
 In this context, the word problem of a group $G$ with a finite generating set $\Sigma$ is identified with the formal language $\WP(G,\Sigma)$ consisting of all
 words over the alphabet~$\Sigma$ that evaluate to the identity element of $G$. 
 For instance, Anissimov~\cite{Ani71} showed that $\WP(G,\Sigma)$ is regular if and only if $G$ is finite, and
 Muller and Schupp~\cite{MuSch83} showed that $\WP(G,\Sigma)$ is context-free  if and only if $G$ is virtually free.\footnote{For a class of groups $\mathcal{C}$, a group $G$ is called virtually $\mathcal{C}$ if it has a finite-index subgroup in $\mathcal{C}$.}
 For an overview of results in this direction, we refer to \cite{HRR2017}.
 
 \subsection{Streaming algorithms for word problems}
 In this paper we initiate the study of streaming algorithms for word problems. These are algorithms that do not have random access on the whole input. Instead, the $k$-th input symbol is only available at time $k$ \cite{AlonMS99}. Quite often, streaming algorithms are randomized and have a bounded error probability. Usually, one is interested in the space used by a streaming algorithm, but update times
 (that is, the worst case time spend to process a new input symbol) have also been studied. 
 Clearly, every regular language $L$ admits a deterministic streaming algorithm with constant
 space, namely one given by a deterministic finite automaton recognizing $L$.
 Randomized streaming algorithms for context-free languages have been studied in \cite{BabuLRV13,BathieS21,FrancoisMRS16,MagniezMN14}.
 
 Before we discuss our results, 
 we should say a word about our model of (deterministic or randomized) streaming algorithms. 
 It is non-uniform in the sense that for every input length
 $n$ we have a separate algorithm that only handles inputs of length at most $n$. Formally, this yields an infinite sequence 
 $(\mathcal{A}_n)_{n \geq 0}$ of (deterministic or probabilistic) finite
 automata, and we are only interested in the behavior of $\mathcal{A}_n$ on input words of length at most $n$.
 Non-uniformity means in this context that the $\mathcal{A}_n$ do not have to follow a common pattern. In principle, the 
 function $n \mapsto \mathcal{A}_n$ could be undecidable.
 The space complexity of the streaming algorithm $(\mathcal{A}_n)_{n \geq 0}$ is then the function $n \mapsto \lceil \log_2 (\text{number of states
 of $\mathcal{A}_n$}) \rceil$. This is the number of bits needed to encode the states of $\mathcal{A}_n$.
 In this model, every problem can be solved by a deterministic streaming algorithm of space complexity $\mathcal{O}(n)$.
 The streaming algorithm simply stores the whole input word of length $n$ in space $\mathcal{O}(n)$ (assuming the input alphabet
 has constant size).

 For a finitely generated group $G$ with finite generating set $\Sigma$,
the \emph{deterministic} (resp., \emph{randomized) streaming space complexity} of $\WP(G,\Sigma)$ is the 
space complexity of the best deterministic (resp., randomized) streaming algorithm for $\WP(G,\Sigma)$. 
If $\WP(G,\Sigma)$ has  deterministic/randomized streaming space complexity $s(n)$ and $\Sigma'$ is another
generating set for $G$ then there is a constant $c$ such that the  deterministic/randomized streaming space complexity 
of $\WP(G,\Sigma')$ is bounded by $s(c n)$;  see Lemma~\ref{lemma-gen-set}. For the space complexity 
functions $s(n)$ that appear in this paper we have $s(c n) = \Theta(s(n))$ for every constant $c$.
This allows us to suppress the generating set and say that the deterministic/randomized streaming space complexity
of $\WP(G)$ is in $\mathcal{O}(s(n))$, resp. $\Omega(s(n))$. 

The deterministic streaming space complexity of $\WP(G,\Sigma)$
is directly linked to the growth function $\gamma_{G,\Sigma}(n)$ of the group $G$.
The latter is the number of different group elements 
of $G$ that can be represented by words over the finite generating set $\Sigma$ of length at most $n$ (also here the 
generating set $\Sigma$ only has a minor influence). The deterministic streaming space complexity of $\WP(G,\Sigma)$
turns out to be $\log_2 \gamma_{G,\Sigma}(n/2)$ up to a small additive constant (Theorem~\ref{thm-det-growth}). 
The growth of finitely generated groups is a well-investigated
topic in geometric group theory. A famous theorem of Gromov says that a finitely generated group has polynomial growth if and only if it is virtually nilpotent; see \cite{Harpe00,Ceccherini-SilbersteinA21}
for  a discussion. Theorem~\ref{thm-det-growth} reduces all questions about the  deterministic streaming space complexity of word problems to questions about growth
functions. Due to this, we mainly study randomized streaming algorithms for word problems in this paper.

In the randomized setting, the growth of $G$ still yields a lower bound: 
The randomized streaming space complexity of $\WP(G,\Sigma)$ is lower bounded by
$\Omega(\log \log \gamma_{G,\Sigma}(n/2))$ (Theorem~\ref{thm-lower-bound-random}). 
A large class of groups, where this lower bound can be exactly matched by an upper bound, is the class of finitely generated linear groups.
Recall that Lipton and Zalcstein  \cite{LiZa77}  and Simon \cite{Sim79} showed that the word problem of a finitely generated linear group can be solved in logarithmic space. 
Their algorithm can be turned into a randomized streaming algorithm with logarithmic space complexity. 
From this we would like to get more groups with space-efficient randomized streaming algorithms using group constructions 
like graph products (that generalize free and direct products) or wreath products. It turns out that for this purpose a stronger notion of randomized streaming algorithms for word problems is useful. 

\subsection{Distinguishers}
A so-called \emph{$\epsilon$-distinguisher} with $0 \leq \epsilon < 1/2$ for a  finitely generated group $G$ with the finite generating set $\Sigma$
is a randomized streaming algorithm $\mathcal{R} = (\mathcal{A}_n)_{n \geq 0}$ (in the non-uniform sense explained above) such that 
for all words $u,v \in \Sigma^*$ of length at most $n$ the following hold: 
\begin{itemize}
\item If $u$ and $v$ evaluate to the same element of $G$
then with probability at least $1-\epsilon$, $u$ and $v$ lead to the same memory state of $\mathcal{A}_n$, and 
\item if $u$ and $v$ evaluate to different elements of $G$
then with probability at least $1-\epsilon$, $u$ and $v$ lead to different memory states of $\mathcal{A}_n$; see Section~\ref{sec-injective}.
\end{itemize}
The error probability $\epsilon$ may depend on the input length $n$ and will be in $o(1)$ for the all the $\epsilon$-distinguishers
that we construct.

It is easy to obtain from an $\epsilon$-distinguisher $\mathcal{R}$ for the group $G$ a randomized streaming algorithm 
$\mathcal{S}$ for the word problem of $G$ with error probability $\epsilon$. Moreover, the space complexity of $\mathcal{S}$ is only twice
the space complexity of $\mathcal{R}$; see Lemma~\ref{lemma-injective-0}. 

An advantage of the distinguisher concept is that its definition can be directly extended to monoids. 
In fact such a distinguisher serves as a natural notion of a streaming algorithm for the word
problem of a monoid. The word problem for a monoid $M$ generated by $\Sigma$ is the set of all pairs $(u,v) \in \Sigma^* \times \Sigma^*$
such that $u$ and $v$ represent the same element of $M$. In an $\epsilon$-distinguisher for $M$ the two input words $u$ and $v$ arrive in an
interleaved fashion. If $(u,v)$ belongs to the word problem of $M$ then with high probability the distinguisher arrives in the same memory state
after reading $u$ and $v$, otherwise the memory states are different with high probability.
We define an $\epsilon$-distinguisher for a semigroup $S$ that is not a monoid as an $\epsilon$-distinguisher for the monoid $S^1$ (the semigroup $S$ together
with an adjoined neutral element).

\subsection{Upper bounds on the space complexity of distinguishers}
We show that for every finitely generated linear monoid $M$ (i.e., a monoid $M$ of matrices over an arbitrary field)
there is an $\epsilon(n)$-distinguisher with 
space complexity $\mathcal{O}(\log n)$ and inverse polynomial 
error probability $\epsilon(n) = 1/n^c$ for every constant $c \geq 1$ (Theorem~\ref{thm-lin}).

For the following classes of (semi)groups we obtain $\epsilon(n)$-distinguishers with space complexity $\mathcal{O}(\log \log n)$ (which
is the best one can hope for a finitely generated infinite semigroup due to Theorem~\ref{thm-lower-bound-random}):
\begin{itemize}
\item finitely generated cancellative semigroups of polynomial growth (Theorem~\ref{thm-cancellative-poly-growth})
and hence, in particular, finitely generated virtually nilpotent groups (Corollary~\ref{thm-virt-nilpotent});
\item finitely generated commutative semigroups (Theorem~\ref{thm-comm}).
\end{itemize}
In both cases, an inverse polylogarithmic error probability $\epsilon(n) = (1/\log n)^c$ 
(for any constant $c \geq 1$) can be obtained.
Using a known gap theorem for the growth of linear groups \cite{Miln68,Wolf68}, it turns out that the randomized streaming space complexity of the word problem for a finitely generated
linear group $G$ is either $\Theta(\log \log n)$ (if $G$ is virtually nilpotent) or $\Theta(\log n)$ (if $G$ is not virtually nilpotent), 
see Corollary~\ref{thm--gap-linear}.

In order to obtain further (in general non-linear) (semi)groups with logspace distinguishers, we prove transfer theorems for the following operations:

\medskip
\paragraph{\em Finite extensions.}
 If $G$ is a finitely generated group having an $\epsilon(n)$-distinguisher with space complexity $s(n)$ then every
finite extension of $G$ has an $\epsilon(c n)$-distinguisher with space complexity $s(c n) + \mathcal{O}(1)$ for some constant $c$
(Theorem~\ref{thm-finite-ext}).

\medskip
\paragraph{\em Graph products.} Graph products of monoids interpolate between free products and direct products. 
If $M_1, \ldots, M_c$ are finitely generated left-cancellative monoids having $\epsilon(n)$-distinguishers with space 
complexities $s_1(n), \ldots, s_c(n)$, then there is an $\epsilon'(n)$-distinguisher of space complexity $\mathcal{O}(\sum_{i=1}^c s_i(n)+\log n)$
for any graph product of the $M_i$, where $\epsilon'(n) = \Theta(\epsilon(n) \cdot n^2)$ (Theorem~\ref{thm-GP}).
We do not know whether the factor $n^2$ in $\epsilon'(n)$ is needed but it still allows nontrivial applications (for which we clearly need
$\epsilon'(n) < 1$): if for instance all $M_i$ are linear then we can enforce $\epsilon(n) = 1/n^c$ for any $c \geq 1$ (Theorem~\ref{thm-lin}).
We also prove a variant of Theorem~\ref{thm-GP}, where every $M_i \setminus \{1\}$ is required to be an ideal of $M_i$ (Theorem~\ref{thm-GP-S}).

\medskip
\paragraph{\em Wreath products.}
 If $A$ is a finitely generated abelian group and $G$ is a finitely generated group with an $\epsilon(n)$-distinguisher 
of space complexity $s(n)$, then there is an $\epsilon'(n)$-distinguisher of space complexity $\mathcal{O}( s(n)+\log n)$
for the wreath product $A \wr G$, where  $\epsilon'(n) = \Theta(\epsilon(n) \cdot n^2)$
(Corollary~\ref{thm-wreath-by-abelian}). This result can be extended to the case where the abelian group $A$ is replaced
by a cancellative and commutative monoid (Corollary~\ref{thm-wreath-cancel-comm}).

\medskip
\paragraph{\em Semilattices of semigroups}
This is an important construction in semigroup theory, which is usually described as a decomposition: a semigroup $S$ is
decomposed as $S = \biguplus_{\alpha \in Y} S_\alpha$, where $Y$ is a semilattice (a commutative and idempotent semigroup),
every $S_\alpha$ is a subsemigroup, and the multiplication in $S$ must be compatible with the multiplication in $Y$ in a certain way.
The multiplication between elements from 
different $S_\alpha$ can be complicated in general, but if every $S_\alpha$ is a monoid, it turns out to be more structured (for groups
this was known before) and moreover the semilattice $Y$ must be finite if $S$ is finitely generated.
 We show that if $S$ is a finitely generated semilattice of monoids $M_\alpha$, each having an $\epsilon(n)$-distinguisher with 
 space complexity $s(n)$, then $S$ has a $(c \cdot \epsilon(d n))$-distinguisher with space complexity $c \cdot s(dn)$ (Theorem~\ref{thm-semilattice-deco}, which is in fact  a bit more general). Here, $c$ is the size of 
 the (necessarily finite) semilattice and $d$ is another constant depending on $S$. From this result we derive logspace distinguishers
 for free Clifford semigroups and regular nilpotent semigroups (Corollaries~\ref{thm-regular-nilpotent} and \ref{thm-clifford}).
 
\medskip
\noindent
Using the above transfer results, we obtain also non-linear groups with 
a logspace distinguisher, e.g., metabelian groups (Corollary~\ref{coro-metabel}) and free solvable groups (Corollary~\ref{thm-free-solvable}). 
Regarding the probability of error in the transfer results, we will carry out more refined calculations of the probability of false positives and false negatives, respectively.

\subsection{Lower bounds on the space complexity of distinguishers}

In Section~\ref{sec lower} we prove lower bounds for the randomized streaming space complexity of group word problems
and distinguishers for semigroups. All the lower bounds that we state below are shown using randomized communication 
complexity; see Section~\ref{sec-CC}.

We start with some sharp lower bounds for certain inverse monoids. Inverse monoids are an important class of monoids that
are somehow located between groups and monoids \cite{petrich:1984}. 
They are characterized by the condition that every element $x$ has a unique ``inverse'' 
$x^{-1}$ satisfying $x x^{-1} x = x$ and $x^{-1} x x^{-1} = x^{-1}$. The class of inverse monoids is a variety. Hence, for every set $\Sigma$ there
exists the free inverse monoid $\FIM(\Sigma)$ generated by $\Sigma$. We show that if $\Sigma$ is a finite set of size at least two, then every
$\epsilon$-distinguisher (for a constant $\epsilon < 1/2$) for $\FIM(\Sigma)$
 has space complexity $\Theta(n)$ (Theorem~\ref{thm-FIM-2}). If $\Sigma$ is a singleton set then there is a $0$-distinguisher for $\FIM(\Sigma)$ 
 with space complexity $\mathcal{O}(\log n)$ and this bound can be also matched with a lower bound of $\Omega(\log n)$
 for every constant error probability (Theorem~\ref{thm-FIM-1}).
 Similar results hold for polycyclic monoids (another important
 class of inverse monoids that is tightly related to context-free languages \cite{RenKam09}). In particular, let us mention that 
 the bicyclic monoid $B = \langle a,b \mid ab=1 \rangle$ has a $0$-distinguisher with space complexity $\mathcal{O}(\log n)$ 
 and that this upper bound can be matched with a lower bound for every constant error probability (Theorem~\ref{thm-bcm}).
 
For wreath products of the form $M \wr G$ such that $G$ is a finitely generated infinite group and $M$ is a non-commutative or non-cancellative monoid,
we show that every $\epsilon$-distinguisher ($0 < \epsilon < 1/2$) must have space complexity $\Omega(n)$
(Theorem~\ref{lower-bound-non-cancel-non-commute}), which nicely contrasts the above mentioned upper bounds for wreath products;
see Theorem~\ref{thm-wreath-cancel-comm}. 

For wreath products $H \wr G$ with $G$ and $H$ finitely generated groups, $H$ non-abelian, and $G$ infinite a lower bound of $\Omega(n)$
holds for the randomized streaming space complexity of the word problem 
(Theorem~ \ref{lower-wreath}). From this lower bounds we can derive lower bounds for some concrete (and quite famous)
groups. Our first example is Thompson's group $F$. It is a finitely presented group introduced by Richard Thompson in 1965.
Thompson's group $F$  belongs due to its unusual properties to the most intensively studied infinite groups; see e.g. \cite{CaFlPa96}. From a computational perspective it is interesting to note that $F$ is co-context-free (i.e., the set of all words over any set of generators that do not evaluate to the group identity is a context-free language) \cite{LehSchw07}. This implies that the word problem for Thompson's group $F$
is in {\sf DSPACE}$((\log n)^2)$. 
Since $F$ is non-abelian and contains a copy of $F \wr \mathbb{Z}$, we obtain the lower bound $\Omega(n)$ for the streaming
space complexity of the word problem of $F$  (Corollary~\ref{coro-thompson}). 

We also consider the famous Grigorchuk group $G$ \cite{Grigorchuk80}, which 
was the first example of a group with intermediate growth as well as the 
first example of a group that is amenable but not elementary amenable. We 
show that the deterministic streaming space complexity of the word problem for $G$ is $\mathcal{O}(n^{0.768})$, whereas the 
randomized streaming space complexity is $\Omega(n^{1/3})$ (Theorem~\ref{thm-grig}).

 Concerning our transfer result for graph products, we show that the conditions on the monoids $M_i$ in Theorems~\ref{thm-GP} and \ref{thm-GP-S} 
 cannot be completely avoided. This is already true for free product: We show that if $M$ is any monoid where $M \setminus \{1\}$ is 
 not an ideal and $B$ is the bicyclic monoid, then every $\epsilon$-distinguisher ($0 < \epsilon < 1/2$) for the free product $B \ast M$
 has space complexity $\Theta(n)$; see Theorem~\ref{thm-lower-bound-free-product}.

 \subsection{Streaming algorithms for membership problems}
In Section~\ref{sec member} we consider randomized streaming algorithms for subgroup and submonoid membership problems. In a subgroup
(resp., submonoid) membership problem one has a subgroup (resp., submonoid) $H$ of a finitely generated group $G$ and for a given input word $w \in \Sigma^*$ ($\Sigma$ is again
a finite set of generators for $G$) one has to determine whether $w$ represents an element of $H$. The word problem is the special
case where $H=1$. We present a randomized streaming algorithm with logarithmic space complexity for the case where $G$ is a finitely generated free group and $H$ is a finitely generated subgroup of $G$ (Theorem~\ref{thm-sbmp-free}). 
We also generalize this result by constructing for every $c \ge 1$
 a $1/n^c$-distinguisher for the Schreier coset graph of a finitely generated subgroup in a finitely generated free group
 (Theorem~\ref{thm-schreier-dist}).

Finally, we show that Theorem~\ref{thm-sbmp-free} cannot be extended 
to one of the following three situations:
\begin{itemize}
\item $G$ is a finitely generated free group of rank two but $H$ is not finitely generated (Theorem~\ref{thm-thompson2}).
\item $G$ is a direct product of two free groups of rank two and $H$ is a finitely generated subgroup of $G$ (Theorem~\ref{thm-mihailova}).
\item $G$ is the free group $F(a,b)$ generated by $a$ and $b$ and $H$ is the free monoid $\{a,b\}^* \subseteq F(a,b)$.
\end{itemize}
In all three cases we prove a lower bound of $\Omega(n)$ for a randomized streaming algorithm for the corresponding 
membership problem.

\subsection*{Related results.}
In this paper, we are only interested in streaming algorithms for a fixed infinite group. 
Implicitly, streaming algorithms for finite groups are studied in \cite{GowersV19}.
Obviously, every finite group $G$ has deterministic streaming space complexity $\mathcal{O}(\log |G|)$.\footnote{In our setting,
$|G|$ would be a constant, but for the moment let us make the dependence on the finite group $G$ explicit.}
In \cite{GowersV19}, it is shown that for the group $G = \mathsf{SL}(2,\mathbb{F}_p)$ this upper bound is matched
by a lower bound, which even holds for the randomized streaming space complexity. More precisely, Gowers and Viola
study the communication cost of the following problem: Alice receives a sequence of elements $a_1,\ldots, a_n \in G$,
Bob receives a sequence of elements $b_1,\ldots, b_n \in G$ and they are promised that the interleaved product
$a_1 b_1 \cdots a_n b_n$ is either $1$ or some fixed element $g \in G \setminus\{1\}$ and their task is to determine which
of these two cases holds. For $G = \mathsf{SL}(2,\mathbb{F}_p)$ it is shown that the randomized communication 
complexity of this problem is $\Theta(\log |G| \cdot n)$ (the upper bound is trivial). From this it follows easily that 
the randomized streaming space complexity of $\mathsf{SL}(2,\mathbb{F}_p)$ is $\Omega(\log |G|)$.

Our transfer theorems for graph products (Theorem~\ref{thm-GP}) and wreath products (Corollary~\ref{thm-wreath-by-abelian})
have similar counterparts in classical complexity theory:
For graph products, the following result is shown in \cite{DiekertK16}: If the word problem for every group $G_i$ ($1 \le i \le k$)
can be solved in deterministic logspace on a Turing machine then the same is true for every graph product of the groups $G_1, \ldots, G_k$. 
Kausch in his thesis \cite{kausch2017parallel} strengthened this result by showing that the word problem of the graph
product is $\mathsf{AC}_0$-Turing-reducible to the word problems of the $G_i$ and the free group of rank two.
A similar result holds for wreath products: the word problem for $G \wr H$ is 
$\mathsf{AC}_0$-Turing-reducible to the word problems for $G$ and $H$ \cite{MiasnikovVW19}.

Interestingly, the word problem for a free inverse monoid can be solved in logspace on a Turing machine; see \cite{LoOn07} for a more general
result. Recall that we prove a space lower bound of $\Omega(n)$ for any $\epsilon$-distinguisher for a free inverse monoid generated
by at least two elements. 

Some of the results in this paper were presented at the conferences MFCS 2022 and MFCS 2024; extended abstracts appeared in \cite{LohreyL22,LohreyX24}.

\section{Preliminaries}

For integers $a \le b$ let $[a,b]$ be the integer interval $\{a, a+1, \ldots,b\}$. 
We write $\exp(x)$ for $e^x$, where $e$ is Euler's number. For a number $\alpha$ we denote with $\mathbb{P}_\alpha$ the
set of all primes $p \leq \alpha$.

\subsection{Words}
Let $\Sigma$ be a finite alphabet.
As usual, we write $\Sigma^*$ for the set of all finite words over the alphabet $\Sigma$. 
The empty word is denoted with $\varepsilon$.
For  a word $w = a_1 a_2 \cdots a_n$ ($a_1, a_2, \ldots, a_n \in \Sigma$) let $|w|=n$
be its length and $w[i] = a_i$ (for $1 \leq i \leq n$) the symbol at position $i$.
A prefix of a word $w$ is a word $u$ such that $w = uv$ for some word $v$.
We denote with $\mathcal{P}(w)$ the set of all prefixes of $w$.
Let  $\Sigma^+ = \Sigma^* \setminus \{ \varepsilon \}$ be the set of non-empty words and 
$\Sigma^{\le n} = \{ w \in \Sigma^* \colon |w| \leq n\}$ be the set of all words of length at most $n$. 
For a subalphabet $\Theta \subseteq \Sigma$ we denote with $\pi_{\Theta} : \Sigma^* \to \Theta^*$ the
projection homomorphism that deletes all symbols from $\Sigma \setminus \Theta$ in a word:
$\pi_{\Theta}(a)=a$ for $a \in \Theta$ and $\pi_{\Theta}(a)=\varepsilon$ for $a \in \Sigma \setminus \Theta$.

\subsection{Probabilities}
We assume that the reader has some basic knowledge in probability theory. 
We will only deal with discrete probability distributions. 
We write $[0,1]_{\mathbb{R}}$ for the set $\{ p \in \mathbb{R} \colon 0 \leq p \leq 1\}$ of all probabilities.

If $\iota$ is a probability distribution on a finite set $A$ and  $\mathcal{E} : A \to \{0,1\}$ is a boolean predicate on $A$,
we define the probability
\[
\Prob_{a \sim \iota}[\mathcal{E}(a)]  \coloneqq  \sum_{a \in A} \mathcal{E}(a) \cdot \iota(a),
\]
which is of course the expected value of $\mathcal{E}$.
If $\iota$ is the uniform distribution on $A$, we just write $\Prob_{a \in A}[\mathcal{E}(a)]$ or simply $\Prob_{a}[\mathcal{E}(a)]$.
When writing 
\[
\Prob_{a \sim \iota_1, \, b \sim \iota_2}[\mathcal{E}(a,b)] 
\]
we assume that $a$ and $b$ are chosen independently.

Throughout we will make frequent use of the Chernoff bound, of which there are many variants. 
We use the following form, which can be found in \cite[eq.~(1)]{DiGo23}.
\begin{theorem}
Let $\delta > 0$, and
$X_1, X_2, \ldots X_k$ be independent identically distributed Bernoulli random variables with 
$\Prob[X_i = 1] = \epsilon$ and $\Prob[X_i = 0] = 1-\epsilon$. 
Then 
\begin{equation}\label{chernoff}
\Prob\left[\sum_{i=1}^k X_i > (1+\delta) \epsilon k\right] < \exp\left( - \frac{\delta^2 \epsilon k}{\delta+2}\right) 
\stackrel{(\text{if }\delta \geq 1)}{\leq} 
\exp\left( - \frac{\delta \epsilon k}{3} \right) .
\end{equation}
\end{theorem}

\subsection{Communication complexity} \label{sec-CC}

Our lower bounds for randomized streaming space complexity will be based on randomized communication
complexity. In this section, we 
present the necessary background; see \cite{KuNi96}
for a detailed introduction.  

Consider a function $f \colon X \times Y \to \{0,1\}$ for some finite sets $X$ and $Y$.
A \emph{randomized (communication) protocol} $P$ for $f$ consists of two parties called Alice and Bob.
The input for Alice is an element $x \in X$ and a random choice $r \in R$.
Similarly, Bob's input is an element $y \in Y$ and a random choice $s \in S$.
Here, $R$ and $S$ are finite sets with probability distributions $\iota$ and $\zeta$, respectively.
The goal of Alice and Bob is to compute $f(x,y)$.
For this, they communicate in a finite number of rounds, where in each round either Alice sends
a bit string to Bob or vice versa. 
The protocol determines which of the two communication directions is chosen.
At the end, Bob outputs a bit $P(x,r,y,s)$.  
In a \emph{one-way protocol}, there is only one round, where Alice sends a bit string to Bob.
The protocol $P$ \emph{computes} $f$ if 
for all $(x,y) \in X \times Y$ we have
\begin{equation} \label{eq-rcc}
\Prob_{r \sim \iota, \, s \sim \zeta}[P(x,r,y,s) \neq f(x,y)] \le \frac{1}{3}.
\end{equation}

The cost of the protocol is the maximum number of transmitted bits, where the maximum
is taken over all $(x,r,y,s) \in X \times R \times Y \times S$.
The \emph{randomized (one-way) communication complexity} of $f$ is the minimal cost
among all (one-way) randomized protocols that compute $f$. Here, the size of the finite sets $R$ and $S$ is not restricted.
The choice of the constant $1/3$ in \eqref{eq-rcc} is arbitrary in the sense that changing the constant
to any $\epsilon < 1/2$ only changes the communication complexity
by a constant (depending on $\epsilon$),
see \cite[p.~30]{KuNi96}. Also note that we only use the private version of randomized communication protocols, where Alice
and Bob make private and independent random choices from the sets $R$ and $S$, respectively, and their choices are not known to the other
party (in contrast to the public version).

A \emph{promise communication problem} is defined by a function $f$ from a subset $A \subseteq X \times Y$ to $\{0,1\}$.
In this case, \eqref{eq-rcc} must only hold for all $(x,y) \in A$. 

We will use the following communication problems.
\begin{itemize}
\item $\EQ_n$ (equality): 
  Alice's and Bob's inputs are numbers $x \in [1,n]$ and $y \in [1,n]$, respectively. 
  Bob should output $1$ if and only if $x=y$.
\item $\GT_n$ (greater than): 
  Alice's and Bob's inputs are numbers $x, y \in [1,n]$, respectively. 
  Bob should output $1$ if and only if $x < y$.
\item  $\DIS_n$ (disjointness):  
  Alice's and Bob's inputs are bit strings $x, y \in \{0,1\}^n$, respectively.
  Bob should output $1$ if and only if there is no position $i \in [1,n]$ such that $x[i] = y[i] = 1$.
\item  $\IDX_n$ (index):  
  Alice's input is a bit string $x \in \{0,1\}^n$, whereas Bob's input is a number $i \in [1,n]$.
Bob should output $1$ if and only if $x[i]=1$.
\item $\AIDX_n$ (augmented index):  
  Alice's input is a bit string $x \in \{0,1\}^n$, whereas Bob's input is a number $i \in [1,n]$ and a bit string $y \in \{0,1\}^{n-i}$ with the 
  restriction that $y$ is a suffix of $x$.
Bob should output $1$ if and only if $x[i]=1$. This is a promise problem as defined above.
\end{itemize}
We make use of the following results. The statements in the first two points hold for the general as well as the one-way communication model.

\begin{theorem} \label{theorem-randCC}
The following hold.
\begin{itemize}
\item The randomized (one-way)  communication complexity of $\EQ_n$ is $\Theta(\log \log n)$ \cite[Section~3.2]{KuNi96}.
\item The randomized (one-way)  communication complexity of $\DIS_n$ is $\Theta(n)$  \cite[Section~4.6]{KuNi96}.
\item The randomized one-way  communication complexity of $\GT_n$ is $\Theta(\log n)$ \cite[Theorem~3.8]{KremerNR99} and \cite[Theorem~19]{MiltersenNSW98}.
\item The randomized one-way  communication complexity of $\IDX_n$ is $\Theta(n)$ \cite[Theorem~3.7]{KremerNR99}.
\item The randomized one-way  communication complexity of $\AIDX_n$ is $\Theta(n)$ \cite[Theorem~5.1]{BaIPW10}.
\end{itemize} 
\end{theorem}

\subsection{Automata}

In this section we introduce deterministic and probabilistic automata. The latter will serve as our formalization of randomized streaming algorithms.

\subsubsection{Deterministic automata} \label{sec-det-auto}

A \emph{deterministic automaton} is a tuple 
\[\mathcal{A} = (Q, \Sigma, q_0, \delta,F),\] where 
$Q$ is a possibly infinite set of states, $\Sigma$ is a finite alphabet, $q_0 \in Q$ is the initial state,
$\delta : Q \times \Sigma \to Q$ is the transition function, and $F$ is the set of final states.
The transition function $\delta : Q \times \Sigma \to Q$ is extended to a mapping
$\delta : Q \times \Sigma^* \to Q$ in the usual way: $\delta(q,\varepsilon)=q$ and $\delta(q, aw) = \delta( \rho(q,a),w)$
for all $q \in Q$, $a \in \Sigma$ and $w \in \Sigma^*$. 
The language accepted by $\mathcal{A}$ is $L(\mathcal{A})= \{w \in \Sigma^* : \delta(q_0,w) \in F\}$.
The \emph{growth function} of $\mathcal{A}$ is $\gamma_{\mathcal{A}}(n) = | \{ \delta(q_0, w) \colon w \in \Sigma^{\le n} \}|$.
If $Q$ is finite, then $\mathcal{A}$ is called a \emph{deterministic finite automaton}, DFA for short.

\subsubsection{Probabilistic finite automata}

A \emph{probabilistic finite automaton} (PFA) \cite{Paz71,Rabin63}
\[\mathcal{A} = (Q,\Sigma,\iota,\rho,F)\]
consists of a finite set of states $Q$, a finite alphabet $\Sigma$,
an \emph{initial state distribution} $\iota \colon Q \to [0,1]_{\mathbb{R}}$,
a \emph{transition probability function} $\rho \colon Q \times \Sigma \times Q \to [0,1]_{\mathbb{R}}$
and a set of final states $F \subseteq Q$ such that
$\sum_{q \in Q} \iota(q) = 1$ and $\sum_{q \in Q} \rho(p,a,q) = 1$ for all $p \in Q$, $a \in \Sigma$.
Intuitively, for an input word $w \in \Sigma^*$, the PFA $\mathcal{A}$ first chooses randomly, according
to the distribution $\iota$, an initial state $q_0$. Then, it reads the word $w$ symbol by symbol. If $p \in Q$
is the current state (initially, it is $q_0$) and $a \in \Sigma$ is the next symbol in $w$, then the next state
is $q$ with probability $\rho(p,a,q)$.

If the transition probability function $\rho$ maps into $\{0,1\}$ then $\mathcal{A}$ is called a
\emph{semi-probabilitistic finite automaton} (semiPFA). 
This means that after choosing the initial state according to the distribution $\iota$, $\mathcal{A}$
proceeds deterministically. It is natural to identify the transition probability function $\rho \colon Q \times \Sigma \times Q \to [0,1]_{\mathbb{R}}$
of a semiPFA with the unique transition function $\delta \colon Q \times \Sigma \to Q$ such that
$\rho(q,a,\delta(q,a)) = 1$ for all $q \in Q$ and $a \in \Sigma$. We then extend $\delta$ to $\delta \colon Q \times \Sigma^* \to Q$ as in
Section~\ref{sec-det-auto}.  A semiPFA  $(Q, \Sigma, \iota, \delta, F)$ can be also seen as a probability distribution
on the set of DFA $(Q, \Sigma, q, \delta, F)$ for $q \in Q$, where $(Q, \Sigma, q, \delta, F)$ has probability $\iota(q)$.
If in addition also  $\iota$ maps into $\{0,1\}$, then $\mathcal{A} = (Q,\Sigma,\iota,\delta,F)$ can be identified with the DFA $(Q,\Sigma, q_0, \delta,F)$
where $q_0 \in Q$ is the necessarily unique state with $\iota(q_0)=1$.

Let  $\mathcal{A} = (Q,\Sigma,\iota,\rho,F)$ be a PFA. In the following, we consider discrete random variables that take
values in the set $Q$. For this, one should assume w.l.o.g.~that $Q$ is a subset of $\mathbb{N}$ (which 
will be also useful for other reasons in Sections~\ref{sec-graph-prod} and \ref{sec-wreath}).
For such a random variable $X$ and $a \in \Sigma$ we define the random variable $X \cdot a$ (which also takes values from $Q$) by
\[ 
\Prob[X \cdot a = q] = \sum_{p \in Q} \Prob[X = p] \cdot \rho(p,a,q).
\]
For a word $w \in \Sigma^*$ we define a random variable $\mathcal{A}(w)$ with values from $Q$ inductively
as follows: the random variable $\mathcal{A}(\varepsilon)$ is defined such that
$\Prob[\mathcal{A}(\varepsilon) = q] = \iota(q)$ for all $q \in Q$. Moreover, $\mathcal{A}(wa) = \mathcal{A}(w) \cdot a$ for all $w \in \Sigma^*$ and $a \in \Sigma$.
Thus, $\Prob[\mathcal{A}(w) = q]$ is the probability that $\mathcal{A}$ is in state $q$ after reading $w$.

We can define $\Prob[\mathcal{A}(w) = q]$ also via runs:
A \emph{run} on a word $a_1a_2 \cdots a_m \in \Sigma^*$ in the PFA $\mathcal{A}$ is a sequence $\pi = (q_0,a_1,q_1,a_2,\dots,a_m,q_m)$
where $q_0, \dots, q_m \in Q$. We say that $\pi$ ends in $q_m$.
Given a run $\pi$ in $\mathcal{A}$ we define $\iota\rho^*(\pi)= \iota(q_0) \cdot \prod_{i=1}^n \rho(q_{i-1},a_i,q_i)$.
For every $w \in \Sigma^*$ the function $\iota\rho^*$ is a probability distribution on
the set $\Runs(w)$ of all runs of $\mathcal{A}$ on $w$. Then, $\Prob[\mathcal{A}(w) = q]$ is the sum of 
all the probabilities $\iota\rho^*(\pi)$, where $\pi \in \Runs(w)$ ends in $q$.

If $\mathcal{A} = (Q, \Sigma, \iota, \delta, F)$ is a semiPFA, then we obtain
\[
\Prob[\mathcal{A}(w) = q] = \sum \{ \iota(p) \colon p \in Q, \delta(p,w) = q \}.
\]
For a language $L \subseteq \Sigma^*$, a PFA $\mathcal{A}$ and a word $w \in \Sigma^*$ we define
the \emph{error probability} of $\mathcal{A}$ on $w \in \Sigma^*$ for $L$ as
\[
\epsilon(\mathcal{A},w,L) = \begin{cases}
\Prob[ \mathcal{A}(w) \notin F] & \text{ if } w \in L, \\
\Prob[ \mathcal{A}(w) \in F] & \text{ if } w \notin L.
\end{cases}
\]

\subsubsection{Sequential transducer} \label{sec transducer}

In Section~\ref{sec-graph-prod} we make use of (left-)sequential transducers, see e.g.~\cite{Ber79} for more details.
A sequential transducer is a tuple 
\[\mathcal{T} = (Q, \Sigma, \Gamma, q_0, \delta),\] where 
$Q$ is a finite set of states, $\Sigma$ is the input alphabet, $\Gamma$ is the output alphabet,
$q_0 \in Q$ is the initial state, and $\delta : Q \times \Sigma \to Q \times \Gamma^*$ is the transition function.
If $\delta(q,a) = (p,u)$, then this should be read as follows: If the transducer is in state $q$ and the next input
symbol is $a$ then it moves to state $p$ and outputs the word $u$. This is often written as
\[
q \xrightarrow{a/u} p .
\]
We extend $\delta$ to a mapping $\delta : Q \times \Sigma^* \to Q \times \Gamma^*$ as follows, where
$q \in Q$, $a \in \Sigma$ and $w \in \Sigma^*$:
\begin{itemize}
\item $\delta(q,\varepsilon) = (q,\varepsilon)$ for all $q \in Q$, and
\item if $\delta(q,a) = (p, u)$ and $\delta(p, w) = (r, v)$ then $\delta(q, aw) = (r, uv)$.
\end{itemize}
Finally, we define the function $f_{\mathcal T} : \Sigma^* \to \Gamma^*$ computed by $\mathcal{T}$ as follows (where
$w \in \Sigma^*$ and $x \in \Gamma^*$):
$f_{\mathcal T}(w) = x$ if and only if $\delta(q_0, w) = (q,x)$ for some $q\in Q$. 
Intuitively, in order to compute $f_{\mathcal T}(w)$, $\mathcal{T}$ reads the word $w$
starting in the initial state $q_0$ and thereby concatenates all the outputs produced in the transitions.

\subsection{Semigroups, monoids and groups} \label{sec-semigroups}

We assume that the reader has some basic knowledge in group and semigroup theory.
In this section we recall some elementary definitions from this area.
For more details the reader can consult the monographs \cite{ClPrest61,Grillet95} (for semigroups)
and \cite{KaMe79,Rob96} (for groups).

A \emph{semigroup} is an algebraic structure $(S, \circ)$ with carrier set $S$ and an associative binary operation $\circ$ on $S$.
If the operation $\circ$ is clear from the context, we omit it and identify the semigroup with the carrier set $S$.
Moreover, we often write $ab$ instead of $a \circ b$ for $a,b \in S$.

For semigroups $(S, \circ)$ and $(T, \circ)$, a mapping $h\colon S \to T$ is called a \emph{(semigroup) homomorphism} if 
 $h(a \circ b) = h(a) \circ h(b)$ for all $a,b \in S$.

An important example of a semigroup is the \emph{free semigroup} $(\Sigma^+, \circ)$ consisting of all non-empty finite words 
over a non-empty set $\Sigma$ and the operation of concatenation. As usual, we write $uv$ instead of $u \circ v$ for words $u,v \in \Sigma^+$.
The important property of the free semigroup is that 
 for every semigroup  $S$ and mapping $h\colon \Sigma \to S$ there is a unique 
homomorphism $h'\colon \Sigma^+ \to S$ with $h'(a) = h(a)$ for every $a \in \Sigma$.
In particular, there is a homomorphism $\pi_S\colon S^+ \to S$ with $\pi_S(a) = a$ for $a \in S$, called
the \emph{evaluation homomorphism}. For a word $u \in S^+$ we say that $\pi_S(u)$ is the \emph{value} of $u$
or that $u$ \emph{represents} $\pi_S(u)$.
For two words $u,v \in S^+$ we write $u \equiv_S v$, or $u = v$~in~$S$, if $\pi_S(u) = \pi_S(v)$.
Note that writing $ab$ instead of $a \circ b$ for $a,b \in S$ means that one identifies a word $u \in S^+$ with $\pi_S(u) \in S$.
The notations $\pi_S(u)$ and $u \equiv_S v$ will be only used if it is important that $u$ and $v$ are words over $S$.

Fix a semigroup $S$.
A \emph{zero element} in $S$ is a necessarily unique element $0 \in S$ such that $0 a = a 0 = 0$ for all $a \in S$. 
If $S$ has a zero element and
for every $a \in S$ there is an $n \geq 1$ with $a^n = 0$, then $S$ is called a
\emph{nilsemigroup}. 
An element $a \in S$ is called \emph{idempotent}
if $a^2 = a$ in $S$. 
The semigroup $S$ is \emph{left-cancellative} if $ab = ac$ implies $b=c$, and \emph{right-cancellative} if 
$ba = ca$ implies $b=c$. It is \emph{cancellative} if it is both left- and right-cancellative.

\subsubsection{Subsemigroups, ideals and generating sets}

A \emph{subsemigroup} of $S$ is a subset $T \subseteq S$ such that $ab \in T$ for all $a,b \in T$.
An \emph{ideal} in $S$ is a subset $I \subseteq S$ such that $ab, ba \in I$ for all $a \in S$ and $b \in I$. 

For a subset $\Sigma \subseteq S$, the subsemigroup generated by $\Sigma$ is the intersection of all subsemigroups of $S$ that
contain $\Sigma$. It is equal to the image $\pi_S(\Sigma^+)$.
The semigroup $S$ is generated by $\Sigma$ if $S = \pi_S(\Sigma^+)$, i.e., $\pi_S : S^+ \to S$ restricts to a surjective homomorphism from $\Sigma^+$
to $S$. We also say that $\Sigma$ is a \emph{generating set} of $S$. 
If $S$ has a finite generating set then $S$ is called \emph{finitely generated} (f.g.~for short). In this paper we will only consider finitely generated semigroups. Clearly, the free semigroup $\Sigma^+$ is generated by $\Sigma$.

If $h : S \to T$ is a homomorphism, where $S$ is generated by $\Sigma$ and $T$ is generated by $\Gamma$ then we occasionally identify
$h$ with a homomorphism $\tilde{h} : \Sigma^+ \to \Gamma^+$, where $\tilde{h}(a) \in \Gamma^+$ for $a \in \Sigma$ is a word that represents
$h(a) \in T$.

\subsubsection{Congruences}
A \emph{congruence} on the semigroup $S$ is an equivalence relation $\rho$ on $S$ such that $a\, \rho\, a'$ and $b \,\rho\, b'$ imply
$ab \, \rho\, a'b'$ for all $a,a',b,b' \in S$. 
In this situation one can construct the quotient semigroup $S/\rho$. Its carrier set is the set of all equivalence classes 
$\{ [a]_\rho \colon a \in S \}$ of $\rho$ and the multiplication is defined by $[a]_\rho [b]_\rho = [ab]_\rho$. For every binary relation $R$ on $S$
there is a smallest (with respect to set inclusion) congruence $\rho_R$ containing $R$. It is the intersection of all congruences containing $R$.
If $S$ is finitely generated then also every quotient $S/\rho$ is finitely generated.
If $I \subseteq S$ is an ideal then the relation $\rho_I = \{ (a,b) \colon a,b \in I \} \cup \{ (a,a) \colon a \in S \setminus I \}$ is a 
congruence. The corresponding quotient $S/\rho_I$ is also called the \emph{Rees quotient} of $S$ with respect to $I$.

\subsubsection{Semigroup presentations}
Every semigroup $S$ is isomorphic to a quotient semigroup $\Sigma^+/\rho_R$ for a set $\Sigma$ and a relation $R \subseteq \Sigma^+ \times \Sigma^+$. If $\Sigma$ and $R$ are both finite then $S$ is called  \emph{finitely presented}.
 We then also write $\langle \Sigma \mid R \rangle^+$ (or more intuitively
$\langle \Sigma \mid u = v \text{ for } (u,v) \in R \rangle^+$) for the quotient $\Sigma^+/\rho_R$. The pair $(\Sigma, R)$ is also
called a \emph{semigroup presentation} for $\langle \Sigma \mid R \rangle^+$.

\subsubsection{Regular and inverse semigroups} \label{sec-regular-semigroup}

 Consider a semigroup $S$ and $a \in S$. An \emph{inverse} of $a$ is an element $b \in S$ such that
 $aba = a$ and $b = bab$ hold in $S$. A \emph{regular semigroup} is a semigroup, where every element has at least one inverse.
 An \emph{inverse semigroup} is a regular semigroup, where idempotents commute.
 Equivalently, it is a regular semigroup, where every element $a$  has a unique inverse,
 which is then denoted with $a^{-1}$  \cite[Theorem~II.1.2]{petrich:1984}.
  Note that $a a^{-1} = a^{-1}a=1$ does not necessarily hold (in fact, an inverse semigroup does not need to have
  a neutral element). The idempotents of an inverse semigroup are exactly the elements of the form $a a^{-1}$. 
  A famous result for inverse semigroups is the representation theorem of Vagner and Preston. It states that
  a semigroup is inverse if and only if it is isomorphic to a semigroup of partial injections on a set $A$ (the multiplication
  is the ordinary composition operation of partial mappings).
  
\subsubsection{Monoids}
A \emph{neutral element} in a semigroup  $S$ is a necessarily unique element $e \in S$ such that
$ea = ae = a$ for all $a \in S$. If there exists a neutral element, then $S$ is a \emph{monoid}.
Most of the times, we denote the neutral element with $1$.
A \emph{submonoid} of $M$ is a subsemigroup of $M$ that in addition contains the neutral element $1 \in M$.
A \emph{monoid morphism} $h : M \to P$ for monoids $M$ and $P$ is a 
semigroup morphism that in addition maps the neutral element of $M$ to the neutral element of $P$.

The concepts introduced for semigroups so far can be easily extended to monoids by replacing $( \cdot )^+$ by $( \cdot )^*$. 
The \emph{free monoid} generated by $\Sigma$ is of course the set of all finite words $\Sigma^*$ with the operation of concatenation
and the empty word $\varepsilon$ is its neutral element. The evaluation homomorphism $\pi_S : S^+ \to S$ for a semigroup
then becomes $\pi_M : M^* \to M$ for a monoid $M$ and we have $\pi_M(\varepsilon) = 1$. 
For a relation $R \subseteq \Sigma^* \times \Sigma^*$ one obtains 
the monoid $\Sigma^*/\rho_R$, which will be denoted by
$\langle \Sigma \mid R \rangle^*$. Note that in $\langle \Sigma \mid R \rangle^*$, the neutral element is the congruence class of $\varepsilon$. In contrast, if we have a semigroup $\langle \Sigma \mid R \rangle^+$, which has a neutral element (and hence is a monoid), then the neutral element is necessarily an equivalence class of non-empty words.

For a semigroup $S$, we denote by $S^1$ the smallest monoid containing $S$. More explicitly, we have $S^1 = S$ if $S$ is a monoid and, otherwise, $S^1$ is obtained from $S$ by adjoining the new element $1 \notin S$ to $S$ and defining $1 a = a 1 = a$ for all $a \in S$.
Note that if $S$ is generated (as a semigroup) by $\Sigma$ then $S^1$ is generated (as a monoid) by $\Sigma$ as well, where the empty word $\varepsilon \in \Sigma^\ast$ represents the neutral element $1 \in S^1$.

Let $M$ be an f.g.~monoid with finite generating set $\Sigma$; that is, $\pi_M : M^* \to M$ restricts to a surjective monoid homomorphism from the free monoid $\Sigma^*$ to $M$.
For $n \in \mathbb{N}$, let $B_{M,\Sigma}(n) =  \pi_M(\Sigma^{\le n})$ 
be the set of all monoid elements
that can be written as words of length at most $n$ over the generating set $\Sigma$.
The \emph{growth function} $\gamma_{M,\Sigma} : \mathbb{N} \to \mathbb{N} \setminus \{0\}$ is defined by
\begin{equation}  \label{eq-growth}
\gamma_{M,\Sigma}(n) = |B_{M,\Sigma}(n)|.
\end{equation}
For different finite generating sets $\Sigma_1, \Sigma_2$ of $M$, the functions $\gamma_{M,\Sigma_1}$ and $\gamma_{M,\Sigma_2}$ 
are in general different, but their growth rates are the same. More precisely, there exist constants $a,b \ge 1$ such that $\gamma_{S,\Sigma_1}(n) \le \gamma_{S,\Sigma_2}(a \cdot n)$ and
$\gamma_{M,\Sigma_2}(n) \le \gamma_{M,\Sigma_1}(b \cdot n)$
 \cite[Corollary~7.10]{Ceccherini-SilbersteinA21}. 
We can also define the growth function of a semigroup $S$ by $\gamma_{S,\Sigma} = \gamma_{S^1,\Sigma}$.\footnote{In the literature
one usually defines the growth function of a semigroup by counting the size of the sets $\{ \pi_S(w) \colon w \in \Sigma^+, |w| \leq n \}$.
Compared to our definition, this yields an additive difference of at most one.}

The \emph{Cayley graph} of $M$ with respect to $\Sigma$ is the deterministic automaton (without final states)
$C(M,\Sigma) = (M, \Sigma, 1, \delta)$, where $\delta(s,a) = sa$ for all $s \in M$ and $a \in \Sigma$. 
Note that $\gamma_{M,\Sigma}$ is the same as $\gamma_{C(M,\Sigma)}$.
The Cayley graph is usually viewed as a directed graph with the distinguished node $1$ and edges labelled with generators.

\subsubsection{Groups} \label{sec-groups}

A \emph{group} is a monoid $G$ such that for every $g \in G$ there exists a necessarily unique inverse $a^{-1} \in G$
with $aa^{-1} = a^{-1}a=1$. The neutral element of a group is usually called the \emph{identity element} (or just the \emph{identity})
of $G$. Clearly, every group is an inverse monoid. 

As for monoids, most semigroup concepts directly extend to groups.
A \emph{subgroup} of $G$ is a submonoid $H$ of $G$ such that in addition
$g^{-1} \in H$ for every $g \in H$; one writes $H \leq G$ in this situation.
The (right) \emph{cosets} $Hg$ for $g \in G$ then form a partition of~$G$. The \emph{index} of $H$ in $G$, denoted by $[G:H]$, is the number of 
cosets of $H$. If $[G:H] < \infty$ then $G$ is called a \emph{finite extension} of $H$.

 A {(group) homomorphism} $h : G \to H$ is a monoid homomorphism such that in addition
$h(g^{-1}) = h(g)^{-1}$ for every $g \in G$.

For a subset $\Sigma \subseteq G$, we denote with $\langle \Sigma \rangle$ the subgroup
of $G$ generated by $\Sigma$. 
It is the intersection of all subgroups of $G$ that contain $\Sigma$. 
This is the same as the submonoid generated by $\Sigma \cup \Sigma^{-1}$. If $\langle \Sigma \rangle = G$, then $\Sigma$ is a (group) \emph{generating
set} of~$G$.
A generating set $\Sigma$ for $G$ is called \emph{symmetric} if $\Sigma = \Sigma^{-1}$, in which case
$\Sigma$ generates $G$ as a monoid.
 In the rest of the paper, we assume
that all group generating sets are symmetric. It is well known that if $G$ is a finite extension of $H$ then 
$G$ is finitely generated if and only if $H$ is finitely generated. The group $G$ is called \emph{torsion-free} if for every $a \in G$, the cyclic group $\langle a \rangle$
is infinite, i.e., $\langle a \rangle \cong \mathbb{Z}$.

A \emph{normal subgroup} of $G$ is a subgroup $H \leq G$ such that $g^{-1} h g \in H$ for all $g \in G$, $h \in H$.
For a normal subgroup $H$ of $G$ one defines the quotient group $G/H$ consisting of all cosets $Hg$ ($g \in G$)
with the well-defined multiplication $Hg_1 Hg_2 = H(g_1g_2)$.
The \emph{normal closure} $N(\Sigma)$ of $\Sigma \subseteq G$  is the smallest normal subgroup of $G$ that contains $\Sigma$.
It can be also defined as the subgroup $\langle \{ g^{-1} a g \colon  a \in \Sigma, g \in G \}\rangle$.

The \emph{commutator} of $g,h \in G$ is the element
$[g,h] = ghg^{-1}h^{-1}$ and for subsets $A,B \subseteq G$ we write $[A,B]$ for the subgroup $\langle \{ [a,b] \colon  a \in A, b \in B \} \rangle$.
This is a normal subgroup of $G$.

For a monoid $M$ we denote with $U(M)$ the \emph{group of units} in $M$. It contains all elements $a \in M$ such that
there is $b \in M$ with $ab = ba = 1$. Clearly, this set together with the multiplication in $M$ is a group.

In this paper, we only consider finitely generated groups. 
Let $G = \langle \Sigma \rangle$ be a finitely generated group with $\Sigma$ a finite symmetric generating set for $G$.
Hence we have the surjective monoid morphism $\pi_G : \Sigma^* \to G$
satisfying $\pi_G(a)=a$ for $a \in \Sigma$.
As for semigroups we write $u \equiv_G v$ for $\pi_G(u) = \pi_G(v)$.
For a word $u = a_1 a_2 \cdots a_n \in \Sigma^*$ with $a_i \in \Sigma$ we define the word
$u^{-1} = a_n^{-1} \cdots a_2^{-1} a_1^{-1} \in \Sigma^*$. Clearly, we have $\pi_G(u^{-1}) = \pi_G(u)^{-1}$.

The definition of the growth function and Cayley graph for a finitely generated monoid can be directly used for groups.
Growth functions of groups have been intensively studied in the literature; see e.g.\ \cite{Mann11}.
Note that the Cayley graph of a group is symmetric in the sense that for every $a$-labelled edge from $g$ 
to $h$ there is an $a^{-1}$-labelled edge from $h$ to $g$.
The  \emph{word problem for $G$} with respect to the generating set
$\Sigma$ is the language 
\[ \WP(G,\Sigma) = \{ w \in \Sigma^* \colon \pi_G(w)=1 \}.\]
The group counterpart of the free monoid $\Sigma^*$ is the free goup.
Fix a finite alphabet $\Gamma$ and take a copy $\Gamma^{-1} = \{ a^{-1} \colon a \in \Gamma \}$
of formal inverses. Let $\Sigma = \Gamma \cup \Gamma^{-1}$.
We extend the mapping $a \mapsto a^{-1}$ ($a \in \Gamma$) to the whole alphabet $\Sigma$
by setting $(a^{-1})^{-1} = a$. For a word $u\in \Sigma^*$ 
the word $u^{-1}$ is defined as above.
 A word $u \in \Sigma^*$ is called \emph{irreducible} if it contains no factor of the form $aa^{-1}$
for $a \in \Sigma$. 
Let $\IRR(\Sigma) \subseteq \Sigma^*$ be the set of irreducible words.
The \emph{free group} $F(\Gamma)$ can be defined as the set $\IRR(\Sigma)$ of irreducible
words together with the following multiplication operation: Let $u, v \in \IRR(\Sigma)$. Then one can uniquely
write $u$ and $v$ as $u = xy$ and $v = y^{-1}z$ such that $xz \in \IRR(\Sigma)$ and define
the product of $u$ and $v$ in the free group $F(\Gamma)$ as $xz$. For every word $w \in \Sigma^*$ 
we can define an irreducible word $\red(w)$ as follows. If $w \in \IRR(\Sigma)$ then
$\red(w) = w$ and if 
 $w = u a a^{-1}v$ for $u,v \in \Sigma^*$ and $a \in \Sigma$ then $\red(w) = \red(uv)$. It is important that this definition does not 
 depend on which factor $aa^{-1}$ is deleted in $w$. More precisely, the reduction relation $u a a^{-1}v \to uv$ for all  $u,v \in \Sigma^*$ and $a \in \Sigma$ is confluent.
The reduction mapping $w \mapsto \red(w)$ then becomes the unique monoid morphism mapping
$w \in \Sigma^*$ to the element of the free group represented by $w$. 

Group presentations are a common way to describe groups. Let $\Gamma$ and $\Sigma$ be as in the previous paragraph
and let $R \subseteq F(\Gamma)$. Then the quotient group $F(\Gamma)/N(R)$ is also denoted by $\langle \Gamma \mid R \rangle$
and the pair $(\Gamma, R)$ is called a \emph{group presentation}. The group $\langle \Gamma \mid R \rangle$ is finitely generated (since we 
assume $\Gamma$ to be finite) and every f.g.~group can be written in this form. 
If $R$ is also finite then $\langle \Gamma \mid R \rangle$
is called \emph{finitely presented}.

\subsubsection{Actions} \label{sec-action}
A mapping $\alpha : X \times M \to X$ is a \emph{right action} of a monoid $M$ on a set~$X$ if $\alpha(x, 1) = x$ and $\alpha(\alpha(x, a), b) = \alpha(x, ab)$ hold for all $x \in X$ and $a,b \in M$. 
We will often simply denote $\alpha(x,a)$ as $xa$ or $x^a$ if there is no risk of confusion.

The right action $\alpha$ is called \emph{transitive} if for all $x,y \in X$ there is an $a \in M$ with $xa = y$.
The set $xM \coloneqq \{ xa \mid a \in M \}$ is called the \emph{orbit} of $x \in X$.
Thus, $\alpha$ is transitive if and only if $xM = X$ for all $x \in X$.
If $M$ is a group $G$, then the orbits $xG$ partition $X$; that is, being in the same orbit is an equivalence relation on $X$.

The \emph{stabilizer} of an element $x \in X$ is the set $\mathsf{Stab}_M(x)  \coloneqq  \{ a \in M \mid xa = x \}$, which is a submonoid of $M$ (and a subgroup if $M$ is a group).
If $x,y \in X$ and $a,b \in M$ satisfy $xa = y$ and $yb = x$, then $a\,\mathsf{Stab}_M(y)\, b \subseteq \mathsf{Stab}_M(x)$.
If $M$ is a group $G$, then
 this implies the equality $\mathsf{Stab}_G(xa) = a^{-1}\,\mathsf{Stab}_G(x)\,a$ for all $x \in X$ and $a \in G$; that is, the elements of a single orbit have conjugate stabilizers.

\section{Part A: Streaming algorithms for languages}

\subsection{Streaming algorithms for languages: definitions} \label{sec-streaming-def}

In this section we define our model of randomized streaming algorithms. It is a non-uniform model in the sense
that for every input length $n$ we have a separate algorithm that handles inputs of length at most $n$.
Formally, a (non-uniform) \emph{randomized streaming algorithm} is a sequence $\mathcal{R} = (\mathcal{A}_n)_{n \ge 0}$
of  PFA $\mathcal{A}_n$ over the same input alphabet $\Sigma$.
If every $\mathcal{A}_n$ is deterministic (resp., semi-probabilitistic), we speak of a \emph{deterministic} (resp., \emph{semi-randomized}) \emph{streaming algorithm}.

Let $\epsilon_0, \epsilon_1 : \mathbb{N} \to [0,1)_{\mathbb{R}}$ be functions.
A randomized streaming algorithm $\mathcal{R} = (\mathcal{A}_n)_{n \ge 0}$ is \emph{$(\epsilon_0,\epsilon_1)$-correct} for a language $L \subseteq \Sigma^*$
if for every large enough $n \geq 0$ and every word $w \in \Sigma^{\le n}$ we have the following:
\begin{itemize}
\item if $w \in L$ then $\epsilon(\mathcal{A}_n,w,L) \leq \epsilon_1(n)$, and
\item if $w \notin L$ then $\epsilon(\mathcal{A}_n,w,L) \leq \epsilon_0(n)$. 
\end{itemize}
If $\epsilon_0 = \epsilon_1 =: \epsilon$ then we also say that $\mathcal{R}$ is $\epsilon$-correct for $L$. 
Typically, the functions $\epsilon_0, \epsilon_1$ are monotonically decreasing.
We say that $\mathcal{R}$ is a 
\begin{itemize}
\item \emph{randomized streaming algorithm for $L$} if it is $1/3$-correct for $L$;
\item \emph{$0$-sided randomized streaming algorithm for $L$} if it is $(1/3,0)$-correct for $L$;
\item \emph{$1$-sided randomized streaming algorithm for $L$} if it is $(0,1/3)$-correct for $L$;
\item \emph{deterministic streaming algorithm for $L$} if it is deterministic and $0$-correct for $L$;
\item \emph{nondeterministic streaming algorithm for $L$} if it is $(0,\epsilon)$-correct for $L$ for a
 function $\epsilon$ with $0 \leq \epsilon(n) < 1$ for all $n$;
\item \emph{co-nondeterministic streaming algorithm for $L$} if it is $(\epsilon,0)$-correct for $L$ for a  function $\epsilon$ with $0 \leq \epsilon(n) < 1$ for all $n$.
\end{itemize}
The choice of $1/3$
for the error probability in the first three points is not important.  Using a standard application of the Chernoff bound, one can make
the error probability an arbitrarily small constant; see Theorem~\ref{thm-prob-amplify} below.

The \emph{space complexity} of the randomized streaming algorithm  $\mathcal{R} = (\mathcal{A}_n)_{n \ge 0}$
is the function $s(\mathcal{R},n) = \lceil \log_2 |Q_n| \rceil$, where $Q_n$ is the state set of $\mathcal{A}_n$.
The motivation for this definition is that states of $Q_n$ can be encoded by bit strings of length at most $\lceil \log_2 |Q_n| \rceil$.
The \emph{randomized streaming space complexity of the language $L$} is the smallest possible function $s(\mathcal{R},n)$, where 
$\mathcal{R}$ is a randomized streaming algorithm for $L$. In an analogous way we define the $0$-sided (resp., $1$-sided) randomized
streaming space complexity, the deterministic streaming space complexity, and the (co-)nondeterministic streaming space complexity of a language $L$. Note that all these functions are monotone since we quantify over all words of length at most $n$ in the above definition of 
an $(\epsilon_0,\epsilon_1)$-correct randomized streaming algorithm.

The (non)deterministic streaming space complexity of a language $L$ is directly linked to the \emph{automaticity of $L$}. 
The automaticity of $L \subseteq \Sigma^*$ is the function $A_L(n)$ that maps $n$ to the number of states of a smallest DFA $\mathcal{A}_n$
such that for all words $w \in \Sigma^{\le n}$ we have: $w \in L$ if and only if $w$ is accepted by $\mathcal{A}_n$.
If we allow the automata $\mathcal{A}_n$ to be nondeterministic then we obtain the \emph{nondeterministic automaticity $N_L(n)$ of $L$}.
Hence, the deterministic (resp., nondeterministic) streaming space complexity of $L$ is exactly $\lceil \log_2 A_L(n) \rceil$ (resp., $\lceil \log_2 N_L(n) \rceil$). The (nondeterministic)
automaticity of languages was studied in \cite{GlaisterS98,ShallitB96}.
Interesting in our context is the following result of Karp \cite{Karp67}: if $L$ is a non-regular language then
$A_L(n) \geq (n+3)/2$ for infinitely many $n$. Hence, for every non-regular language the 
deterministic streaming space complexity is lower bounded by $\log_2(n) - c$ for a constant $c$
and infinitely many $n$. 

As remarked before, our model of streaming algorithms is non-uniform in the sense that for every input length $n$ we have a separate 
streaming algorithm $\mathcal{A}_n$.\footnote{This is analogous to circuit complexity, where for every input length $n$ one
has a separate boolean circuit with $n$ input gates.} Clearly, lower bounds for non-uniform streaming algorithms are stronger
than lower bounds for uniform streaming algorithms.
Moreover, the streaming algorithms
that we construct for concrete problems will be mostly uniform in the sense that there is an efficient algorithm that constructs from a given $n$,
the automaton $\mathcal{A}_n$.  

\subsection{Streaming algorithms: general results}

Before we investigate streaming algorithms for word problems we prove a few general results that are of independent interest.
Let us first prove that (as stated above) the error probability of a randomized streaming algorithm can be pushed down to any constant
$\epsilon > 0$ at the cost of an additional constant factor in the space complexity.

\begin{theorem} \label{thm-prob-amplify}
Let  $r : \mathbb{N} \to \mathbb{N}$ be a monotonic function with $r(n)$ odd for all $n$. and 
$\mathcal{R}$ be a randomized streaming algorithm such that 
$\mathcal{R}$ is $\frac{1}{3}$-correct for the language $L$. Then there exists a randomized streaming 
algorithm $\mathcal{S}$ such that $s(\mathcal{S},n) = r(n) \cdot s(\mathcal{R},n)$ and 
$\mathcal{S}$ is $\exp(-r(n)/30)$-correct for the language $L$.
\end{theorem}

\begin{proof}
Let $\mathcal{R}=(\mathcal{A}_n)_{n \ge 0}$ with $\mathcal{A}_n = (Q_n,\Sigma,\iota_n,\rho_n,F_n)$.
We use the standard idea of running $r(n)$ copies of $\mathcal{A}_n$ in parallel and making a majority vote at the end.\footnote{We assume that
all $r(n)$ are odd in order to avoid ties; this assumption is only made for convenience.}
Formally, for an $n \ge 0$ and odd $k\ge 1$ we define the PFA $\mathcal{A}^{k}_n$ as follows:
\begin{itemize}
\item $\mathcal{A}^{k}_n = (Q^{k}_n,\Sigma,\iota^{k}_n,\rho^{k}_n, F_{n,k})$,
\item $\iota^{k}_n(q_1, \ldots, q_{k}) = \prod_{1 \leq i \leq k} \iota_n(q_i)$,
\item $\rho^{k}_n((p_1, \ldots, p_{k}), a, (q_1, \ldots, q_{k})) = \prod_{1 \leq i \leq k} \rho_n(p_i,a,q_i)$, and
\item $F_{n,k} = \{ (q_1, \ldots, q_{k}) \colon q_i \in F_n \text{ for more than $k/2$ many $i \in [1,k]$} \}$.
\end{itemize}
We then define the new randomized  streaming algorithm $\mathcal{R}^r =(\mathcal{A}^{r(n)}_n)_{n \ge 0}$.
In order to bound the error probability of $\mathcal{R}^r$ by $\exp(-r(n)/30)$ we have to show that 
$\epsilon(\mathcal{A}^{k}_n,w,L) \leq  \exp(-k/30)$ for every input word $w \in \Sigma^{\le n}$.
For this we introduce identically distributed independent Bernoulli 
random variables $X_1, \ldots, X_{k}$ with $\Prob[X_i=1] = \frac{1}{3}$. Then, for every  $w \in \Sigma^{\le n}$ we have:
\[
\epsilon(\mathcal{A}^{k}_n,w,L) \leq \Prob\left[\sum_{i=1}^{k} X_i > \frac{k}{2}\right].
\]
Let $\delta = \frac{1}{2}$. With $\epsilon = \frac{1}{3}$ we obtain with the Chernoff bound \eqref{chernoff}:
\[
 \Prob\left[\sum_{i=1}^{k} X_i > \frac{k}{2}\right] = \Prob\left[\sum_{i=1}^{k} X_i > (1+\delta) k \epsilon \right] <  \exp\left( - \frac{\delta^2 \epsilon k}{\delta+2}\right) =
  \exp\left(-\frac{k}{30}\right).
\]
The space complexity of $\mathcal{R}^r$ is clearly $r(n)$ times the space complexity of $\mathcal{R}$.
\end{proof}
Let  $\mathcal{A} = (Q,\Sigma,\iota,\rho,F)$ be a PFA and $0 < \delta < 1$, $\epsilon > 0$. 
We say that $\delta$ is an $\epsilon$-isolated cut-point for $\mathcal{A}$ if for all words
$w \in \Sigma^*$ we have
\begin{equation} \label{eq-cut-point}
|\Prob[\mathcal{A}(w) \in F] - \delta| \geq \epsilon .
\end{equation}
Assume that this holds.
The language $L(\mathcal{A}, \delta)$ accepted by $\mathcal{A}$ with cut-point $\delta$ is the set of all words $w$ with
$\Prob[\mathcal{A}(w) \in F] > \delta$, or, equivalently, $\Prob[\mathcal{A}(w) \in F] \geq \delta+\epsilon$.
Paz proved in \cite[Theorem~30']{Paz66} that there 
exists a DFA for $L(\mathcal{A}, \delta)$ with $(1+ 1/2\epsilon)^{|Q|-1}$ states.
A proof can be found in \cite[p.~160]{Paz71}; it uses 
the proof technique for a slightly weaker result of Rabin \cite[Theorem~3]{Rabin63}.
Paz's proof easily yields the following result: 

\begin{theorem}
Let $L$ be a language with randomized streaming space complexity $S(n)$. Then the 
deterministic streaming space complexity of $L$ is bounded by $2^{S(n)+1}$.
\end{theorem}

\begin{proof}
Let $\mathcal{R} = (\mathcal{A}_n)$ be a randomized streaming algorithm for $L$ such that 
$S(n) = \lceil \log_2 |Q_n| \rceil$. Fix an $n$ and
set $\delta = 1/2$ and $\epsilon = 1/6$, so that $\delta-\epsilon=1/3$ and 
$\delta+\epsilon=2/3$. We cannot directly apply the above mentioned result of Paz since $1/2$ is not 
necessarily a $1/6$-isolated cut-point for $\mathcal{A}_n$, for which \eqref{eq-cut-point} only has to hold for words $w$
of length at most $n$. But we can argue as follows:
Recall the automaticity function $A_L(n)$ of the language $L$ from Section~\ref{sec-streaming-def}. 
Then the deterministic streaming space complexity of $L$ is $\lceil \log_2 A_L(n) \rceil$.

It is shown in \cite{KanepsF91} (see also \cite{ShallitB96}) that $A_L(n)$ is the maximal number $k$ for which there exist words $v_1, \ldots, v_k \in \Sigma^{\le n}$
such that for all $i,j \in [1,k]$ with $i < j$ there is a word $w_{i,j} \in \Sigma^*$ with
$v_iw_{i,j}, v_jw_{i,j} \in \Sigma^{\le n}$ and $v_iw_{i,j} \in L$ if and only if  $v_jw_{i,j} \notin L$.

Assume that $k = A_L(n)$, fix the above words $v_i$ and $w_{i,j}$, and consider $i,j \in [1,k]$ with $i < j$. Since $v_iw_{i,j}, v_jw_{i,j} \in \Sigma^{\le n}$
and $v_iw_{i,j} \in L$ if and only if  $v_jw_{i,j} \notin L$,
we get
$$
|\Prob[\mathcal{A}_n(v_iw_{i,j}) \in F] - \Prob[\mathcal{A}_n(v_jw_{i,j}) \in F]| \geq 2 \epsilon = \frac{1}{3}
$$
whenever $i < j$.
In the proof of \cite[Theorem~30']{Paz66} (see \cite[p.~160]{Paz71}) it is shown that this implies
\begin{equation*}\label{eq-paz}
k \leq (1+ 1/2\epsilon)^{|Q_n|-1} = 4^{|Q_n|-1} .
\end{equation*}
We obtain $A_L(n) \leq 4^{|Q_n|} \leq 4^{2^{S(n)}}$ and hence  $\lceil \log_2 A_L(n) \rceil \leq 2^{S(n)+1}$.
\end{proof}
We now turn to the connection between randomized and semi-randomized streaming algorithms.
Our next result states that a randomized streaming algorithm can be transformed into an equivalent semi-randomized
streaming algorithm with a moderate blow-up in the space complexity.

\begin{theorem} \label{thm-semi-randomized}
Let $0 < \epsilon(n) < 1/4$ for all $n \geq 0$ 
and let $\mathcal{R}$ be a randomized streaming algorithm which is $\epsilon(n)$-correct for the language $L$. Then there is a semi-randomized streaming 
algorithm $\mathcal{S}$ with $s(\mathcal{S},n) = s(\mathcal{R},n) + \Theta(\log n + \log(1/\epsilon(n)))$ and 
$\mathcal{S}$ is $2\epsilon(n)$-correct for the language $L$.
\end{theorem}

\begin{proof}
Let $\mathcal{R}=(\mathcal{A}_n)_{n \ge 0}$.
Let us fix an $n$ and consider the PFA
$\mathcal{A}_n = (Q,\Sigma,\iota,\rho,F)$. We first transform $\mathcal{A}_n$ into an
acyclic PFA $\mathcal{A}'_n = (Q',\Sigma,\iota',\rho',F')$, where acyclic means that for every run
$\pi = (q_0,a_1,\dots,a_m,q_m)$ of $\mathcal{A}'_n$  such that $m \leq n$ and  $q_i = q_j$ for some $i<j$ we have $\iota\rho^*(\pi) =0$.
We define the components of  $\mathcal{A}'_n$ as follows:
\begin{itemize}
\item $Q' = Q \times [0,n]$
\item $F' = F \times [0,n]$
\item For all $p,q \in Q$, $i \in [0,n-1]$, $j \in [0,n]$ and $a \in \Sigma$ we set 
\[
\rho'((p,i),a,(q,j)) = \begin{cases}
 \rho(p,a,q) & \text{ if } j = i+1 \\
 0 & \text{ if } j \neq i+1. 
 \end{cases}
\]
\item For states $(p,n)$ we could define $\rho'$ arbitrarily. Let us set $\rho'((p,n),a,(p,n)) = 1$ for all $a \in \Sigma$
and $\rho'((p,n),a,(q,i)) = 0$ whenever $(q,i) \neq (p,n)$.
\item For all states $(q,0) \in Q'$ we set $\iota'(q,0) = \iota(q)$. Moreover, $\iota'(q,i) = 0$ if $i > 0$.
\end{itemize}
The randomized streaming algorithm $\mathcal{R}' = (\mathcal{A}'_n)_{n \ge 0}$ is also \emph{$\epsilon(n)$-correct} for the language $L$.
Moreover, the space complexity of $\mathcal{R}'$ is $s(\mathcal{R},n) + \lceil \log_2 (n+1)\rceil$.

We now define a random DFA $\mathsf{D}_n$. Only DFAs of the form 
$(Q', \Sigma, (q_0,0), \delta, F')$ for some transition function $\delta \colon Q' \times \Sigma \to Q'$ and initial state $(q_0, 0) \in Q'$
may have a non-zero probability. The components $\delta$ and $(q_0, 0)$ are randomly chosen as follows:
\begin{itemize}
\item For every state $(p,i)$ of $\mathcal{A}'_n$  
and every $a \in \Sigma$ we choose a state
$(q,j)$ with probability $\rho'((p,i),a,(q,j))$ and define $\delta( (p,i), a) = (q,j)$.
\item The initial state $(q_0,0) \in Q'$ is chosen with probability $\iota'(q_0,0)$.
\end{itemize}
The above choices are made independently. 
Let $\mathsf{sup}(\mathsf{D}_n)$ be the support of $\mathsf{D}_n$ (the set of DFAs that have a non-zero probability).

For every fixed word $w \in \Sigma^*$ with $|w|\leq n$ and $\mathcal{D} \in \mathsf{sup}(\mathsf{D}_n)$ define 
$Z[w,\mathcal{D}] \in \{0,1\}$
by $Z[w,\mathcal{D}] = 1$ if and only if $w \in L \setminus L(\mathcal{D}) \cup L(\mathcal{D}) \setminus L$. In other words:
$Z[w,\mathcal{D}] = 1$ if and only if
$\mathcal{D}$ makes an error (with respect to the language $L$) on the word $w$.
For the expected value of  $Z[w,\mathcal{D}]$ we obtain
\[
\mathsf{E}[w]  \coloneqq  \sum_{\mathcal{D} \in \mathsf{sup}(\mathsf{D}_n)} \Prob[\mathsf{D}_n = \mathcal{D}] \cdot Z[w,\mathcal{D}]  \leq \epsilon(n),
\]
because the left-hand side of the inequality is exactly the error probability of $\mathcal{A}'_n$ on $w$. 
For this, it is important that we construct the DFAs from the acyclic PFA
$\mathcal{A}'_n$: In our original PFA
$\mathcal{A}_n$ there could be a run of the form $\pi = (\ldots, p,a,q, \ldots, p,a,q',\ldots)$ with $q \neq q'$ 
and $\iota\rho^*(\pi) > 0$. But runs of this form cannot occur in a DFA.

The rest of  the proof follows the arguments from the proof of Newman's theorem from communication complexity, see e.g. \cite{KuNi96}.
Fix a number $t$ that will be suitably chosen later.  
For a $t$-tuple of DFAs $\overline{\mathcal{D}} = (\mathcal{D}_{n,1}, \ldots, \mathcal{D}_{n,t})$ with $ \mathcal{D}_{n,i} \in \mathsf{sup}(\mathsf{D}_n)$
we construct a semiPFA $\mathcal{B}(\overline{\mathcal{D}})$ by taking the disjoint 
union of the $\mathcal{D}_{n,i}$. To define the initial state distribution $\bar{\iota}$ of $\mathcal{B}(\overline{\mathcal{D}})$, let
$q_{0,i}$ be the initial state of $\mathcal{D}_{n,i}$. Then we set $\bar{\iota}(q_{0,i}) = 1/t$. Thus, the starting state
of a run in $\mathcal{B}(\overline{\mathcal{D}})$ is chosen uniformly among the initial states of the $\mathcal{D}_{n,i}$.

We show that there exists a $t$-tuple $\overline{\mathcal{D}}$ of the above form such that 
for every input word $w \in \Sigma^{\le n}$ 
the error probability of $\mathcal{B}_n  \coloneqq  \mathcal{B}(\overline{\mathcal{D}})$ on $w$ is at most $2\epsilon(n)$. 
Then $(\mathcal{B}_n)_{n \geq 0}$
is the desired semi-randomized streaming algorithm from the theorem.

Fix again an input word $w \in \Sigma^{\le n}$  and a $t$-tuple 
$\overline{\mathcal{D}} = (\mathcal{D}_{n,1}, \ldots, \mathcal{D}_{n,t})$.
Then the error probability
of $\mathcal{B}(\overline{\mathcal{D}})$ on $w$ is
\[
\epsilon(\mathcal{B}(\overline{\mathcal{D}}), w, L) = \frac{1}{t} \cdot \sum_{i=1}^t Z[w,\mathcal{D}_{n,i}].
\]
We now choose the tuple $\overline{\mathcal{D}} = (\mathcal{D}_{n,1}, \ldots, \mathcal{D}_{n,t})$ randomly by taking $t$ 
independent samples of the random DFA $\mathsf{D}_n$.
With the Chernoff bound \eqref{chernoff} and  $\mathsf{E}[w]  \leq \epsilon(n)$ (i.e., $\frac{2\epsilon(n)}{\mathsf{E}[w]} -1 \ge 1$)
we obtain
\begin{eqnarray*}
& & \Prob\left[\frac{1}{t} \cdot \sum_{i=1}^t Z[w,\mathcal{D}_{n,i}] > 2\epsilon(n) \right]  \\
& = & \Prob\left[\sum_{i=1}^t Z[w,\mathcal{D}_{n,i}] > \bigg(1+ \frac{2\epsilon(n)}{\mathsf{E}[w]} -1\bigg) \cdot \mathsf{E}[w] \cdot t  \right]  \\
& \leq & \exp \left( - \frac{2\epsilon(n)/\mathsf{E}[w] -1}{3}  \cdot \mathsf{E}[w] \cdot t\right) \\
& = & \exp \left( \frac{-2\epsilon(n) + \mathsf{E}[w]}{3}   \cdot t\right) \\
& \leq & \exp \left( -\frac{\epsilon(n)\cdot t}{3}\right).
\end{eqnarray*}
By the union bound, the probability that 
$\epsilon(\mathcal{B}(\overline{\mathcal{D}}), w, L) > 2\epsilon(n)$ for some word $w \in \Sigma^*$ of length
at most $n$ (where $\overline{\mathcal{D}}$ is randomly chosen using $t$ 
independent samples of the random DFA $\mathsf{D}_n$) is bounded by
\[ |\Sigma|^{n+1} \cdot \exp(-\epsilon(n) \cdot t/3) = \exp(\ln |\Sigma| \cdot (n+1) -\epsilon(n) \cdot t/3).\]
If we choose $t = (3(n+1) \ln |\Sigma| + 1)/\epsilon(n)$ then this probability is $\exp(-1/3) < 1$.
With such a $t$ the space complexity of $\mathcal{B}(\overline{\mathcal{D}})$ becomes
$s(\mathcal{R}',n) + \lceil \log_2 t \rceil = 
s(\mathcal{R},n) + \Theta(\log n + \log(1/\epsilon(n)))$.
\end{proof}
Note that if $s(\mathcal{R},n) \geq \Omega(\log n)$ and $\epsilon(n) \geq \Omega(1/n^c)$ for some constant $c \geq 1$
then $s(\mathcal{S},n) = \Theta(s(\mathcal{R},n))$ in Theorem~\ref{thm-semi-randomized}.
Also notice that the proof of Theorem~\ref{thm-semi-randomized} uses non-uniformity in a crucial way.

Theorem~\ref{thm-semi-randomized} might explain the fact that many streaming algorithms in the literature only make a random guess
at the very beginning and then proceed deterministically. This is for instance the case for the famous AMS-algorithm \cite{AlonMS99} for 
approximating $k^{th}$ moments in a data stream.

\begin{oproblem}
Is the additive $\log n$ term in Theorem~\ref{thm-semi-randomized} necessary? In Newman's theorem (which inspired our proof 
of Theorem~\ref{thm-semi-randomized}) this additive term cannot be avoided. Newman's theorem states that
a public coin randomized communication protocol with cost $C(n)$ and error probability $\epsilon$ can be transformed into an equivalent 
private coin randomized communication protocol with cost $C(n) + \Theta(\log n + \log(1/\epsilon(n)))$ and error probability $2\epsilon$. The equality problem has a
public coin randomized communication protocol with cost $\mathcal{O}(1)$, whereas every  
private coin protocol for equality has cost $\Omega(\log n)$.
\end{oproblem}

Our final result in this section is a trade-off between the space complexity and the error probability 
for semi-randomized streaming algorithms:

\begin{theorem} \label{thm-trade-off}
Let $s(n)$ be the deterministic streaming space complexity of the language $L \subseteq \Sigma^*$ and let 
$\mathcal{R}=(\mathcal{A}_n)_{n \ge 0}$ be a semi-randomized streaming algorithm that is
$\epsilon(n)$-correct for the language $L$. Then for every large enough $n \geq 0$ we have
\[
s(\mathcal{R},n) \geq \min \{ s(n), \log_2 (1/\epsilon(n)) \} .
\]
\end{theorem}

\begin{proof}
Fix an $n \geq 0$ large enough such that for every word $w \in \Sigma^{\le n}$ the error probability
$\epsilon(\mathcal{A}_n,w,L)$ is bounded by $\epsilon(n)$.
Let $\mathcal{A}_n = (Q_n,\Sigma,\iota_n,\delta_n,F_n)$. Hence, we have 
$s(\mathcal{R},n) = \lceil \log_2 |Q_n| \rceil$. If $s(\mathcal{R},n) \geq \log_2 (1/\epsilon(n))$ then we are done.
Therefore, assume that $1/\epsilon(n) > 2^{s(\mathcal{R},n)} \ge |Q_n|$, i.e., $\epsilon(n) < 1/|Q_n|$.
There must exist a state $q_n \in Q_n$ with $\iota_n(q_n) \geq 1/|Q_n| > \epsilon(n)$. Consider the DFA 
$\mathcal{B}_n = (Q_n,\Sigma,q_n,\delta_n,F_n)$. If there is a word $w \in \Sigma^{\leq n}$ such that
$w \in L(\mathcal{B}_n) \Leftrightarrow w \notin L$ then we would have $\epsilon(n) \geq \iota_n(q_n) > \epsilon(n)$, which 
yields a contradition. Therefore we have $L(\mathcal{B}_n) \cap \Sigma^{\le n} = L \cap \Sigma^{\le n}$.
Since $\mathcal{B}_n$ is a DFA with state set $Q_n$, we get $s(\mathcal{R},n) \ge s(n)$.
\end{proof}

\subsection{Streaming algorithms for group word problems}

Let us now specialize the considerations from the previous section to the case where the language $L$ is 
a group word problem $\WP(G,\Sigma)$.
The following lemma is simple but important:

\begin{lemma} \label{lemma-gen-set}
Let $\Sigma_1$ and $\Sigma_2$ be finite symmetric generating sets for the group $G$ and let
$s_i(n)$ be the deterministic/randomized streaming space complexity of $\WP(G,\Sigma_i)$. 
Then there is a constant $c$, depending on $G$, $\Sigma_1$ and $\Sigma_2$,
such that $s_1(n) \leq s_2(c \cdot n)$.
\end{lemma}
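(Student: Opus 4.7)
The plan is to simulate a streaming algorithm for $\WP(G,\Sigma_2)$ by one for $\WP(G,\Sigma_1)$, by expanding each input letter from $\Sigma_1$ into a short word over $\Sigma_2$ on the fly. Concretely, for every $a \in \Sigma_1$ fix a word $w_a \in \Sigma_2^*$ with $\pi_G(w_a)=a$, which exists because $\Sigma_2$ generates $G$. Let $c = \max_{a \in \Sigma_1} |w_a|$; this constant depends only on $G$, $\Sigma_1$, $\Sigma_2$. For any input $u = a_1 \cdots a_m \in \Sigma_1^{\le n}$, the word $u' = w_{a_1} \cdots w_{a_m} \in \Sigma_2^*$ has length at most $cn$ and satisfies $\pi_G(u) = \pi_G(u')$, so $u \in \WP(G,\Sigma_1)$ iff $u' \in \WP(G,\Sigma_2)$.

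Given a streaming algorithm $\mathcal{R}^{(2)} = (\mathcal{A}^{(2)}_n)_{n \geq 0}$ for $\WP(G,\Sigma_2)$ of space complexity $s_2(n)$, I would build $\mathcal{R}^{(1)} = (\mathcal{A}^{(1)}_n)_{n \geq 0}$ as follows: take $\mathcal{A}^{(1)}_n$ to have the same state set, initial distribution and final states as $\mathcal{A}^{(2)}_{cn}$, and for each $a \in \Sigma_1$ define the transition probability function on letter $a$ to be the $|w_a|$-fold composition of the $\mathcal{A}^{(2)}_{cn}$ transition function along $w_a$. This composition of stochastic matrices is again stochastic, so $\mathcal{A}^{(1)}_n$ is a valid PFA (a DFA in the deterministic case).

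By construction, running $\mathcal{A}^{(1)}_n$ on $u \in \Sigma_1^{\le n}$ induces exactly the same distribution on final states as running $\mathcal{A}^{(2)}_{cn}$ on $u'$, so the error probability on $u$ for $\WP(G,\Sigma_1)$ equals the error probability of $\mathcal{A}^{(2)}_{cn}$ on $u'$ for $\WP(G,\Sigma_2)$, which is at most $1/3$ (or $0$ in the deterministic case) since $|u'| \leq cn$. The state set is that of $\mathcal{A}^{(2)}_{cn}$, hence $s(\mathcal{R}^{(1)},n) = s(\mathcal{R}^{(2)},cn) \leq s_2(cn)$, and taking $\mathcal{R}^{(2)}$ to be space-optimal gives $s_1(n) \leq s_2(cn)$.

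There is no real obstacle here; the only thing to check carefully is that composing randomized transitions letter by letter produces the correct overall distribution on the state after $|w_a|$ steps, but this is immediate from the matrix formulation of PFA dynamics. The lemma is genuinely a routine observation that the streaming space complexity is a quasi-isometry invariant of $(G,\Sigma)$.
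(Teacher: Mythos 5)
Your proposal is correct and is essentially the paper's own argument: translate each letter of $\Sigma_1$ into a fixed word over $\Sigma_2$ via a homomorphism of stretch factor $c$, and let $\mathcal{A}^{(1)}_n$ simulate $\mathcal{A}^{(2)}_{cn}$ on the translated input, reusing its state set so that the space bound $s_2(cn)$ and the error bound carry over. Your write-up even states the translation in the correct direction ($\Sigma_1$ letters expanded into $\Sigma_2$-words), where the paper's proof has the roles of the two alphabets inadvertently swapped in its notation, and your explicit remark that composing the stochastic transition maps along $w_a$ yields a valid PFA is exactly the detail the paper leaves implicit.
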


\begin{proof}
For every generator $a \in \Sigma_1$  there is a word $w_a \in \Sigma_2^*$ such that $\pi_G(a) = \pi_G(w_a)$.
Let $c = \max \{ |w_a| \colon a \in \Sigma_1\}$ and let $\phi : \Sigma_1^* \to \Sigma_2^*$
be the homomorphism with $\phi(a) = w_a$ for $a \in \Sigma_1$.
Let $\mathcal{R}_2 = (\mathcal{A}_{2,n})_{n \geq 0}$ be a deterministic/randomized
streaming algorithm for the language $\WP(G,\Sigma_2)$ with space complexity $s_2(n)$. We obtain a deterministic/randomized
streaming algorithm $\mathcal{R}_1 = (\mathcal{A}_{1,n})_{n \geq 0}$ for $\WP(G,\Sigma_1)$ as follows: On input $w \in \Sigma_1^{\le n}$,
the PFA $\mathcal{A}_{1,n}$ simulates the PFA $\mathcal{A}_{2, c \cdot n}$ on the input word $\phi(w) \in \Sigma_2^{\le c \cdot n}$.
This yields a deterministic/randomized streaming algorithm for $\WP(G,\Sigma_1)$ with space complexity $s_2(c \cdot n)$.
\end{proof}
As already mentioned in the introduction, Lemma~\ref{lemma-gen-set} allows us to speak of the 
 deterministic/randomized streaming space complexity
of the word problem of the group  $G$ without mentioning the generating set, as long as we use the Landau notation for 
the space complexity. In such situations we will write $\WP(G)$ instead of $\WP(G,\Sigma)$.

The (non)deterministic streaming space complexity of $\WP(G)$ is directly
linked to the growth of $G$ by the following theorem. Recall that the deterministic (resp., nondeterministic) streaming
space complexity of a language $L$ is $\lceil \log_2 A_L(n) \rceil$ (resp., $\lceil \log_2 N_L(n) \rceil$).
For the deterministic case, the following result has been shown independently in \cite[Lemma~26]{Remscrim21} in a slightly weaker form.

\begin{theorem} \label{thm-det-growth}
Let $G$ be a finitely generated infinite group and let $\Sigma$ be a finite symmetric generating set for $G$. 
Then, for the deterministic automaticity and nondeterministic automaticity of $W  \coloneqq  \WP(G,\Sigma)$ we have:
\begin{eqnarray*} 
A_W(n) & = & \begin{cases}
\gamma_{G,\Sigma}(n/2) & \text{ if $n$ is even,} \\
\gamma_{G,\Sigma}(\lfloor n/2 \rfloor) + 1 & \text{ if $n$ is odd,}
\end{cases} \\
N_W(n) & = & \gamma_{G,\Sigma}(\lfloor n/2 \rfloor) .
\end{eqnarray*}
\end{theorem}

\begin{proof}
We start with the upper bound for the deterministic automaticity in case $n$ is even.
In the following we identify the ball $B_{G,\Sigma}(n/2)$ with its induced subgraph of the Cayley graph $C(G,\Sigma)$.
We obtain a partial DFA $\mathcal{A}_n$ by taking the edge-labelled graph $B_{G,\Sigma}(n/2)$
with the initial and unique final state $1$. It is a partial DFA in the sense that for every $g \in B_{G,\Sigma}(n/2)$
and every $a \in \Sigma$, $g$ has at most one outgoing edge labelled with $a$ (that leads to $g a$ if $ga \in B_{G,\Sigma}(n/2)$).
In order to add the missing transitions we choose an element $g_f \in B_{G,\Sigma}(n/2) \setminus B_{G,\Sigma}(n/2 - 1)$
(here, we set $B_{G,\Sigma}(-1) = \emptyset$). Such an element exists because $G$ is infinite.
If $g \in B_{G,\Sigma}(n/2)$ has not outgoing $a$-labelled edge in $B_{G,\Sigma}(n/2)$ then we add an 
$a$-labelled edge from $g$ to $g_f$. We call those edges  \emph{spurious}.  The resulting DFA is $\mathcal{A}_n$.

We claim that for every word $w \in \Sigma^{\le n}$, $w$ is accepted by $\mathcal{A}_n$ 
if and only if $w \in \WP(G,\Sigma)$. This is clear, if no spurious edge is traversed while reading $w$ into $\mathcal{A}_n$.
In this case, after reading $w$, we end up in state $\pi_G(w)$. Now assume that a spurious edge is traversed while reading $w$ into $\mathcal{A}_n$
and let  $x$ be the shortest prefix of $w$ such that a spurious edge is traversed while reading the last symbol of $x$.
Let us write $w = xy$. We must
have $|x| > n/2$ and $\pi_G(x) \notin B_{G,\Sigma}(n/2)$. Moreover, $|y| < n- n/2  = n/2$.
Since  $\pi_G(x) \notin B_{G,\Sigma}(n/2)$, we have 
$w = xy \notin \WP(G,\Sigma)$. Moreover, $w$ is rejected by $\mathcal{A}_n$, because $x$ leads in $\mathcal{A}_n$ from
the initial state $1$ to state $g_f$ and there is no path of length at most $n/2  -1$ from $g_f$ back to the final state $1$.

For the case that $n$ is odd, we take the ball $B_{G,\Sigma}(\lfloor n/2 \rfloor)$. Instead of adding spurious edges we add a failure state $f$.
If $g \in B_{G,\Sigma}(\lfloor n/2 \rfloor)$ has no outgoing $a$-labelled edge in $B_{G,\Sigma}(\lfloor n/2 \rfloor)$, then we add an $a$-labelled
edge from $g$ to $f$. Moreover, for every $a \in \Sigma$ we add an $a$-labelled loop at state $f$. As for the case $n$ even, one can
show that the resulting DFA accepts a word $w  \in \Sigma^{\le n}$ if and only if  $w \in \WP(G,\Sigma)$. 

The upper bound for the nondeterministic automaticity follows with the same arguments. Notice that the failure state $f$ in case
$n$ is odd is not needed in a nondeterministic automaton.

For the lower bound we start with the nondeterministic automaticity.
Let $k = \gamma_{G,\Sigma}(\lfloor n/2 \rfloor)$ and choose words $w_1, \ldots, w_k$ such that
$|w_i| \leq \lfloor n/2 \rfloor$ for all $i \in [1,k]$ and $w_i \not\equiv_G w_j$ whenever $i \neq j$.
Then for every $i \in [1,k]$ we have $w_i w_i^{-1} \in \WP(G,\Sigma)$ and $w_j w_i^{-1} \notin \WP(G,\Sigma)$
for all $j \in [1,k] \setminus \{i\}$. Moreover, $|w_j w_i^{-1}| \leq n$ for all $i,j \in [1,k]$. In the language of \cite{GlaisterS98}, $\{w_1, \ldots, w_k\}$
is a set of uniformly $n$-dissimilar words.
By \cite[Lemma~3.1]{GlaisterS98} this implies that the nondeterministic automaticity of $\WP(G,\Sigma)$ satisfies
$N_{\WP(G,\Sigma)}(n) \geq \gamma_{G,\Sigma}(\lfloor n/2 \rfloor)$.
This shows the lower bound on the nondeterministic automaticity.

For the lower bound on the deterministic automaticity, let
$\mathcal{A} = (Q,\Sigma, q_0, \delta, F)$ be a smallest DFA such that 
$L(\mathcal{A}) \cap \Sigma^{\leq n} = \WP(G,\Sigma)\cap \Sigma^{\leq n}$.
We have to show that 
\[
|Q| \ge \begin{cases}
\gamma_{G,\Sigma}(n/2) & \text{ if $n$ is even,} \\
\gamma_{G,\Sigma}(\lfloor n/2 \rfloor) + 1 & \text{ if $n$ is odd}.
\end{cases}
\]
Let us consider two words $u,v \in \Sigma^*$ of length at most $\lfloor n/2\rfloor$ such that
$u \not\equiv_G v$  and $\delta(q_0,u) = \delta(q_0,v)$. We then have $u v^{-1} \not\in \WP(G,\Sigma)$ and $v v^{-1} \in \WP(G,\Sigma)$.
On the other hand, we have $\delta(q_0,u v^{-1}) = \delta(q_0,v v^{-1})$, which is a contradiction (note that $|u v^{-1}|, |v v^{-1}| \leq n$).
Hence, if $\delta(q_0,u) = \delta(q_0,v)$ for  two words $u,v \in \Sigma^*$ of length at most $\lfloor n/2\rfloor$, then $u \equiv_G v$.

Let $Q'  = \{\delta(q_0,w) \colon w \in \Sigma^*, |w| \leq \lfloor n/2\rfloor \} \subseteq Q$. The previous paragraph shows that 
$|Q'| \geq \gamma_{G,\Sigma}(\lfloor n/2 \rfloor)$. If $n$ is even then $\lfloor n/2 \rfloor = n/2$
and we are done. So, let us assume that $n$ is odd.

If $|Q| > \gamma_{G,\Sigma}(\lfloor n/2 \rfloor)$ then we are again done.
So, let us assume that $Q = Q'$ and $|Q| = \gamma_{G,\Sigma}(\lfloor n/2 \rfloor)$. Then, to every state $q \in Q$ we 
can assign a unique group element $g_q \in B_{G,\Sigma}(\lfloor n/2 \rfloor)$ such that 
for every word $w \in \Sigma^*$ with $|w| \leq \lfloor n/2\rfloor$ we have $\delta(q_0,w)=q$ if and only if $\pi_G(w)=g_q$.
The mapping $q \mapsto g_q$ is a bijection between $Q$ and  $B_{G,\Sigma}(\lfloor n/2 \rfloor)$.

Let us now take a state $q \in Q$ and a generator $a \in \Sigma$ such that
$g_q \cdot a \notin B_{G,\Sigma}(\lfloor n/2 \rfloor)$. Such a state and generator must exist since $G$ is infinite. 
Let $u,v \in \Sigma^*$ be words of length at most $\lfloor n/2\rfloor$ such that $\delta(q_0,u)=q$
and $\delta(q_0,v)=\delta(q,a) = \delta(q_0,ua)$. We obtain $\delta(q_0,vv^{-1}) = \delta(q_0,uav^{-1})$.
But $vv^{-1} \in \WP(G,\Sigma)$ and $uav^{-1} \notin \WP(G,\Sigma)$ since
$\pi_G(uav^{-1}) = g_q \cdot a \cdot \pi_G(v^{-1})$ and $g_q \cdot a \notin B_{G,\Sigma}(\lfloor n/2 \rfloor)$,
$\pi_G(v^{-1}) \in B_{G,\Sigma}(\lfloor n/2 \rfloor)$. This is a contradiction
since $vv^{-1}$ and $uav^{-1}$ both have length at most $n$. Hence, we must have $|Q| > \gamma_{G,\Sigma}(\lfloor n/2 \rfloor)$.
\end{proof}
The growth of f.g.~groups is well-studied and Theorem~\ref{thm-det-growth} basically closes the chapter
on (non)deterministic streaming algorithms for word problems. Hence, in the rest of the paper we focus on
randomized streaming algorithms. Here, we can still prove a lower bound (that will turn out to be sharp in some cases
but not always) using the randomized one-way communication complexity of the equality problem (Theorem~\ref{theorem-randCC}).

\begin{theorem} \label{thm-lower-bound-random}
Let $G$ be an f.g.\ group with the finite generating set $\Sigma$.
The randomized streaming space complexity of $\WP(G,\Sigma)$ is
$\Omega(\log \log \gamma_{G,\Sigma}(\lfloor n/2 \rfloor))$.
\end{theorem}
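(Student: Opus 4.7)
The plan is to reduce the randomized one-way communication complexity of the equality problem on a domain of size $N = \gamma_{G,\Sigma}(\lfloor n/2 \rfloor)$ to the streaming problem for $\WP(G,\Sigma)$ on inputs of length at most $n$. Since the randomized one-way communication complexity of equality on an $N$-element domain is $\Omega(\log \log N)$ (the standard bound for the equality problem on $\log_2 N$-bit inputs, see \cite{KuNi96}), such a reduction yields the desired lower bound.

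To set up the reduction, enumerate the elements of the ball $B_{G,\Sigma}(\lfloor n/2 \rfloor) = \{g_1, g_2, \dots, g_N\}$ and fix, for each $g_i$, a representative word $w_i \in \Sigma^{\le \lfloor n/2 \rfloor}$ with $\pi_G(w_i) = g_i$. Given inputs $i,j \in [1,N]$ to Alice and Bob respectively, the key observation is that $g_i = g_j$ if and only if $w_i w_j^{-1} \in \WP(G,\Sigma)$, where $w_j^{-1}$ is the word obtained by reversing $w_j$ and replacing each letter by its inverse (this is again a word over the symmetric generating set $\Sigma$). Crucially, $|w_i w_j^{-1}| \le n$, so any streaming algorithm for $\WP(G,\Sigma)$ correct on inputs of length at most $n$ applies.

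Now suppose $\mathcal{R} = (\mathcal{A}_n)_{n \ge 0}$ is a randomized streaming algorithm for $\WP(G,\Sigma)$ with space complexity $s(n)$. Alice, on input $i$, simulates $\mathcal{A}_n$ on $w_i$ and sends the resulting memory state to Bob; Bob, on input $j$, continues the simulation on the suffix $w_j^{-1}$ and outputs $1$ iff the final state is accepting. The message Alice sends encodes a state of $\mathcal{A}_n$, so its length is at most $s(n)$ bits, and the protocol succeeds with probability at least $2/3$ on every input pair $(i,j)$. This is a randomized one-way protocol for equality on $[1,N]$ of cost $s(n)$, hence $s(n) = \Omega(\log \log N) = \Omega(\log \log \gamma_{G,\Sigma}(\lfloor n/2 \rfloor))$.

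The only subtle point is to be sure the shared randomness used in the streaming algorithm can be realized as shared randomness in the communication protocol (which is allowed in the standard model); since the PFA $\mathcal{A}_n$'s randomness is implicit in its initial distribution and transition distributions, both parties can sample their simulation consistently from a common random tape, and this does not affect the $\Omega(\log \log N)$ lower bound for equality, which holds even with public randomness.
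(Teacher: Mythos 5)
Your reduction is exactly the one the paper uses (Alice streams a word of length at most $\lfloor n/2\rfloor$ representing $g_i$, sends the state, Bob streams a word representing $g_j^{-1}$), but the final paragraph contains a step that fails: the $\Omega(\log\log N)$ lower bound for equality does \emph{not} hold with public randomness. With a shared random tape, equality on a domain of size $N$ can be solved by a one-way protocol with $O(1)$ bits of communication (e.g.\ Alice sends the inner product of her input with a public random string, repeated a constant number of times), which is consistent with Newman's theorem since the public-to-private conversion costs an additive $O(\log\log N)$. So if you realize the streaming algorithm's randomness as shared randomness, the resulting protocol lives in a model where equality is cheap, and you cannot conclude $s(n) = \Omega(\log\log N)$ from it. The bound you need is the \emph{private-coin} one-way complexity of equality, which is $\Theta(\log\log N)$, and to invoke it your protocol must be a private-coin protocol.

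The fix is easy, and is what the paper does: no shared randomness is needed at all. The randomness of the PFA $\mathcal{A}_n$ consists of the initial-state choice and an independent random choice at each transition, made online as the input is read. Alice can therefore privately sample the initial state and all transition choices along her prefix $w_i$ and send only the resulting state; Bob privately samples the transition choices along his suffix $w_j^{-1}$ starting from that state. The distribution of the final state (and hence the acceptance probability) is exactly that of $\mathcal{A}_n$ on the word $w_i w_j^{-1}$ of length at most $n$, so correctness with error $1/3$ carries over, the protocol uses only private coins, and the private-coin one-way lower bound gives $s(n) \ge \Omega(\log\log \gamma_{G,\Sigma}(\lfloor n/2\rfloor))$. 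With that replacement your argument coincides with the paper's proof.
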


\begin{proof}
We use the fact that the randomized one-way communication complexity of the equality problem is $\Theta(\log \log n)$ 
(Theorem~\ref{theorem-randCC}).  
Suppose that $\mathcal{R} = (\mathcal{A}_n)_{n \ge 0}$ is a randomized streaming algorithm 
for $\WP(G,\Sigma)$ with space complexity $s(n)$. 
We will construct a randomized one-way communication protocol for equality on numbers from the interval $[1, \gamma_{G,\Sigma}(\lfloor n/2 \rfloor)]$.
Identifying this interval with $B_{G,\Sigma}(\lfloor n/2 \rfloor)$ in an arbitrary way, we can assume that the inputs for Alice and Bob are group elements
from $B_{G,\Sigma}(\lfloor n/2 \rfloor)$.

Let $g \in B_{G,\Sigma}(\lfloor n/2 \rfloor)$ be the input of Alice and $h \in B_{G,\Sigma}(\lfloor n/2 \rfloor)$ be the input of Bob.
Alice chooses an arbitrary word $u \in \Sigma^{\le \lfloor n/2 \rfloor}$ with $\pi_G(u) = g$ and runs 
 (using her random choices) the PFA $\mathcal{A}_n$ on input
$u$. The state $q$ reached at the end (which can be encoded by a bit string of length at most $s(n)$) is communicated to Bob. 
Bob then chooses an arbitrary word $v \in \Sigma^{\le \lfloor n/2 \rfloor}$ with $\pi_G(v) = h^{-1}$
and simulates (using his random choices) the PFA $\mathcal{A}_n$ on input $v$ starting from state $q$
and accepts if and only if a final state of $\mathcal{A}_n$ is reached.
We have $g=h$ if and only if $uv = 1$  in $G$. Moreover, $uv$ has length at most $n$.
It follows that in case $g=h$ Bob accepts with probability at least $3/4$. In case $g \neq h$ he rejects 
with probability at least $3/4$.
This shows that we obtain indeed a randomized one-way protocol for equality with cost $s(n)$.
Theorem~\ref{theorem-randCC} implies $s(n) \ge \Omega(\log \log \gamma_{G,\Sigma}(\lfloor n/2 \rfloor))$.
\end{proof}

\begin{remark} \label{remark-infinite-group}
Since every f.g.~infinite group $G$ has growth at least $n$, Theorem~\ref{thm-lower-bound-random} shows that the randomized streaming space complexity of $\WP(G)$ is  $\Omega(\log \log n)$.
\end{remark}

\begin{remark} \label{remark-growth}
Later in this paper, we will make use of the following two famous results on the growth of groups, see also \cite{Harpe00,Ceccherini-SilbersteinA21}:
\begin{itemize}
\item Gromov's theorem \cite{Gro81}: An f.g.~group $G$ has polynomial growth if and only if $G$ is virtually nilpotent (i.e., $G$ has a nilpotent subgroup of finite index).
\item Milnor-Wolf theorem \cite{Miln68,Wolf68}; see also \cite[p.~202]{Harpe00}: 
An f.g.~solvable group $G$ is either virtually nilpotent (and hence has polynomial growth) or there is a constant
$c > 1$ such that $G$ has growth $c^n$ (i.e., $G$ has exponential growth). 
It is well known that the same dichotomy also holds for f.g.~linear groups. This is a consequence of Tits alternative \cite{Tits72}:
An f.g.~linear group $G$ is either virtually solvable or contains a free group of rank at least two (in which case $G$ has exponential growth).
\end{itemize}
\end{remark}
The dichotomy theorem of Milnor and Wolf does not generalize to all f.g.~groups. Grigorchuk \cite{Grigorchuk80} constructed a group 
whose growth is lower bounded by $\exp(n^{0.515})$ \cite{Bar01} and upper bounded by $\exp(n^{0.768})$ \cite{Bartholdi98}.
The randomized streaming space complexity of the word problem for this remarkable group will be addressed in Theorem~\ref{thm-grig}.

\subsection{Comparison with sofic groups}

In this section we will briefly discuss a relationship between randomized streaming space complexity and sofic groups.
There are many equivalent definitions of sofic groups. The following is from \cite{ArzChe20,Cav16}. 

With $\mathsf{Sym}(k)$ we denote the \emph{symmetric group} on $[1,k]$ (the set of all permutations 
on $[1,k]$ together with the operation of function composition). For $\sigma \in \mathsf{Sym}(k)$
the \emph{normalized Hamming} weight $w_H(\sigma)$ is defined by
\[
w_H(\sigma) = \frac{1}{k} \cdot |\{ i \in [1,k] \colon \sigma(i) \neq i \}|.
\]
Let $G$ be an f.g.~group and $\Sigma$ be a finite symmetric generating set for $G$. Let $\pi_G : \Sigma^* \to G$
be the canonical morphism that evaluates words in the group $G$.
Then $G$ is called \emph{sofic} if for every $n \geq 0$ there exists a $k \geq 1$ and a monoid morphism
$\sigma : \Sigma^* \to \mathsf{Sym}(k)$ (with $\sigma(a^{-1}) = \sigma(a)^{-1}$) such that 
for every word $w \in \Sigma^{\leq n}$ the following holds:
\begin{itemize}
\item if $\pi_G(w) = 1$ then $w_H(\sigma(w)) \leq 1/n$, and
\item if $\pi_G(w) \neq 1$ then $w_H(\sigma(w)) \geq 1-1/n$.
\end{itemize}
In case $G$ is sofic, we define the \emph{sofic dimension growth} of $G$ (with respect to $\Sigma$) as 
the function $\kappa_{G,\Sigma} : \mathbb{N} \to \mathbb{N}$ such that $\kappa_{G,\Sigma}(n)$ is the minimal
value $k$ for which the above conditions hold.  For different finite generating sets $\Sigma_1, \Sigma_2$ of $G$ 
the functions $\kappa_{G,\Sigma_1}$ and $\kappa_{G,\Sigma_2}$ are in general different, but their asymptotic behavior 
is the same (analogously to the growth functions $\gamma_{G,\Sigma_1}$ and $\gamma_{G,\Sigma_2}$); see \cite[Proposition 3.3.2]{Cav16} for a precise statement. 

It is a famous open problem whether every f.g.group $G$ is sofic.\footnote{One can define the concept 
of sofic groups also for non-finitely generated groups, but here we only talk about finitely generated groups.}
The connection to randomized streaming space complexity can be seen as follows: Assume that
$G$ is sofic and consider its sofic dimension growth $\kappa_{G,\Sigma}$.
For every $n \geq 0$ let $k_n = \kappa_{G,\Sigma}(n)$ and let
$\sigma_n : \Sigma^* \to \mathsf{Sym}(k_n)$ be the monoid morphism satisfying the above conditions for soficity.
Write $\sigma_{n,a} \in \mathsf{Sym}(k_n)$ for $\sigma_n(a)$ ($a \in \Sigma$).
 Then we obtain a semi-randomized streaming algorithm $\mathcal{R} =  (\mathcal{A}_n)_{n \geq 0}$ 
that is $1/n$-correct for $\WP(G,\Sigma)$ as follows: define the semiPFA
$\mathcal{A}_n = (Q_n, \Sigma, \iota_n, \delta_n, F_n)$ with
\begin{itemize}
\item  $Q_n = [1,k_n] \times [1,k_n]$,
\item  $\iota_n(i,i) = 1/k_n$ for all $i \in [1,k_n]$ and $\iota_n(i,j) = 0$ for $i \neq j$,
\item $\delta_n((i,j),a) = (i,\sigma_{n,a}(j))$ for all $i,j \in [1,k_n]$ and $a \in \Sigma$, and
\item $F_n = \{ (i,i) \colon i \in [1,k_n]\}$.
\end{itemize}
The space complexity of this algorithm is $s(\mathcal{R},n) = \lceil 2 \log_2 k_n \rceil$.

The above semi-randomized streaming algorithm $\mathcal{R} =  (\mathcal{A}_n)_{n \geq 0}$  has
some particular properties:
\begin{itemize}
\item for every $a \in \Sigma$, the transition function $q \mapsto \rho_n(q,a)$ (for $q \in Q_n$) is 
a permutation on $Q_n$, and 
\item the initial state distribution $\iota_n$ is  a uniform distribution on
a subset of $Q_n$.
\end{itemize}
The second property is not a real restriction. With an additional constant factor in the space complexity
one can easily ensure that $\iota_n$ is the uniform distribution on
a subset of $Q_n$.
The first property is a severe restriction that makes the existence of non-sofic
groups possible.

\section{Part B: Distinguishers for (semi)groups} \label{sec part B}

In order to obtain more groups, whose word problem has a low streaming space complexity (e.g., $\mathcal{O}(\log n)$)
we will consider group theoretical constructions like for instance graph products and wreath products. 
At this point it turned out to be useful to consider the stronger model of distinguishers that was mentioned in the introduction.
It is easy to turn a distinguisher for an f.g.~group $G$ into a randomized streaming algorithm for $\WP(G,\Sigma)$; thereby
the space complexity only increases by a multiplicative factor 2 (see Lemma~\ref{lemma-injective-0}). 
An advantage of the distinguisher model is that it directly extends to semigroups (and in fact to arbitrary deterministic automata).

In Section~\ref{sec-injective} we formally introduce the distinguisher model for arbitrary (infinite) deterministic automata and prove
two general lower bounds. In Section~\ref{sec dist monoids} we then specialize to distinguishers for semigroups.
In Sections~\ref{sec-dist-linear}--\ref{sec-commutative} we will directly construct small-space distinguishers for 
certain (classes of) semigroups. Finally, in Part C of the paper we will prove transfer results for distinguishers with respect to various (semi)group constructions.

\subsection{Distinguishers for infinite deterministic automata} \label{sec-injective}

For the purpose of Part B and C of the paper, final states in automata are not important. 
We will therefore skip the set of final states in the description of a (deterministic or probabilistic) automaton.

Consider a (in general infinite) deterministic automaton $\mathcal{A} = (Q,\Sigma, q_0, \delta)$
and let $\epsilon_0, \epsilon_1 : \mathbb{N} \to [0,1]_{\mathbb{R}}$ be monotonically decreasing functions.
A collection of semiPFAs $\mathcal{R} = (\mathcal{A}_n)_{n \ge 0}$ with $\mathcal{A}_n = (Q_n,\Sigma,\iota_n,\delta_n)$ 
is called an \emph{$(\epsilon_0, \epsilon_1)$-distinguisher} for $\mathcal{A}$ if the following properties hold
for all large enough $n \geq 0$ and all words $u,v \in \Sigma^{\le n}$: 
\begin{itemize}
\item If $\delta(q_0,u) = \delta(q_0,v)$ then $\Prob_{q \sim \iota_n}[\delta_n(q,u) = \delta_n(q,v)] \ge 1-\epsilon_1(n)$.
In other words: for a randomly chosen initial state,
the semiPFA $\mathcal{A}_n$ arrives with probability at least $1-\epsilon_1(n)$ in the same state after reading $u$ and $v$.
\item If $\delta(q_0,u) \neq \delta(q_0,v)$ then $\Prob_{q \sim \iota_n}[\delta_n(q,u) \neq \delta_n(q,v)] \geq 1-\epsilon_0(n)$. 
In other words: for a randomly chosen initial state, the semiPFA $\mathcal{A}_n$ arrives with probability at least
$1-\epsilon_0(n)$ in different states after reading $u$ and $v$.
\end{itemize}
If $\epsilon_0 = \epsilon_1$, we speak of an $\epsilon_0$-distinguisher.

The \emph{space complexity} of an $(\epsilon_0, \epsilon_1)$-distinguisher $\mathcal{R} = (\mathcal{A}_n)_{n \ge 0}$
is the function $s(\mathcal{R},n) = \lceil \log_2 |Q_n| \rceil$, where $Q_n$ is the state set of $\mathcal{A}_n$.
The motivation for this definition is that states of $Q_n$ can be encoded by bit strings of length at most $\lceil \log_2 |Q_n| \rceil$.
We can always assume that $s(\mathcal{R},n)$ is monotone. If $s(\mathcal{R},n) > s(\mathcal{R},m)$ for $n < m$ then we could replace
$\mathcal{A}_n$ by $\mathcal{A}_m$.

We will describe distinguishers with pseudocode, consisting of an initialization phase and an update phase, where the next input letter
$a$ will be processed (this corresponds to the transition function $\delta_n$). The values of the variables comprise the state of the distinguisher. 
In the initialization phase, certain variables will be set to random values. This corresponds to the 
initial state distribution $\iota_n$.

A deterministic distinguisher is a $0$-distinguisher. In this case, we can turn every $\mathcal{A}_n$
into a DFA by replacing the distribution $\iota_n$ by a unique initial state $q_{0,n} \in Q_n$ such that $\iota_n(q_{0,n}) > 0$
(the concrete choice of $q_{0,n}$ is not important).
Deterministic distinguisher are not very interesting. A deterministic automaton $\mathcal{A} = (Q,\Sigma, q_0, \delta)$
has a deterministic distinguisher with space complexity $\lceil \gamma_{\mathcal{A}}(n)\rceil$, where 
$\gamma_{\mathcal{A}}(n)$ is the growth function of $\mathcal{A}$ 
from Section~\ref{sec-det-auto} (take the induced automaton on
all states in $\{ \delta(q_0,w) \colon w \in \Sigma^{\le n} \}$ and add the missing outgoing transitions in an arbitrary way) and this cannot be
improved, which can be shown by the usual fooling argument. Therefore, our focus will be on distinguishers with a non-zero 
error probability. 

The following two general lower bounds on the space complexity of distinguishers are analogous to the corresponding
results for randomized streaming algorithms for languages (Theorem~\ref{thm-lower-bound-random} and 
Theorem~\ref{thm-trade-off}).

\begin{theorem}  \label{thm-growth}
Let $0 < \epsilon < 1/2$ be a constant and $\mathcal{A}= (Q,\Sigma, q_0, \delta)$ be a deterministic automaton.
Then every  $\epsilon$-distinguisher for $\mathcal{A}$ must have space complexity
$\Omega(\log \log \gamma_{\mathcal{A}}(n))$.
\end{theorem}

\begin{proof}
We prove the result by a reduction from the randomized 
communication complexity of the equality problem (Theorem~\ref{theorem-randCC}).
Let $\mathcal{R} = (\mathcal{A}_n)_{n \ge 0}$  be an $\epsilon$-distinguisher
for $\mathcal{A}$ and let $\mathcal{A}_n = (Q_n,\Sigma,\iota_n,\delta_n)$.
We identify the states in $B_n  \coloneqq  \{ \delta(q_0, w) \colon w \in \Sigma^{\le n} \}$ 
 with the interval $[1, \gamma_{\mathcal{A}}(n)]$. 

A randomized communication protocol for the equality problem on numbers from the interval
$[1, \gamma_{\mathcal{A}}(n)]$ (i.e., $\EQ_{\gamma_{\mathcal{A}}(n)}$) works as follows:
On input 
 $q \in B_n$, Alice chooses a word $u \in \Sigma^{\le n}$ with $\delta(q_0,u) = q$, randomly chooses
 an initial state $q_{n,0} \in Q_n$ according to the distribution $\iota_n$ and sends the states $q_{n,0}$ and $\delta_n(q_{n,0}, u)$ to Bob.
 Let $r \in B_n$ be Bob's input. He then chooses a word $v \in \Sigma^{\le n}$ with $\delta(q_0,v)=r$
and outputs $1$ if and only if $\delta_n(q_{n,0}, u) = \delta_n(q_{n,0},v)$. This yields a randomized one-way protocol
for $\EQ_{\gamma_{\mathcal{A}}(n)}$ with error at most $\epsilon < 1/2$. By Theorem~\ref{theorem-randCC}, the cost of this protocol must be 
$\Omega(\log \log \gamma_{\mathcal{A}}(n))$. Since the protocol communicates two states from $Q_n$, 
the space complexity of $\mathcal{R}$
must be $\Omega(\log \log \gamma_{\mathcal{A}}(n))$.
\end{proof}

\begin{theorem} \label{thm-trade-off-dist}
Let $\mathcal{A}$ be a deterministic automaton. Then, every
$\epsilon(n)$-distinguisher $\mathcal{R}$ for $\mathcal{A}$ satisfies 
$s(\mathcal{R},n) \geq \min \{ \log_2 \gamma_{\mathcal{A}}(n), \log_2 (1/\epsilon(n)) \}$ for $n$ large enough.
\end{theorem}

\begin{proof}
The proof is the same as for Theorem~\ref{thm-trade-off}. Recall that there is a deterministic distinguisher for 
$\mathcal{A}$ with space complexity $\lceil \log_2 \gamma_{\mathcal{A}}(n) \rceil$.
\end{proof}
The theorem implies that an $\epsilon(n)$-distinguisher with $\epsilon(n) \leq 1/\gamma_{\mathcal{A}}(n)$
cannot yield a space improvement over a deterministic distinguisher.

\begin{oproblem}  \label{open-dist2}
An interesting open problem concerns error reduction for distinguishers. Recall that for a randomized streaming algorithm for a language $L$ one can 
reduce the error by running several independent copies of the algorithm; see Theorem~\ref{thm-prob-amplify}. It is not clear whether an analogous
result holds for distinguishers. Running $k$ independent copies of an $(\epsilon_0(n), \epsilon_1(n))$-distinguisher for a deterministic automaton $\mathcal{A}$ yields
an $(\epsilon_0(n)^k, 1-(1-\epsilon_1(n))^k)$-distinguisher for $\mathcal{A}$.
The problem is that $1-(1-\epsilon_1(n))^k$ decreases when $k$ increases.
Note that in case $\epsilon_1(n) = 0$, error reduction by running independent copies works.
\end{oproblem}

An $\epsilon$-distinguisher for a deterministic automaton $\mathcal{A}$ can be seen as a non-uniform probabilistic 
approximation of $\mathcal{A}$ by finite systems. Proving interesting results in this context for arbitrary deterministic infinite automata seems
to be difficult. Therefore, we will restrict for the main part of this paper to Cayley graphs of finitely generated monoids and try
to deduce results about the space complexity of distinguishers for Cayley graphs from the algebraic properties of the underlying
monoid.

\subsection{Distinguishers for monoids and semigroups: general results} \label{sec dist monoids}

Let $M$ be an f.g.~monoid with the finite generating set $\Sigma$. 
An $(\epsilon_0, \epsilon_1)$-distinguisher for $M$ (with respect to $\Sigma$)
is an $(\epsilon_0, \epsilon_1)$-distinguisher for the Cayley graph $C(M,\Sigma)$.
This means that for all large enough $n \geq 0$ and all words $u,v \in \Sigma^{\le n}$ we have: 
\begin{itemize}
\item If $u \equiv_M v$ then $\Prob_{q \sim \iota_n}[\delta_n(q,u) = \delta_n(q,v)] \ge 1-\epsilon_1(n)$.
In other words: for a randomly chosen initial state,
the semiPFA $\mathcal{A}_n$ arrives with probability at least $1-\epsilon_1(n)$ in the same state after reading $u$ and $v$.
\item If $u \not\equiv_M v$ then $\Prob_{q \sim \iota_n}[\delta_n(q,u) \neq \delta_n(q,v)] \geq 1-\epsilon_0(n)$. 
In other words: for a randomly chosen initial state, the semiPFA $\mathcal{A}_n$ arrives with probability at least
$1-\epsilon_0(n)$ in different states after reading $u$ and $v$.
\end{itemize}
An $(\epsilon_0, \epsilon_1)$-distinguisher for a semigroup $S$  with respect to the finite generating set $\Sigma$
is an $(\epsilon_0, \epsilon_1)$-distinguisher for the monoid $S^1$  with respect to $\Sigma$.
In the following, we are mainly interested in distinguishers for monoids. Nevertheless, some results will be formulated
for semigroups since in the proofs we use results from the literature that are formulated for semigroups.
For groups, we have the following easy lemma:

\begin{lemma} \label{lemma-injective-0} 
Let  $\mathcal{R}$ be an $(\epsilon_0, \epsilon_1)$-distinguisher for an f.g.~group $G$ with respect to $\Sigma$.
Then $\WP(G,\Sigma)$ has an $(\epsilon_0, \epsilon_1)$-correct semi-randomized streaming algorithm with space complexity $2 \cdot s(\mathcal{R},n)$.
\end{lemma}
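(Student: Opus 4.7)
The plan is to construct, from the $\epsilon$-injective automaton $\mathcal{A}_n = (Q_n,\Sigma,\iota_n,\rho_n)$, a new semi-randomized PFA $\mathcal{B}_n$ that checks whether the input word $w \in \Sigma^{\le n}$ leaves the (randomly chosen) initial state of $\mathcal{A}_n$ invariant. Concretely, I would define $\mathcal{B}_n$ on state set $Q_n \times Q_n$, with initial distribution $\iota'(q,q) = \iota_n(q)$ (and $\iota'(p,q) = 0$ for $p \neq q$), deterministic transitions $\rho'((p,q),a) = (p,\rho_n(q,a))$, and final states $F' = \{(p,p) : p \in Q_n\}$. Thus after reading $w$ from the initial state $(q,q)$ the machine sits in $(q,\rho_n(q,w))$ and accepts iff $\rho_n(q,w) = q$.

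The key observation to make is that $q = \rho_n(q,\varepsilon)$, so applying the $\epsilon$-injectivity of $\mathcal{R}$ with $u := w$ and $v := \varepsilon$ (both of length at most $n$) yields exactly what I need: if $w \in \WP(G,\Sigma)$ then $w \equiv_G \varepsilon$, hence with probability at least $1-\epsilon$ (over the choice of $q$) we have $\rho_n(q,w) = \rho_n(q,\varepsilon) = q$ and $\mathcal{B}_n$ accepts; if $w \notin \WP(G,\Sigma)$ then $w \not\equiv_G \varepsilon$, hence with probability at least $1-\epsilon$ we have $\rho_n(q,w) \neq q$ and $\mathcal{B}_n$ rejects. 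This gives $\epsilon$-correctness of the resulting streaming algorithm $\mathcal{S} = (\mathcal{B}_n)_{n \ge 0}$.

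For the space bound, the state set of $\mathcal{B}_n$ has cardinality $|Q_n|^2$, so
\[
s(\mathcal{S},n) = \lceil \log_2 |Q_n|^2 \rceil \le 2 \lceil \log_2 |Q_n| \rceil = 2 \cdot s(\mathcal{R},n),
\]
and since $\rho'$ is a function (no randomness in transitions) while $\iota'$ may take values outside $\{0,1\}$, $\mathcal{B}_n$ is semi-randomized as required.

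There is no real obstacle in the argument: the whole content of the lemma is the realisation that the $\epsilon$-injectivity property, when instantiated with $v = \varepsilon$, already says exactly ``$\rho_n(q,w) = q$ with the right probability depending on whether $w \in \WP(G,\Sigma)$''. The only thing to be a bit careful about is that the first coordinate of $\mathcal{B}_n$ must be frozen at the initial random state (rather than, say, being resampled or evolving), which is why we must duplicate the state space and cannot save a factor of two.
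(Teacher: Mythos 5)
Your proof is correct and follows essentially the same approach as the paper: instantiate $\epsilon$-injectivity with $v=\varepsilon$, then double the state space to $Q_n\times Q_n$ so that one coordinate stays frozen at the randomly chosen initial state while the other coordinate evolves under $\rho_n$, accepting on the diagonal. The only cosmetic difference is that the paper freezes the second coordinate and evolves the first, whereas you do the opposite, which is of course equivalent.
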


\begin{proof}
Let  $\mathcal{R}=(\mathcal{A}_n)_{n \ge 0}$ with $\mathcal{A}_n = (Q_n,\Sigma,\iota_n,\delta_n)$.
Using the above definition of an $(\epsilon_0, \epsilon_1)$-distinguisher with the empty string $v = \varepsilon$ we get for every word
$u \in \Sigma^{\le n}$:
\begin{itemize}
\item If $u \in \WP(G,\Sigma)$ then $\Prob_{q \sim \iota_n}[\delta_n(q,u) = q] \ge 1-\epsilon_1(n)$.
\item If $u \notin \WP(G,\Sigma)$ then $\Prob_{q \sim \iota_n}[\delta_n(q,u) \neq q] \geq 1-\epsilon_0(n)$.
\end{itemize}
This allows to construct an $(\epsilon_0, \epsilon_1)$-correct 
semi-randomized streaming algorithm $(\mathcal{B}_n)_{n \ge 0}$ for $\WP(G,\Sigma)$. Thereby the space complexity of the algorithm only doubles:
We define $\mathcal{B}_n = (Q_n \times Q_n, \Sigma, \iota'_n, \delta'_n,F_n)$ where
\begin{itemize}
\item $\iota'_n(p,p) = \iota_n(p)$ for all $p \in Q_n$ and $\iota'_n(p,q) = 0$ if $p \neq q$,
\item $\delta'_n((p,q), a) = (p,\delta_n(q,a))$ for $p,q \in Q_n$ and $a \in \Sigma$, and
\item $F_n = \{ (p,p) \colon p \in Q_n \}$.
\end{itemize}
It is easy to check that this semi-randomized streaming algorithm is indeed $(\epsilon_0, \epsilon_1)$-correct for $\WP(G,\Sigma)$.
\end{proof}
Due to Lemma~\ref{lemma-injective-0}, our goal in the rest of the paper will be the construction of space efficient
$(\epsilon_0, \epsilon_1)$-distinguishers for groups and, more generally, for monoids.

\begin{oproblem} \label{open-dist1} \label{op-distinguisher-WP}
It is open, whether a converse of Lemma~\ref{lemma-injective-0} holds: Assume that $\WP(G,\Sigma)$ has an $(\epsilon_0, \epsilon_1)$-correct semi-randomized streaming algorithm with space complexity $s(n)$. Does this imply that there exists an
$(\mathcal{O}(\epsilon_0), \mathcal{O}(\epsilon_1))$-distinguisher for $G$ with space complexity $\mathcal{O}(s(n))$?
Also note that if this question has a positive answer, then error reduction for group distinguishers is possible (see open problem~\ref{open-dist2})
by first transforming the distinguisher to a randomized streaming algorithm for the word problem using Lemma~\ref{lemma-injective-0}.
Then, one can reduce the error using Theorem~\ref{thm-prob-amplify}, and finally transform the resulting randomized streaming algorithm
back to a distinguisher.
\end{oproblem}
Since every f.g.~infinite monoid $M$ has linear growth, we obtain the following corollary 
from Theorem~\ref{thm-growth}:

\begin{corollary} \label{coro general lower bound for monoids}
Every $\epsilon$-distinguisher
($0 < \epsilon < 1/2$) for an f.g.~infinite monoid has space complexity $\Omega(\log \log n)$.
\end{corollary}

We will use $(\epsilon_0, \epsilon_1)$-distinguishers for monoids in order
to get transfer results for various constructions from (semi)group theory.
For this, we need some further observations on $(\epsilon_{0}, \epsilon_{1})$-distinguishers that we discuss in the rest of the section.

For equivalence relations $\equiv_1$ and $\equiv_2$ on a set $A$ and a subset $S \subseteq A$ we say that:
\begin{itemize}
\item $\equiv_1$ refines $\equiv_2$ on $S$ if for all $a,b \in S$ we have: if $a \equiv_1 b$ then $a \equiv_2 b$,
\item $\equiv_1$ equals $\equiv_2$ on $S$ if for all $a,b \in S$ we have: $a \equiv_1 b$ if and only if $a \equiv_2 b$.
\end{itemize}
For a semiPFA $\mathcal{A} = (Q,\Sigma, \iota, \delta)$ and a state $q \in Q$ we define the equivalence
relation $\equiv_{\mathcal{A},q}$ on $\Sigma^*$ as follows: $u \equiv_{\mathcal{A},q} v$ if and only if $\delta(q,u) = \delta(q,v)$.
Whenever $\mathcal{A}$ is clear from the context, we just write $\equiv_{q}$ instead of $\equiv_{\mathcal{A},q}$.

\begin{lemma} \label{lemma-injective}
Let $(\mathcal{A}_n)_{n \ge 0}$ be an $(\epsilon_{0}, \epsilon_{1})$-distinguisher for the finitely generated monoid $M$ with respect
to the finite generating set $\Sigma$. Let $\mathcal{A}_n = (Q_n,\Sigma, \iota_n, \delta_n)$.
Consider a set $U \subseteq \Sigma^{\le n}$. 
Then,  the following statements hold, where $\equiv_{q}$ refers to $\mathcal{A}_n$:
\begin{itemize}
\item $\Prob_{q \sim \iota_n}[\equiv_{M} \text{equals} \equiv_{q} \text{on } U]\geq 1- \max\{\epsilon_0(n), \epsilon_1(n)\}\binom{|U|}{2}$,
\item $\Prob_{q \sim \iota_n}[\equiv_{M} \text{refines} \equiv_{q}\text{on } U]\geq 1-\epsilon_1(n) \binom{|U|}{2}$,
\item $\Prob_{q \sim \iota_n}[\equiv_{q} \text{refines} \equiv_{M}\text{on } U]\geq 1-\epsilon_0(n) \binom{|U|}{2}$.
\end{itemize}
\end{lemma}

\begin{proof}
All three statements follow from the union bound and the fact that there are $\binom{|U|}{2}$ unordered pairs 
of different elements from $U$. For the first statement note that 
$\Prob_{q \sim \iota_n}[(u \equiv_M v \text{ and } u \not\equiv_{q} v) \text{ or } (u \not\equiv_M v \text{ and } u \equiv_{q} v)] \leq \max\{\epsilon_0(n), \epsilon_1(n)\}$ for all $u,v \in U$.
For the second statement, note that $\Prob_{q \sim \iota_n}[u \equiv_M v \text{ and } u \not\equiv_{q} v] \leq \epsilon_1(n)$, and similarly for the third statement.
\end{proof}
Recall that for a word $w$ we write $\mathcal{P}(w)$ for the set of all prefixes of $w$.

\begin{lemma} \label{lemma-injective-reduce}
Let $M$ be a finitely generated monoid with the finite generating set $\Sigma$ and let $\mathcal{A} = (Q,\Sigma, \iota, \delta)$ be a semiPFA with $q \in Q$.
Consider words $u,v \in \Sigma^*$ such that $\equiv_{M}$ refines $\equiv_q$ on $\mathcal{P}(u) \cup \mathcal{P}(v)$ and
let $u = xyz$ with $y \equiv_M \varepsilon$. Then $\equiv_{M}$ refines $\equiv_q$ on $\mathcal{P}(xz) \cup \mathcal{P}(v)$.
\end{lemma}

\begin{proof}
Assume that $s,t \in \mathcal{P}(xz) \cup \mathcal{P}(v)$ are such that $s \equiv_{M} t$. We have to show that $\delta(q,s) = \delta(q,t)$.
If $s \notin \mathcal{P}(u) \cup \mathcal{P}(v)$ we must have $s = xz'$ for a prefix $z'$ of $z$.
Since $x \equiv_M xy$, $x,xy \in \mathcal{P}(u)$ and $\equiv_{M}$ refines $\equiv_q$ on $\mathcal{P}(u) \cup \mathcal{P}(v)$, we have $\delta(q,x) = \delta(q,xy)$.
This implies that $\delta(q,s) = \delta(q,xz') = \delta(q,xyz')$. In addition we have $s \equiv_M xyz'$ and $xyz' \in \mathcal{P}(u)$.
In this way we obtain from $s$ a word $\tilde{s} \in \mathcal{P}(u) \cup \mathcal{P}(v)$ such that $\delta(q,s) = \delta(q,\tilde{s})$ and
$s \equiv_M \tilde{s}$ (we might have $s = \tilde{s}$). In the same way, we can obtain from $t$ 
a word $\tilde{t} \in \mathcal{P}(u) \cup \mathcal{P}(v)$ such that $\delta(q,t) = \delta(q,\tilde{t})$ and
$t \equiv_M \tilde{t}$.
Since $s \equiv_{M} t$ we have $\tilde{s} \equiv_{M} \tilde{t}$. Since 
$\equiv_{M}$ refines $\equiv_q$ on $\mathcal{P}(u) \cup \mathcal{P}(v)$ and $\tilde{s}, \tilde{t} \in \mathcal{P}(u) \cup \mathcal{P}(v)$ we get
$\delta(q,s) = \delta(q,\tilde{s}) = \delta(q,\tilde{t}) = \delta(q,t)$.
\end{proof}

\begin{lemma} \label{lemma-injective-reduce2}
Let $M$, $\mathcal{A}$, and $q$ be as in Lemma~\ref{lemma-injective-reduce}.
Consider words $u,v \in \Sigma^*$ such that $\equiv_q$ refines $\equiv_M$ on $\mathcal{P}(u) \cup \mathcal{P}(v)$ and
let $u = xyz$ with $\delta(q,x) = \delta(q,xy)$. Then $\equiv_q$ refines $\equiv_M$ on $\mathcal{P}(xz) \cup \mathcal{P}(v)$.
\end{lemma}

\begin{proof}
Assume that $s,t \in \mathcal{P}(xz) \cup \mathcal{P}(v)$ are such that $\delta(q,s) = \delta(q,t)$. We have to show that $s \equiv_{M} t$.
If $s \notin \mathcal{P}(u) \cup \mathcal{P}(v)$ we must have $s = xz'$ for a prefix $z'$ of $z$. 
Since $\delta(q,x) = \delta(q,xy)$ and $x,xy \in \mathcal{P}(u)$, we have $x \equiv_M xy$.
This implies $xyz' \equiv_M x z' = s$,
$\delta(q,s) = \delta(q,xz') = \delta(q,xyz')$, and $xyz' \in \mathcal{P}(u)$.
The same argument works also for $t$ and yields words $\tilde{s}, \tilde{t} \in \mathcal{P}(u) \cup \mathcal{P}(v)$ 
such that $\delta(q,s) = \delta(q,\tilde{s})$, $\delta(q,t) = \delta(q,\tilde{t})$, $s \equiv_M \tilde{s}$, and
$t \equiv_M \tilde{t}$. We obtain $\delta(q,\tilde{s}) = \delta(q,s) = \delta(q,t) =  \delta(q,\tilde{t})$.
Since $\equiv_q$ refines $\equiv_M$ on $\mathcal{P}(u) \cup \mathcal{P}(v)$ we get $s \equiv_M \tilde{s} \equiv_M \tilde{t} \equiv_M t$.
\end{proof}

 \subsection{Distinguishers for linear monoids} \label{sec-dist-linear}

A monoid is \emph{linear} if it is isomorphic to a monoid of matrices over a field $K$.
The monoid of all $(r \times r)$-matrices with entries from $K$ is denoted with $M_r(K)$.
In this section, we show that with some care one can turn the logspace algorithms from \cite{LiZa77,Sim79} into $(\epsilon_0(n),0)$-distinguishers with  $\epsilon_0(n) = 1/n^c$ for a constant $c$ and space complexity $\mathcal{O}(\log n)$.

To this end, we will make use of the following well-known result of DeMillo, Lipton, Schwartz and Zippel \cite{DemilloL78,Schwartz80,Zippel79}. 
Therein, the \emph{degree} of a multivariate polynomial $p(x_1,\ldots,x_m) \in K[x_1,\ldots,x_m]$ with coefficients from the field $K$ is the maximal sum $k_1+ k_2 + \cdots + k_m$
 where $x_1^{k_1} x_2^{k_2} \cdots x_m^{k_m}$ is a monomial of $p$.

\begin{theorem} \label{schwartz-zippel} 
Let $p(x_1,\ldots,x_m) \in K[x_1,\ldots,x_m]$ be a non-zero multivariate polynomial of degree $d$, and let $S
\subseteq K$ be finite.
If $(s_1,\ldots,s_m) \in S^m$ is randomly chosen according to the uniform distribution, then 
$\Prob[p(s_1,\ldots,s_m)=0] \leq \frac{d}{|S|}$.
\end{theorem}
Recall that a prime field is either the field $\mathbb{Q}$ or a finite field $\mathbb{F}_p$ for a prime $p$.
For a field $F$ and variables $x_1, \ldots, x_m$, the transcendental extension $F(x_1, \ldots, x_m)$
is the field of all fractions $q_1/q_2$ for polynomials $q_1,q_2 \in F[x_1,\ldots,x_m]$ with $q_2 \neq 0$.

We also need the following lemma, which is shown in \cite{LiZa77} for a finitely generated linear group instead of a finitely generated linear monoid. The proof for the monoid case
is the same as for the group case. In order to keep the paper self-contained we provide the details. 

\begin{lemma} \label{lemma-LiZa77}
Let $M$ be an f.g.\ linear monoid. Then $M$ is isomorphic to a 
submonoid of $M_r (K)$, where the field $K$ is of the form 
$K = F(x_1,\ldots,x_m)$ for a prime field $F$.
\end{lemma}

\begin{proof} 
Assume that $M$ is finitely generated by the matrices $A_1, \ldots, A_k \in M_r (K)$.
Let $C \subseteq K$ be the finite set consisting of all field elements that appear in one of the matrices
$A_i$.
We can therefore replace the field $K$ by the subfield $L$ generated
by $C$. If we choose a maximal algebraically independent subset
$\{x_1, \ldots, x_m\} \subseteq C$, then $L$ is isomorphic to a finite algebraic extension
of the field of fractions $F(x_1,\ldots,x_m)$, where $F$ is the
prime field of $K$, see e.g.\ \cite[p.~156]{Jaco64}.
Let $[L:F(x_1,\ldots,x_m)]=d$ be the degree of this algebraic
extension.  The field $L$ can also be viewed as a $d$-dimensional associative algebra 
over the base field $F(x_1, \ldots, x_m)$ (that is, a vector space with a multiplication that together with the vector addition yields a ring structure). By the
regular representation for associative algebras \cite{Jac74}, $L$ is isomorphic 
to a subring of $M_d(F(x_1, \ldots, x_m))$.
Hence, by replacing in the generator
matrices~$A_i$ every matrix entry by the corresponding matrix from $M_d(F(x_1, \ldots, x_m))$,
it follows that $M$ is isomorphic to
a submonoid of $M_{rd}(F(x_1,\ldots,x_m))$. 
\end{proof}

Let us now turn to the main result of this section.

\begin{theorem} \label{thm-lin}
For every f.g.~linear monoid $M$ and every $c > 0$ there exists a $(1/n^c,0)$-distinguisher 
with space complexity $\mathcal{O}(\log n)$.
\end{theorem}

\begin{proof}
By Lemma~\ref{lemma-LiZa77}, we can assume that 
 $M$ is a finitely generated submonoid of $M_r (K)$ where $K = F(x_1,\ldots,x_m)$ for some prime field $F$. 

Let us first assume that $F = \mathbb{Q}$. Let $\Sigma$ be a generating set for $M$.  
Then every generator $A \in \Sigma$ is a matrix, whose entries are quotients of polynomials from 
$\mathbb{Z}[x_1,\ldots,x_m]$. Therefore there exists a fixed non-zero polynomial $t \in \mathbb{Z}[x_1,\ldots,x_m]$
such that every matrix $\hat A  \coloneqq  t \cdot A$ for $A \in \Sigma$ has entries from $\mathbb{Z}[x_1,\ldots,x_m]$.
Let $r$ be the dimension of the matrices.
Let $d$ be the maximal degree of $t$ and all polynomials that appear in matrices $\hat A$ with 
$A \in \Sigma$. The parameters $m$, $r$, and $d$ are constants in the further considerations.

Fix an input length $n \geq 1$.
Clearly, for all matrices $A_1, \ldots, A_k, B_1, \ldots, B_l \in \Sigma$ with $k,l \leq n$ we have
$\prod_{i=1}^k A_i = \prod_{i=1}^l B_i$ if and only if $t^{n-k} \prod_{i=1}^k \hat A_i = t^{n-l} \prod_{i=1}^l \hat B_i$.

Consider two input words $A_1 A_2 \cdots A_k, B_1 B_2 \cdots B_l \in \Sigma^*$ with $k,l \leq n$ and assume that
\[
t^{n-k} \prod_{i=1}^k \hat A_i \neq t^{n-l} \prod_{i=1}^l \hat B_i .
\]
Define the matrix
\begin{equation} \label{matrix N}
N  \coloneqq  t^{n-k} \prod_{i=1}^k \hat A_{i} - t^{n-l} \prod_{i=1}^l \hat B_{i} \in  \mathbb{Z}[x_1,\ldots,x_m]^{r \times r}.
\end{equation}
Note that all entries of the matrix $N$ are polynomials
of degree at most $dn$ and at least one of them is not the zero polynomial.

Let $\bar{S}= [1,4dn^{c+1}]^m \subseteq \mathbb{N}^m$, where $c$ is the value from the theorem. For a tuple $\bar{s} = (s_1,\ldots,s_m) \in \bar{S}$ and 
a matrix $C \in \mathbb{Z}[x_1,\ldots,x_m]^{r \times r}$
let $C(\bar{s})$ be the integer matrix obtained from $C$ by replacing every variable $x_i$ by $s_i$.
For a uniformly chosen tuple $\bar{s} \in \bar{S}$, Theorem~\ref{schwartz-zippel} implies that
\begin{equation*} \label{prob-A-neq-0}
\Prob_{\bar{s} \in \bar{S}}[N(\bar{s}) \neq 0] \geq 1-\frac{1}{4n^c}.
\end{equation*}
Let us now consider a tuple $\bar{s} \in \bar{S}$ such that $N(\bar{s}) \neq 0$.
Every entry in a matrix $\hat A(\bar{s})$ ($A \in \Sigma$) has an absolute value of size at most $\mathcal{O}( (4dn^{c+1})^d ) = \mathcal{O}(n^{d(c+1)})$ ($d$ is a constant)
and also $|t(\bar{s})| \le \mathcal{O}(n^{d(c+1)})$.
Therefore, all entries in the matrix  $t(\bar{s})^{n-k} \prod_{i=1}^k \hat A_{i}(\bar{s})$ have an absolute value 
of size at most $\mathcal{O}(r^n n^{d(c+1)n})$, and similarly for 
$t(\bar{s})^{n-l} \prod_{i=1}^l \hat B_{i}(\bar{s})$. Hence,
$N(\bar{s})$ is a non-zero matrix with all entries of absolute value $\mathcal{O}(r^n n^{d(c+1)n})$.

The number of different prime factors
of a positive number $D \le \mathcal{O}(r^n n^{d(c+1)n})$ is bounded by
\[
\frac{ \ln D}{\ln \ln D} \cdot (1+o(1)) \le \mathcal{O}\bigg( \frac{n  \log n}{\log n}\bigg) = \mathcal{O}(n);
\]
see \cite[Theorem~16]{Robin1983}.
By a weak form of the prime number theorem, the number of primes of size at most $n^{c+2} \ln n$ is $\Theta(n^{c+2})$.
Hence, by uniformly choosing a prime of size at most $\alpha  \coloneqq  n^{c+2} \ln n$ we can obtain the bound
\[
\Prob _{\bar{s} \in \bar{S},\, p \in \mathbb{P}_\alpha}[N(\bar{s}) \bmod p = 0 \mid N(\bar{s}) \neq 0]   \leq \mathcal{O}\bigg(\frac{1}{n^{c+1}}\bigg) \leq
\frac{1}{4n^c} 
\]
for $n$ large enough.
Hence, we obtain
\begin{eqnarray*}
& & \Prob_{\bar{s} \in \bar{S}, \, p \in \mathbb{P}_\alpha}\bigg[t(\bar{s})^{n-k} \prod_{i=1}^k \hat A_{i}(\bar{s}) \not\equiv t(\bar{s})^{n-l} \prod_{i=1}^l \hat B_{i}(\bar{s}) \bmod p \bigg] \\
& = & \Prob_{\bar{s} \in \bar{S}, \, p \in \mathbb{P}_\alpha}[N(\bar{s}) \bmod p \neq 0 \mid N(\bar{s}) \neq 0]  \cdot \Prob_{\bar{s} \in \bar{S}}[N(\bar{s}) \neq 0]  \\
& \geq & \bigg(1- \frac{1}{4n^c} \bigg)^2 \\
& \geq & 1 - \frac{1}{2n^c}
\end{eqnarray*}
for $n$ large enough. The same calculation shows that also
\[
\Prob_{\bar{s} \in \bar{S}, \, p \in \mathbb{P}_\alpha}[t(\bar{s})\bmod p \neq 0] \geq 1 - \frac{1}{4n^c}
\]
(this probability is in fact much closer to $1$) and hence
\[
\Prob_{\bar{s} \in \bar{S},\, p \in \mathbb{P}_\alpha}\!\bigg[t(\bar{s})\bmod p \neq 0 \;\wedge\;
t(\bar{s})^{n-k} \prod_{i=1}^k \hat A_{i}(\bar{s}) \not\equiv t(\bar{s})^{n-l} \prod_{i=1}^l \hat B_{i}(\bar{s}) \bmod p \bigg] \geq 1 - \frac{1}{n^c}.
\]
The streaming algorithm for inputs of length at most $n$ is now clear. Initially, the algorithm guesses  
$\bar{s} \in \bar{S}$ (for $\bar{S} = [1,2dn^{c+1}]^m$) and a prime $p \in \mathbb{P}_\alpha$ for $\alpha =  n^{c+2} \ln n$. All these numbers
need $\mathcal{O}(\log n)$ bits in total. If $t(\bar{s}) \bmod p  = 0$ then the algorithm ignores the input word.
Otherwise, the algorithm initializes a matrix $C  \coloneqq  t(\bar{s})^{n} \cdot \mathsf{Id}_r \bmod p$,
where $\mathsf{Id}_r$ is the $r$-dimensional identity matrix. Then, for every
new generator matrix $A \in \Sigma$ the algorithm updates $C$ by
\[
C  \coloneqq  t(\bar{s})^{-1} \cdot C \cdot \hat{A}(\bar{s}) \bmod p .
\]
All computation are carried out in the field $\mathbb{F}_p$. If $\prod_{i=1}^k A_i = \prod_{i=1}^l B_i$ ($k,l \leq n$) then 
after reading the input words $A_1 \cdots A_k$ and $B_1 \cdots B_l$, the algorithm arrives with probability one in 
the same state (this holds also in case $t(\bar{s}) \bmod p  = 0$).
On the other hand, if $\prod_{i=1}^k A_i \neq \prod_{i=1}^l B_i$ then the reached states differ with probability
at least $1-1/n^c$ by the above error analysis.

Let us now briefly discuss the case where the underlying prime field is $F = \mathbb{F}_p$ for a prime $p$.
Then we have to work in a finite extension $\mathbb{F}_{p^e}$ for some $e \ge 1$ such that
$p^e \geq dn^{c+1}$, which can be achieved by taking $e$ of size $\Theta(\log n)$.
By choosing a tuple $\bar{s} = (s_1,\ldots,s_m) \in \bar{S}  \coloneqq  \mathbb{F}_{p^e}^m$ randomly, we obtain for the matrix $N$ from \eqref{matrix N}
\[
\Prob_{\bar{s} \in \bar{S}}[N(\bar{s}) \neq 0] \geq 1-\frac{1}{4n^c}.
\]
Since an $r$-dimensional matrix over the field $\mathbb{F}_{p^e}$ can be stored in space
$\mathcal{O}(\log n)$ ($r$ and $p$ are constants and $e = \Theta(\log n)$), this yields the desired algorithm in the same way as for the case $F = \mathbb{Q}$.
\end{proof}

 \subsection{Distinguishers for nilpotent (semi)groups} \label{sec-nilpotent}

The \emph{lower central series} of a group $G$ is the series $G = G_0 \trianglerighteq G_1 \trianglerighteq G_2 \trianglerighteq \cdots$ 
where $G_{i+1} = [G,G_i]$. 
A group $G$ is \emph{$i$-step nilpotent} if $G_i = 1$ and it is \emph{nilpotent} if it is $i$-step nilpotent  for some $i \geq 0$.

Every nilpotent group is linear and the $1$-step nilpotent groups are exactly the abelian groups.
For nilpotent groups we can improve the algorithm from the proof of Theorem~\ref{thm-lin}, at least if we sacrifice  
the inverse polynomial error probability:

\begin{theorem} \label{thm-nilpotent}
For every f.g.~nilpotent group $G$ and every constant $c > 0$ there exists a $(1/(\log n)^c,0)$-distinguisher 
with space complexity $\mathcal{O}(\log \log n)$.\footnote{The inverse polylogarithmic error probability cannot be improved, see Remark~\ref{remark-inverse-polylog} below.} 

\end{theorem}

\begin{proof}
With $\mathsf{UT}_d(\mathbb{Z})$ we denote the set of all upper triangular $(d \times d)$-matrices over $\mathbb{Z}$ with all
diagonal entries equal to $1$ (so-called unitriangular matrices). These matrices form an f.g.~nilpotent group and every f.g.~torsion-free 
nilpotent group can be embedded into some $\mathsf{UT}_d(\mathbb{Z})$ \cite[Theorem 17.2.5]{KaMe79}.

We can assume that our f.g.~nilpotent group $G$ is infinite.
Then $G$ has an f.g.~torsion-free nilpotent subgroup
$H$ such that the index $[G:H]$ is finite \cite[Theorem 17.2.2]{KaMe79}.
Since $H$ must be also f.g., it can be embedded into 
$\mathsf{UT}_d(\mathbb{Z})$ for some $d \ge 1$.

By Theorems~\ref{thm-subgroup} and \ref{thm-finite-ext} below, it suffices to show that every $\mathsf{UT}_d(\mathbb{Z})$ has a
$(1/(\log n)^c,0)$-distinguisher 
with  space complexity $\mathcal{O}(\log \log n)$.

Fix a finite generating set $\Sigma$ for $\mathsf{UT}_d(\mathbb{Z})$ and an input length $n$.
Consider a product $M \coloneqq  \prod_{i=1}^m M_i$ with $m \le n$ and 
$M_i \in \Sigma$. From \cite[Proposition~4.18]{Loh14} it follows that the absolute 
value of every entry of the matrix $M$ has size at most $\mathcal{O}(m^{d-1}) \leq \mathcal{O}(n^{d-1})$.
The distinguisher for $\mathsf{UT}_d(\mathbb{Z})$ guesses a prime number $p \in \mathbb{P}_\alpha$ for 
$\alpha = \Theta((\log n)^{c+1} \cdot \log \log n)$ and computes the product matrix $M$ modulo $p$. For this,
$\mathcal{O}(\log\log n)$ bits are sufficient.
 
Consider two input words $u = M_1 M_2 \cdots M_l \in \Sigma^*$ and $v = N_1 N_2 \cdots N_m \in \Sigma^*$ with $l,m \leq n$.
If $\prod_{i=1}^l M_i = \prod_{i=1}^m N_i$ then our distinguisher will reach with probability one the same
state after reading the input words $u$ and $v$, respectively. On the other hand, if $\prod_{i=1}^l M_i \neq \prod_{i=1}^m N_i$,
then consider a non-zero matrix entry $a \in \mathbb{Z}$ of the matrix $\prod_{i=1}^l M_i - \prod_{i=1}^m N_i$.
We have $|a| \leq \mathcal{O}(n^{d-1})$. The number of different prime factors of $a$ is therefore bounded by
$\mathcal{O}(\log n/\log \log n)$ \cite[Theorem~16]{Robin1983}. By randomly choosing a prime number $p \in \mathbb{P}_\alpha$ 
for $\alpha = \Theta((\log n)^{c+1} \cdot \log \log n)$ we can therefore obtain a probability of at most $1/(\log n)^c$ 
for $a \bmod p = 0$. Hence, with probability $1- 1/(\log n)^c$ we reach different states after reading $u$ and $v$, respectively.
\end{proof}
Note that if $G$ is infinite, the space bound from Theorem~\ref{thm-nilpotent} is sharp up to constant factors even if we allow a constant error probability; see Remark~\ref{remark-infinite-group}.

A group $G$ is \emph{virtually nilpotent} if it is a finite extension of a nilpotent group. In other words: $G$ has a nilpotent subgroup $H$ with finitely many cosets.
Recall that Gromov \cite{Gro81} proved that a  finitely generated group has polynomial growth if and only if it is virtually nilpotent.

\begin{corollary} \label{thm-virt-nilpotent}
For every f.g.~virtually nilpotent group $G$ and every constant $c > 0$ there exists a $(1/(\log n)^c,0)$-distinguisher 
with space complexity $\mathcal{O}(\log \log n)$.
\end{corollary}

\begin{proof}
In Section~\ref{sec-simple-transfer} we will show that finite extensions blow up the space complexity of a distinguisher only
by a constant factor in the argument $n$; see Theorem~\ref{thm-finite-ext}. With Theorem~\ref{thm-nilpotent} this yields the corollary.
\end{proof}

\begin{corollary}\label{thm--gap-linear}
Let $G$ be an infinite finitely generated linear group.
\begin{itemize}
\item If $G$ is virtually nilpotent then the $(0$-sided) randomized streaming space complexity of  $\WP(G)$ is $\Theta(\log \log n)$.
\item If $G$ is not virtually nilpotent then $(0$-sided) the randomized streaming space complexity of $\WP(G)$ is $\Theta(\log n)$.
\end{itemize}
\end{corollary}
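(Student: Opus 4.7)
The plan is to split into the two cases and prove matching upper and lower bounds in each, relying on the tools already assembled in the paper.

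For the virtually nilpotent case, the upper bound $\mathcal{O}(\log\log n)$ is essentially immediate: Theorem~\ref{thm-nilpotent} provides an $\epsilon(n)$-injective randomized streaming algorithm of space $\mathcal{O}(\log\log n)$ for every f.g.\ nilpotent group, and Theorem~\ref{thm-finite-ext} lifts such an algorithm to any finite extension with only a constant blow-up in space (since $s(\mathcal{R}, cn) = \mathcal{O}(\log\log(cn)) = \mathcal{O}(\log\log n)$). Combining these with Lemma~\ref{lemma-injective-0} yields the randomized streaming algorithm for $\WP(G,\Sigma)$. The matching lower bound $\Omega(\log\log n)$ is exactly Remark~\ref{remark-infinite-group}, which follows directly from Theorem~\ref{thm-lower-bound-random} together with the fact that every infinite f.g.\ group has growth at least $n$.

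For the case where $G$ is linear but not virtually nilpotent, the upper bound $\mathcal{O}(\log n)$ is handed to us by Theorem~\ref{thm-lin} (applied with any fixed $c>0$) together with Lemma~\ref{lemma-injective-0}. For the lower bound, I would invoke the growth dichotomy recalled in Remark~\ref{remark-growth}: by the Tits alternative, a f.g.\ linear group is either virtually solvable or contains a free subgroup of rank two; in the latter case $G$ has exponential growth, and in the former case the Wolf--Milnor theorem forces either virtual nilpotence or exponential growth. Since we assumed $G$ is not virtually nilpotent, in both branches we obtain a constant $c>1$ with $\gamma_{G,\Sigma}(n)\ge c^n$ for all $n$ large enough. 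Plugging this into Theorem~\ref{thm-lower-bound-random} yields a randomized streaming space lower bound of
\[
\Omega\bigl(\log\log \gamma_{G,\Sigma}(\lfloor n/2\rfloor)\bigr) = \Omega\bigl(\log\log c^{\lfloor n/2\rfloor}\bigr) = \Omega(\log n),
\]
matching the upper bound.

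There is no real obstacle in this corollary; it is a routine assembly of previously proved results. The only point that deserves a sentence of care is making sure that the growth dichotomy really applies to all linear groups and not merely to the solvable ones: this is precisely what the last bullet of Remark~\ref{remark-growth} records, so I would cite it explicitly when splitting into the ``virtually solvable'' and ``contains $F_2$'' subcases before concluding exponential growth. The rest is bookkeeping in the Landau notation.
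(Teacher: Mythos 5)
Your proof is correct and follows essentially the same route as the paper's: Theorems~\ref{thm-lin} and~\ref{thm-nilpotent} for the upper bounds, Remark~\ref{remark-infinite-group} for the $\Omega(\log\log n)$ lower bound, and the growth dichotomy of Remark~\ref{remark-growth} combined with Theorem~\ref{thm-lower-bound-random} for the $\Omega(\log n)$ lower bound. You are slightly more explicit than the paper in spelling out the use of Theorem~\ref{thm-finite-ext} to pass from nilpotent to virtually nilpotent and the use of Lemma~\ref{lemma-injective-0} to convert an $\epsilon$-injective algorithm into a streaming algorithm for the word problem, but this is just filling in steps the paper leaves implicit.
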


\begin{proof}
The upper bounds follow from Theorems~\ref{thm-lin} and \ref{thm-virt-nilpotent}. Since $G$ is infinite, the 
randomized streaming space complexity of $\WP(G)$ is $\Omega(\log \log n)$ (see Remark~\ref{remark-infinite-group}), which yields the lower
bound for the virtually nilpotent case. If $G$ is not virtually nilpotent, then $G$ has growth $c^n$ for some constant $c>1$ (see Remark~\ref{remark-growth}), which
yields the lower bound $\Theta(\log n)$ by Theorem~\ref{thm-lower-bound-random}.
\end{proof}
It is conjectured that for every f.g.~group $G$  that is not virtually nilpotent the growth is lower bounded by $\exp(n^{0.5})$.
This is known as the \emph{gap conjecture} \cite{Grig14}. It would imply that for every f.g.~group $G$ that is not virtually nilpotent,
the randomized streaming space complexity of $\WP(G)$ is lower bounded  by $\Omega(\log n)$.

Grigorchuk \cite{Grig88} proved that an f.g.~cancellative monoid has polynomial growth if and only
if it has a virtually nilpotent group of left quotients. Since a cancellative monoid embeds into its group of left quotients,
it follows that an f.g.~cancellative monoid of polynomial growth embedds into an f.g.~virtually nilpotent group.
With Theorems~\ref{thm-virt-nilpotent} and \ref{thm-subgroup} 
we obtain:

\begin{theorem} \label{thm-cancellative-poly-growth}
 Every finitely generated cancellative monoid of polynomial growth has a $((\log n)^{-c},0)$-distinguisher 
with space complexity $\mathcal{O}(\log \log n)$ for every constant $c > 0$.
 \end{theorem}
 We will later (see the comment after Theorem~\ref{thm-bcm}) see that there are finitely generated monoids of polynomial growth, for which every
 distinguisher has space complexity $\Omega(\log n)$.
   
 Nilpotent groups were generalized to nilpotent semigroups by 
 Maltsev \cite{malcev1953} as well as  Neumann and Taylor \cite{NT63}. 
 They both showed that the class of nilpotent 
groups can be defined by identities that only use the group multiplication (and not inverses). These identities
then serve as the defining identities of nilpotent semigroups. The identities used by Maltsev and Neumann and Taylor
are slightly different. Here we adopt the definition of Neumann and Taylor.
To be more specific, define words $X_i$ and $Y_i$ over
formal variables $x,y, z_1, \ldots, z_{i-1}$  for $i \ge 1$ inductively as follows: 
\[ X_1  = xy, \  \  Y_1 = yx,  \  \ X_i = X_{i-1} z_i Y_{i-1}, \ \ Y_i = Y_{i-1} z_i X_{i-1} \text{ for } i \geq 2 .
\]
Then, a semigroup $S$ is \emph{$i$-step nilpotent} if it satisfies the identity $X_i = Y_i$ (for all values from $S$ that are substituted
for the variables $x,y, z_1, \ldots, z_i$). Moreover, $S$ is nilpotent if it is $i$-step nilpotent for some $i \geq 1$.

Every f.g.~cancellative nilpotent semigroup embeds into an f.g.~nilpotent group \cite{NT63}. 
We therefore obtain the following result (that can be also obtained from Theorem~\ref{thm-cancellative-poly-growth}):
\begin{theorem} \label{thm-cancellative-nilpotent}
 Every f.g.~cancellative nilpotent semigroup has a $((\log n)^{-c},0)$-dis\-tinguisher 
with space complexity $\mathcal{O}(\log \log n)$ for every constant $c > 0$.
 \end{theorem}
On the other hand, it is known that there exists a nilpotent semigroup $S$ 
with exponential growth; see e.g.~\cite[page~512]{Shnee01}.
Hence, by Theorem~\ref{thm-growth}, every $\epsilon$-distinguisher (for $0 < \epsilon < 1/2$) for $S$
has  space complexity $\Omega(\log n)$. 

\begin{oproblem}
Does every nilpotent semigroup have
an $\epsilon$-distinguisher with space complexity $\mathcal{O}(\log n)$ for some $\epsilon < 1/2$.
\end{oproblem}

\begin{remark} \label{remark-inverse-polylog}
By Theorem~\ref{thm-trade-off-dist} the inverse polylogarithmic error in Corollary~\ref{thm-virt-nilpotent} cannot be improved if $G$ is infinite:
Consider an $\epsilon(n)$-distinguisher with space complexity $r(n) \le \mathcal{O}(\log \log n)$ for the infinite group $G$. 
The growth of $G$ is at least $\Omega(n)$.
Hence, if $n$ is large enough, we must have $r(n) \geq \log_2(1/\epsilon(n))$ by Theorem~\ref{thm-trade-off-dist}.
We get $\log_2(1/\epsilon(n)) \leq c \cdot \log_2 \log_2 n$ for some constant $c>0$, i.e.,  $\epsilon(n) \geq 1/\log_2^c n$.

Analogously, the inverse polylogarithmic error in Theorems~\ref{thm-cancellative-poly-growth}
and \ref{thm-cancellative-nilpotent} cannot be improved if the semigroup $S$ is infinite.
\end{remark}

\subsection{Distinguishers for commutative semigroups} \label{sec-commutative}

A semigroup $S$ is \emph{commutative} if $ab = ba$ for all $a,b \in S$.
Note that every f.g.~commutative semigroup $S$ has polynomial growth: if the generating set $\Sigma$ has size $k$, then $\gamma_{S,\Sigma}(n) \leq \mathcal{O}(n^k)$. 
Hence, Theorem~\ref{thm-cancellative-poly-growth} (or \cite[Proposition~3.2]{GrilletComm})
yields the following:

\begin{lemma} \label{lemma-cancel-comm}
For every f.g.~cancellative commutative semigroup $S$ and every constant $c > 0$ there exists a $((\log n)^{-c},0)$-distinguisher 
with space complexity $\mathcal{O}(\log \log n)$.
\end{lemma}

The goal of this section is to generalize Lemma~\ref{lemma-cancel-comm} to all finitely generated commutative semigroups as follows.

\begin{theorem} \label{thm-comm}
For every f.g.~commutative semigroup $S$ and every constant $c > 0$ there exists a $((\log n)^{-c},0)$-distinguisher 
with space complexity $\mathcal{O}(\log \log n)$.
\end{theorem}

To prove Theorem~\ref{thm-comm} we use a structure theorem for commutative semigroups, which we now recall. 
For this we need several definitions.
  
A \emph{subdirect product} of semigroups $S_i$ ($i \in I$) is a subsemigroup $S$ of the direct product 
$\Pi_{i \in I} S_i$ such that $\pi_j(S) = S_j$ for every $j \in I$. Here, $\pi_j : \Pi_{i \in I} S_i \to S_j$ is the 
projection homomorphism with $\pi_j( (s_i)_{i \in I} ) = s_j$. Since there exists a surjective homomorphism 
from $S$ to every $S_j$, it follows that if $S$ is finitely generated, then also every $S_j$ is finitely generated.

By \cite[Theorem~VI.2.2]{GrilletComm}, an f.g.~commutative semigroup is a subdirect product of 
\emph{finitely} many (necessarily f.g.) semigroups $S_1, \ldots, S_k$, where
every $S_i$ is of one of the following types:
\begin{itemize}
\item an f.g.~commutative cancellative  semigroup,
\item an f.g.~commutative nilsemigroup (which is necessarily finite),
\item an f.g.~subelementary semigroup.
\end{itemize}
Here, a \emph{subelementary} semigroup is a commutative semigroup $S = C \uplus N$ for disjoint subsemigroups $C$ and $N$
such that the following hold:
\begin{itemize}
  \item $C$ is cancellative in $S$, i.e.,  $ab = ac$ implies $b=c$ for all $a \in C$ and $b,c \in S$.
  \item $N$ is a nilsemigroup and an ideal in $S$.
\end{itemize}
In particular, $C$ is cancellative. 
Moreover, if $S$ is finitely generated, then $C$ is also finitely generated since $C \uplus \{0\}$ ($C$ together
with a zero element) is a quotient of $S$ (namely the Rees quotient $S/N$); see \cite[Lemma~VI.2.3]{GrilletComm}.
If $C$ is an abelian group, then $S = C \uplus N$ is called an \emph{elementary} semigroup. 

For the purpose of proving Theorem~\ref{thm-comm} it suffices by Lemma~\ref{lemma-cancel-comm} 
to consider the case of an f.g.~subelementary semigroup. 
Moreover, every subelementary semigroup $S$ can be embedded into an elementary semigroup $T$ \cite[Proposition~VI.3.1]{GrilletComm}, and if $S$ is finitely generated, then
so is $T$ \cite[Proposition~VI.3.3]{GrilletComm}.
 Hence, in order to prove Theorem~\ref{thm-comm}, it suffices to show the following lemma.

\begin{lemma} \label{thm-elementary}
For every f.g.~elementary semigroup $S$ and every constant $c > 0$ there exists a $((\log n)^{-c},0)$-distinguisher 
with space complexity $\mathcal{O}(\log \log n)$.
\end{lemma}

\begin{algorithm}[h]
  \SetKwComment{Comment}{(}{)}
  \SetKwInput{KwGlobal}{global variables}
  \SetKwInput{KwInit}{initialization}
  \SetKwInput{KwNext}{next input letter}
  \BlankLine
  \KwGlobal{$q_\alpha \in Q_{\alpha, n}$ ($\alpha \in X$) and $\chi \in X$} 
  \BlankLine
  \KwInit{}
  \For{$\alpha \in X$}{choose $q_\alpha$ according to the distribution $\iota_{\alpha, n}$\label{alg-com-elementary-init}}
  $\chi \coloneqq 1$\label{alg-com-elementary-chi1}
  \BlankLine
  \KwNext{$a \in \Sigma$ with $\pi(a) = \beta$}
  \For{$\alpha \in X$}
      {\uIf{$\alpha \leq \chi \cdot \beta$}{
        $q_\alpha \coloneqq \delta_{\alpha, n}(q_\alpha, r_\alpha(g_{\chi, a}))$\label{alg-com-elementary-mult}}
      \Else{$q_\alpha \coloneqq q^\ast_{\alpha}$\label{alg-com-elementary-dflt}}}
       $\chi \coloneqq \chi \cdot \beta$\label{alg-com-elementary-chi2}
  \caption{A $((\log n)^{-c},0)$-distinguisher for an elementary semigroup.} \label{alg-com-elementary}
\end{algorithm}

\begin{proof}
  Let $S = G \uplus N$ be an f.g.\ elementary semigroup where $G$ is an abelian subgroup of~$S$ and $N$ is an ideal of $S$ and a nilsemigroup.
  Further, let $\Sigma \subseteq S$ be a finite generating set.
  We write $\Sigma$ as a disjoint union $\Sigma = \Sigma_G \uplus \Sigma_N$ with $\Sigma_G = \Sigma \cap G$ and $\Sigma_N = \Sigma \cap N$.
  Then $\Sigma_G \subseteq G$ is a generating set of $G$, as the complement of $G$ in $S$ is an ideal.
  In particular, $G$ is finitely generated. Be aware, however, that $N$ need not be finitely generated.

  Let us now consider Green's $\mathcal{H}$-preorder on $S$, which we simply denote by $\leq$.
  Since $S$ is a commutative semigroup, it can be defined by $a \leq b$ if and only if $a = bc$ for some $c \in S$, and the corresponding equivalence relation $\mathcal{H}$ (with $a \mathrel{\mathcal{H}} b$ if and only if $a \leq b \leq a$) is a congruence relation on $S$.
  Let $\pi \colon S \to X$ with $X = S / \mathcal{H}$ be the corresponding quotient homomorphism.

  The monoid $X$ consists of the $\mathcal{H}$-classes of $S$. As a quotient of the commutative semigroup $S$, it is commutative as well.
  Moreover, it is a \emph{nilmonoid} \cite[Corollary~V.1.8]{GrilletComm}, i.e.,  a monoid obtained from a commutative nilsemigroup by adjoining a neutral element.
  The neutral element $1 \in X$ corresponds to $\pi^{-1}(1) = G$, which forms a single $\mathcal{H}$-class of~$S$.
  Likewise, the zero element $0 \in X$ corresponds to $\pi^{-1}(0) = \{0\} \subseteq N$.
  Moreover, Green's $\mathcal{H}$-preorder on $X$, which is also denoted by $\leq$, is a partial order with greatest element $1 \in X$ and least element $0 \in X$.
  The monoid $X$ is generated by $\pi(\Sigma)$ as a semigroup (and by $\pi(\Sigma_N)$ as a monoid).
  Since every f.g.\ commutative nilmonoid is clearly finite, so is the monoid $X$.

  Let us now consider the action of the group $G$ on $S$ by right multiplication.
  For every $a \in S$ and $g \in G$, we clearly have $a = a g g^{-1} \leq ag \leq a$ and, hence, $a \mathrel{\mathcal{H}} ag$.
  Thus, the action of $G$ preserves the $\mathcal{H}$-classes of $S$.
  In fact, the $\mathcal{H}$-classes of $S$ are precisely the orbits under the action of $G$; see \cite[Proposition~IV.5.1]{GrilletComm}.
  Therefore, we can also write $X$ as the quotient $X = S / G$.
  
  Recall from Section~\ref{sec-action} that the stabilizers $\mathsf{Stab}_G(a)$ and $\mathsf{Stab}_G(b)$ of elements $a,b \in S$ from the same orbit of the action under $G$
  are conjugated, i.e., $\mathsf{Stab}_G(a) = g\,\mathsf{Stab}_G(b) g^{-1}$ for some $g \in G$. Since  
  $G$ is commutative, we get $\mathsf{Stab}_G(a) = \mathsf{Stab}_G(b)$.
  Hence, the stabilizer $\mathsf{Stab}_G(a) = \{ g \in G \mid ag = a \}$ of any $a \in S$ depends only on the orbit $aG \subseteq S$ of $a$, that is, on the $\mathcal{H}$-class $\pi(a) \in X$ of $a$.
  For $\alpha \in X$, we thus obtain a well-defined quotient group $G_\alpha \coloneqq G / \mathsf{Stab}_G(a)$ where $a \in \pi^{-1}(\alpha)$.
  We denote the corresponding quotient homomorphism by $r_\alpha \colon G \to G_\alpha$.
  It is easy to see that these maps are compatible with one another in the following sense.

  \begin{claim}\label{pro-com-elementary-claim}
    For all $g,h \in G$ and $\alpha, \beta \in X$ with $\alpha \leq \beta$, $r_\beta(g) = r_\beta(h)$ implies $r_\alpha(g) = r_\alpha(h)$.
  \end{claim}
  \begin{proof}
    Let $a,b,c \in S$ with $a = bc$, $\pi(a) = \alpha$, and $\pi(b) = \beta$.
    By definition of the mapping $r_\beta$, we have $r_\beta(g) = r_\beta(h)$ if and only if $gh^{-1} \in \mathsf{Stab}_G(b)$ if and only if
    $bg = bh$.
    The latter implies $ag = bcg = bch = ah$ and, therefore, $r_\alpha(g) = r_\alpha(h)$.
  \end{proof}
  Let us now choose for each $\alpha \in X$ a canonical representative $b_\alpha \in \pi^{-1}(\alpha) \subseteq S$ where $b_1 = 1$.
  At this point, every $a \in S$ can be written as $a = b_{\pi(a)} g$ for some $g \in G$, since $a$ and $b_{\pi(a)}$ are contained in the same orbit under the action of $G$.
  The expression $b_{\pi(a)} g$ is not unique, but for $\alpha,\beta \in X$ and $g,h \in G$ we have $b_\alpha g = b_\beta h$  if and only if
  $\alpha=\beta$ and $r_\alpha(g) = r_\beta(h)$.
  Moreover, let us choose for each $\alpha \in X$ and $a \in \Sigma$ an element $g_{\alpha, a} \in G$ such that $b_\alpha a = b_{\alpha \cdot \pi(a)} g_{\alpha, a}$ holds in~$S$.
  For $a \in \Sigma_G$ we can choose $g_{\alpha,a} = a$  since $b_\alpha a = b_{\alpha \cdot \pi(a)} g_{\alpha, a} = b_{\alpha \cdot 1} g_{\alpha, a} = b_{\alpha} g_{\alpha, a}$.
  
  Thus, for $a_1, a_2, \dotsc, a_n \in \Sigma$, we have 
  \begin{equation} \label{eq repr of a1a2...a_n}
a_1a_2 \cdots a_n = b_{\chi_n} g_{\chi_0, a_1} g_{\chi_1, a_2} \cdots g_{\chi_{n-1}, a_n}
\end{equation}
  in~$S$ where $\chi_0 = 1 \in X$ and $\chi_i = \pi(a_1a_2 \cdots a_{i}) \in X$ for $1 \leq i \leq n$.
  Finally, let 
  \[\Gamma = \{ g_{\chi, a} \mid \chi \in X, a \in \Sigma \},\] 
  which finitely generates $G$ since 
  $a = g_{\alpha,a} \in \Gamma$ for every $a \in \Sigma_G$.

  With all prerequisites in place, let us now construct a distinguisher for the semigroup~$S$.
  By Theorem~\ref{thm-nilpotent}, we can fix for each $\alpha \in X$ a $((\log n)^{-c}, 0)$-distinguisher $\mathcal{R}_\alpha = (\mathcal{A}_{\alpha, n})_{n \geq 0}$ for the f.g.\ abelian group $G_\alpha$ with respect to the generating set $\Gamma_\alpha \coloneqq r_{\alpha}(\Gamma)$ such that the space complexity of $\mathcal{R}_\alpha$ is $\mathcal{O}(\log \log n)$.
  Now fix the input length $n$.
  For our construction, we use the semiPFAs $\mathcal{A}_{\alpha, n} = (Q_{\alpha, n}, \Gamma_{\alpha}, \iota_{\alpha, n}, \delta_{\alpha, n})$ and (arbitrary) default states $q_{\alpha}^\ast \in Q_{\alpha, n}$.
  The semiPFA $\mathcal{A}_n$ for our distinguisher $\mathcal{R} = (\mathcal{A}_n)_{n \geq 0}$ for $S$ is implicitly defined by Algorithm~\ref{alg-com-elementary}.
  From its global variables  $\chi \in X$, $q_\alpha$ ($\alpha \in X$) we see that the space complexity of the distinguisher is
  \[
    s(\mathcal{R}, n) = \log_2 \abs{X} + \sum_{\alpha \in X} s(\mathcal{R}_{\alpha}, n) \in \mathcal{O}(\log \log n)
  \]
  (note that $|X|$ is a constant).
  Let $w \in \Sigma^{\leq n}$ be an input word.
  Then Algorithm~\ref{alg-com-elementary} incrementally computes $\pi(w) \in X$, which is stored in the program variable $\chi$, in lines~\ref{alg-com-elementary-chi1} and~\ref{alg-com-elementary-chi2}.
  It also implicitly computes a word $\tilde{w} \in \Gamma^\ast$ as a concatenation of elements $g_{\chi, a} \in \Gamma$ in line~\ref{alg-com-elementary-mult}.
  This word satisfies $w = b_{\pi(w)} \tilde{w}$ in~$S$ by construction; see \eqref{eq repr of a1a2...a_n}.
  We then have the following equivalences for all words $u,v \in \Sigma^{\leq n}$:
  \begin{eqnarray}
u \equiv_S v   & \Longleftrightarrow & \pi(u) = \pi(v)  \eqqcolon \beta \text{ in } X \text{ and } r_\beta(\tilde{u}) = r_\beta(\tilde{v}) \text{ in } G_\beta \label{equiv1-comm} \\
& \Longleftrightarrow & \pi(u) = \pi(v)  \eqqcolon \beta \text{ in } X \text{ and } \forall \alpha \leq \beta \colon r_\alpha(\tilde{u}) = r_\alpha(\tilde{v}) \text{ in } 
G_\alpha \label{equiv2-comm}
\end{eqnarray}
where the second equivalence follows from Claim~\ref{pro-com-elementary-claim}.
  
  Note that Algorithm~\ref{alg-com-elementary} simulates on input $w$ the semiPFA $\mathcal{A}_{\alpha, n}$ on input $\tilde{w}_\alpha  \coloneqq r_\alpha(\tilde w)$ for each $\alpha \leq \pi(w)$ in lines~\ref{alg-com-elementary-init} and~\ref{alg-com-elementary-mult}.
  For $\alpha \in X$ with $\alpha \not\leq \pi(w)$ such a simulation is performed initially, but the state $q_\alpha$ is then eventually set to the default state $q_\alpha^\ast$ in line~\ref{alg-com-elementary-dflt}.
  It is now straightforward to analyze the error probability of $\mathcal{A}_n = (Q_n, \Sigma, \iota_n, \delta_n)$ for two input words $u, v \in 
  \Sigma^{\leq n}$. 
  
  \medskip\noindent
  \emph{Case 1.} $u \not\equiv_S v$. 
  If $\pi(u) \not\equiv_X \pi(v)$, then the error probability is zero, since the variable $\chi$
  will have different values after reading $u$~and~$v$.
  If $\pi(u)$ and $\pi(v)$ have the same value  $\beta$ in $X$, then we must have $r_\beta(\tilde{u}) \not\equiv_{G_\beta} r_\beta(\tilde{v})$
  by \eqref{equiv1-comm},
   where $\tilde{u}, \tilde{v} \in \Gamma^\ast$ are the implicitly computed words on inputs $u$ and $v$, respectively. 
  The algorithm inputs $r_\beta(\tilde{u})$ and $r_\beta(\tilde{v})$ into the semiPFA $\mathcal{A}_{\beta, n}$ and both words have length at most $n$; hence, the resulting states of $\mathcal{A}_{\beta, n}$ will differ with probability at least $1-(\log n)^{-c}$.
  
  \medskip\noindent
  \emph{Case 2.}  $u \equiv_S v$. Then the error probability is equal to zero: 
  The variable $\chi$ holds the value $\beta \in X$ of $\pi(u) \equiv_X \pi(v)$ after reading $u$ and $v$, respectively. 
  Moreover, since $r_\alpha(\tilde{u}) \equiv_{G_\alpha} r_\beta(\tilde{v})$ for all $\alpha \leq \beta$ by \eqref{equiv2-comm},
  all semiPFAs $\mathcal{A}_{\alpha, n}$ for $\alpha \leq \beta$ 
   will indeed be in the same state with probability one after reading $u$ and $v$.
  Morover, all variables $q_\alpha$ for $\alpha \not\le \beta$ have the value $q_\alpha^\ast$ after reading $u$ as well as after reading $v$.
\end{proof}

\section{Part C: Transfer results for distinguishers} \label{sec-C-transfer}

In this section, we will investigate the question to which extent semigroup constructions preserve
the space complexity of distinguishers.

 \subsection{Finitely generated subgroups, finite extensions, direct products}
 \label{sec-simple-transfer}
 
 For many algorithmic problems in group theory, the complexity is preserved when (i) going down to a finitely generated
 subgroup or (ii) going up to a finite extension.
 This is also true for the space complexity of distinguishers:

 \begin{theorem} \label{thm-subgroup}
 Let $M$ be an f.g.~monoid and $N$ an f.g.~submonoid of $M$. If $\mathcal{R}$ is an $(\epsilon_0(n), \epsilon_1(n))$-distinguisher for $M$
 then $N$ has an $(\epsilon_0(c \cdot n), \epsilon_1(c \cdot n))$-distinguisher with space complexity $s(\mathcal{R}, c \cdot n)$
for some constant $c$. 
 \end{theorem}
  
 \begin{proof}
 Fix the generating sets $\Sigma$ and $\Gamma$ of $M$ and $N$, respectively. Then for every generator $a \in \Gamma$ 
 there is a word $w_a \in \Sigma^*$ such that $a$ and $w_a$ represent the same element of $N$.
 We can then argue as in the proof of Lemma~\ref{lemma-gen-set}.
 \end{proof}

 \begin{theorem} \label{thm-finite-ext}
Assume that $H$ is an f.g.~group and $G$ is a subgroup of $H$ of finite index (hence, also $G$ must be f.g.).
Assume that $\mathcal{R}$ is an $(\epsilon_0(n), \epsilon_1(n))$-distinguisher for $G$.
Then $H$ has an $(\epsilon_0(c \cdot n), \epsilon_1(c \cdot n))$-distinguisher with space complexity $s(\mathcal{R}, c \cdot n) + \mathcal{O}(1)$
for some constant $c$. 
\end{theorem}

\begin{proof}
We can assume that $G$ is a normal subgroup of $H$:
It is well known that there exists
a normal subgroup $N$ of $H$ (the so-called normal core of $G$) such that $N \leq G$ and $N$ has finite index in $H$, see e.g.~\cite[Excercise 1.6.9]{Rob96}.
Since $N$ has finite index in $H$, also $N$ must be finitely generated.  Since $N \leq G$, Theorem~\ref{thm-subgroup} implies that
$N$ has an $(\epsilon_0(dn), \epsilon_1(dn))$-distinguisher with space complexity $s(\mathcal{R},dn)$ for some constant $d$.
This shows that we can replace $G$ by $N$.

For the rest of the proof we assume that $G$ is normal in $H$. Fix a generating set $\Sigma$ for $G$.
Let $h_1, \ldots, h_k \in H$ be a set of coset representatives for $G$ where $h_1 = 1$. Then
$\Gamma = \Sigma \cup \{h_2, \ldots, h_k\}$ generates $H$. Since $G$ is normal,
for every $a \in \Sigma$ and every $i \in [1,k]$ there exists an element $g(i,a) \in G$ such that
$h_i a = g(i,a) h_i$ in $H$ (with $g(1,a) = a$). Moreover, for all $i,j \in [1,k]$ there are $g(i,j) \in G$ and
$1 \le \alpha(i,j) \le k$ such that $h_i h_j = g(i,j) h_{\alpha(i,j)}$ in $H$ (with $\alpha(1,i) = \alpha(i,1)=i$ and
$g(i,1) = g(1,i) = 1$). Below, we identify the group elements $g(i,a), g(i,j)$ with words over the alphabet $\Sigma$.
Let $c$ be the maximal length of these words.

\begin{algorithm}[t]
 \SetKwComment{Comment}{(}{)}
\SetKwInput{KwGlobal}{global variables}
\SetKwInput{KwInit}{initialization}
\SetKwInput{KwNext}{next input letter}
\BlankLine
\KwGlobal{$i \in [1,k]$, $q \in Q_{c n}$} 
\BlankLine
\KwInit{}
guess $q$ according to the distribution $\iota_{cn}$ \label{line-init-q-r1} \\
$i  \coloneqq 1$  \\[0.5em]
\KwNext{$a \in \Gamma$}
\eIf{$a \in \Sigma$}{$q  \coloneqq \delta_{cn}(q, g(i,a))$}
{let $a = h_j$ with $j \ge 2$ \\
$q  \coloneqq \delta_{cn}(q, g(i,j))$ \\
$i  \coloneqq \alpha(i,j)$ \\
\BlankLine
\caption{An $(\epsilon_0(cn), \epsilon_1(cn))$-distinguisher for the finite extension $H$ of $G$.} 
\label{alg dist finite index}}
\end{algorithm}

Let $\mathcal{R} = (\mathcal{A}_n)_{n \ge 0}$ with $\mathcal{A}_{n} = (Q_{n},\Sigma,\iota_{n},\delta_{n})$
be the $(\epsilon_0 (n),\epsilon_1 (n))$-distinguisher for $G$ with respect to $\Sigma$.
An $(\epsilon_0 (cn),\epsilon_1 (cn))$-distinguisher for $H$ with respect to $\Gamma$ 
is shown in Algorithm~\ref{alg dist finite index}.

Fix an input length $n \geq 0$. Algorithm~\ref{alg dist finite index} stores a coset representative $h \in \{h_1,\ldots,h_k\}$ 
in the form of an index $i \in [1,k]$
and a state $q \in Q_{c n}$ of the semiPFA $\mathcal{A}_{c n}$. Initially, we set $i  \coloneqq1$ (since $h_1$ is the group identity) 
and $q$ is guessed according to
the initial state distribution $\iota_{c n}$.

Consider an input word $w \in \Sigma^{\le n}$ and 
assume that $q_0$ is the initially guessed value for the variable $q$. Then, after reading an input word $w \in\Gamma^{\le n}$
representing the group element $g h_j \in H$ ($g \in G$, $j \in [1,k]$), the program variable $i$ has the value $j$ (thus, the right coset will be computed) and the state
variable $q$ satisfies $q = \delta_{cn}(q_0, w_g)$ for a word $w_g \in \Sigma^*$ of length at most $c n$ representing the group 
element $g$.
From this it follows easily that the algorithm is an $(\epsilon_0(cn), \epsilon_1(cn))$-distinguisher for $H$.
Since $i$ needs space $\mathcal{O}(1)$ and $q$ needs space $s(\mathcal{R}, c \cdot n)$, the space complexity of the algorithm is $s(\mathcal{R}, c \cdot n) + \mathcal{O}(1)$.
\end{proof}
 Also direct products preserve the space complexity of distinguishers (simply run the distinguishers
 for the two factor groups in parallel):
 
 \begin{lemma} \label{thm-direct-prod}
Let $M$ (resp., $N$) be a finitely generated monoid for which there exists an $(\epsilon_0, \epsilon_1)$-distinguisher $\mathcal{R}$
(resp., $(\zeta_0, \zeta_1)$-distinguisher $\mathcal{S}$). 
Then there exists a $(\max\{\epsilon_0,\zeta_0\}, \epsilon_1+\zeta_1)$-distinguisher
for $M \times N$ with space complexity $s(\mathcal{R},n) + s(\mathcal{S},n)$.\footnote{Here, $\max\{\epsilon_0,\zeta_0\}$ denotes the pointwise
maximum of the two functions $\epsilon_0$ and $\zeta_0$.}
\end{lemma}
Recall that a group $G$ is metabelian if it has an abelian normal subgroup $A \le G$ such that the 
quotient $G/A$ is abelian as well. Every finitely generated metabelian group can be embedded into 
a direct product of finitely generated linear groups (over fields of different characteristics) \cite{Wehr80}. Hence, 
with Lemma~\ref{thm-direct-prod} and Theorem~\ref{thm-lin} we obtain:

\begin{corollary} \label{coro-metabel}
For every finitely generated metabelian group and every $c > 0$ there exists a $(1/n^c,0)$-distinguisher 
with space complexity $\mathcal{O}(\log n)$.
\end{corollary}

  \subsection{Distinguishers for graph products} \label{sec-graph-prod}
  
 In this section we investigate graph products of monoids. This construction is a common generalization of free and direct products.
 Originally, graph products were introduced for groups \cite{Gre90}, but the construction easily extends to monoids (and even semigroups; see
 the end of this section) \cite{VelCos01}.
 
  Fix monoids $M_1, \ldots, M_c$ and a symmetric and irreflexive edge relation $I \subseteq [1,c] \times [1,c]$.
  Here, we only consider the case where all $M_i$ are finitely generated.
 Write $M_i$ as $M_i = \langle \Sigma_i \mid R_i\rangle^* = \Sigma_i^*/\rho_{R_i}$ with $\Sigma_i$ finite, whereas
 $R_i \subseteq \Sigma_i^* \times \Sigma_i^*$ is potentially infinite.
 Assume w.l.o.g.~that the generating sets $\Sigma_i$ are pairwise 
 disjoint. Then the graph product $M = \mathsf{GP}(M_1, \ldots, M_c, I)$   
is defined as $M = \langle \Sigma \mid R \rangle^* = \Sigma^*/\rho_R$ where
\begin{eqnarray}
\Sigma & = & \bigcup_{i=1}^c \Sigma_i \text{ and } \label{def-Sigma-GP} \\
R & = & \bigcup_{i=1}^c R_i   \cup \bigcup_{(i,j) \in I} \{ (ab, ba) \colon a \in \Sigma_i, b \in \Sigma_j\} . \label{relation-R}
\end{eqnarray}
Up to isomorphism, this definition does not depend
on the presentations $(\Sigma_i, R_i)$ for the monoids $G_i$. Note that $M$ is finitely generated by $\Sigma$.

In case $I = \emptyset$, one obtains the free product $\Asterisk_{i \in [1,c]} M_i$
of the monoids $M_1, \ldots, M_c$. On the other hand, if
$([1,c], I)$ is a complete graph, then $M$ is the direct product $\prod_{i \in [1,c]} M_i$.
The graph product $\mathsf{GP}(M_1, \ldots, M_c, I)$ 
is obtained from the free product 
$\free_{i \in [1,c]} M_i$ by allowing elements from monoids $M_i$ and $M_j$ with $(i,j) \in I$ to commute.
 
Graph products  $\mathsf{GP}(G_1, \ldots, G_c, I)$, where every $G_i$ is isomorphic to the infinite cyclic group $\mathbb{Z}$,
are also known as \emph{graph groups} (or \emph{right-angled Artin groups}). We will make use of the fact that
every graph group is linear \cite{Hum94}.

Consider the graph product $M = \mathsf{GP}(M_1, \ldots, M_c, I)$ and, as above, let $\Sigma_i$ 
be a generating set for $M_i$ and $\Sigma  =  \bigcup_{i=1}^c \Sigma_i$.
 For a word $u \in \Sigma^*$, the \emph{block factorization} of $u$ is the unique factorization $u = u_1 u_2 \cdots u_l$
 such that $l \geq 0$,  $u_1, \ldots, u_l \in \bigcup_{i \in [1,c]} \Sigma_i^+$ and $u_j u_{j+1} \not\in \bigcup_{i \in [1,c]} \Sigma_i^+$
 for all $j \in [1,l-1]$. The factors
 $u_1, u_2, \ldots, u_l$ are also called the \emph{blocks} of $u$.
 
 We define several rewrite relations on words from $\Sigma^*$ as follows. Take $u,v \in \Sigma^*$ and let
 $u = u_1 u_2 \cdots u_l$ be the block factorization of $u$. 
 \begin{itemize}
 \item We write $u \to_s v$ ({\it s} for swap) if there is $i \in [1,l-1]$ and $(j,k) \in I$
 such that $u_i \in \Sigma_j^+$, $u_{i+1} \in \Sigma_k^+$ and 
 $v = u_1 u_2 \cdots u_{i-1} u_{i+1} u_i u_{i+2} \cdots u_l$. In other words, we 
  swap consecutive commuting blocks. 
 \item We write $u \to_d v$ ({\it d} for delete) if there is $i \in [1,l]$  and $j \in [1,c]$ 
 such that $u_i \in \Sigma_j^+$, $u_i = 1$ in $M_j$ and $v = u_1 u_2 \cdots u_{i-1} u_{i+1} u_{i+2} \cdots u_l$.
In other words, we delete a block that is trivial in its monoid.
\item We write $u \leftrightarrow_r v$ ({\it r} for replace) if there is $i \in [1,l]$  and $j \in [1,c]$ 
 such that $u_i, u'_i \in \Sigma_j^+$, $u_i \equiv_{M_j} u'_i$ and $v = u_1 u_2 \cdots u_{i-1} u'_i u_{i+1} u_{i+2} \cdots u_l$.
In other words, we replace a block by an equivalent non-empty word. 
\end{itemize}
Clearly, in all three cases we have $u \equiv_M v$. If  $u \rightarrow_{d} v$, then the number of blocks of $v$ is smaller than
the number of blocks of $u$ and if $u \to_s v$ then the number of blocks of $v$ can be smaller than
the number of blocks of $u$ (since two blocks can be merged into a single block).
Note that $\leftrightarrow_r$ is a symmetric relation but neither $\to_d$ nor $\to_s$ are symmetric. For $\to_d$ this is clear. 
For $\to_s$ note that for $a \in \Sigma_j$, $b \in \Sigma_k$ with $(j,k) \in I$ we have $aba \to_s aab$ but $aab \to_s aba$ does
not hold since $aa$ is a single block.
We write 
$u \to_{\mathit{sd}} v$ if $u \to_s v$ or $u \to_{d} v$. 

Let us say that a word $u \in \Sigma^*$ with $l$ blocks
is \emph{reduced} if there is no $v \in \Sigma^*$ such that $u \to_{\mathit{sd}}^* v$ and $v$ has at most $l-1$ blocks.
Note that $\to_s$ restricted to reduced words is symmetric.
Clearly, for every word $u \in \Sigma^*$ there is a reduced word $u' \in \Sigma^*$ such that
$u \to_{\mathit{sd}}^* u'$. The following result can be found in \cite[Proposition~3.8]{DanGou23} in a slightly different notation.\footnote{The difference is that
in  \cite{DanGou23} elements of $M_i \setminus \{1\}$ are considered as single symbols that replace the blocks in our setting. This makes our
replace relation $\leftrightarrow_r$ obsolete. On the other hand, in \cite{DanGou23} merge steps $ab \to c$ for $a,b,c \in M_i \setminus \{1\}$ with $ab=c$ in $M_i$ are needed. In our setting, these steps are implicit when two blocks are merged.} 

\begin{lemma} \label{lemma-rewrite-GP-2}
Let $M$ be a graph product as above and $u,v \in \Sigma^*$.
Then the following are equivalent:
 \begin{itemize}
\item $u \equiv_M v$.
\item There are reduced words $u',v'$ such that $u \to_{\mathit{sd}}^* u'$, $v \to_{\mathit{sd}}^* v'$, and $u'  \leftrightarrow_r^* v'$.
\end{itemize}
\end{lemma}
Consider a word $u \in \Sigma^*$ and its block factorization $u = u_1 u_2 \ldots u_l$.
 A \emph{pure prefix} of $u$ is a word $u_{k_1} u_{k_2} \cdots u_{k_m}$ such that for some $i \in [1,c]$ we have
 \begin{itemize}
 \item $1 \leq k_1 < k_2 < \cdots < k_m \leq l$, 
\item $u_{k_1}, u_{k_2}, \ldots, u_{k_m} \in \Sigma_i^+$, and
\item if $k_j <  p < k_{j+1}$ for some $j \in [1,m-1]$ or $1 \le p < k_1$ then $u_p \notin \Sigma^+_i$.
\end{itemize}
Our first main result on graph products is the following:

\begin{theorem}  \label{thm-GP}
Let $M = \mathsf{GP}(M_1, \ldots, M_c, I)$ be a graph product as above, where every $M_i$ is a left-cancellative monoid. Let
$\mathcal{R}_i =(\mathcal{A}_{i,n})_{n \ge 0}$ be an $(\epsilon_0,\epsilon_1)$-distinguisher for $M_i$. Fix a constant $d \geq 1$ and define 
\begin{eqnarray*}
\zeta_0(n) &=& 2 \epsilon_0(n) (n+c)^2 + 1/n^d, \\
\zeta_1(n) &=& 2 \epsilon_1(n) (n+c)^2.
\end{eqnarray*} 
Then, there is a $(\zeta_0,\zeta_1)$-distinguisher for the graph product $M$
with space complexity $\mathcal{O}(\sum_{i=1}^c s(\mathcal{R}_i,n) + \log n)$.\footnote{Note that Theorem~\ref{thm-GP} only makes sense if $\epsilon_i(n) < 1/2(n+c)^2$ for $i \in \{1,2\}$.}
\end{theorem}

\begin{proof}
Let us fix an input length $n$ and let
$\mathcal{A}_{i,n} = (Q_{i,n}, \Sigma_i, \iota_{i,n}, \rho_{i,n})$, where w.l.o.g.~$Q_{i,n} = [1, |Q_{i,n}|]$ consists of the first $|Q_{i,n}|$ natural numbers.
To simplify the notation, we will omit the second subscript $n$ in the following, i.e., we write
$\mathcal{A}_{i} = (Q_{i}, \Sigma_i, \iota_{i}, \rho_{i})$ with $Q_{i} = [1, |Q_{i}|]$ for the semiPFA $\mathcal{A}_{i,n}$.  
For a state $q \in Q_i$, we write $\equiv_{q}$ for the 
 equivalence relation $\equiv_{\mathcal{A}_i,q}$
defined in Section~\ref{sec-injective}.
For a word $w \in \Sigma^*$, we write $\pi_i(w)$ for the projection $\pi_{\Sigma_i}(w)$.

For every $i \in [1,c]$ we choose a new symbol $a_i$ and consider the infinite cyclic group $\langle a_i \rangle \cong \mathbb{Z}$.
Let $\Delta = \{a_1, a_1^{-1}, \ldots, a_c, a_c^{-1}\}$ and consider the graph group $H = \mathsf{GP}(\langle a_1\rangle, \ldots, \langle a_c\rangle, I)$.
Since every graph group is linear, there is a $(1/m^d,0)$-distinguisher $(\mathcal{B}_m)_{m \ge 0}$ 
with space complexity $\mathcal{O}(\log m)$ for $H$ by Theorem~\ref{thm-lin}.
Let $\mathcal{B}_m = (R_m, \Delta, \lambda_m, \sigma_m)$.

We build from the semiPFA $\mathcal{A}_{i}$ and a state $q \in Q_i$ a sequential transducer (see Section~\ref{sec transducer})
$\mathcal{T}_{i,q} = (Q_{i}, \Sigma_i, \{a_i,a_i^{-1}\}, q, \delta_{i})$, where 
$$
\delta_i(p, a) = (\rho_i(p,a),  a_i^{-p} a_i^{\rho_i(p,a)})
$$
for all $a \in \Sigma_i$ and $p \in Q_i$. 
Thus, $\mathcal{T}_{i,q}$ is obtained from $\mathcal{A}_{i}$ by replacing every transition $p \xrightarrow{a} q$ of 
$\mathcal{A}_{i}$ by
\[
p \xrightarrow{a/a_i^{-p} a_i^q} q 
\]
and making $q$ the intial state of $\mathcal{T}_{i,q}$.
Let $f_{i,q} = f_{\mathcal{T}_{i,q}} : \Sigma_i^* \to \{a_i,a_i^{-1}\}^*$ be the 
function computed by the sequential transducer $\mathcal{T}_{i,q}$.

For a tuple $\bar{q} = (q_1, \ldots, q_c) \in \prod_{i \in [1,c]} Q_{i}$
of states from the semiPFAs $\mathcal{A}_{i}$ we define the sequential transducer $\mathcal{T}_{\bar{q}}$ by taking the direct product
of the sequential transducers $\mathcal{T}_{i,q_i}$ ($i \in [1,c]$). Formally, it is defined as follows:
$$
\mathcal{T}_{\bar{q}} = \bigg(\prod_{i \in [1,c]} Q_{i}, \Sigma, \Delta, \bar{q}, \delta\bigg)
$$
where for every $i \in [1,c]$, $a \in \Sigma_i$, and $(p_1, \ldots, p_c) \in \prod_{i \in [1,c]} Q_{i}$ we have
$$
\delta( (p_1, \ldots, p_c), a) = \big( (p_1, \ldots, p_{i-1}, \rho_i(p_i,a), p_{i+1},\ldots,p_c), a_i^{-p_i} a_i^{\rho_i(p_i,a)}\big).
$$
Let $f_{\bar{q}} = f_{\mathcal{T}_{\bar{q}}} : \Sigma^* \to \Delta^*$ be the function computed by $\mathcal{T}_{\bar{q}}$
(note that it is not a homomorphism).
Let 
\begin{equation} \label{def-m-graph-prod}
m = 2 \cdot n \cdot \max\{ |Q_{i}| \colon i \in [1,c]\} \leq n \cdot 2^{1+ \max\{  s(\mathcal{R}_i,n) \colon i \in [1,c]\} }.
\end{equation}
Note that $|f_{\bar{q}}(w)| \leq m$ if $|w| \leq n$ (we consider $a_i^{-p_i} a_i^{\rho_i(p_i,a)}$ as a word of length at most $2 |Q_i|$ 
and do not replace it by $a_i^{-p_i + \rho_i(p_i,a)}$).

Our distinguisher for $M$ and input length $n$ will use the semiPFA
$\mathcal{B}_m$ for the graph group $H$. States of $\mathcal{B}_m$ can be stored with 
$\mathcal{O}(\log m) \leq \mathcal{O}(\log n + \max\{  s(\mathcal{R}_i,n) \colon i \in [1,c]\})$ bits.
Basically, for an input word $w \in \Sigma^{\le n}$ the algorithm  simulates the semiPFA $\mathcal{A}_{i}$ ($i \in [1,c]$)
on the projections $w_i = \pi_{i}(w)$ and feeds the word $f_{\bar{q}}(w)$ into the semiPFA $\mathcal{B}_m$. Here, the
state tuple $\bar{q}$ is randomly guessed in the beginning according to the distributions $\iota_{i}$. 
Algorithm~\ref{algo-graph-product} shows the pseudocode of the distinguisher. It 
stores at most $\sum_{i=1}^c s(\mathcal{R}_i,n) + \mathcal{O}(\max\{  s(\mathcal{R}_i,n) \colon i \in [1,c]\} + \log n)$ bits.

\begin{algorithm}[t]
\SetKwComment{Comment}{(}{)}
\SetKwInput{KwGlobal}{global variables}
\SetKwInput{KwInit}{initialization}
\SetKwInput{KwNext}{next input letter}
\BlankLine
\KwGlobal{$q_i  \in Q_{i}$ for all $i \in [1,c]$,  $r \in R_m$} 
\BlankLine
\KwInit{}
guess $q_i \in Q_{i} = [1,|Q_{i}|]$ for all $i \in [1,c]$ according to the input distribution $\iota_{i}$ of $\mathcal{A}_{i}$ \\
guess $r \in R_m$ according to the input distribution $\lambda_m$ of $\mathcal{B}_m$ \\
\BlankLine
\KwNext{$a \in \Sigma$}
\If{$a \in \Sigma_i$}
  {$r  \coloneqq \sigma_m(r, a_i^{-q_i} a_i^{\rho_{i}(q_i,a)})$  \\
   $q_i  \coloneqq \rho_{i}(q_i,a)$}
\caption{A $(\zeta_0,\zeta_1)$-distinguisher for a graph product of left-cancellative monoids.  \label{algo-graph-product}}
\end{algorithm}

Before we analyze the error probability of the algorithm we need some preparations. For $i \in [1,c]$ 
and a word $w \in \Sigma^*$, let $\mathcal{P}_i(w)  \coloneqq \mathcal{P}(\pi_i(w))$ be the set of all prefixes of the projection $\pi_i(w)$.
Assume that  $y \in \Sigma_i^+$ is a block of $w$ and write $w = xyz$. We then have
$f_{\bar{q}}(w) = f_{\bar{q}}(x) f_{\bar{r}}(y) f_{\bar{s}}(z)$, where $\delta(\bar{q}, x) = (\bar{r}, f_{\bar{q}}(x))$ and
$\delta(\bar{r}, y) = (\bar{s}, f_{\bar{r}}(y))$. The word $f_{\bar{r}}(y)$ is also a block of 
$f_{\bar{q}}$ (for this it is important that every $\mathcal{T}_{j,q}$ translates non-empty words into non-empty
words). Since $y \in \Sigma_i^+$ we have
$r_j = s_j$ for all $j \in [1,c] \setminus \{i\}$ and $f_{\bar{r}}(y) = f_{i,r_i}(y)$.
In addition, the definition of the sequential transducer $\mathcal{T}_{i,r_i}$ implies that  
$f_{\bar{r}}(y) \equiv_{\langle a_i \rangle} a_i^{-r_i+s_i}$.

Consider now two input words $u,v \in \Sigma^{\le n}$.
Let $\mathcal{S}_i = \mathcal{P}_i(u)  \cup \mathcal{P}_i(v)$ and $n_i = |\mathcal{S}_i|$ for $i \in [1,c]$.
By Lemma~\ref{lemma-injective} we have for all $i \in [1,c]$:
\begin{eqnarray}
\Prob_{q \sim \iota_i}[\equiv_{q} \text{refines} \equiv_{M_i} \text{on } \mathcal{S}_i] & \geq & 1-\epsilon_0(n) \binom{n_i}{2} \ge
1- \epsilon_0(n) n_i^2/2,  \label{graph-prod-refine-1} \\
\Prob_{q \sim \iota_i}[\equiv_{M_i} \text{refines} \equiv_{q}\text{on } \mathcal{S}_i] & \geq & 1-\epsilon_1(n) \binom{n_i}{2} \ge
1-  \epsilon_1(n) n_i^2/2 . \label{graph-prod-refine-2}
\end{eqnarray}

\begin{claim} \label{claim-GP1}
Assume that $\bar{q} = (q_1, \ldots, q_c)$ is such that 
$\equiv_{M_i}$ refines $\equiv_{q_i}$ on $\mathcal{S}_i$  for every $i \in [1,c]$.
If $u \to_{\mathit{sd}}^* u'$ and $v \to_{\mathit{sd}}^* v'$, then $f_{\bar{q}}(u) \to_{\mathit{sd}}^*  f_{\bar{q}}(u')$,
$f_{\bar{q}}(v) \to_{\mathit{sd}}^*  f_{\bar{q}}(v')$
 and $\equiv_{M_i}$ refines $\equiv_{q_i}$ on $\mathcal{P}_i(u') \cup \mathcal{P}_i(v')$ for every $i \in [1,c]$.
 \end{claim}

\medskip
\noindent
\emph{Proof of Claim~\ref{claim-GP1}.} It suffices to show the following: If $u \to_{\mathit{sd}} u'$ holds, 
then $f_{\bar{q}}(u) \to_{\mathit{sd}}  f_{\bar{q}}(u')$
 and $\equiv_{M_i}$ refines $\equiv_{q_i}$ on $\mathcal{P}_i(u') \cup \mathcal{P}_i(v)$ for every $i \in [1,c]$.
 From this (and the symmetric statement where $v \to_{\mathit{sd}}  v'$ and $u=u'$)
 we obtain the general statement by induction on the number of $\to_{\mathit{sd}}$-steps.
 We distinguish two cases.
 
 \medskip
 \noindent
\emph{Case 1.} $u \to_s u'$. We must have $u = x y_1 y_2 z$ and $u' = x y_2 y_1 z$ for 
blocks $y_1, y_2$ such that $y_1 \in \Sigma_i^+$, $y_2 \in \Sigma_j^+$ and $(i,j) \in I$ (in particular $i \neq j$).
We obtain 
\begin{eqnarray*}
f_{\bar{q}}(u) & = & f_{\bar{q}}(x) f_{\bar{p}}(y_1) f_{\bar{r}}(y_2) f_{\bar{s}}(z) \text{ and } \\
f_{\bar{q}}(u') & = & f_{\bar{q}}(x) f_{\bar{p}}(y_2) f_{\bar{r}'}(y_1) f_{\bar{s}}(z),
\end{eqnarray*}
where $\delta(\bar{q}, x) = (\bar{p}, f_{\bar{q}}(x))$,
$\delta(\bar{p}, y_1) = (\bar{r}, f_{\bar{p}}(y_1))$, 
$\delta(\bar{r}, y_2) = (\bar{s}, f_{\bar{r}}(y_2))$, 
$\delta(\bar{p}, y_2) = (\bar{r}', f_{\bar{p}}(y_2))$, and
$\delta(\bar{r}', y_1) = (\bar{s}, f_{\bar{r}'}(y_1))$.
If we write $\bar{p} = (p_1, \ldots, p_c)$, then there are states $r_i \in Q_i$ and 
$r_j \in Q_j$ such that 
\begin{eqnarray}
\bar{r} &=& (p_1, \ldots, p_{i-1}, r_i, p_{i+1}, \ldots, p_c), \label{tuple-r} \\
\bar{r}' &=& (p_1, \ldots, p_{j-1}, r_j, p_{j+1}, \ldots, p_c), \text{ and }  \label{tuple-r'} \\
\bar{s} &=& (p_1, \ldots, p_{i-1}, r_i, p_{i+1}, \ldots, p_{j-1}, r_j, p_{j+1}, \ldots, p_c) \label{tuple-s}
\end{eqnarray}
(we assume w.l.o.g.~that $i < j$).
Moreover, $f_{\bar{p}}(y_1) = f_{i,p_i}(y_1) = f_{\bar{r}'}(y_1) \in \{a_i,a_i^{-1}\}^+$ and
$f_{\bar{r}}(y_2) = f_{j,p_j}(y_2) = f_{\bar{p}}(y_2) \in \{a_j,a_j^{-1}\}^+$.
Thus, we have 
\begin{eqnarray*}
f_{\bar{q}}(u) & = & f_{\bar{q}}(x) f_{\bar{p}}(y_1) f_{\bar{r}}(y_2) f_{\bar{s}}(z) \\
& = &  f_{\bar{q}}(x) f_{i,p_i}(y_1) f_{j,p_j}(y_2) f_{\bar{s}}(z) \\
& \to_s  & f_{\bar{q}}(x) f_{j,p_j}(y_2) f_{i,p_i}(y_1) f_{\bar{s}}(z) \\
& = & f_{\bar{q}}(x) f_{\bar{p}}(y_2) f_{\bar{r}'}(y_1) f_{\bar{s}}(z) \\
& = & f_{\bar{q}}(u') .
\end{eqnarray*}
Moreover, since $\mathcal{P}_k(u') = \mathcal{P}_k(u)$ and
$\equiv_{M_k}$ refines $\equiv_{q_k}$ on $\mathcal{S}_k$ for all $k \in [1,c]$, it follows that
$\equiv_{M_k}$ refines $\equiv_{q_k}$ on $\mathcal{P}_k(u') \cup \mathcal{P}_k(v)$ for all $k \in [1,c]$.

 \medskip
 \noindent
\emph{Case 2.} $u \to_{d} u'$. Then we obtain a factorization $u = xyz$, where $y \in \Sigma_i^+$ is a block, $y \equiv_{M_i} \varepsilon$, and
$u' = xz$. We obtain a factorization 
$$f_{\bar{q}}(u) = f_{\bar{q}}(x) f_{\bar{r}}(y) f_{\bar{s}}(z),$$ 
where $\delta(\bar{q}, x) = (\bar{r}, f_{\bar{q}}(x))$ and
$\delta(\bar{r}, y) = (\bar{s}, f_{\bar{r}}(y))$.  The word $f_{\bar{r}}(y)$ is a block of $f_{\bar{q}}(u)$.
For the projection $\pi_{i}(u)$ we have $\pi_{i}(u) = \pi_{i}(x) y \pi_{i}(z)$.
Since $\equiv_{M_i}$ refines $\equiv_{q_i}$ on $\mathcal{S}_i$ and $\pi_{i}(x) \equiv_{M_i} \pi_{i}(x) y$, 
we obtain $\pi_{i}(x) \equiv_{q_i} \pi_{i}(x) y$. 
Since $r_i$ (resp., $s_i$) is the state reached from $q_i$ by the automaton $\mathcal{A}_i$ after reading 
$\pi_{i}(x)$ (resp., $\pi_{i}(x) y$), we obtain $r_i = s_i$ and hence $\bar{r} = \bar{s}$.
This implies 
\[
f_{\bar{r}}(y) = f_{i,r_i}(y) \equiv_{\langle a_i \rangle} a_i^{-r_i} a_i^{s_i} \equiv_{\langle a_i \rangle} \varepsilon.
\]
Moreover, we have 
\[
f_{\bar{q}}(u) =  f_{\bar{q}}(x) f_{\bar{r}}(y) f_{\bar{s}}(z)  \to_d f_{\bar{q}}(x)f_{\bar{s}}(z) =   
f_{\bar{q}}(x) f_{\bar{r}}(z) = f_{\bar{q}}(xz)  = f_{\bar{q}}(u').
\]
It remains to show that $\equiv_{M_j}$ refines $\equiv_{q_j}$ on $\mathcal{P}_j(u') \cup \mathcal{P}_j(v)$ for every $j \in [1,c]$.
For $j \neq i$ this is clear since $\mathcal{P}_j(u') \cup \mathcal{P}_j(v) = \mathcal{S}_j$. For $j = i$ we can use
Lemma~\ref{lemma-injective-reduce} for the words $\pi_i(u) = \pi_{i}(x) y \pi_{i}(z)$ and $\pi_i(v)$.
This concludes the proof of Claim~\ref{claim-GP1}.

\begin{claim} \label{claim-GP2}
Assume that $\bar{q} = (q_1, \ldots, q_c)$ is such that 
$\equiv_{q_i}$ refines $\equiv_{M_i}$ on $\mathcal{S}_i$  for every $i \in [1,c]$.
If $f_{\bar{q}}(u) \to_{\mathit{sd}}^*  \tilde{u}$ and
$f_{\bar{q}}(v) \to_{\mathit{sd}}^*  \tilde{v}$, then there are $u', v' \in \Sigma^*$ such that 
$u \to_{\mathit{sd}}^* u'$,  $v \to_{\mathit{sd}}^* v'$, $f_{\bar{q}}(u') = \tilde{u}$, $f_{\bar{q}}(v') = \tilde{v}$
and $\equiv_{q_i}$ refines $\equiv_{M_i}$ on $\mathcal{P}_i(u') \cup \mathcal{P}_i(v')$ for every $i \in [1,c]$.
\end{claim}

\medskip
\noindent
\emph{Proof of Claim~\ref{claim-GP2}.} The proof is very similar to the proof of Claim~\ref{claim-GP1}.
As in the proof of Claim~\ref{claim-GP1}, it suffices to consider the case where $f_{\bar{q}}(u) \to_{\mathit{sd}}  \tilde{u}$ and
$\tilde{v} = f_{\bar{q}}(v)$. 

 \medskip
 \noindent
\emph{Case 1.} $f_{\bar{q}}(u) \to_s \tilde{u}$. Since the blocks of $u$ are translated into the blocks
of $f_{\bar{q}}(u)$ by the sequential transducer $\mathcal{T}_{\bar{q}}$, we obtain a factorization
$u = x y_1 y_2 z$ for 
blocks $y_1 \in \Sigma_i^+, y_2 \in \Sigma_j^+$ of $u$ such that $(i,j) \in I$
(in particular $i \neq j$) and
\begin{eqnarray*}
f_{\bar{q}}(u) & = & f_{\bar{q}}(x) f_{\bar{p}}(y_1) f_{\bar{r}}(y_2) f_{\bar{s}}(z), \\
\tilde{u} & = & f_{\bar{q}}(x) f_{\bar{r}}(y_2) f_{\bar{p}}(y_1) f_{\bar{s}}(z).
\end{eqnarray*}
Here, the state tuples $\bar{p} = (p_1,\ldots,p_c)$, $\bar{r}$, and $\bar{s}$ are as in the proof of Claim~\ref{claim-GP1}, see in particular
\eqref{tuple-r} and \eqref{tuple-s}.
We can then define the tuple $\bar{r}'$ as in \eqref{tuple-r'} and get
\begin{alignat*}{3}
f_{\bar{p}}(y_1) & = f_{i,p_i}(y_1) & \ = \ & f_{\bar{r}'}(y_1) & \ \in \ & \{a_i,a_i^{-1}\}^+ \text{ and } \\
f_{\bar{r}}(y_2)  & = f_{j,p_j}(y_2) & \ = \ & f_{\bar{p}}(y_2) & \ \in \ & \{a_j,a_j^{-1}\}^+.
\end{alignat*}
We thus have 
$$
\tilde{u} = f_{\bar{q}}(x) f_{\bar{r}}(y_2) f_{\bar{p}}(y_1) f_{\bar{s}}(z) =
f_{\bar{q}}(x) f_{\bar{p}}(y_2) f_{\bar{r}'}(y_1) f_{\bar{s}}(z) = f_{\bar{q}}(xy_2y_1z).
$$
Clearly, we also have $u = x y_1 y_2 z \to_s xy_2y_1z$. So, we can set $u' = xy_2y_1z$.
Since $\mathcal{P}_k(u') = \mathcal{P}_k(u)$ for all $k \in [1,c]$, it follows that
$\equiv_{q_k}$ refines $\equiv_{M_k}$ on $\mathcal{P}_k(u') \cup \mathcal{P}_k(v)$ for all $k \in [1,c]$.

\medskip
 \noindent
\emph{Case 2.} $f_{\bar{q}}(u) \to_d \tilde{u}$.
Then we obtain a factorization $u= xyz$, where $y \in \Sigma_i^+$ is a block of $u$, 
\begin{eqnarray*}
f_{\bar{q}}(u)  & = & f_{\bar{q}}(x) f_{\bar{r}}(y) f_{\bar{s}}(z), \text{ and }\\
\tilde{u}  & = &  f_{\bar{q}}(x)  f_{\bar{s}}(z).  
\end{eqnarray*}
The state tuples $\bar{r}$ and $\bar{s}$ are such that
$\delta(\bar{q}, x) = (\bar{r}, f_{\bar{q}}(x))$ and
$\delta(\bar{r}, y) = (\bar{s}, f_{\bar{r}}(y))$.  Moreover, the word $f_{\bar{r}}(y)$ is a block of $f_{\bar{q}}(u)$ with
$$
a_i^{-r_i} a_i^{s_i} \equiv_{\langle a_i \rangle} f_{\bar{r}}(y) \equiv_{\langle a_i \rangle} \varepsilon.
$$
This implies that $r_i = s_i$ and hence $\bar{r} = \bar{s}$.
We therefore have 
$$\rho_i(q_i, \pi_{i}(x)) = r_i = s_i = \rho_i(q_i, \pi_{i}(x)y).$$ 
Since  $\equiv_{q_i}$ refines $\equiv_{M_i}$ on $\mathcal{S}_i$ and $\pi_{i}(x), \pi_{i}(x)y \in \mathcal{S}_i$,
 we get $\pi_{i}(x) \equiv_{M_i} \pi_{i}(x) y$, i.e., $y \equiv_{M_i} \varepsilon$ (here we use the assumption that $M_i$ is left-cancellative).
If we set $u' = xz$ we get $u \to_d u'$ and 
$$
\tilde{u} = f_{\bar{q}}(x) f_{\bar{s}}(z) = f_{\bar{q}}(x) f_{\bar{r}}(z) = f_{\bar{q}}(xz) = f_{\bar{q}}(u').
$$ 
It remains to show that $\equiv_{q_j}$ refines $\equiv_{M_j}$ on $\mathcal{P}_j(u') \cup \mathcal{P}_j(v)$ for every $j \in [1,c]$.
For $j \neq i$ this is clear since $\mathcal{P}_j(u') \cup \mathcal{P}_j(v) = \mathcal{S}_j$. For $j = i$ we can use
Lemma~\ref{lemma-injective-reduce2} for the words $\pi_i(u) = \pi_{i}(x) y \pi_{i}(z)$ and $\pi_i(v)$.
This concludes the proof of Claim~\ref{claim-GP2}.

\medskip
 \noindent
We now estimate the error probability of Algorithm~\ref{algo-graph-product} for the input words $u$ and $v$. There are two cases to consider:

\medskip
\noindent
\emph{Case 1.} $u \equiv_M v$. 
We will show that
Algorithm~\ref{algo-graph-product} reaches with probability at least $1- 2 \epsilon_1(n)(n+c)^2$
the same state when running on $u$ and $v$, respectively. 
For this,
assume that the randomly selected initial states $q_i \in Q_{i}$ are such that 
$\equiv_{M_i}$ refines $\equiv_{q_i}$ on $\mathcal{S}_i$ for all $i \in [1,c]$. 
By \eqref{graph-prod-refine-2} this happens with probability at least 
\[
1 - \epsilon_1(n) \sum_{i \in [1,c]} n_i^2/2 \; \geq \; 1 - \frac{\epsilon_1(n)}{2} \bigg(\sum_{i \in [1,c]} n_i\bigg)^2 \geq  1 - 2 \epsilon_1(n) (n+c)^2
\]
(note that $\sum_{i \in [1,c]} n_i$ is at most $2n+c$, where the additive term $c$ arises, since the empty prefix has to be counted $c$
times).

First note that $u \equiv_M v$ implies $\pi_{i}(u) \equiv_{M_i} \pi_{i}(v)$ for all $i \in [1,c]$.
Since $\equiv_{M_i}$ refines $\equiv_{q_i}$ on $\mathcal{S}_i$, we obtain 
$\rho_i(q_i,\pi_{i}(u)) = \rho_i(q_i,\pi_{i}(v))$. It remains to show that after reading $u$ and $v$,
also the states of $\mathcal{B}_m$ are the same. For this we show that
$f_{\bar{q}}(u) \equiv_H f_{\bar{q}}(v)$ in the graph group $H$.

From Lemma~\ref{lemma-rewrite-GP-2} it follows that there are reduced words $u', v' \in \Sigma^{\le n}$ such that
$u \to_{\mathit{sd}}^* u'$, $v \to_{\mathit{sd}}^* v'$, and $u' \leftrightarrow_{\mathit{r}}^* v'$.
Claim~\ref{claim-GP1} implies $f_{\bar{q}}(u) \to^*_{\mathit{sd}}  f_{\bar{q}}(u')$,
$f_{\bar{q}}(v) \to^*_{\mathit{sd}}  f_{\bar{q}}(v')$,
and $\equiv_{M_i}$ refines $\equiv_{q_i}$ on $\mathcal{P}_i(u') \cup \mathcal{P}_i(v')$ for every $i \in [1,c]$.
Since $u' \leftrightarrow_{\mathit{r}}^* v'$ we can write the block factorizations of $u'$ and $v'$ as
$u' = u_1 u_2 \cdots u_l$ and $v' = v_1 v_2 \cdots v_l$ with $u_i, v_i \in \Sigma_{j_i}^+$ for some 
$j_i \in [1,c]$ and $u_i \equiv_{M_{j_i}} v_i$ for all $i \in [1,l]$. 
The block factorizations of $f_{\bar{q}}(u')$ and $f_{\bar{q}}(v')$ can be written as
$f_{\bar{q}}(u') = \tilde{u}_1 \tilde{u}_2 \cdots \tilde{u}_l$ and $f_{\bar{q}}(v')  = \tilde{v}_1 \tilde{v}_2 \cdots \tilde{v}_l$
with $\tilde{u}_i, \tilde{v}_i \in \{a_{j_i}, a^{-1}_{j_i}\}^+$. 

We claim that $\tilde{u}_i \equiv_{\langle a_{j_i} \rangle} \tilde{v}_i$ for all $i \in [1,l]$, which implies
$f_{\bar{q}}(u')  \equiv_H f_{\bar{q}}(v')$. Since $u_i \equiv_{M_{j_i}} v_i$ for all $i \in [1,l]$,
we get the following for all $j \in [1,c]$ and $1 \leq k_1 < k_2 < \cdots < k_e \leq l$: 
$u''  \coloneqq u_{k_1} u_{k_2} \cdots u_{k_e} \in \Sigma_j^*$ is a pure prefix of $u'$ if and only if
$v''  \coloneqq v_{k_1} v_{k_2} \cdots v_{k_e} \in \Sigma_j^*$ is a pure prefix of $v'$. Moreover, if these two equivalent conditions hold, then
$u'' \equiv_{M_j} v''$.
Since $\equiv_{M_j}$ refines $\equiv_{q_j}$ on $\mathcal{P}_j(u') \cup \mathcal{P}_j(v')$ and $u'', v'' \in \mathcal{P}_j(u') \cup \mathcal{P}_j(v')$,
we obtain $\rho_j(q_j, u'') = \rho_j(q_j, v'')$. This implies $\tilde{u}_i \equiv_{\langle a_{j_i} \rangle} \tilde{v}_i$ for all $i \in [1,l]$
and hence $f_{\bar{q}}(u') \equiv_H f_{\bar{q}}(v')$. 
From this, we finally get $f_{\bar{q}}(u) \equiv_H f_{\bar{q}}(u') \equiv_H f_{\bar{q}}(v') \equiv_H f_{\bar{q}}(v)$.

Recall that $f_{\bar{q}}(u)$ (resp., $f_{\bar{q}}(v)$) is the word fed into the semiPFA $\mathcal{B}_m$ on input $u$ (resp., $v$).
Since $(\mathcal{B}_n)_{n \ge 0}$ is a $(1/n^d,0)$-distinguisher for the graph group $H$, it follows that $f_{\bar{q}}(u)$ and $f_{\bar{q}}(v)$ lead in
$\mathcal{B}_m$ with probability one to the same state. Hence,
Algorithm~\ref{algo-graph-product} reaches with probability at least $1- 2\epsilon_1(n)(n+c)^2$
the same state when running on $u$ and $v$, respectively.

\medskip
\noindent
\emph{Case 2.} $u \not\equiv_M v$. We will show that
Algorithm~\ref{algo-graph-product} reaches with probability at least $1 - (2 \epsilon_0(n) (n+c)^2 + 1/n^d)$
different states when running on $u$ and $v$, respectively.
To show this, assume that the randomly selected initial states $q_i \in Q_{i}$ are such that 
$\equiv_{q_i}$ refines $\equiv_{M_i}$ on $\mathcal{S}_i$ for all $i \in [1,c]$. 
By \eqref{graph-prod-refine-1} this happens with probability at least 
\[
1 - \epsilon_0(n) \sum_{i \in [1,c]} n_i^2/2 \; \geq \; 1 - \frac{\epsilon_0(n)}{2} \bigg(\sum_{i \in [1,c]} n_i\bigg)^2 \geq  1 - 2 \epsilon_0(n) (n+c)^2 .
\]
We claim that $f_{\bar{q}}(u) \not\equiv_H f_{\bar{q}}(v)$ holds, where $\bar{q}=(q_1,\ldots,q_c)$.
 In order to get a contradiction, assume that
$f_{\bar{q}}(u) \equiv_H f_{\bar{q}}(v)$. From Lemma~\ref{lemma-rewrite-GP-2} it follows that there are reduced words
 $\tilde{u}, \tilde{v} \in \Delta^*$ such that $f_{\bar{q}}(u) \to_{\mathit{sd}}^*  \tilde{u}$,
$f_{\bar{q}}(v) \to_{\mathit{sd}}  \tilde{v}$ and $\tilde{u} \leftrightarrow_r^* \tilde{v}$.
Claim~\ref{claim-GP2} implies that  there exist $u', v' \in \Sigma^*$ such that 
$u \to_{\mathit{sd}}^* u'$,  $v \to_{\mathit{sd}}^* v'$, $f_{\bar{q}}(u') = \tilde{u}$, $f_{\bar{q}}(v') = \tilde{v}$
and $\equiv_{q_i}$ refines $\equiv_{M_i}$ on $\mathcal{P}_i(u') \cup \mathcal{P}_i(v')$ for every $i \in [1,c]$.

Since $f_{\bar{q}}(u') = \tilde{u}\leftrightarrow_{\mathit{r}}^* \tilde{v} = f_{\bar{q}}(v')$ we can write the block factorizations  of
$f_{\bar{q}}(u')$ and $f_{\bar{q}}(v')$ as
$f_{\bar{q}}(u') = \tilde{u}_1 \tilde{u}_2 \cdots \tilde{u}_l$ and $f_{\bar{q}}(v') = \tilde{v}_1 \tilde{v}_2 \cdots \tilde{v}_l$ with
$\tilde{u}_i, \tilde{v}_i \in \{a_{j_i}, a_{j_i}^{-1}\}^+$ for some 
$j_i \in [1,c]$ and $\tilde{u}_i \equiv_{\langle a_{j_i}\rangle} \tilde{v}_i$ for all $i \in [1,l]$. 
Clearly, the block factorizations of $u'$ and $v'$ can then be written as
$u' = u_1 u_2 \cdots u_l$ and $v' = v_1 v_2 \cdots v_l$, where the block $u_i \in \Sigma^+_{j_i}$ (resp., $v_i\in \Sigma^+_{j_i}$) is translated
into the block $\tilde{u}_i$ (resp., $\tilde{v}_i$) by the sequential transducer $\mathcal{T}_{\overline{q}}$.

We claim that $u_i \equiv_{M_{j_i}} v_i$ for all $i \in [1,l]$. Since $\tilde{u}_i \equiv_{\langle a_{j_i}\rangle} \tilde{v}_i$ for all $i \in [1,l]$ we have the following
for all $j \in [1,c]$ and $1 \leq k_1 < k_2 < \cdots < k_e \leq l$:
$\tilde{u}''  \coloneqq \tilde{u}_{k_1} \tilde{u}_{k_2} \cdots \tilde{u}_{k_e} \in \{a_j,a_j^{-1}\}^*$ is a pure prefix of $f_{\bar{q}}(u')$ if and only if
$\tilde{v}''  \coloneqq \tilde{v}_{k_1} \tilde{v}_{k_2} \cdots \tilde{v}_{k_e} \in \{a_j,a_j^{-1}\}^*$ is a pure prefix of $f_{\bar{q}}(v')$.
Moreover, if these two equivalent conditions hold, then
 $\tilde{u}'' \equiv_{\langle a_j\rangle} \tilde{v}''$. Let $p_j = \rho_j(q_j, u_{k_1} u_{k_2} \cdots u_{k_e})$
and $r_j = \rho_j(q_j, v_{k_1} v_{k_2} \cdots v_{k_e})$. We then have 
$$
a_j^{-q_j} a_j^{p_j} \equiv_{\langle a_j\rangle} \tilde{u}'' \equiv_{\langle a_j\rangle} \tilde{v}'' \equiv_{\langle a_j\rangle}
a_j^{-q_j} a_j^{r_j},
$$
i.e., $p_j = r_j$.
Since $\equiv_{q_j}$ refines $\equiv_{M_j}$ on $\mathcal{P}_j(u') \cup \mathcal{P}_j(v')$ and $u_{k_1} u_{k_2} \cdots u_{k_e}$ as well as 
$v_{k_1} v_{k_2} \cdots v_{k_e}$ belong to $\mathcal{P}_j(u') \cup \mathcal{P}_j(v')$,
we obtain $u_{k_1} u_{k_2} \cdots u_{k_e} \equiv_{M_j} v_{k_1} v_{k_2} \cdots v_{k_e}$.
For the same reason, we also get $u_{k_1} u_{k_2} \cdots u_{k_{e-1}} \equiv_{M_j} v_{k_1} v_{k_2} \cdots v_{k_{e-1}}$.
Due to the left-cancellativity of the monoid $M_j$ we obtain $u_{k_{e}} \equiv_{M_j} v_{k_{e}}$.
Hence, we get
$u_i \equiv_{M_{j_i}} v_i$ for all $i \in [1,l]$, which implies $u'  \equiv_M v'$.
Finally, we get $u \equiv_M u' \equiv_M v' \equiv_M v$, which is a contradiction.
Hence, we must have $f_{\bar{q}}(u) \not\equiv_H f_{\bar{q}}(v)$.

Since the algorithm feeds $f_{\bar{q}}(u)$ (resp., $f_{\bar{q}}(v)$) into the semiPFA $\mathcal{B}_m$, the latter reaches different
states with probability at least $1 - 1/m^d \geq 1 - 1/n^d$
 (under the assumption that $\equiv_{q_i}$ refines $\equiv_{M_i}$ on $\mathcal{S}_i$ for all $i \in [1,c]$).
 Hence, the probability that Algorithm~\ref{algo-graph-product} reaches different states when running on $u$ and $v$
 is at least $(1 - 2 \epsilon_0(n) (n+c)^2) \cdot (1 - 1/n^d) \geq 1 - (2 \epsilon_0(n) (n+c)^2 + 1/n^d)$. 
\end{proof}

\begin{oproblem}
Theorem~\ref{thm-GP} only makes a non-trivial statement if $\zeta_0(n)$ and $\zeta_1(n)$ are strictly
smaller than $1$. Note that the error probabilities $\epsilon_0(n)$ and $\epsilon_1(n)$ are multiplied with $2 (n+c)^2$.
It would be interesting to see whether this multiplicative factor can be avoided or replaced by a smaller factor. But also
in the present form, Theorem~\ref{thm-GP} leads to useful applications. For instance, if all monoids $M_1, \ldots, M_c$ 
are linear then, by Theorem~\ref{thm-lin}, we can set $\epsilon_1(n) = 0$ and $\epsilon_0(n) = 1/n^c$ for any constant $c>0$ and still obtain
an $\mathcal{O}(\log n)$-space distinguisher for the graph product $M$. Similar remarks apply to the next theorem and the
transfer theorems for wreath products
that we show in the next section.
\end{oproblem}

 \begin{theorem}  \label{thm-GP-S}
 Let $M = \mathsf{GP}(M_1, \ldots, M_c, I)$ be a graph product of f.g.~monoids $M_1, \ldots, M_c$
such that $M_i \setminus \{1\}$ is an ideal of $M_i$ for every $i \in [1,c]$
and let 
$\mathcal{R}_i =(\mathcal{A}_{i,n})_{n \ge 0}$ be an $(\epsilon_0,\epsilon_1)$-distinguisher for $M_i$. Let $d \geq 1$ and define 
\begin{eqnarray*}
\zeta_0(n) &=& 2 \epsilon_0(n) n^2 + 1/n^d, \\
\zeta_1(n) &=& \epsilon_1(n) n.
\end{eqnarray*} 
Then, there is a $(\zeta_0,\zeta_1)$-distinguisher for the graph product $M$
with space complexity $\mathcal{O}(\sum_{i=1}^c s(\mathcal{R}_i,n) + \log n)$.
\end{theorem}

\begin{proof}
We can assume that the monoid generating set $\Sigma_i$ of $M_i$ does not contain the neutral element of $M_i$. Hence, a word $u \in \Sigma_i^+$ does not evaluate to the neutral element, which means
that the relation $\to_d$ is empty. Hence, Lemma~\ref{lemma-rewrite-GP-2} says that $u \equiv_M v$ if and only if 
there are reduced words $u', v'$ such that $u \to_s^* u'$, $v \to_s^* v'$ and $u' \leftrightarrow_r^* v'$. 

Fix an input length $n$ and let
$\mathcal{A}_{i,n} = \mathcal{A}_{i} = (Q_{i}, \Sigma_i, \iota_{i}, \rho_{i})$, where w.l.o.g.~$Q_{i} = [1, |Q_{i}|]$.
First, we modify every $\mathcal{A}_{i}$ such that
every state $q \in Q_{i}$ with $\iota_{i}(q) > 0$ has no incoming transitions. To achieve this we introduce for every
$q \in Q_{i}$ with $\iota_{i}(q) > 0$ a copy $q'$ and set $\iota_{i}(q')  \coloneqq \iota_{i}(q)$, $\rho_{i}(q',a)  \coloneqq \rho_{i}(q,a)$ for every $a \in \Sigma_i$,
and  $\iota_{i}(q)  \coloneqq 0$. Since $w \not\equiv_{M_i} \varepsilon$ for every word $w \in \Sigma^+_i$, this modification
does not increase the error probabilities. Moreover the number of states only doubles.
For a state $q \in Q_i$, we use the equivalence relation $\equiv_{q} \ \,  \coloneqq \ \, \equiv_{\mathcal{A}_i,q}$.

As in the proof of Theorem~\ref{thm-GP},
we fix for every $i \in [1,c]$ a new symbol $a_i$ that generates an infinite cyclic group $\langle a_i \rangle \cong \mathbb{Z}$.
Let $\Delta = \{a_1, a_1^{-1}, \ldots, a_c, a_c^{-1}\}$ and consider the graph group $H = \mathsf{GP}(\langle a_1\rangle, \ldots, \langle a_c\rangle, I)$.
We take a $(1/m^d,0)$-distinguisher $(\mathcal{B}_m)_{m \ge 0}$ 
with space complexity $\mathcal{O}(\log m)$ for the graph group $H$. Let $\mathcal{B}_m = (R_m, \Delta, \lambda_m, \sigma_m)$.
Define $m$ as in \eqref{def-m-graph-prod}.

For a word $u \in \Sigma^*$  we define $\max(u) \subseteq [1,c]$ as follows.
For $i \in [1,c]$ we have $i \in \max(u)$ if and only if $u \to_s v a$ for some $v \in \Sigma^*$ and $a \in \Sigma_i$.
In other words, we can move in $u$ a block from $\Sigma^+_i$ to the right end of $u$ using swap operations.
If $u = u_1 u_2 \cdots u_k$ is the block factorization of $u$ then
we have $i \in \max(u)$ if and only if there is an $s \in [1,k]$ such that that $u_s \in \Sigma_i^+$ and for every $t \in [s+1,k]$
there is $j \in [1,c]$ with $(i,j) \in I$ and $u_t \in \Sigma_j^+$.
Note that $u \to_s v$ or $u \to_r v$ implies that $\max(u)=\max(v)$.

\begin{algorithm}[t]
\SetKwComment{Comment}{(}{)}
\SetKwInput{KwGlobal}{global variables}
\SetKwInput{KwInit}{initialization}
\SetKwInput{KwNext}{next input letter}
\BlankLine
\KwGlobal{$p_i, q_i  \in Q_{i}$ for all $i \in [1,c]$,  $r \in R_m$, $\max \subseteq [1,c]$} 
\BlankLine
\KwInit{}
\BlankLine
$\max  \coloneqq \emptyset$ \\
guess $p_i \in Q_{i} = [1,|Q_{i}|]$ for all $i \in [1,c]$ according to $\iota_{i}$ \\
$q_i  \coloneqq p_i$ for all $i \in [1,c]$ \\
guess $r \in R_m$ according to $\lambda_m$  \\
\BlankLine
\KwNext{$a \in \Sigma$}
\BlankLine
let $i \in [1,c]$ such that $a \in \Sigma_i$ \\
$\max  \coloneqq (\max \setminus \{ j \in \max \colon (i,j) \notin I \}) \cup \{i\}$ \\
 $r  \coloneqq \sigma_m(r, a_i^{-q_i} a_i^{\rho_{i}(q_i,a)})$  \\
 $q_i  \coloneqq \rho_{i}(q_i,a)$ \\
 \label{reset} $q_j  \coloneqq p_j$ for all $j \in [1,c] \setminus \max$ \\
\caption{A $(\zeta_0,\zeta_1)$-distinguisher for a graph product of monoids $M_i$ with $M_i \setminus \{1\}$ an ideal.
 \label{algo-graph-product-2}}
\end{algorithm}

Consider Algorithm~\ref{algo-graph-product-2} and an input word $u \in \Sigma^{\le n}$.
In the variable $\max$ the algorithm stores $\max(u)$ (lines 1 and 6) and for every $j \notin \max(u)$ the algorithm sets $q_j$ to the initially guessed
state $p_j$ of $\mathcal{A}_j$ (lines 3 and 9). The algorithm stores $\mathcal{O}(\sum_{i=1}^c s(\mathcal{R}_i,n) + \log n)$ many bits.

\begin{claim} \label{claim-GP}
Whenever $u \to_s u'$ for $u, u' \in \Sigma^{\le n}$ then after reading $u$ and $u'$ (starting with the same guesses in lines 2 and 4),
the program variables $\max$, $q_i$ $(i \in [1,c])$ and $r$
have the same values. Moreover the words $\tilde{u}, \tilde{u}' \in \Delta^*$ that are input into the semiPFA $\mathcal{B}_m$ (line 7)
when reading $u$ and $u'$, respectively, are in relation $\equiv_H$.
\end{claim}

\noindent
\emph{Proof of Claim~\ref{claim-GP}.} 
It suffices to consider the case, where $u = v u_1 u_2$ and $u' = v u_2 u_1$ for blocks $u_1 \in \Sigma_{i_1}^+$,
$u_2 \in \Sigma_{i_2}^+$ with $(i_1, i_2) \in I$. Clearly, $\max(u) = \max(u')$. 
Moreover, every $q_j$ for $j \in [1,c] \setminus \{i_1, i_2\}$ has the same value after reading $u$ and $u'$, respectively.
To see that this holds also for $q_{i_1}$ and $q_{i_2}$, it suffices by symmetry to consider only $q_{i_1}$.
Observe  that $i_1 \in \max(v u_2)$ if and only if $i_1 \in \max(v)$. This means that
while reading in $v u_2 u_1$ the symbols from the $u_2$, there will be no
new reset of $q_{i_1}$ in line~\ref{reset}. In other words, if reading the symbols
of $u_2$ causes a reset of $q_{i_1}$, then this has no effect since $q_{i_1}$ has already the value $p_{i_1}$
after reading $v$. This shows that $q_{i_1}$ has the same value after reading 
$v u_2 u_1$ and $v u_1 u_2$.

From the above consideration it follows directly that 
the words $\tilde{u}, \tilde{u}' \in \Delta^*$ are in relation $\equiv_H$.
Since $(\mathcal{B}_m)_{m \ge 0}$ is a $(1/m^d,0)$-distinguisher for $H$ and $|\tilde{u}|, |\tilde{u}'| \le m$,
it follows that after reading $u$ and $u'$, respectively, also the program variable $r$ (the current state of $\mathcal{B}_m$) has
the same value.
\qed

\medskip
\noindent
We now analyze the error probabilities of Algorithm~\ref{algo-graph-product-2}. 
Consider input words $u,v \in \Sigma^{\le n}$ and let $p_i$ ($i \in [1,c]$) be the randomly guessed states from line 2 and $r_0$ the randomly
guessed state from line 4.

\medskip
\noindent
\emph{Case 1.} $u \equiv_M v$: Then there exist reduced words $u', v' \in \Sigma^*$ such that $u \to^*_s u'$, $v \to_s^* v'$ and 
$u' \leftrightarrow_r^* v'$. We have $u', v' \in \Sigma^{\le n}$.
By Claim~\ref{claim-GP}, it suffices to show that $u'$ and $v'$ lead with probability at least $1 - \epsilon_1(n) n$
to the same program state in Algorithm~\ref{algo-graph-product-2}.
We clearly have $\max(u') = \max(v')$, hence the program variable $\max$ has (with probability one) the same value after reading $u'$ and $v'$, respectively. Moreover, for every $i \in [1,c] \setminus \max(u')$ we have $q_i = p_i$.

Let $u' = u_1 u_2 \cdots u_k$ and $v' = v_1 v_2 \cdots v_k$ be the block factorizations of $u'$ and $v'$, respectively. 
We have $k \leq n$. Let
$u_j, v_j \in \Sigma_{i_j}^+$. We then have $u_j \equiv_{M_{i_j}} v_j$ for all $j \in [1,k]$. 
Observe that since $u'$ is reduced, we have $i_j \notin \max(u_1 u_2 \cdots u_{j-1})$, which
means that the program variable $q_{i_j}$ is in the initial state $p_{i_j}$ after reading $u_1 u_2 \cdots u_{j-1}$.
The same is true for $v'$.

Since $u_j \equiv_{M_{i_j}} v_j$ for all $j \in [1,k]$, we have
$\rho_{i_j}(p_{i_j}, u_j) = \rho_{i_j}(p_{i_j}, v_j)$ for all $j \in [1,k]$
with probability at least $1 -  \epsilon_1(n) n$.
Assume that this holds and let $r_j  \coloneqq  \rho_{i_j}(p_{i_j}, u_j) = \rho_{i_j}(p_{i_j}, v_j)$ for all $j \in [1,k]$.
Then for all $i \in \max(u') = \max(v')$ the program variable $q_i$ has the same value after reading $u'$ and $v'$ respectively
(namely $r_j$ with $j$ maximal such that $i_j = i$). In addition,
the words $\tilde{u}, \tilde{v} \in \Delta^*$ that Algorithm~\ref{algo-graph-product-2} inputs into the semiPFA $\mathcal{B}_m$ 
 while reading $u'$ and $v'$, respectively, are in relation $\equiv_H$. More precisely, we have
 $\tilde{u} \equiv_H a_{i_1}^{-p_{i_1}+r_1}  a_{i_2}^{-p_{i_2}+r_2} \cdots a_{i_k}^{-p_{i_k}+r_k} \equiv_H \tilde{v}$).
 Since $\tilde{u}, \tilde{v} \in \Delta^{\le m}$, we
 obtain $\sigma_m(r_0, \tilde{u}) = \sigma_m(r_0, \tilde{v})$ with probability one. Hence, also the program variable $r$ has the same value after
 reading $u'$ and $v'$, respectively (under the assumption that $\rho_{i_j}(p_{i_j}, u_j) = \rho_{i_j}(p_{i_j}, v_j)$ for all $j \in [1,k]$).
 
 \medskip
\noindent
\emph{Case 2.} $u \not\equiv_M v$: We will show that with probability at least $1-(2\varepsilon_0(n)n^2 + 1/n^d)$
the programm variable $r$ reaches different values after reading $u$ and $v$, respectively.
By Claim~\ref{claim-GP} we can assume that $u$ and $v$ are reduced.
Let $u = u_1 u_2 \cdots u_k$ and $v = v_1 v_2 \cdots v_l$ be the block factorizations of $u$ and $v$, respectively. 
We have $k,l \leq n$. 
For $i \in [1,c]$ let 
\[ B_i = \{ u_j \colon j \in [1,k], u_j \in \Sigma_i^+ \} \cup \{ v_j \colon j \in [1,l], v_j \in \Sigma^+_i \} \]
be the set of all blocks in $u$ and $v$ from $\Sigma_i^+$. Let $n_i = |B_i|$ and note that $\sum_{i=1}^c n_i \le 2n$.
Assume that $\equiv_{p_i}$ refines $\equiv_{M_i}$ on $B_i$ for every $i \in [1,c]$.  
This happens with probability at least 
\[ 
1 - \sum_{i=1}^c \epsilon_0(n) \binom{n_i}{2} \geq 1 - \frac{\epsilon_0(n)}{2}  \sum_{i=1}^c n_i^2 \geq 1 - 2 \epsilon_0(n) n^2.
\] 
The block factorizations of the words $\tilde{u}, \tilde{v} \in \Delta^*$ that Algorithm~\ref{algo-graph-product-2} 
inputs into the semiPFA $\mathcal{B}_m$ on input $u$ and $v$, respectively, can be written as
$\tilde{u} = \tilde{u}_1 \tilde{u}_2 \cdots \tilde{u}_k$, resp., $\tilde{v} = \tilde{v}_1 \tilde{v}_2 \cdots \tilde{v}_l$.
Every block $\tilde{x}$ with $x \in B_i$ ($i \in [1,c]$) satisfies 
\begin{equation}\label{eq-block-tilde-x}
\tilde{x} \equiv_{\langle a_{i} \rangle} a_{i}^{-p_{i} + \rho_i(p_i, x)}  .
\end{equation}
Note that $-p_{i} + \rho_i(p_i, x) \neq 0$. This follows from our preprocessing, ensuring that there are no non-trivial loops at
states with non-zero probability with respect to $\iota_i$.

We claim that $\tilde{u} \not\equiv_H \tilde{v}$.
In order to get a contradiction, assume that $\tilde{u} \equiv_H \tilde{v}$. Hence, we obtain words $\tilde{y}, \tilde{z} \in \Delta^*$ such that
$\tilde{u} \to_s \tilde{y}$, $\tilde{u} \to_s \tilde{z}$, $\tilde{y} \leftrightarrow_r \tilde{z}$. Since also $\tilde{u}$ and $\tilde{v}$
are reduced, we have $k = l$ and the block factorizations of $\tilde{y}$ and $\tilde{z}$ have the form
$\tilde{y} = \tilde{u}_{s_1} \tilde{u}_{s_2} \cdots \tilde{u}_{s_k}$ and 
$\tilde{z} = \tilde{v}_{t_1} \tilde{v}_{t_2} \cdots \tilde{v}_{t_k}$ for permutations $j \mapsto s_j$ and $j \mapsto t_j$.
Moreover, for every $j \in [1,k]$, $\tilde{u}_{s_j}$ and $\tilde{v}_{t_j}$ are from the same set $\{ a_i, a_i^{-1} \}^+$ ($i \in [1,c]$)
and by \eqref{eq-block-tilde-x} satisfy $\rho_i(p_i, u_{s_j}) = \rho_i(p_i, v_{s_j})$.  
Since $\equiv_{p_i}$ refines $\equiv_{M_i}$ on $B_i$ we obtain $u_{s_j} \equiv_{M_i} v_{t_j}$. This finally
implies $u \to_s y$, $v \to_s z$ and $y \leftrightarrow_r z$ and hence $u \equiv_M v$, which is a contradiction.

Hence, we have $\tilde{u} \not\equiv_H \tilde{v}$. This implies that $\sigma_m(r_0, \tilde{u}) \neq \sigma_m(r_0, \tilde{v})$
 with probability at least $1 - 1/m^d \ge 1 - 1/n^d$.

Taken together, it follows that Algorithm~\ref{algo-graph-product-2} reaches different program states on inputs $u$ and $v$ with
probability at least $(1 - 2 \epsilon_0(n) n^2)(1 - 1/n^d) \geq 1 - (2 \epsilon_0(n) n^2 + 1/n^d)$.
\end{proof}
The graph product construction can be extended to semigroups
$S_i = \langle \Sigma_i \mid R_i\rangle^+$ ($i \in [1,c]$). It is defined as 
$\mathsf{GP}^+(S_1, \ldots, S_c, I) =   \Sigma^+/\rho_R$ where $\Sigma$ and $R$ are defined as in \eqref{def-Sigma-GP} and \eqref{relation-R}; see also \cite[Section~7]{DanGou23}.
Note that if every $S_i$ is a monoid, then
$\mathsf{GP}^+(S_1, \ldots, S_c, I)$ and $\mathsf{GP}^*(S_1, \ldots, S_c, I)$
are non-isomorphic semigroups in general. Take for instance two monoids $M_1$ and $M_2$ with neutral elements $e_1$ and $e_2$, respectively.
In their free product according to the semigroup version, we have for instance $e_1 e_2 \neq e_2 e_1$, On the other hand,
if we take the free product using the monoid version, we have $e_1 e_2 = e_2 e_1$, which represents the neutral element of $M_1 \ast M_2$
(the equivalence class of the empty word in  $\Sigma^*/\rho_R$). 

It is easy to see that $\mathsf{GP}^+(S_1, \ldots, S_c, I)$ embeds into $\mathsf{GP}^*(M_1, \ldots, M_c, I)$ where each monoid $M_i = S_i \cup \{1\}$ is obtained from the corresponding semigroup $S_i$ by adjoining a new neutral element $1 \not\in S_i$ (regardless of whether $S_i$ already had a neutral element; in particular
$M_i$ and $S_i^1$ are different if $S_i$ is already a monoid) \cite[Proposition~7.3]{DanGou23}. 
Moreover, the monoids $M_i$ satisfy the assumption from Theorem~\ref{thm-GP-S}.
Hence, Theorem~\ref{thm-GP-S} can be directly applied to $\mathsf{GP}^+(S_1, \ldots, S_c, I)$.

 \subsection{Distinguishers for semidirect products} \label{sec-semi-direct}

Let $M$ and $P$ be monoids. We denote the neutral elements of both monoids with $1$, which should not lead
to missinterpretations.
The multiplication in $M$ is written as $x \ast y$ (for $x,y \in M$) for better readability, whereas the
multiplication in $P$ is written as $p \cdot q$ (for $p,q \in P$).
A \emph{right action} of $P$ on $M$ is a mapping $\alpha : M \times P \to M$
with the following properties, where we write $x^p$ for $\alpha(x,p)$:
\begin{itemize}
\item $x^{p \cdot q} = (x^p)^q$,
\item $(x \ast y)^p= x^p \ast  y^p$,
\item $x^1 = x$, and $1^p = 1$.
\end{itemize}
In other words, $\alpha$ is a right action of $P$ on the  \emph{set} $M$ as defined in Section~\ref{sec-action}, where the latter is considered as a set, which is also compatible with the monoid structure~of~$M$.

Dually, a \emph{left action} of $P$ on $M$ is a mapping $\beta : P \times M \to M$
with the following properties, where we write ${}^{p}{\kern-1pt}x$ for $\beta(p,x)$:
\begin{itemize}
\item ${}^{p \cdot  q}{\kern-1pt}x = {}^p{\kern-1pt}({}^{q}{\kern-1pt}x)$,
\item ${}^{p}{\kern-1pt}(x \ast y) = {}^{p}{\kern-1pt}x \ast  {}^{p}{\kern-1pt}y$,
\item ${}^{1}{\kern-1pt}x = x$, and ${}^{p}{\kern-.5pt}1 = 1$.
\end{itemize}

For a right action $\alpha$ of $P$ on $M$ one
defines the \emph{semidirect product} $P \ltimes_\alpha M$ as follows:
its carrier set is $P \times M$ and the multiplication is defined by $(p,x) (q,y) = (p \cdot q, x^q \ast y)$. 
Similarly, for a left action $\beta$ of $P$ on $M$ one
defines the \emph{semidirect product} $M \rtimes_\beta P$ as the monoid with 
the carrier set  $M \times P$ and the multiplication $(x,p) (y,q) = (x \ast {}^{p}{\kern-1pt}y, p \cdot q)$.
The neutral element is $(1,1)$ in both cases.
 
Note that if $P$ is generated by $\Gamma$ and $M$ is generated by $\Sigma$, then
$P \ltimes_\alpha M$ is generated by the set $(\{1\} \times \Sigma) \cup (\Gamma \times \{1\})$ and similarly
for $M \rtimes_\beta P$. In particular, if $M$ and $P$ are both finitely generated then so are $P \ltimes_\alpha M$ and
$M \rtimes_\beta P$.

 \begin{theorem} \label{thm-semidirect-prod}
 Let $P$ be a finite monoid with $c = |P|$ and let $M$ be a finitely generated monoid.
 Let $\mathcal{R}=(\mathcal{A}_{n})_{n \ge 0}$ be an $(\epsilon_0(n),\epsilon_1(n))$-distinguisher for $M$ with space complexity $s(n)$.
 \begin{itemize}
 \item If $\alpha : M \times P \to M$ is a right action of $P$ on $M$ then there is a constant $d > 0$ and an $(\epsilon_0(dn),c \cdot\epsilon_1(dn))$-distinguisher for $P \ltimes_\alpha M$ with space complexity $c \cdot s(dn)) + \log_2 c$. 
 \item If $\beta : P \times M \to M$ is a left action of $P$ on $M$ then there is a constants $d > 0$ and an $(\epsilon_0(dn),\epsilon_1(dn))$-distinguisher for $M \rtimes_\beta P$ with space complexity $s(dn) +  \log_2 c$.
 \end{itemize}
 \end{theorem}
 
 \begin{algorithm}[t]
 \SetKwComment{Comment}{(}{)}
\SetKwInput{KwGlobal}{global variables}
\SetKwInput{KwInit}{initialization}
\SetKwInput{KwNext}{next input letter}
\BlankLine
\KwGlobal{$q_r \in Q_{dn}$ for every $r \in P$, $p \in P$} 
\BlankLine
\KwInit{}
guess according to the distribution $\iota_{dn}$ a state $\tilde{q}_0$ and set every $q_r$ to $\tilde{q}_0$. \label{line-init-q-r2} \\
$p  \coloneqq 1$ (the neutral element of $M$) 
\BlankLine
\KwNext{$a \in \Sigma$}
\If{$a \in P$}{$p  \coloneqq p\cdot a$ ; \label{line-new-x} \\ $q_r  \coloneqq q_{a \cdot r}$ for all $r \in P$ \label{line-new-q_r}}
\If{$a \in \Sigma$}{$q_r  \coloneqq \delta_{dn}(q_r, a^r)$ for all $r \in P$ \label{line-new-q_r-2}}
\BlankLine
\caption{An $(\epsilon_0(dn), c \cdot \epsilon_1(dn))$-distinguisher for a semidirect product  $P \ltimes_\alpha M$.
\label{algo-semidirect}}
\end{algorithm}

 \begin{proof}
 Let us start with the first statement. Fix a finite generating set $\Sigma$ for $M$. Let $d$ be such that every element $a^r$ for $a \in \Sigma$ and 
 $r \in P$ can be represented by a word over $\Sigma$ of length at most $d$. Below, we identify $a^r$ with such a word.  
 We consider the finite generating set $\Sigma \uplus P$ for $P \ltimes_\alpha M$, where $a \in \Sigma$ is identified with
 $(1,a)$ and $p \in P$ is identified with $(p,1)$. 
 Fix an input length $n$ and consider the semiPFA $\mathcal{A}_{dn} = (Q_{dn}, \Sigma, \iota_{dn}, \delta_{dn})$ 
 from the $(\epsilon_0(n),\epsilon_1(n))$-distinguisher $\mathcal{R}$ for $M$.
 
 Our distinguisher for $P \ltimes_\alpha M$ is shown in Algorithm~\ref{algo-semidirect}.
 From the list of global variables, it is clear the the algorithm stores $c \cdot s(dn) + \log c$ bits.
 
 Consider now an input word $w \in (\Sigma \uplus P)^{\le n}$ 
 that represents the element $(s ,x) \in P \ltimes_\alpha M$. 
Then after reading $w$, the program variable $p$ stores the element $s \in P$.
Moreover, every program variable $q_r$ (for $r \in P$) stores a state of $\mathcal{A}_{dn}$ that is reached from
the guessed initial state $\tilde{q}_0$ by reading a word $v_r \in \Sigma^{\le dn}$ that represents in $P$ the element $x^r$. 
This follows easily by induction on $n$: The case $n=0$ is clear with $v_r = \varepsilon$.
Assume now that these invariants hold after reading $w \in (\Sigma \uplus P)^{\le n-1}$ with
the words $v_r \in \Sigma^{\leq d(n-1)}$
and let $w$ represent $(s,x) \in P \ltimes_\alpha M$. If the next input letter is $a \in P$, then $wa$ represents 
$(s,x) (a,1) = (s\cdot a, x^a)$. The variable $p$ is correctly set to $s \cdot a$ in line \ref{line-new-x}. Moreover,
the state variable $q_r$ is set  to $q_{a \cdot r} = \delta_{dn}(\tilde{q}_0, v_{a\cdot r})$ in line \eqref{line-new-q_r}, where $v_{a\cdot r} \in \Sigma^{\leq d(n-1)}$ is a word that represents the element
$x^{a\cdot r} = (x^a)^r$. Hence, every $q_r$ is updated correctly. 

If the next input letter is $a \in \Sigma$ then $wa$ represents
$(s, x) (1,a) = (s, x \ast a)$. Therefore, $p$ is correctly not updated. Moreover, the state variable $q_r$ is set in  \ref{line-new-q_r-2} to 
$\delta_{dn}(q_r, a^r) = \delta_{dn}(\tilde{q}_0, v_r a^r)$, where the word $v_r a^r \in \Sigma^{\le dn}$ represents the element $x^r \ast a^r = (x \ast a)^r$.
 Hence, every $q_r$ is updated correctly. 
 
 From the above invariant it follows directly that for two input words $u,v \in (\Sigma \uplus P)^{\le n}$ we have the following:
 \begin{itemize}
\item If $u \neq v$ in  $P \ltimes_\alpha M$ then on input $u$ and $v$, respectively, the algorithm reaches different memory states with probability at least $1-\epsilon_0(dn)$.
More precisely, the values of the variable $p$ are different with probability $1$ or the values of the variable $q_1$ are different with probability
at least $1- \epsilon_0(dn)$.
\item If $u = v$ in $P \ltimes_\alpha M$ then after reading $u$ and $v$, respectively, the values of the variable $p$ are equal with probability $1$
and for every $r \in P$, the values of the variable $q_r$ are equal with probability at least $1-\epsilon_1(dn)$. Hence, the memory states are the same with probability at least $1- |P| \cdot \epsilon_1(dn)$.
\end{itemize}
This concludes the proof of the first statement from the theorem. The second statement is straightforward due to the multiplication rule 
$(x,p) (y,q) = (x \ast {}^{p}{\kern-1pt}y, p \cdot q)$ for $M \rtimes_\beta P$. An 
$(\epsilon_0(dn),\epsilon_1(dn))$-distinguisher for $M \rtimes_\beta P$ can store as above the current element from $P$ in a variable $p$.
When reading a new letter $a \in P$ it only has to update $p$. When reading a new letter $a \in \Sigma$ the distinguisher has to read
${}^{p}{\kern-1pt}a$ into the semiPFA $\mathcal{A}_{dn}$, where $d$ is maximal length of a word over $\Sigma$ representing an element 
${}^{r}{\kern-1pt}a \in M$ for $a \in \Sigma$ and $r \in P$.
 \end{proof}

 \subsection{Distinguishers for wreath products} \label{sec-wreath}

In this section we will investigate distinguishers for wreath products. We start with the definition
of the wreath product of two monoids.

Let $M$ and $P$ be monoids. As in the previous section we denote the multiplication in $P$ with $\cdot$ and the multiplication in $M$ with $\ast$.
 The wreath product $M \mathop{wr} P$ of $M$ and $P$ is a semidirect product of $P$ and $M^P$ where $M^P$ is the set of all
 functions $f : P \to M$ with the multiplication defined by $p(fg) = pf \ast pg$ for $f,g \in M^P$ and $p \in P$. 
 Note that we write $pf$ instead of $f(p)$. The neutral element of $M^P$ is the mapping $i$ with $p i = 1$ for all
 $p \in P$. The monoid $P$ acts on $M^P$ on the left: 
 $p(^q  \! f) = (p \cdot q)f$. If $\beta$ denotes this left action then
 the \emph{wreath product} $M \mathop{wr} P$ is defined as $M^P \rtimes_\beta P$.
 
 If $M$ and $P$ are finitely generated, then the wreath product $M \mathop{wr} P$ is in general not finitely generated, simply because
 it is not countable. Therefore, one defines the \emph{restricted wreath product}. Start with the set $M^{(P)}$ of all finitely supported
 mappings $f : P \to M$. These are mappings $f$ such that $pf \neq 1$ for only finitely many $p \in P$. The set $M^{(P)} \times P$ is 
 in general not a submonoid of $M \mathop{wr} P$. The restricted wreath product is the submonoid of $M \mathop{wr} P$ generated by $M^{(P)} \times P$.
 In the following, we only consider the restricted wreath product. We use the standard notation $M \wr P$ for it and omit the adjective ``restricted''.
 Note that $M \mathop{wr} P = M \wr P$ if $P$ is finite.
 In general, also the restricted wreath product of two finitely generated monoids is not finitely generated. The following result was
 shown in \cite[Theorem~1.1]{RobRus03}: Let $M$ and $P$ be monoids. Then
$M \wr P$ is finitely generated if and only if (i) $M$ and $P$ are both finitely
generated and (ii) $M$ is trivial or $P = T \cdot U(P)$ for some finite set $T \subseteq P$
(recall that $U(P)$ is group of units of $P$).

The following result is a direct consequence of Theorem~\ref{thm-semidirect-prod}:
 \begin{theorem} \label{thm-wreath-right-finite}
 Let $P$ be a finite monoid with $c = |P|$ and let $M$ be a finitely generated monoid.
Let $\mathcal{R}=(\mathcal{A}_{n})_{n \ge 0}$ be an $(\epsilon_0(n),\epsilon_1(n))$-distinguisher for $M$ with space complexity $s(n)$.
Then there is a constants $d > 0$ and an $(\epsilon_0(dn),c \cdot \epsilon_1(dn))$-distinguisher for $M \wr P$ with space complexity $c \cdot s(dn) +  \log_2 c$.
 \end{theorem}

\begin{proof}
The monoid $M^P$ is a direct product of $c$ many copies of $M$ and therefore has an $(\epsilon_0(n), c \cdot \epsilon_1(n))$-distinguisher with space complexity $c \cdot s(n)$ by Lemma~\ref{thm-direct-prod}. The theorem follows from the second statement in Theorem~\ref{thm-semidirect-prod}.
\end{proof}
 In the following, we will only consider the case where $P$ is a finitely generated group $G$. 
 Then, the above mentioned result from \cite{RobRus03} implies that $M \wr G$ is finitely generated if and only if $M$ and $G$ are finitely generated.
 Moreover, for every element $(f,g) \in M \wr G$ the mapping $f : G \to M$ is finitely supported 
 \cite[Section~2]{RobRus03}.

 For the case $P =G$ it is common to define the left action of $G$ on $M^G$ slightly
 different by $g'(^g  \! f) = (g^{-1} \cdot g')f$ for $g,g' \in G$ and $f : G \to M$. The two versions of the wreath product that correspond to the two
 left actions of $G$ on $M^G$ are then isomorphic; an isomorphism is given by $(f,g) \mapsto (f',g)$ with $gf' = g^{-1}f$.
The reason for using the left action $g'(^g  \! f) = (g^{-1} \cdot g')f$ is that it yields the lamplighter interpretation for the wreath product, which is useful for the upcoming proofs.
First note that if $G$ is generated by $\Sigma$ and $M$ is generated by $\Gamma$ then
$M \wr G$ is generated by the set $\{ (i, a) \colon a \in \Sigma \} \cup \{ (f_b, 1) \colon b \in \Gamma \}$, where $i : G \to M$
is defined by $g i = 1$ for all $g \in G$ and $f_b : G \to M$ is defined by $1 f_b = b$ and $g f_b = 1$ for all $g \in G \setminus \{1\}$.
Moreover, $\{ (i, a) \colon a \in \Sigma \}$ generates a copy of $G$ in $M \wr G$ and $\{ (f_b, 1) \colon b \in \Gamma \}$ generates a copy
of $M$ in $M \wr G$. In the following we will identify $\{ (i, a) \colon a \in \Sigma \}$ with $\Sigma$ and 
$\{ (f_b, 1) \colon b \in \Gamma \}$ with $\Gamma$, so that $M \wr G$ is generated by $\Sigma \cup \Gamma$ (assuming $\Sigma \cap \Gamma = \emptyset$).
Note that
\begin{itemize}
\item $(f,g) (i, a) = (f, g \cdot a)$ and
\item $(f,g) (f_b, 1) = (f', g)$ where $g' f' = g' f$ for all $g' \neq g$ and $g f' = (g f) \ast b$. 
\end{itemize}
Hence, an element $(f,g)$ should seen as (i) an assignment of elements from $M$ to the elements of $G$ (this is the mapping $f$) such that
only finitely many elements of $G$ get an element from $M \setminus \{1\}$, and (ii) a distinguished lamplighter position $g \in G$.
When multiplying with the generator $(i, a)$ the lamplighter moves from $g$ to $g \cdot a$
and when multiplying with the generator $(f_b, 1)$
the element assigned to the current  lamplighter position is multiplied on the right with $b$ (and the lamplighter does not move).
The classical lamplighter group is $\mathbb{Z}_2 \wr \mathbb{Z}$ where the two elements in $\mathbb{Z}_2$ correspond to ``light off'' and ``light on''.
Note that in our first version of the wreath product multiplication with the generator $(f_b,1)$ multiplies the element at position $g^{-1}$ on the right
with $b$.

In the rest of this section, we construct distinguishers for wreath products $H \wr G$ of f.g.~groups $H$ and $G$. 
The case of a wreath product $H \wr G$ with $G$ finite  is covered by Theorem~\ref{thm-wreath-right-finite}.
The case of a wreath product $H \wr G$ with $G$ infinite turns out to be more interesting. 
We will start with wreath products $Z \wr G$, where $G$ is infinite and $Z$ is a cyclic group and split this case
into two subcases:
\begin{itemize}
\item $Z = \mathbb{Z}$; see Theorem~\ref{thm-wreath-main},
\item $Z = \mathbb{Z}_{p^k}$ for a prime $p$ and $k \geq 1$; see Theorem~\ref{thm-wreath-Zpk}.
\end{itemize}
The case $Z = \mathbb{Z}_{m}$ with $m$ not a prime power (and more generally, the case of a wreath product $A \wr G$ with $A$ finitely generated abelian)
will follow easily from those cases; see Corollary~\ref{thm-wreath-by-abelian}.
 In the following, $\Sigma$ denotes a finite generating set for the group $G$ and $a$ denotes a generator for the 
cyclic group $Z$. Then, $\Gamma = \Sigma \cup \{a,a^{-1}\}$ is a finite symmetric generating set for the wreath product $Z \wr G$.

\begin{theorem} \label{thm-wreath-main}
Let $G$ be an f.g.~infinite group  and $\mathcal{R} = (\mathcal{A}_n)_{n \ge 0}$
an $(\epsilon_0,\epsilon_1)$-distinguisher for $G$.
Let $d$ be a fixed constant and define 
\begin{eqnarray*}
\zeta_0(n) &=& 2\max\{\epsilon_0(n),\epsilon_1(n)\}n^2+ 1/n^d, \\
\zeta_1(n) &=& (2n^2+1) \epsilon_1(n) . 
\end{eqnarray*}
Then there is a $(\zeta_0,\zeta_1)$-distinguisher $\mathcal{R}'$ for $\mathbb{Z} \wr G$
with space complexity 
\[
  s(\mathcal{R}', n) = 3 \cdot s(\mathcal{R},n) + \Theta(\log n).
\]
\end{theorem}
 
 \begin{proof}
Fix an input length $n$ and 
let $\mathcal{A}_n = (Q_n, \Sigma, \iota_n, \delta_n)$. W.l.o.g. we can assume that $Q_n = [0,|Q_n|-1]$
consists of the first $|Q_n|$ non-negative integers.

For a word $u = v a^{\beta}$ with $v \in \Gamma^*$
and $\beta \in \{-1,1\}$ we define $\sigma(u) = \beta$. 

Consider a potential input word $w \in \Gamma^{\le n}$ and a state $q \in Q_n$ (later, it will
be randomly guessed according to the initial state distribution $\iota_n$). 
Define the mapping $\delta_q : \Sigma^* \to Q$ by
$\delta_q(u) = \delta_n(q,u)$ for $u \in \Sigma^*$.
With  $w$ and $q$  we associate a polynomial $P_{q,w}(x) \in \mathbb{Z}[x]$ as follows: 
Let $R_w$ be the set of all 
prefixes of $w$ that end with $a$ or  $a^{-1}$. We have $|R_w| \leq |w| \le n$. For every $v \in R_w$ 
consider the $\mathcal{A}_n$-state $q_v = \delta_q(\pi_\Sigma(v)) \in [0,|Q_n|-1]$
(where $\pi_\Sigma : \Gamma^* \to \Sigma^*$ is the canonical projection morphism).
 We then define the polynomial
\begin{equation}
\label{eq-poly-P_qw}
P_{q,w}(x)  \coloneqq \sum_{v \in R_w} \sigma(v) \cdot x^{q_v} .
\end{equation}
Note that this polynomial has degree at most $|Q_n|-1$, all its coefficients have absolute value at most $n$, and there are at most $n$ monomials.
Actually, the sum of the absolute values of the coefficients of $P_{q,w}(x)$ is bounded by $n$.

Let us write $(f_w, g_w) \in \mathbb{Z} \wr G$ for the group element represented by the word $w$. The element $g_w \in G$
is obtained by evaluating the projection  $\pi_\Sigma(w)$ in the group $G$.
For this, the distinguisher for $\mathbb{Z} \wr G$ that we aim for (and that we describe in detail later) will simply use the semiPFA $\mathcal{A}_n$.
The difficult part is the mapping $f_w$. For this we make use of the polynomial $P_{q,w}(x)$.

\begin{claim} \label{claim-WP1}
 Let $u,v \in \Gamma^{\le n}$ be two input words such that $f_u = f_v$.
Then we have
\[ \Prob_{q \sim \iota_n}[P_{q,u}(x) = P_{q,v}(x)] \geq 1-2\epsilon_1(n) n^2.\] 
\end{claim}
\emph{Proof of Claim~\ref{claim-WP1}.} Define the set of words $S = \pi_\Sigma(R_u \cup R_v) \subseteq \Sigma^*$
and let $G_S = \pi_G(S) \subseteq G$ be the finite set of group elements represented by the words in $S$. Clearly, $|S| \le 2n$.
We will use the equivalence relations $\equiv_q$ ($q \in Q_n$) from Lemma~\ref{lemma-injective}.
It suffices to show for every state $q \in Q_n$ the following: if $\equiv_G$ refines  $\equiv_q$ on $S$, then $P_{q,u}(x) = P_{q,v}(x)$.
If this is shown then the second statement of Lemma~\ref{lemma-injective} implies
\begin{eqnarray*}
\Prob_{q \sim \iota_n}[P_{q,u}(x)  = P_{q,v}(x)] & \geq & \Prob_{q \sim \iota_n}[\equiv_G  \text{refines} \equiv_q \text{on } S] \\ 
& \geq & 1-  \epsilon_1(n) \binom{|S|}{2} \\
& \geq & 1-2\epsilon_1(n) n^2 .
\end{eqnarray*}
So, consider a state $q \in Q_n$ and
assume that $\equiv_G$ refines $\equiv_q$ on $S$. 
Let $Q_S = \delta_q(S)$ be the image of $S$ under the mapping $\delta_q$.
It is the set of states
of $\mathcal{A}_n$ that can be reached from $q$ via words from $S$.
Let $[S]_{\equiv_G}$ and $[S]_{\equiv_q}$ be the set of equivalence
classes of $\equiv_G$ and  $\equiv_q$, respectively, on $S$.
With every state $r \in Q_S$ we can associate the equivalence class
$A_r = \delta^{-1}_q(r) \cap S \in [S]_{\equiv_q}$.  Similarly, 
with every group element $g \in G_S$ we associate the equivalence
class $B_g = \pi^{-1}_G(g) \cap S \in [S]_{\equiv_g}$ (where $\pi_G : \Sigma^* \to G$ is 
the evalutation morphism).
Moreover, the $A_r$ ($r \in Q_S$) and
$B_g$ ($g \in G_S$) are all equivalence classes of  $\equiv_q$ and  $\equiv_G$ on $S$, respectively.
Since $\equiv_G$ refines $\equiv_q$ on $S$, there is a surjective mapping
$h : G_S \to Q_S$ such that 
\[ A_r = \biguplus_{g \in h^{-1}\!(r)} B_g \]
for every $r \in Q_S$.
The definition of the mappings $f_u$ and $f_v$ yields
for every $g \in G_S$:
\begin{equation} \label{eq-wreath-Z-def_f_u-f_v}
\sum_{y \in \pi^{-1}_\Sigma\!(B_g) \cap R_u} \!\!\!\!\!\!\!\!\! \sigma(y) 
\ = \ f_u(g) \ = \ f_v(g) \ = \ \sum_{y \in \pi^{-1}_\Sigma\!(B_g) \cap R_v} \!\!\!\!\!\!\!\!\!  \sigma(y) .
\end{equation}
Note that for every $r \in Q_S$ we have
\[
 \pi^{-1}_\Sigma(A_r) \cap R_u = \pi^{-1}_\Sigma\left( \biguplus_{g \in h^{-1}\!(r)} B_g \right) \cap R_u = 
 \biguplus_{g \in h^{-1}\!(r)} (\pi^{-1}_\Sigma(B_g) \cap R_u)
 \]
 and similarly for $R_v$.
If we write $P_{q,u} \ast x^r$ for the coefficient of the monomial $x^r$ (where $r \in Q_S$) in the polynomial $P_{q,u}(x)$ and analogously
for $P_{q,v}(x)$, then we obtain for every $r \in Q_S$:
\begin{eqnarray*}
P_{q,u} \ast x^r  = \sum_{y \in \pi^{-1}_\Sigma\!(A_r) \cap R_u} \!\!\!\!\!\!\!\!\!  \sigma(y) 
&=& \sum_{g \in h^{-1}(r)} \sum_{y \in \pi^{-1}_\Sigma\!(B_g) \cap R_u } \!\!\!\!\!\!\!\!\!  \sigma(y)  \\
&\stackrel{\text{\eqref{eq-wreath-Z-def_f_u-f_v}}}{=} & \sum_{g \in h^{-1}(r)} \sum_{y \in \pi^{-1}_\Sigma\!(B_g) \cap R_v } \!\!\!\!\!\!\!\!\!  \sigma(y)  \\
& = & \sum_{y \in \pi^{-1}_\Sigma\!(A_r) \cap R_v} \!\!\!\!\!\!\!\!\!  \sigma(y) 
\ = \ P_{q,v} \ast x^r .
\end{eqnarray*}
Hence, we finally get $P_{q,u}(x)  = P_{q,v}(x)$, which proves Claim~\ref{claim-WP1}.

\begin{claim} \label{claim-WP2}
Let $u,v \in \Gamma^{\le n}$ be two input words such that $f_u \neq f_v$.
Then we have
\[ 
\Prob_{q \sim \iota_n}[P_{q,u}(x) \neq P_{q,v}(x)] \geq 1-2\max\{\epsilon_0(n),\epsilon_1(n)\} n^2.
\] 
\end{claim}
\emph{Proof of Claim~\ref{claim-WP2}.} The proof is very similar to the proof of Claim~\ref{claim-WP1}. Assume that $f_u \neq f_v$.
We use the same notations as in the proof of Claim~\ref{claim-WP1}.
By the first statement of Lemma~\ref{lemma-injective}
it suffices to show for every state $q \in Q_n$: if $\equiv_G$ equals $\equiv_q$ on $S$ then $P_{q,u}(x) \neq P_{q,v}(x)$.

Assume that $\equiv_G$ equals $\equiv_q$ on $S$ for a state $q$.
Then there is a bijection $h : G_S \to Q_S$ such that $B_{g} =  A_{h(g)}$
for every $g \in G_S$. Since $f_u \neq f_v$ there is a $g \in G_S$ with 
\[
\sum_{y \in \pi^{-1}_\Sigma\!(B_g) \cap R_u} \!\!\!\!\!\!\!\!\! \sigma(y) 
\ = \ f_u(g) \ \neq \ f_v(g) \ = \ \sum_{y \in \pi^{-1}_\Sigma\!(B_g) \cap R_v} \!\!\!\!\!\!\!\!\!  \sigma(y) .
\]
For $r = h(g)$ we obtain
\begin{eqnarray*}
P_{q,u} \ast x^r 
&=& \sum_{y \in \pi^{-1}_\Sigma\!(A_r) \cap R_u} \!\!\!\!\!\!\!\!\!  \sigma(y) \
= \sum_{y \in \pi^{-1}_\Sigma\!(B_g) \cap R_u } \!\!\!\!\!\!\!\!\!  \sigma(y)  \\
& \neq & \sum_{y \in \pi^{-1}_\Sigma\!(B_g) \cap R_v } \!\!\!\!\!\!\!\!\!  \sigma(y)  \
= \sum_{y \in \pi^{-1}_\Sigma\!(A_r) \cap R_v} \!\!\!\!\!\!\!\!\!  \sigma(y) 
\ = \ P_{q,v} \ast x^r .
\end{eqnarray*}
Thus, we have $P_{q,u}(x)  \neq P_{q,v}(x)$, which proves Claim~\ref{claim-WP2}.

\medskip
\noindent
In order to verify $f_u = f_v$, a randomized distinguisher could compute the polynomial $P_{q,w}(x)$ for 
an input word $w \in \Gamma^{\leq n}$ and a random initial state $q \sim \iota_n$. The problem is that the polynomial $P_{q,w}(x)$ does not fit
into the space bound we are aiming for. Therefore, we only can afford to compute a fingerprint of 
$P_{q,w}(x)$. This fingerprint is obtained in two steps. 

First, observe that the Cauchy bound\footnote{The Cauchy bound says that for a polynomial $p(x) = a_n x^n + a_{n-1} x^{n-1} + \cdots a_1x + a_0 \in \mathbb{R}[x]$
with $a_n \neq 0$ every root $\alpha$ of $p(x)$ satisfies $|\alpha| < 1 + \max \{ |a_0|, |a_1|, \ldots |a_{n-1}|\}/|a_n|$.} implies for all words $u,v \in \Gamma^{\leq n}$ that $P_{q,u}(x) = P_{q,v}(x)$ if and only if 
$P_{q,u}(2n+1) = P_{q,v}(2n+1)$. To see this, note that all coefficients of the polynomial $P_{q,u}(x) - P_{q,v}(x) \in \mathbb{Z}[x]$ are bounded in absolute value
by $2n$. Hence, $2n+1$ is not a root of the polynomial $P_{q,u}(x) - P_{q,v}(x)$.

The value $P_{q,w}(2n+1)$ still needs too many bits. Therefore, our distinguisher will compute it only modulo a sufficiently large 
prime number $p$. Note that for every word $w \in \Gamma^{\leq n}$ we have
\[
|P_{q,w}(2n+1)| \leq n \cdot (2n+1)^{|Q_n|-1}.
\]
Hence, for two input words $u,v \in \Gamma^{\leq n}$ we have
\[
D_{u,v}  \coloneqq |P_{q,u}(2n+1)-P_{q,v}(2n+1)|  \leq 2n(2n+1)^{|Q_n|-1} \leq (2n+1)^{|Q_n|}.
\]
Recall that $\mathbb{P}_{\alpha}$ is the set of prime numbers $p \leq \alpha$. We want to choose $\alpha = \alpha(n)$ large enough such that for a prime $p$ uniformly
chosen from $\mathbb{P}_{\alpha}$ we obtain, under the assumption $D_{u,v} \neq 0$,
\begin{equation} \label{prob-p}
 \Prob_{p \in \mathbb{P}_{\alpha}}[p \text{ divides } D_{u,v}] \leq 1/n^d ,
\end{equation}
where $d$ is the parameter from the theorem.

By \cite{Robin1983},
the number of different prime factors of $D_{u,v}$ is bounded by 
\begin{eqnarray*}
\frac{ \ln D_{u,v}}{\ln \ln D_{u,v}} \cdot (1+o(1)) & \le &
\frac{\ln \big( (2n+1)^{|Q_n|}\big)}{\ln \ln  \big( (2n+1)^{|Q_n|} \big)} \cdot (1+o(1)) \\[2mm]
& \leq & \mathcal{O}\bigg(\frac{|Q_n| \cdot \log n}{\log |Q_n| + \log\log n}\bigg)  \leq \mathcal{O}\bigg(\frac{|Q_n| \cdot \log n}{\log \log n}\bigg)
\end{eqnarray*}
(recall that $\log |Q_n| \geq s(\mathcal{R},n)-1 \ge \Omega(\log \log n)$ since $G$ is infinite, see Remark~\ref{remark-infinite-group}).
Moreover, there are $\Theta(x)$ many primes of size at most $x \log x$. 
Hence, by fixing the number $\alpha = \alpha(n)$ such that
\begin{eqnarray*}
\alpha(n) & = & \Theta\bigg( \frac{|Q_n| \cdot \log n \cdot n^d}{\log \log n}  \cdot \log\bigg(\frac{|Q_n| \cdot \log n \cdot n^d}{\log \log n}\bigg) \bigg) \\[2mm]
& = &
\Theta\bigg( \frac{|Q_n| \cdot \log n \cdot n^d}{\log \log n}  \cdot (\log |Q_n| + d \log n)\bigg)  \leq \Theta( |Q_n| \cdot n^{d+2}),
\end{eqnarray*}
we obtain \eqref{prob-p}.

\begin{algorithm}[t]
\SetKwComment{Comment}{(}{)}
\SetKwInput{KwGlobal}{global variables}
\SetKwInput{KwInit}{initialization}
\SetKwInput{KwNext}{next input letter}
\BlankLine
\KwGlobal{prime number $p \in \mathbb{P}_{\alpha}$, integer $z \in [0,p-1]$, state $q \in Q_n$} 
\BlankLine
\KwInit{}
guess $p \in \mathbb{P}_{\alpha(n)}$ according to the uniform distribution \label{guess p} \\
guess $q \in Q_n$ according to the input distribution $\iota_n$ of $\mathcal{A}_n$ \label{guess q} \\ 
$z  \coloneqq 0$ 
\BlankLine
\KwNext{$b \in \Gamma$}
\If{$b \in \Sigma$}
   {$q  \coloneqq \delta_n(q,a)$ \label{update q}}
\If{$b = a^{\beta}$ for $\beta \in \{-1,1\}$}
   {$z  \coloneqq (z + \beta \cdot (2n+1)^{q}) \bmod p$}
\caption{A $(\zeta_0,\zeta_1)$-distinguisher for a wreath product $\mathbb{Z} \wr G$.  \label{algo-Z-wr-G}}
\end{algorithm}

We can now finally explain our $(\zeta_0,\zeta_1)$-distinguisher for the wreath product $\mathbb{Z} \wr G$; see Algorithm~\ref{algo-Z-wr-G}.
For an input word $w \in \Gamma^{\le n}$ we simulate 
in lines \ref{guess q} and \ref{update q} the semiPFA $\mathcal{A}_n$ on the projection $\pi_{\Sigma}(w)$.
In the integer variable $z$ we compute the number $P_{q,w}(2n+1) \bmod p$, where $p$ is the prime guessed in line \ref{guess p} and $q$ is the state guessed
in line \ref{guess q}. If we want to describe the algorithm by a semiPFA, then the state of the algorithm would consist of the prime number $p$ and the current values
of $q$ and $z$. The prime number $p$ is not changed when a new symbol $b$ arrives.

The algorithm stores $3 \cdot  s(\mathcal{R},n) + \Theta(\log n)$ bits:
\begin{itemize}
\item $s(\mathcal{R},n) + \Theta(\log n) = \lceil\log |Q_n|\rceil + \Theta(\log n)$ bits for the prime $p \leq \alpha(n)$,
\item $s(\mathcal{R},n) + \Theta(\log n)$ bits for the number $z < p$.
\item $s(\mathcal{R},n)$ bits for the state $q$.
\end{itemize}
It remains to compute the error probabilities of the distinguisher. For this, consider  two words $u, v \in  \Gamma^{\le n}$. 
Let $(f_u, g_u) \in \mathbb{Z} \wr G$ (resp., $(f_v, g_v) \in \mathbb{Z} \wr G$) be the group element represented by the word $u$ (resp., $v$).
Let $z(p,q,u)$ (resp., $z(p,q,v)$) be the value of the variable $z$ that Algorithm~\ref{algo-Z-wr-G} computes on input $u$ (resp., $v$) when
$p$ and $q$ are the random choices in lines \ref{guess p} and \ref{guess q}, respectively.
Recall the definitions of $\zeta_0(n)$ and $\zeta_1(n)$ from the theorem.

\begin{claim} \label{claim-WP3}
 If $u \equiv_{\mathbb{Z} \wr G}  v$ then 
\[ \Prob_{p \in \mathbb{P}_{\alpha},q \sim \iota_n}[\delta_n(q, \pi_\Sigma(u)) = \delta_n(q, \pi_\Sigma(v)) \wedge z(p,q,u) = z(p,q,v)] \geq 1-\zeta_1(n) . \]
\end{claim}

\emph{Proof of Claim~\ref{claim-WP3}.}
Assume that $u \equiv_{\mathbb{Z} \wr G} v$, i.e., $f_u = f_v$ and $g_u = g_v$.
Since we run the algorithm $\mathcal{A}_n$ on $\pi_\Sigma(u)$ and $\pi_\Sigma(v)$, respectively, and 
$\pi_\Sigma(u) \equiv_G \pi_\Sigma(v)$, we get 
\[
\Prob_{q \sim \iota_n}[\delta_n(q, \pi_\Sigma(u)) = \delta_n(q, \pi_\Sigma(v))]\ge 1-\epsilon_1(n).
\]
Moreover, $f_u = f_v$ implies 
\[ \Prob_{p \in \mathbb{P}_{\alpha},q \sim \iota_n}[z(p,q,u) = z(p,q,v)] \geq
\Prob_{q \sim \iota_n}[P_{q,u}(x) = P_{q,v}(x)] \geq 1-2\epsilon_1(n) n^2, \] where the second inequality follows from Claim~\ref{claim-WP1}.
In total, we obtain
\begin{eqnarray*}
& & \Prob_{p \in \mathbb{P}_{\alpha}, \, q\sim \iota_n}[\delta_n(q, \pi_\Sigma(u)) \neq \delta_n(q, \pi_\Sigma(v)) \vee z(p,q,u) \neq z(p,q,v)] 
\\ & \leq & \Prob_{p \in \mathbb{P}_{\alpha}, \, q\sim \iota_n}[\delta_n(q, \pi_\Sigma(u)) \neq \delta_n(q, \pi_\Sigma(v))] + \Prob_{p \in \mathbb{P}_{\alpha}, \, q\sim\iota_n}[ z(p,q,u) \neq z(p,q,v)] \\
& \leq &  \epsilon_1(n) + 2\epsilon_1(n) n^2 = \zeta_1(n) .
\end{eqnarray*}
\begin{claim} \label{claim-WP4}
If $u \not\equiv_{\mathbb{Z} \wr G}  v$ then  
\[ \Prob_{p \in \mathbb{P}_{\alpha}, \, q \sim \iota_n}[\delta_n(q, \pi_\Sigma(u)) \neq \delta_n(q, \pi_\Sigma(v)) \vee z(p,q,u) \neq z(p,q,v)] \geq 1-\zeta_0(n) .\]
\end{claim}
\emph{Proof of Claim~\ref{claim-WP4}.}
Assume that $u \not\equiv_{\mathbb{Z} \wr G} v$. If $g_u \neq g_v$, i.e., 
$\pi_\Sigma(u) \not\equiv_G \pi_\Sigma(v)$, then we get
\[
\Prob_{q \sim \iota_n}[\delta_n(q, \pi_\Sigma(u)) \neq \delta_n(q, \pi_\Sigma(v))]\ge 1-\epsilon_0(n) \ge 1-\zeta_0(n).
\]
On the other hand, if the mappings $f_u$ and $f_v$ are different, we obtain
\[ \Prob_{q \sim \iota_n}[P_{q,u}(x) \neq P_{q,v}(x)] \geq 1-2\max\{\epsilon_0(n),\epsilon_1(n)\} n^2
\] 
from Claim~\ref{claim-WP2}.
Consider a state $q$ with  $P_{q,u}(x) \neq P_{q,v}(x)$. With inequality \eqref{prob-p} for 
$D_{u,v}  \coloneqq |P_{q,u}(2n+1) - P_{q,v}(2n+1)| > 0$
we obtain the bound
\[
\Prob_{p \in \mathbb{P}_{\alpha}}[z(p,q,u) = z(p,q,v)]  = \Prob_{p \in \mathbb{P}_{\alpha}}[p \text{ divides } D_{u,v}] \leq 1/n^d .
\]
Therefore we have
\[
 \Prob_{p \in \mathbb{P}_{\alpha}, \, q\sim\iota_n}[z(p,q,u) \neq z(p,q,v) \mid P_{q,u}(x) \neq P_{q,v}(x)] \geq 1-1/n^d .
\]
Finally, we get
\begin{eqnarray*}
& & \Prob_{p \in \mathbb{P}_{\alpha}, \, q\sim \iota_n}[z(p,q,u) \neq z(p,q,v)]  \\
&=& \Prob_{p \in \mathbb{P}_{\alpha}, \, q\sim\iota_n}[z(p,q,u) \neq z(p,q,v) \mid P_{q,u}(x) \neq P_{q,v}(x)] \cdot \Prob_{q \sim \iota_n}[P_{q,u}(x) \neq P_{q,v}(x)] \\
& \geq & (1- 1/n^d) \cdot (1-2\max\{\epsilon_0(n),\epsilon_1(n)\} n^2) \\
& \ge & 1 - (2\max\{\epsilon_0(n),\epsilon_1(n)\} n^2 +1/n^d) = 1-\zeta_0(n)
\end{eqnarray*}
This proves the theorem.
\end{proof}
We now deal with wreath products $\mathbb{Z}_{p^k} \wr G$ with $p$ prime and $k \geq 1$.
Note that the ring $\mathbb{Z}_{p^k}$ of integers modulo $p^k$ is not a ring unless $k = 1$.

\begin{theorem} \label{thm-wreath-Zpk}
Let $G$ be an f.g.~infinite group and $\mathcal{R} = (\mathcal{A}_n)_{n \ge 0}$ an $(\epsilon_0,\epsilon_1)$-distinguisher for $G$.
Let $d$ be a fixed constant and define 
\begin{eqnarray*}
\zeta_0(n) & = & 2\max\{\epsilon_0(n),\epsilon_1(n)\}n^2+ 1/n^d, \\
\zeta_1(n) & = & (2n^2+1) \epsilon_1(n) .
\end{eqnarray*}
Then there is
a $(\zeta_0,\zeta_1)$-distinguisher $\mathcal{R}'$ for $\mathbb{Z}_{p^k} \wr G$
with space complexity 
\[
  s(\mathcal{R}', n) = \mathcal{O}(s(\mathcal{R},n) + \log n).
\]
\end{theorem}

\begin{proof}
The proof is mostly identical to the proof of Theorem~\ref{thm-wreath-main} and we will reuse most of the notation. 
We consider the polynomial $P_{q,w}(x)$ 
from \eqref{eq-poly-P_qw} as a polynomial in~$\mathbb{Z}_{p^k}[x]$. 
Instead of evaluating it at a randomly chosen prime, we will compute its remainder with respect to a randomly chosen monic polynomial $\phi(x) \in \mathbb{Z}_{p^k}[x]$ of sufficiently low degree.
Note that if $\phi(x) = x - a$ has degree one, then computing this remainder is the same as evaluating $P_{q,w}(x)$ at the unique root $a$ of $\phi(x)$.

For a positive integer $m$, which we specify later, let us denote by $\Phi_m$ the set of all monic polynomials $\phi(x)$ of degree $m$ in $\mathbb{Z}_{p^k}[x]$ such that, firstly, $\phi(x)$ is irreducible as a polynomial in $\mathbb{Z}_p[x]$ and, secondly, all coefficients of $\phi(x)$ are in $[0,p-1]$.
Thus, the canonical homomorphism $\mathbb{Z}_{p^k}[x] \to \mathbb{Z}_{p}[x]$ (reducing all coefficients modulo~$p$) induces a bijection from $\Phi_m$ to the set of all monic irreducible polynomials of degree~$m$ in $\mathbb{Z}_p[x]$.
In particular,
\begin{equation}
  \lvert \Phi_m \rvert = \frac 1 m \sum_{m' | m} \mu(m') p^{m / m'} = \frac {p^m} m \cdot \big(1 + \mathcal{O}(m \cdot p^{-m/2})\big)
  = \frac {p^m} m (1+o(1))
  \label{eqn-wreath-Zpk-size}
\end{equation}
where $\mu$ denotes the Möbius function, meaning that $\mu(m') = (-1)^i$ if $m'$ is the product of $i$ distinct primes and $\mu(m') = 0$ otherwise; see e.g.\ \cite[Exercise~V.Ex.22]{Lang02}.

We claim that if $P(x) \in \mathbb{Z}_{p^k}[x]$ is a nonzero polynomial of degree at most~$D$, then a polynomial $\phi(x) \in \Phi_m$ chosen uniformly at random divides $P(x)$ with probability
\begin{equation}
  \Prob_{\phi(x) \in \Phi_m} \big[ \phi(x) \text{ divides } P(x) \text{ in } \mathbb{Z}_{p^k}[x] \big] \leq \frac D m \cdot \frac 1 {\lvert \Phi_m \rvert} 
  \stackrel{\eqref{eqn-wreath-Zpk-size}}{=} \frac D {p^m} (1+o(1)).
  \label{eqn-wreath-Zpk-prob}
\end{equation}
Indeed, for $k = 1$ this follows from the observation that at most $D/m$ distinct monic irreducible polynomials of degree $m$ can divide $P(x)$.
For $k > 1$ this follows from an argument by Agrawal and Biswas~\cite[Lemma~4.7]{AgrBis03}:
As $P(x) \neq 0$ in~$\mathbb{Z}_{p^k}[x]$, there is an exponent $e$ with $0 \leq e < k$ such that, firstly, all coefficients of $P(x)$ are divisible by $p^e$ and, secondly, $\tilde P(x) = p^{-e} P(x)$ satisfies $\tilde P(x) \neq 0$ in $\mathbb{Z}_p[x]$.
Further, if $\phi(x)$ divides $P(x)$ in $\mathbb{Z}_{p^k}[x]$, then $\phi(x)$ also divides $\tilde P(x)$ in $\mathbb{Z}_{p}[x]$.
Therefore, the estimate in \eqref{eqn-wreath-Zpk-prob} for $k > 1$ follows from the estimate for $k = 1$.

For input length $n$ the algorithm now proceeds as follows.
In the initialization phase we choose $\phi(x) \in \Phi_m$ uniformly at random, where $m = \log_p {\lvert Q_n \rvert} + d \cdot \log_p(n) + 1$
and $Q_n$ is the set of states of the semiPFA $\mathcal{A}_n$ from the distinguisher $\mathcal{R} = (\mathcal{A}_n)_{n \ge 0}$ for $G$.
We then keep track of the remainder $P_{q,w}(x) \bmod \phi(x)$, which is a polynomial of degree at most $m-1$.
As $P_{q,w}(x)$ has degree less than $D = \lvert Q_n \rvert$, we obtain the following bound as a direct consequence of \eqref{eqn-wreath-Zpk-prob}: for all words $u,v \in \Gamma^{\leq n}$, 
\begin{multline*}
  \smash{\Prob_{\substack{\phi(x) \in \Phi_m \\ q \sim \iota_n}}} \big[
    P_{q,u}(x) \bmod \phi(x) = P_{q,v}(x) \bmod \phi(x) \mid P_{q, u}(x) \neq P_{q, v}(x) 
  \big] \\
  \leq  \  \frac{|Q_n|}{p^m} (1+o(1)) = \frac{1}{n^{d} \cdot p} (1+o(1))
  \le \frac 1 {n^d}
\end{multline*}
for $n$ large enough.
The error bounds of the algorithm can then be computed as in the proof of Theorem~\ref{thm-wreath-main}. 
The algorithm stores the values $\phi(x)$, $q$ and $P_{q,w}(x) \bmod \phi(x)$ which requires 
$s(\mathcal{R},n) + \log_2 \lvert \Phi_m \rvert + m \cdot \log_2 p^k = \mathcal{O}(s(\mathcal{R},n) + \log n)$ bits in total.
\end{proof}

Note that in Theorems~\ref{thm-wreath-main} and \ref{thm-wreath-Zpk}, $\epsilon_1 = 0$ implies 
$\zeta_1 = 0$, that is, a one-sided error is preserved.

\begin{corollary} \label{thm-wreath-by-abelian}
Let $G$ be a finitely generated group for which there exists an $(\epsilon_0,\epsilon_1)$-distinguisher $\mathcal{R}$.
Let $A$ be a finitely generated abelian group and $d \ge 1$ a fixed constant.
Then there is a constant $\alpha$ and a $(\zeta_0,\zeta_1)$-distinguisher $\mathcal{S}$
 for $A \wr G$ with 
 \begin{eqnarray}
\zeta_0(n) &=& 2\max\{\epsilon_0(n),\epsilon_1(n)\} n^2 + 1/n^d, \label{eq-epsilon'}\\
\zeta_1(n) &=& \alpha(2 n^2+1)\epsilon_1(n), \nonumber \\
 s(\mathcal{S},n) & \leq & \mathcal{O}(s(\mathcal{R},n) + \log n). \nonumber
\end{eqnarray}
\end{corollary}

\begin{proof}
The wreath product $(H_1 \times H_2) \wr G$ is a subgroup of $(H_1 \wr G) \times (H_2 \wr G)$ \cite[Lemma~6.2]{KonigL18}. Since $A$ is a direct product
of copies of $\mathbb{Z}$ and finite cyclic groups $\mathbb{Z}_{p^k}$ for $p$ a prime and $k \ge 1$, the statement of the theorem follows from 
Lemma~\ref{thm-direct-prod} and Theorems~\ref{thm-wreath-main} and \ref{thm-wreath-Zpk}. The constant $\alpha$ is the number
of factors $\mathbb{Z}$ and $\mathbb{Z}_{p^k}$ in the direct product decomposition of $A$. 
\end{proof}
Corollary~\ref{thm-wreath-by-abelian} can be slightly extended to the case, where the abelian group $A$ is replaced 
by a finitely generated commutative and cancellative monoid:

\begin{corollary} \label{thm-wreath-cancel-comm}
Let $G$ be a finitely generated group for which there exists an $(\epsilon_0,\epsilon_1)$-distinguisher $\mathcal{R}$.
Let $M$ be a finitely generated commutative and cancellative monoid and $1/n^d$ a fixed constant.
Then there is a constant $\alpha$ and a $(\zeta_0,\zeta_1)$-distinguisher $\mathcal{S}$
 for $M \wr G$ with 
 \begin{eqnarray*}
\zeta_0(n) &=& 2\max\{\epsilon_0(n),\epsilon_1(n)\} n^2 + 1/n^d, \\
\zeta_1(n) &=& \alpha(2 n^2+1)\epsilon_1(n), \nonumber \\
 s(\mathcal{S},n) & \leq & \mathcal{O}(s(\mathcal{R},n) + \log n). \nonumber
\end{eqnarray*}
\end{corollary}

\begin{proof}
The monoid $M$ embeds into a finitely generated abelian group $A$ \cite[Proposition3.2]{GrilletComm}. Hence, $M \wr G$ embeds into $A \wr G$. The result 
follows from Corollary~\ref{thm-wreath-by-abelian}.
\end{proof}
We can also apply Corollary~\ref{thm-wreath-by-abelian} to free solvable groups (the free objects in the variety of solvable groups).

\begin{corollary}  \label{thm-free-solvable}
Every free solvable group has a $(1/n^d,0)$-distinguisher with space complexity $\Theta(\log n)$ for every $d \geq 1$.
\end{corollary}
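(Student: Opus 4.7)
Write $G$ for a free solvable group of rank $r$ and derived length $d$. The plan is to prove matching $\Omega(\log n)$ and $\mathcal{O}(\log n)$ bounds, assuming $r \ge 2$ and $d \ge 2$ (otherwise $G$ is $\mathbb{Z}^r$, which is virtually nilpotent and falls under Corollary~\ref{thm--gap-linear}).

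For the lower bound, I would note that $G$ surjects onto the free metabelian group of rank $r$ (i.e.\ the case $d = 2$), which is solvable but not virtually nilpotent; hence $G$ itself is a finitely generated solvable group that is not virtually nilpotent. By the Milnor--Wolf theorem (Remark~\ref{remark-growth}) $G$ has exponential growth, so $\gamma_{G,\Sigma}(n) \ge c^n$ for some $c > 1$, and Theorem~\ref{thm-lower-bound-random} yields the lower bound $\Omega(\log n)$.

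For the upper bound, the plan is to invoke the classical Magnus embedding: $G$ embeds into the iterated wreath product
\[
W_d \;:=\; \underbrace{\mathbb{Z}^r \wr \bigl(\mathbb{Z}^r \wr \bigl(\cdots \wr \mathbb{Z}^r\bigr)\bigr)}_{d \text{ copies}} .
\]
I would construct an $\epsilon$-injective randomized streaming algorithm for $W_d$ of space $\mathcal{O}(\log n)$ by induction on $d$. The base case $\mathbb{Z}^r$ is a finitely generated linear group, so Theorem~\ref{thm-lin} supplies such an algorithm with $\epsilon = 1/n^{c_0}$ for any preselected constant $c_0$ and space $\mathcal{O}(\log n)$. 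The inductive step applies the second clause of Corollary~\ref{thm-wreath-by-abelian} with outer group $\mathbb{Z}^r$ (a direct product of copies of $\mathbb{Z}$): from an $\epsilon_i$-injective algorithm for $W_i$ with $\epsilon_i = 1/n^{c_i}$ and space $\mathcal{O}(\log n)$ it produces an algorithm for $W_{i+1} = \mathbb{Z}^r \wr W_i$ of error $\mathcal{O}(1/n^{c_i - 2} + 1/n^{d_i})$ and space $\mathcal{O}(\log n)$. Choosing the constants $d_i$ large and starting with a sufficiently large $c_0$ (depending only on $d$) keeps the error inverse polynomial through all $d - 1$ iterations while the space stays $\mathcal{O}(\log n)$.

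Finally, I would transfer the $W_d$-algorithm to its subgroup $G$ exactly as in the proof of Lemma~\ref{lemma-gen-set}: each generator of $G$ equals a fixed word over a generating set of $W_d$, so inputs to $G$ of length $n$ become inputs to $W_d$ of length $\mathcal{O}(n)$, and $\epsilon$-injectivity together with the $\mathcal{O}(\log n)$ space bound are preserved. Lemma~\ref{lemma-injective-0} then converts this into an $\epsilon$-correct randomized streaming algorithm for $\WP(G, \Sigma)$ of space $\mathcal{O}(\log n)$, matching the lower bound. The main obstacle I anticipate is the error bookkeeping across the $d - 1$ iterations of Corollary~\ref{thm-wreath-by-abelian}: since each step inflates the preceding error by a factor of $n^2$, one must verify that starting with an exponent like $c_0 \ge 2d + 1$ suffices to keep the final error inverse polynomial, so that after Lemma~\ref{lemma-injective-0} the algorithm lies below the $1/3$ error threshold for all sufficiently large $n$ (small $n$ is handled by non-uniformity).
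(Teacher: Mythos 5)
Your proof follows essentially the same route as the paper's: Magnus embedding into the iterated wreath product $\mathbb{Z}^r \wr (\cdots \wr \mathbb{Z}^r)$, the base case via Theorem~\ref{thm-lin}, iterated application of the second clause of Corollary~\ref{thm-wreath-by-abelian}, and the lower bound via Milnor--Wolf and Theorem~\ref{thm-lower-bound-random}. Your version is in fact more careful than the paper's terse proof, correctly tracking the error inflation factor of $n^2$ per wreath iteration and flagging that the statement implicitly excludes the abelian case $\mathbb{Z}^r$ (rank $1$ or derived length $1$), for which the complexity is only $\Theta(\log\log n)$.
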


\begin{proof}
Magnus' embedding theorem \cite{Mag39} says that every free solvable group can be embedded into an iterated wreath product 
$\mathbb{Z}^m \wr ( \mathbb{Z}^m \wr (\mathbb{Z}^m \wr \cdots ))$. Since $\mathbb{Z}^m$ is linear, we can, using Theorem~\ref{thm-lin},
obtain an $(\epsilon_0(n), 0)$-distinguisher for $\mathbb{Z}^m$ with space complexity $\mathcal{O}(\log n)$ for 
every inverse polynomial $\epsilon_0(n)$. We then apply Corollary~\ref{thm-wreath-by-abelian} a constant number
of times and obtain a $(1/n^d,0)$-distinguisher with space complexity $\Theta(\log n)$
\end{proof}

\begin{remark}
In \cite{Ush14} it is shown that the word problem of a free solvable group can be solved with
a randomized algorithm running in time $\mathcal{O}(n \cdot (\log n)^k)$ for some constant $k$. 
Our algorithm achieves the same running time (because for every new input symbol, only numbers 
of bit length $\mathcal{O}(\log n)$ have to be manipulated).
In contrast to our algorithm, the algorithm from \cite{Ush14} is non-streaming and does not work in logarithmic space.
\end{remark}

\subsection{Distinguishers for fundamental groups of graphs of groups}
\label{sec-fundamental}

Graph of groups (not to confuse with graph groups) are another important construction in group theory.
We present the definition of a graph of groups and its fundamental groups, although it is not needed for
our considerations.

In this section, by a graph $Y$, we mean a graph in the sense of Serre~\cite{Serre03}, where loops and multiple edges between vertices 
are allowed. Formally, it is a tuple $Y = (V,E,\alpha,\tau,{}\inv)$ such that:
\begin{itemize}
\item $V$ is a set vertices,
\item $E$ is a set of edge,
\item $\alpha\colon E\to V$ and $\omega\colon E\to V$ map an edge to its initial vertex and terminal vertex, respectively, and
\item ${}\inv : E \to E$ is a function such that for all edges $e \in E$ we have:
$(e^{-1})^{-1} = e$, $e \neq e^{-1}$ (thus, ${}\inv$ is a fixpoint-free involution) and
$\alpha(e) = \omega(e^{-1})$. 
\end{itemize}
 A \emph{path} in $Y$ is a word $\pi = e_1 e_2 \cdots e_k \in E^+$ such that
$\omega(e_i) = \alpha(e_{i+1})$ for all $1 \leq i < k$.  We say that $\pi$ is a path from 
$\alpha(e_1)$ to $\omega(e_k)$. 
If there is no $i \in [1,k-1]$ with $e_i = e_{i+1}^{-1}$ (i.e., if $\pi \in \IRR(E)$) then
$\pi$ is \emph{reduced}.
The graph $Y$ is \emph{connected} if for all $v,v' \in V$ with $v \neq v'$ there is a path 
from $v$ to $v'$. 
A \emph{spanning tree} of $Y$ is a subset $T \subseteq E$ such that the graph
$(V,T, \alpha\rest_T, \tau\rest_T, {}\inv\rest_T)$ has a unique reduced path from $v$ to $v'$ for all 
$v, v' \in V$, $v \neq v'$.
If $Y$ is connected then it has a spanning tree.

A \emph{graph of groups} $(G,Y)$  consists of a connected graph $Y = (V,E)$ and
\begin{itemize}
\item for every vertex $v \in V$, a group $G_v$,
\item for every edge $e \in E$, a group $G_e$ such that $G_e = G_{e\inv}$,
\item for every edge $e \in E$, injective homomorphisms $\alpha_e\colon G_e \to G_{\alpha(e)}$ and
$\omega_e \colon G_e \to G_{\omega(e)}$ such that $\alpha_e = \omega_{e\inv}$ for
all $e \in E$.
\end{itemize}
Consider a graph of groups $(G,Y)$.
We will only consider the case where the graph $Y = (V,E)$ is finite, i.e., $V$ and $E$ are both finite.
Choose a subset $\Delta \subseteq E$ that contains from every set $\{e, e^{-1}\}$ ($e \in E$) exactly one edge.
Let $T \subseteq E$ be a spanning tree of the graph $Y$, which exists since $Y$ is connected.
The \emph{fundamental  group} $\pi_1(G,Y,T)$ is obtained from the free product 
$H = F(\Delta) \ast \Asterisk_{v \in V} G_v$ by imposing the relations $e=1$ for all $e \in T$ and 
$e^{-1}\alpha_e(g) \, e = \omega_e(g)$ for all $e \in E$ and $g \in G_e$. More formally, it is the quotient
group $H/N$ where  $N$ is the normal closure of the set 
$T \cup  \{  e^{-1}\alpha_e(g) \, e \, \omega_e(g)^{-1} \colon e \in E, g \in G_e\}$.
It can be shown that if $T$ and $T'$ are two different spanning trees of $Y$ then 
 $\pi_1(G,Y,T)$ and  $\pi_1(G,Y,T')$ are isomorphic. Therefore, one just writes 
 $\pi_1(G,Y)$, which is  the fundamental group of $(G,Y)$. All vertex groups and edge groups canonically 
 embed into $\pi_1(G,Y)$.
 
 The graph of groups construction is of fundamental importance in the theory of groups acting on trees \cite{Serre03}.
 Important special cases of this construction are HNN-extension (where $Y$ consists of a single vertex $v$ and a single 
 edge $e$ with $\alpha(e) = \omega(e)=v$) 
 and amalgamated free products (where $Y$ consists of two vertices $v_0$, $v_1$ and a single 
 edge $e$ with $\alpha(e) = v_0$, $\omega(e)=v_1$); see \cite{Serre03} for more details.
 
Another interesting special case of graph of groups arises when all edge groups $G_e$ are finite.
In this case many positive algorithmic properties (e.g., decidability of the word problem) 
transfer from the vertex groups to the fundamental group. 
We do not know whether a corresponding transfer result also holds for groups having
a logspace distinguisher, but with an additional restriction we obtain such a transfer result directly from our previous results.
A group $G$ is called \emph{residually finite} if for every element $g \in G \setminus \{1\}$ there is a homomorphism $\phi : G \to H$
 from $G$ to a finite group $H$ such that $\phi(g) \neq 1$.

\begin{theorem}  \label{thm-fundamental-group}
Let $G$ be the fundamental group of a graph of groups with finite edge groups and f.g.~residually finite vertex groups $G_v$ ($v \in V$).
Let $\mathcal{R}_v$ be an $(\epsilon_0,\epsilon_1)$-distinguisher for $G_v$. Let $c = |V|$, $d \geq 1$ be a constant, and define 
\begin{eqnarray*}
\zeta_0(n) &=& 2\epsilon_0(n) (n+c)^2 + 1/n^d, \\
\zeta_1(n) &=& 2\epsilon_1(n) (n+c)^2.
\end{eqnarray*} 
Then, there is a constant $\gamma$ and a $(\zeta_0(\gamma n),\zeta_1(\gamma n))$-distinguisher for the fundamental group $G$
with space complexity $\mathcal{O}(\sum_{v \in V} s(\mathcal{R}_v, \gamma n) + \log n)$.
\end{theorem}

\begin{proof}
Let $(G,Y)$ be a graph of groups with finite edge groups and  f.g.~residually finite vertex groups $G_1, \ldots, G_c$.
From the construction of the fundamental group it follows that $\pi_1(G,Y)$ is f.g.~too. Moreover, $\pi_1(G,Y)$ is a finite extension
of a free product of (i) an f.g.~free group and (ii) finitely many groups $g^{-1} H g$ for $g \in \pi_1(G,Y)$ and $H$ an f.g.~subgroup of a vertex group $G_v$; see e.g. \cite[proof of Lemma~8]{LoSt08}.
 The result follows from Theorems~\ref{thm-subgroup}, \ref{thm-finite-ext} and \ref{thm-GP} (the latter is only used for free products).
\end{proof}
All the groups for which we can construct logspace distinguishers with our results from Section~\ref{sec-C-transfer}
are  residually finite. This is due to the following results:
\begin{itemize}
\item Every f.g.~linear group is residually finite \cite{Mal40}.
\item Every subgroup of a residually finite group is residually finite (trivially true).
\item Every finite extension of a residually finite groups is residually finite; 
this seems to be folklore; see e.g.~\cite[Proposition 2.2.12]{Ceccherini-SilbersteinC09}. 
\item A graph product of f.g.~residually finite groups is f.g.~residually finite \cite{HsWi99}.
\item A wreath product $H \wr G$ of f.g.~groups $G,H$ is 
residually finite if and only if (i) either $G$ is finite and $H$ is residually finite 
or (ii) $H$ is abelian and $G$ is residually finite \cite{Grue57}.
\end{itemize}
Also the fundamental group of a graph of groups with finite edge groups and 
residually finite vertex groups is residually finite \cite[II.2.6 Proposition 12]{Serre03}.

\begin{oproblem}
Is Theorem~\ref{thm-fundamental-group} also true without the restriction to residually finite vertex groups?
Decidability of the word problem (as well as many other algorithmic properties)  is preserved by graph of groups with finite edge groups.
\end{oproblem}

\subsection{Distinguishers for semilattice decompositions} \label{sec-transfer}

Our final transfer theorem for distinguishers concerns an important decomposition technique for semigroups.

A \emph{semilattice} is a commutative semigroup $Y$ such that $\alpha^2 = \alpha$ for all $\alpha \in Y$.
Note that every finitely generated semilattice is finite.
For a semilattice $Y$ there is a partial order $\leq$ on $Y$ defined by $\alpha \beta \leq \alpha$ for all $\alpha, \beta \in Y$.
Note that $Y$ is a monoid if and only if $Y$ has a largest element with respect to this partial order $\leq$.

A \emph{semilattice decomposition} of a semigroup $S$ consists of a partition $S = \biguplus_{\alpha \in Y} S_\alpha$ with the following properties:
\begin{itemize}
\item $Y$ is a semilattice.
\item For all $\alpha, \beta \in Y$ we have $S_\alpha S_\beta \subseteq S_{\alpha\beta}$ (where $\alpha\beta$ is the product of $\alpha$ and $\beta$ in the semilattice
$Y$).
\end{itemize}
Note that the second property implies that every $S_\alpha$ is a subsemigroup of $S$: $S_\alpha S_\alpha \subseteq S_{\alpha\alpha} = S_\alpha$.
We speak of a \emph{strong semilattice decomposition} if in addition 
 there exist semigroup homomorphisms $h_{\alpha, \beta} : S_\alpha \to S_\beta$ for all $\beta \leq \alpha$ with the following properties:
\begin{enumerate}[(i)]
\item \label{strong1} $h_{\alpha,\alpha}$ is the identity mapping on $S_\alpha$,
\item \label{strong2} $h_{\alpha,\beta} \circ h_{\beta, \gamma} = h_{\alpha,\gamma}$ (in $h_{\alpha,\beta} \circ h_{\beta, \gamma}$, we first apply $h_{\alpha,\beta}$ followed by $h_{\beta, \gamma}$),
\item \label{strong3} for all $\alpha, \beta \in Y$, $a \in S_{\alpha}$ and $b \in S_\beta$ we have $ab = h_{\alpha, \alpha\beta}(a) h_{\beta,\alpha\beta}(b)$ in $S$.
 \end{enumerate}
 Assume that $S = \biguplus_{\alpha \in Y} S_\alpha$ is a semilattice decomposition.
For $\alpha \in Y$ we define 
$S_{\alpha}^{\uparrow} = \bigcup_{\beta \geq \alpha} S_\beta$. Note that $S_{\alpha}^{\uparrow}$ is a subsemigroup of $S$ as well
(since $\beta_1 \geq \alpha$ and $\beta_2 \ge \alpha$ imply $\beta_1\beta_2 \ge \alpha$).
If the semilattice decomposition $S = \biguplus_{\alpha \in Y} S_\alpha$ is 
moreover strong, one can define a homomorphism $r_\alpha : S_{\alpha}^{\uparrow} \to S_\alpha$ by taking the union of all $h_{\beta,\alpha}$
for $\beta \ge \alpha$.
This mapping is indeed a homomorphism, since for all $a_1 \in S_{\beta_1}$ and $a_2 \in S_{\beta_2}$ with $\beta_1  \geq \alpha \le \beta_2$
we have 
\begin{eqnarray*}
r_\alpha(a_1) r_\alpha(a_2) & = & h_{\beta_1,\alpha}(a_1) h_{\beta_2,\alpha}(a_2) \\
&  \stackrel{\text{\eqref{strong2}}}{=} &  h_{\beta_1\beta_2, \alpha}( h_{\beta_1, \beta_1\beta_2}(a_1)) h_{\beta_1\beta_2, \alpha}( h_{\beta_2, \beta_1\beta_2}(a_2))\\
&  = &  h_{\beta_1\beta_2, \alpha}( h_{\beta_1, \beta_1\beta_2}(a_1)  h_{\beta_2, \beta_1\beta_2}(a_2)) \\
& \stackrel{\text{\eqref{strong3}}}{=} &  h_{\beta_1\beta_2,\alpha}(a_1 a_2)  =  r_\alpha(a_1a_2).
\end{eqnarray*}
Note that by \eqref{strong1}, every homomorphism $r_\alpha : S_{\alpha}^{\uparrow} \to S_\alpha$  is the identity mapping 
on $S_\alpha$. Such a homomomorphism is called a \emph{retraction}.\footnote{In general, for a semigroup $S$ and a subsemigroup
$T \subseteq S$, a homomorphism $h : S \to T$ is a retraction
if $h(x) = x$ for all $x \in T$.}

We define a \emph{retractive} semilattice decomposition as a semilattice decomposition $S = \biguplus_{\alpha \in Y} S_\alpha$
such that for every $\alpha \in Y$ there is a retraction $r_\alpha : S_{\alpha}^{\uparrow} \to S_\alpha$. We have just seen that
every strong semilattice decomposition is retractive. Vice versa, every retractive semilattice decomposition satisfies the properties
\eqref{strong1} and \eqref{strong3} for the homomorphisms $h_{\beta,\alpha} = r_\alpha\rest_{S_\beta}$ ($\beta \ge \alpha$), where
$r_\alpha\rest_{S_\beta}$ is the restriction of $r_\alpha$ to the subsemigroup $S_\beta \subseteq S_{\alpha}^{\uparrow}$.
Property \eqref{strong1} is clear, and \eqref{strong3} is obtained as follows for $a \in S_\alpha$, $b \in S_\beta$ (and hence $ab \in S_{\alpha\beta}$): 
\[
h_{\alpha, \alpha\beta}(a) h_{\beta,\alpha\beta}(b) = r_{\alpha\beta}(a) r_{\alpha\beta}(b) = r_{\alpha\beta}(ab) =  ab . 
\]

In a moment we will see an example for a retractive semilattice decomposition that does not satisfy \eqref{strong2}.
First, let us state the following well-known result.

 \begin{lemma}[\mbox{\cite[Theorem~II.2.6]{petrich:1984}}] \label{lemma-strong}
A semilattice decomposition $S = \biguplus_{\alpha \in Y} G_\alpha$, where every $G_\alpha$ is a group, is a strong semilattice decomposition.
 \end{lemma}

 Be aware that Lemma~\ref{lemma-strong} does not generalize to the case where the $G_\alpha$ are monoids, as the following example shows.
 
 \begin{example}\label{counterexample-strong-for-monoids}
 Consider the semilattice $Y = \{\alpha,\beta,\gamma\}$ with $\alpha > \beta > \gamma$ and the monoids $M_\alpha = \{ e_\alpha \}$, $M_\beta = \{ e_\beta, 0_\beta\}$, and
 $M_\gamma = \{ e_\gamma, 0_\gamma \}$, where $e_x$ is the neutral element of $M_x$ and $0_x$ is the zero element
 of $M_x$ (for $x \in \{\beta,\gamma\}$). Moreover, we set
 \begin{itemize}
  \item  $e_\alpha x  =  x e_\alpha = 0_\beta$ for all $x \in M_\beta$,
   \item   $e_\alpha x  =  x e_\alpha = x$ for all  $x \in M_\gamma$,
  \item   $x y  =  y x =  0_\gamma$ for all $x \in M_\beta, y \in M_\gamma$.
 \end{itemize}
 This defines a semigroup $S = \bigcup_{x \in \{\alpha,\beta,\gamma\}} M_x$ with a corresponding semilattice decomposition. 
 However, it is not a strong semilattice decomposition.
 Indeed, let us assume that there were homomorphisms
 $h_{\alpha,\beta}, h_{\alpha,\gamma}, h_{\beta,\gamma}$ with the properties \eqref{strong1}--\eqref{strong3}.
 We obtain with \eqref{strong1} and \eqref{strong3} that
 \begin{itemize}
 \item $0_\beta = e_\alpha e_\beta = h_{\alpha,\beta}(e_\alpha) e_\beta =  h_{\alpha,\beta}(e_\alpha)$,
\item $e_\gamma = e_\alpha e_\gamma = h_{\alpha,\gamma}(e_\alpha) e_\gamma =  h_{\alpha,\gamma}(e_\alpha)$, and
\item $0_\gamma = 0_\beta e_\gamma = h_{\beta,\gamma}(0_\beta) e_\gamma = h_{\beta,\gamma}(0_\beta)$.
\end{itemize}
We therefore get with \eqref{strong2} that
 \[
e_\gamma =  h_{\alpha,\gamma}(e_\alpha) = h_{\beta,\gamma}(h_{\alpha,\beta}(e_\alpha)) = h_{\beta,\gamma}(0_\beta) = 0_\gamma,
 \]
 which is a contradiction.
 \end{example}

On the other hand, we have the following result.

\begin{lemma} \label{lemma-monoids-retractive}
A semilattice decomposition $S = \biguplus_{\alpha \in Y} M_\alpha$, where every $M_\alpha$ is a monoid, is a retractive semilattice decomposition.
\end{lemma}

\begin{proof}
Let $e_\alpha$ be the neutral element of $M_\alpha$.
For $\alpha \in Y$ define the mapping $r_\alpha : S_\alpha^\uparrow \to S_\alpha$ by
$r_\alpha(a) = a e_\alpha$ for $a \in S_\alpha^\uparrow$.
Hence, we have $r_\alpha(a)=a$ for all $a \in S_\alpha$.
Moreover, for all $a_1, a_2 \in S_{\alpha}^\uparrow$ we obtain
\begin{equation*}
r_{\alpha}(a_1 a_2) = (a_1 a_2) e_\alpha = a_1 (a_2 e_\alpha) = (a_1 e_\alpha) (a_2 e_\alpha) =
r_{\alpha}(a_1) r_{\alpha}(a_2),
\end{equation*} 
where the third equality follows from $a_2 e_\alpha \in S_\alpha$ and hence $a_2 e_\alpha = e_\alpha a_2 e_\alpha$.
\end{proof}
By Lemma~\ref{lemma-monoids-retractive}, the semilattice decomposition from Example~\ref{counterexample-strong-for-monoids}
is an example of a retractive semilattice decomposition which is not strong.

For our transfer result for retractive semilattice decomposition, we need the following lemma.
 \begin{lemma} \label{lemma-f.g-semilattice-monoids}
 If $S = \biguplus_{\alpha \in Y} S_\alpha$ is a retractive semilattice decomposition 
 and $S$ is finitely generated, 
 then $Y$ is finite and every $S_\alpha$ is finitely generated.
 \end{lemma}
 
 \begin{proof}
 Note that the equivalence relation $\rho$ on $S$ with $(a,b) \in  \rho$ if and only if there is an $\alpha\in Y$ with $a,b \in S_\alpha$
 is a congruence on $S$ such that $Y = S/\rho$. Hence, $Y$ is a finitely generated semilattice and therefore finite (this argument holds for 
 arbitrary semilattice decompositions).
 
 Assume now that $\Sigma$ is a finite generating set of $S$. 
 We claim that for every $\alpha \in Y$, $S_\alpha$ is generated by the finite set 
 \[
 \Sigma_\alpha = r_\alpha(\Sigma \cap  S_\alpha^\uparrow) \subseteq S_\alpha.
 \]
To show this, consider an element $a \in S_\alpha$
 and write it as $a = a_1 a_2 \cdots a_n$ for $n \geq1$ and $a_i \in \Sigma$. 
 First assume that some $a_i$ does not belong to $S_\alpha^\uparrow$, i.e., 
 $a_i \in S_\beta$ with $\beta \not\geq \alpha$. 
 Then $a$ must belong to some $S_\gamma$ with $\gamma \leq \beta$. In particular, we have $\gamma \neq \alpha$
 which is a contradiction. 
 It follows that $a_i \in \Sigma \cap S_\alpha^\uparrow$ and hence $r_\alpha(a_i) \in \Sigma_\alpha$ for all $i$.
Finally, since $r_\alpha$ is a retraction and $a \in S_\alpha$, we have
 \begin{equation*}
 a = r_\alpha(a) = r_\alpha(a_1 a_2 \cdots a_n) = r_\alpha(a_1)  r_{\alpha}(a_2) \cdots r_{\alpha}(a_n),
 \end{equation*}
 which proves the lemma.
 \end{proof}

 \begin{theorem} \label{thm-semilattice-deco}
 Let $S = \biguplus_{\alpha \in Y} S_\alpha$ be a retractive semilattice decomposition with $S$ finitely generated. 
 Let $c = |Y| < \infty$ and assume that every 
  $S_\alpha$ has an  $(\epsilon_0(n), \epsilon_1(n))$-distinguisher
 with space complexity bounded by $s(n)$. 
 Then, there is a constant $d$ such that
 $S$ has an $(\epsilon_0(dn), c \cdot \epsilon_1(dn))$-distinguisher
 with space complexity bounded by $c \cdot s(dn)$.
 \end{theorem}
 
 \begin{proof}
 The distinguisher for $S$ works similarly to the one for a commutative semigroup; see the proof of  Theorem~\ref{thm-comm}.
By Lemma~\ref{lemma-f.g-semilattice-monoids}, $Y$ is finite and for every 
$\alpha \in Y$ we can fix a finite generating set $\Sigma_\alpha$ for the semigroup  $S_\alpha$. Let
$\mathcal{R}_\alpha = (\mathcal{A}_{\alpha,n})_{n \ge 0},$ be  an  $(\epsilon_0(n), \epsilon_1(n))$-distinguisher  for $S_\alpha$ such that $\mathcal{A}_{\alpha,n} = (Q_{\alpha,n}, \Sigma_\alpha, \iota_{\alpha,n}, \delta_{\alpha,n})$. Its space complexity is $\lceil \log_2 |Q_{\alpha,n}| \rceil \leq s(n)$.  Then $\Sigma = \bigcup_{\alpha \in Y} \Sigma_\alpha$ is a finite generating set for $S$. 
Since we have a retractive semilattice decomposition, there exist retractions 
$r_{\alpha} : S_\alpha^\uparrow \to S_\alpha$. 
In the following, we identify for every generator $a \in S_\beta$ and every $\alpha \leq \beta$ the
element $r_{\alpha}(a) \in S_{\alpha}$ with a word over the generating set $\Sigma_{\alpha}$. Let $d$
be the maximal length of these words for all $a \in S_\beta$ and all $\alpha \leq \beta$.
Note that $d$ is a constant.

 \begin{algorithm}[t]
 \SetKwComment{Comment}{(}{)}
\SetKwInput{KwGlobal}{global variables}
\SetKwInput{KwInit}{initialization}
\SetKwInput{KwNext}{next input letter}
\BlankLine
\KwGlobal{$q_\alpha \in Q_{\alpha,dn}$ for every $\alpha \in Y$, $\chi \in Y^1$} 
\BlankLine
\KwInit{} 
\For{$\alpha \in Y$}{
guess $q_\alpha$ according to the distribution $\iota_{\alpha,dn}$ \label{line-init-q-alpha}}
$\chi  \coloneqq 1$ \Comment*[r]{the neutral element of the monoid $Y^1$} \label{line-chi-init}
\BlankLine
\KwNext{$a \in \Sigma$}
let $\alpha \in Y$ such that $a \in \Sigma_\alpha$ \\
$\chi  \coloneqq\chi \alpha$ \label{line-chi}  \\
\For{$\beta \in Y$}{
    \uIf{$\beta \leq \chi$}    
          {$q_\beta  \coloneqq \delta_{\beta, dn}(q_\beta, r_{\beta}(a))$ \label{line-q-beta-new}}
    \Else{$q_\beta  \coloneqq q_\beta^*$ \label{line-q-beta-default}}}
   
\BlankLine

\caption{An $(\epsilon_0(dn), c \cdot \epsilon_1(dn))$-distinguisher for a retractive semilattice of semigroups $S = \biguplus_{\alpha \in Y} S_\alpha$.
\label{algo-semilattice} }
\end{algorithm}
 
Fix the input length $n$. 
Our distinguisher for $S$ is $\mathcal{R}=(\mathcal{A}_n )_{n\geq 0}$ where $\mathcal{A}_n$ is the  semiPFA implicitly defined by Algorithm \ref{algo-semilattice}.  For every $\alpha \in Y$ we fix an arbitrary default state $q_\alpha^{*}\in Q_{\alpha,dn}$.  
The variable $\chi \in Y^1$ can be stored with a constant number of bits and 
the total space needed for the variables $q_\alpha \in Q_{\alpha,dn}$ ($\alpha \in Y$) is
bounded by $c \cdot s(dn)$.
Note that in case $Y$ is not a monoid, the adjoined $1$ in $Y^1$ is only needed for the initialization of $\chi$ in line~\ref{line-chi-init}.

Let $w\in \Sigma^+$ be a non-empty input word of length at most $n$ and let $q_{\alpha,0}$ ($\alpha \in Y$) be the initial  states guessed in line~\ref{line-init-q-alpha} from Algorithm \ref{algo-semilattice}. Let $s \in S$ be the value of the word $w$ in the semigroup $S$.
We claim that after reading $w$, the values of the program variables $\chi$ and $q_\beta$ satisfy the following:
\begin{itemize}
\item $\chi \in Y$ is such that $s \in S_\chi$,
\item if $\beta \not\le \chi$ then $q_\beta = q_\beta^*$, and
\item if $\beta \le \chi$ then $q_\beta = \delta_{\beta,dn}(q_{\beta,0}, w_\beta) \in Q_{\beta, dn}$, where
$w_\beta \in \Sigma_\beta^+$ is a word of length at most $d |w| \le dn$, whose value is $r_{\beta}(s)$ in the
 semigroup $S_\beta$. 
 \end{itemize}
 These invariants hold after reading the first input letter. Now
 assume that they hold after reading $w \in \Sigma^+$ with $|w| < n$ and let 
 $a \in \Sigma_\alpha$ be the next input letter. The value of $wa$ in $S$ is $sa$. Moreover, 
 we have $sa \in S_{\chi \alpha}$ and $\chi\alpha$ will be the new value of the variable $\chi$ (line~\ref{line-chi}).
 For every $\beta \not\le \chi \alpha$, $q_\beta$ will be set to $q_\beta^*$ (line~\ref{line-q-beta-default}).
 Finally, if $\beta \le \chi\alpha$, then the new value of  $q_\beta$ will be
  $\delta_{\beta, dn}(q_\beta, r_{\beta}(a))$ (line~\ref{line-q-beta-new}) and 
  $w_\beta r_{\beta}(a) \in \Sigma_\beta^+$ is a word of length at most $d(|w|+1)\le dn$ representing the semigroup element 
  $r_{\beta}(s) r_{\beta}(a) = r_{\beta}(sa)$.
  
In order  to analyze the error probability of $\mathcal{R}$, we consider two input words $w, w'\in \Sigma$ of length at most $n$.
As before, let  $q_{\alpha,0}$ ($\alpha\in Y$) be the randomly guessed states from line~\ref{line-init-q-alpha}.
Let $p_\alpha \in Q_{\alpha,dn}$ ($\alpha \in Y$) and $\zeta \in Y$ be the values of  the program variables $q_\alpha$ and $\chi$, respectively,  after reading $w$ and define $p'_\alpha$ and $\zeta'$ with respect to $w'$ analogously. Let $s$ (resp., $s'$) be the value in $S$ of the word
$w$ (resp., $w'$). We then have $s \in S_{\zeta}$ and $s' \in S_{\zeta'}$.
By the above discussion, for every $\alpha \in Y$ the following hold:
\begin{itemize}
\item if $\alpha \le \zeta$,  then $p_\alpha = \delta_{\alpha,dn}(q_{\alpha,0}, w_\alpha)$
for a word $w_\alpha \in \Sigma_\alpha^+$ of length at most $dn$ representing the semigroup element
$r_{\alpha}(s) \in S_\alpha$, and
\item if $\alpha \not\le \zeta$ then $p_\alpha = q_\alpha^{*}$.
\end{itemize}
Analogous statements hold for words $w'_\alpha \in \Sigma_\alpha^+$ ($\alpha \le \zeta'$) with respect to the states $p'_\alpha$
and the semigroup element $s'$.

\smallskip
\noindent
\emph{Case 1:} $w \equiv_S w'$, i.e., $s = s'$.
We then have $\zeta = \zeta'$ and $p_\alpha = q_\alpha^{*} = p'_\alpha$ for all 
$\alpha \not\le \zeta$ (with probability one). Moreover, for every 
$\alpha \le \zeta$ we have $r_{\alpha}(s) = r_{\alpha}(s')$ and hence 
$w_\alpha \equiv_{S_\alpha} w'_\alpha$. Since $\mathcal{R}_{\alpha}$ is an $(\epsilon_0(n), \epsilon_1(n))$-distinguisher for $S_\alpha$,
we obtain
\[
\Prob_{q_{\alpha,0} \sim \iota_{\alpha,dn}}[p_\alpha \neq p'_\alpha] = \Prob_{q_{\alpha,0} \sim \iota_{\alpha,dn}}[ \delta_{\alpha,dn}(q_{\alpha,0}, w_\alpha) \neq  \delta_{\alpha,dn}(q_{\alpha,0}, w'_\alpha)] \leq \epsilon_1(dn).
\]
 The union bound shows that with probability at least $1- c \cdot \epsilon_1(dn)$ we have
$p_\alpha = p'_\alpha$ for all $\alpha \le \zeta$.

\smallskip
\noindent

\smallskip
\noindent
\emph{Case 2:} $w \not\equiv_S w'$, i.e., $s \neq s'$.
If $s$ and $s'$ belong to different sets $S_\alpha$ then we obtain $\zeta \neq \zeta'$ with probability one.
Assume now that $\zeta = \zeta'$. 
Since $w_\zeta \in \Sigma_\zeta^+$ is a word of length at most $dn$ representing the semigroup element
$r_{\zeta}(s) = s$ and similarly for $w'_\zeta$, we get
$w_\zeta \equiv_S w  \not\equiv_S w' \equiv_S w'_\zeta$, i.e., $w_\zeta \not\equiv_{S_\zeta} w'_\zeta$.
Since $\mathcal{R}_{\zeta}$ is an $(\epsilon_0(n), \epsilon_1(n))$-distinguisher for $S_{\zeta}$
we obtain $p_\zeta \neq p'_\zeta$ with probability at least $1- \epsilon_0(dn)$.
 \end{proof}
 Let us give two applications of Theorem~\ref{thm-semilattice-deco}.
 Recall the notion of a nilpotent semigroup from Section~\ref{sec-nilpotent} 
 and the notion of a regular semigroup from Section~\ref{sec-regular-semigroup}.

 \begin{corollary}  \label{thm-regular-nilpotent}
 Every f.g.~regular nilpotent semigroup has a $((\log n)^{-c},0)$-distinguisher 
with space complexity $\mathcal{O}(\log \log n)$ for every constant $c > 0$.
 \end{corollary}

\begin{proof}
By \cite{Lall72}, an f.g.~$i$-step nilpotent semigroup is regular if and only if it is a semi\-lattice 
of $i$-step nilpotent groups.
The result follows from Theorems~\ref{thm-nilpotent}~and~\ref{thm-semilattice-deco}.
\end{proof}
  A \emph{Clifford semigroup} $S$ is a semilattice of groups. Equivalently, it is an inverse semigroup $S$ where $ea = ae$ holds for 
 all $a \in S$ and idempotents $e \in S$ \cite[Thm.~II.2.6]{petrich:1984}. 
 For every set $\Sigma$ there is a free Clifford semigroup $\mathsf{FCS}(\Sigma)$
 generated by $\Sigma$,\footnote{Free semigroups can be defined for every variety of semigroups using the classical universal property.
The details of this are not needed here and the interested reader may consult e.g.\ \cite{Sapir14}.} 
which is a semilattice of f.g.~free groups \cite[p.~369]{petrich:1984}. 
Hence, by combining Theorem~\ref{thm-semilattice-deco} with Theorem~\ref{thm-lin} (and using the fact that f.g.~free groups are linear), we obtain the following.
 
 \begin{corollary} \label{thm-clifford}
Every f.g.~free Clifford semigroup has a $(1/n^c,0)$-distinguisher 
with space complexity $\mathcal{O}(\log n)$ for every constant $c$.
 \end{corollary}

\section{Part D: Lower bounds} \label{sec lower}

In this section, we will contrast the upper bounds from Section~\ref{sec part B} with lower bounds.
All lower bounds will derived from lower bounds in communication complexity; see 
Section~ \ref{sec-CC}.

\subsection{Inverse monoids}

We start with some lower bounds on the space complexity for distinguishers for inverse monoids.
Recall the definition of inverse semigroups from Section~\ref{sec-regular-semigroup}.

\subsubsection{Free inverse monoids}
  
 For every set $\Gamma$ there exists the \emph{free inverse monoid} $\FIM(\Gamma)$ generated by $\Gamma$.
 It can be constructed as follows. Consider the free group $F(\Gamma)$ generated by $\Gamma$
 and let $\Sigma = \Gamma \cup \Gamma^{-1}$. Then $\FIM(\Gamma) = \langle \Sigma \mid V\rangle^*$, where
 $V$ contains all pairs of the following form (the so-called Vagner equations):
 \begin{itemize}
 \item $(u u^{-1} u, u)$ for all $u \in \Sigma^*$,
 \item $(u u^{-1} v v^{-1}, v v^{-1} u u^{-1})$ for all $u,v \in \Sigma^*$.
 \end{itemize}
 Here, we make use of the convolution $u \mapsto u^{-1}$ ($u \in \Sigma^*$) that we defined in Section~\ref{sec-groups}.
 
 The above description of $\FIM(\Gamma)$ as $\langle \Sigma \mid V\rangle^*$ is not very useful for algorithmic purposes (e.g. for 
 solving the word problem). A more useful representation is based on so-called \emph{Munn trees}:
The Munn tree $\MT(u)$ of $u \in \Sigma^*$ is the following finite and prefix-closed
subset of $\IRR(\Sigma)$ (recall that $\mathcal{P}(u)$ is the set of all prefixes of $u$, including $\varepsilon$ and $u$):
\[  \MT(u) = \{  \red(v) \colon v \in \mathcal{P}(u)  \}. \]
Since the elements of the free group $F(\Gamma)$ can be identified with the elements from $\IRR(\Sigma)$,
$\MT(u)$ is the set of all nodes along the unique path in the Cayley graph of $F(\Gamma)$ 
that starts in $1$ and that is labeled with the word $u$. 
The subgraph of the Cayley-graph of $F(\Gamma)$, which is induced 
by $\MT(u)$ is connected. Hence it is a finite tree and we can identify
$\MT(u)$ with this tree. Figure~\ref{fig Munn tree} shows an example of a Munn tree.
Munn \cite{munn:1974} proved the following theorem:

\begin{figure}[t]
 \tikzset{alpha/.style={inner sep = 1pt, fill=white}}
 \tikzset{round/.style={inner sep = 1.5pt, circle, fill=black}}
\centering{
\begin{tikzpicture}

\draw     (0,0) node[round,red] (1) {} 
          ++(1,0)  node[round] (2) {} 
          ++(1,0)  node[round] (3) {}
          ++(1,0)  node[round] (4) {}
          ++(1,0)  node[round] (5) {} 
          ++(1,0)  node[round] (6) {} 
          ++(1,0)  node[round,blue] (7) {} 
          ++(-1,1)  node[round] (6') {} 
          ++(-1,0)  node[round] (5') {}
          ++(-2,0)  node[round] (3') {}
          ++(-2,0)  node[round] (1') {} ;
                    
\draw [->]
(1) edge node[below = -.65mm]{$a$} (2)
(2) edge node[below = -.65mm]{$a$} (3)
(3) edge node[below = -.65mm]{$a$} (4)
(4) edge node[below = -.65mm]{$a$} (5)
(5) edge node[below = -.65mm]{$a$} (6)
(6) edge node[below = -.65mm]{$a$} (7)
(1) edge node[right = -.65mm]{$b$} (1')
(3) edge node[right = -.65mm]{$b$} (3')
(5) edge node[right = -.65mm]{$b$} (5')
(6) edge node[right = -.65mm]{$b$} (6');
          \end{tikzpicture}}
\caption{The Munn tree of $w_0 = bb^{-1} aabb^{-1}aabb^{-1}abb^{-1}a$. Every edge labelled with $x \in \{a,b\}$
has an inverse edge labelled with $x^{-1}$, which is not shown. The red vertex is the identity element of the free group and the blue
vertex is $\red(w_0) = a^6$.
}  \label{fig Munn tree}
\end{figure}          

\begin{theorem}[\protect{\cite{munn:1974}}] \label{munn}
For all $u,v \in \Sigma^*$, we have:
$u \equiv_{\FIM(\Sigma)} v$ if and only if 
$\red(u) = \red(v)$ and $\MT(u) = \MT(v)$.
\end{theorem}
Thus, the element of $\FIM(\Sigma)$ that is represented by a word $u \in  \Sigma^*$
 can be uniquely represented by the pair
$(\MT(u), \red(u))$.
In fact, if we define 
on the set of all pairs $(U,v) \in 2^{\IRR(\Sigma)} \times \IRR(\Sigma)$
(with $v \in U$ and $U$ finite and prefix-closed)
a multiplication by
$(U,v)(V,w) = (\red(U \cup vV), \red(vw))$, 
then the resulting monoid is isomorphic to $\FIM(\Sigma)$. The neutral element is $(\{\varepsilon\}, \varepsilon)$.
If we exclude this element then we obtain the free inverse semigroup $\mathsf{FIS}(\Sigma)$ generated by $\Sigma$.
Note that $\mathsf{FIM}(\Sigma) = \mathsf{FIS}(\Sigma)^I$.

 If $\Sigma$ is finite and $k = |\Sigma|$ then we also write $\FIM_k$ for $\FIM(\Sigma)$.
 \begin{theorem} \label{thm-FIM-2}
 Let $k \ge 2$ and $0 < \epsilon< 1/2$ be a fixed constant. Then, every $\epsilon$-distinguisher
 or  $\FIM_k$ has space complexity $\Omega(n)$.
 \end{theorem}
  
 \begin{proof}
 We only have to prove the lower bound for $k=2$. For this, we make a reduction from the randomized one-way communication complexity of $\IDX_n$; see
 Section~\ref{sec-CC}.
 Let $\mathcal{R} = (\mathcal{A}_n)_{n \ge 0}$ 
be an $\epsilon$-distinguisher for $\FIM_2 = \FIM(\{a,b\})$ with $\mathcal{A}_n = (Q_n,\{a,a^{-1},b^{-1}\},\iota_n,\delta_n)$ and $0 < \epsilon< 1/2$.
We will construct a  randomized one-way communication protocol for $\IDX_n$. 
Let $x_1 x_2 \cdots x_n \in \{0,1\}^n$ be the input of Alice and $i \in [1,n]$ be the input of Bob.
The protocol works as follows, where $m = 5n$. 
\begin{itemize}
\item Alice chooses an initial state $q_0$ of $\mathcal{A}_n$ according to the distribution $\iota_m$,
computes the state $q_1 = \delta_m(q_0, w_0)$ where 
\[w_0 = b^{x_1}b^{-x_1} a b^{x_2}b^{-x_2} a \cdots b^{x_{n-1}}b^{-x_{n-1}} a b^{x_n}b^{-x_n}\]
and sends it to Bob.  
\item Bob then computes the state 
\[
q_2 = \delta_m(q_1, a^{i-n} bb^{-1} a^{n-i}) = \delta_m(q_0, w_0 a^{i-n} bb^{-1} a^{n-i})
\]
and outputs $1$ if $q_1 = q_2$ and $0$ otherwise.
\end{itemize}
 Figure~\ref{fig Munn tree} shows the Munn tree of the word $w_0$ that corresponds to Alice's input $1010110$.
Note that
\[
w_0 a^{i-n} bb^{-1} a^{n-i} \equiv_{\FIM_2} 
b^{x_1}b^{-x_1} a \cdots  b^{x_{i-1}}b^{-x_{i-1}} a  bb^{-1} a  b^{x_{i+1}}b^{-x_{i+1}} \cdots a b^{x_n}b^{-x_n}
\] 
(to see this, consider the Munn trees of both words),
which is $w_0$ in $\FIM_2$ if and only if $x_i = 1$.

Note that the length of $w_0 a^{i-n} bb^{-1} a^{n-i}$ is bounded by $5n$. Hence, if $x_i = 1$ then $q_1 = q_2$ with probability at least $1-\epsilon$
and if $x_i = 0$ then $q_1 \neq q_2$ with probability at least $1-\epsilon$. Thus, the protocol is correct. 
By Theorem~\ref{theorem-randCC} the cost of the protocol
must be $\Omega(n)$. Since Alice sends at most $s(\mathcal{R},5n)$ bits to Bob, we obtain
$s(\mathcal{R},5n) \geq \Omega(n)$, i.e., $s(\mathcal{R},n) \geq \Omega(n)$ (since 
$s(\mathcal{R},n)$ is monotone by our assumptions from Section~\ref{sec-injective}).
 \end{proof}
 For $k=1$, i.e., for the monoid $\FIM_1$ (also known as the \emph{free monogenic inverse monoid}), Munn trees are finite intervals on $\mathbb{Z}$. 
 Hence, elements of $\FIM_1$ can be represented by triples $[a, i, b]$ with $a, i, b \in \mathbb{Z}$, $a \leq b$, and $0, i \in [a,b]$.
 The multiplication becomes 
 \[
 [a, i, b] \cdot [c, j, d] = [\min\{a, i+c\}, i+j, \max\{b,i+d\}];
 \] 
 see also \cite[Proposition~IX.1.1]{petrich:1984}. Moreover, we have $[a,i,b]^{-1} = [a-i, -i, b-i]$ \cite[Corollary~IX.1.2]{petrich:1984}.
 The generator of $\FIM_1$ is $s = [0,1,1]$ with $s^{-1} = [-1,-1,0]$.
  
\begin{theorem} \label{thm-FIM-1}
$\FIM_1$  has a deterministic distinguisher with space complexity $\mathcal{O}(\log n)$.
Moreover, for every fixed constant $0 < \epsilon< 1/2$, every $\varepsilon$-distinguisher for $\FIM_1$ has space complexity $\Omega(\log n)$.
\end{theorem}
 
 \begin{proof}
 The upper bound is obtained by storing the integers in the triples $[a,i,b]$ in binary notation.
 The lower bound is shown by a reduction from the randomized one-way communication complexity of $\GT_n$. 
Assume that $\mathcal{R} = (\mathcal{A}_n)_{n \ge 0}$ 
is an $\epsilon$-distinguisher for $\FIM_1$ with $\mathcal{A}_n = (Q_n,\{s,s^{-1}\},\iota_n,\delta_n)$ and $0 < \epsilon< 1/2$.
We will construct a one-way randomized communication protocol for $\GT_n$. 
Let $x \in [1,n]$ be the input of Alice and $y \in [1,n]$ be the input of Bob.
Let $m = 4n$. Alice chooses an initial state $q_0$ of $\mathcal{A}_n$ according to the distribution $\iota_m$,
computes the state $q_1 = \delta_m(q_0, s^x s^{-x})$ and sends it to Bob.
Bob then computes the state $q_2 = \delta_m(q_1, s^y s^{-y}) = \delta_m(q_0, s^x s^{-x}s^y s^{-y})$ and outputs $1$ if $q_1 \neq q_2$ and $0$ otherwise.

Note that $s^x s^{-x}s^y s^{-y}$ has length at most $4n = m$ and that 
\begin{equation*}
s^x s^{-x}s^y s^{-y} \equiv_{\FIM_1} s^{\max\{x,y\}} s^{-\max\{x,y\}},
\end{equation*}
which is $s^x s^{-x}$ if and only if $x \geq y$.

Hence, if $x < y$ then $q_1 \neq q_2$ holds with probability at least $1 - \epsilon$, i.e., Bob outputs $1$ with 
probability at least $1 - \epsilon$. On the other hand, if $x \geq y$ then Bob outputs $0$ with 
probability at least $1 - \epsilon$. Hence, the protocol is correct. By Theorem~\ref{theorem-randCC} the cost of the protocol
must be $\Omega(\log n)$. Since Alice sends at most $s(\mathcal{R},4n)$ bits to Bob, we obtain
$s(\mathcal{R},4n) \geq \Omega(\log n)$, i.e., $s(\mathcal{R},n) \geq \Omega( \log n)$.
 \end{proof}
 
 \subsubsection{Polycyclic monoids}
 
  Another important class of inverse monoids are the polycyclic monoids. 
 For a set $\Sigma$, the \emph{polycyclic monoid} $\PM(\Sigma)$ is the monoid generated by all partial bijections on $\Sigma^*$ of 
 the following form, where $a \in \Sigma$:
 \begin{alignat*}{5}
 p_a &  : \ & \Sigma^*  & \to \ \Sigma^* a & & \ \text{ with } \ & p_a(w)  & = wa \\
 p^{-1}_a & : \ & \Sigma^* a & \to \ \Sigma^* & & \ \text{ with } \ & p^{-1}_a(wa) & = w  
 \end{alignat*}
 The neutral element is of course the identity mapping $p_a p_a^{-1}$ on $\Sigma^*$.
 Note that for two different letters $a,b \in \Sigma$, $p_a p^{-1}_b$ is the totally undefined mapping and hence the zero element
 of $\PM(\Sigma)$. If $\Sigma$ is finite and $k = |\Sigma|$ then we write $\PM_k$ for $\PM(\Sigma)$; it is a finitely 
 generated monoid. In fact, $\PM_k$ is finitely presented:
 For $k \ge 2$ the monoid is $\PM_k$ is isomorphic to
  \begin{align*}
 \langle a_0, b_0, \ldots, a_{k-1}, b_{k-1}, 0 \mid \ & a_i b_i = \varepsilon, a_i b_j = 0, \\
 & a_i 0 = 0a_i = b_i 0 = 0 b_i = 00 = 0  \  (i,j \in [0,k-1], i \neq j) \rangle^*
 \end{align*}
 and $\PM_1$ is isomorphic to $ \langle a,b \mid ab=\varepsilon \rangle^*$. The monoid $\PM_1$
 is known as the \emph{bicyclic monoid} and denoted with $B$.
 All monoids $\PM(\Sigma)$ are inverse since they are monoids of partial injections.
 Polycyclic monoids were introduced in \cite{NiPe70}.
 
 \begin{theorem}
 Let $k \ge 2$ and $0 < \epsilon< 1/2$ be a constant. Then, every $\epsilon$-distinguisher
 for  $\PM_k$ has space complexity $\Omega(n)$.
 \end{theorem}
 
 \begin{proof}
 We prove the lower bound for $k=2$ by a reduction from the randomized one-way communication complexity of $\AIDX_n$ (augmented index). 
 Assume that $\mathcal{R} = (\mathcal{A}_n)_{n \ge 0}$ 
is an $\epsilon$-distinguisher for $\PM_2$ with $\mathcal{A}_n = (Q_n,\{a_0, b_0, a_1, b_1, 0\},\iota_n,\delta_n)$ and $0 < \epsilon< 1/2$.
We will construct a randomized one-way  communication protocol for $\AIDX_n$. 
Let $x_1 x_2 \cdots x_n \in \{0,1\}^n$ be the input of Alice. The input of Bob is $i \in [1,n]$ and the promised suffix $x_{i+1} \cdots x_n$ of Alice's input.
Let $m = 2n+1$. Alice chooses an initial state $q_0$ of $\mathcal{A}_n$ according to the distribution $\iota_m$,
computes the state $q_1 = \delta_m(q_0, w_0)$ where $w_0 = a_{x_1} a_{x_2} \cdots a_{x_n}$
and sends it to Bob.
Bob then computes the states
\begin{alignat*}{2}
q_2 & \ = \ \delta_m(q_1, b_{x_n}\cdots b_{x_{i+1}}) & & \ = \ \delta_m(q_0, w_0 b_{x_n} \cdots b_{x_{i+1}}), \\
q_3 & \ = \ \delta_m(q_2, b_1 a_1) & & \ = \ \delta_m(q_0, w_0 b_{x_n} \cdots b_{x_{i+1}} b_1 a_1)
\end{alignat*}
and outputs $1$ if $q_2 = q_3$ and $0$ otherwise.
We have 
\[w_0 b_{x_n} \cdots b_{x_{i+1}} \equiv_{\PM_2} w_0 b_{x_n} \cdots b_{x_{i+1}} b_1 a_1\]
 if and only if $x_i = 1$.

The length of $w_0 b_{x_n} \cdots b_{x_{i+1}} b_1 a_1$ is bounded by $2n+1$. Hence, if $x_i = 1$ then $q_2 = q_3$ with probability at least $1-\epsilon$
and if $x_i = 0$ then $q_2 \neq q_3$ with probability at least $1-\epsilon$. Hence, the protocol is correct. 
By Theorem~\ref{theorem-randCC} the cost of the protocol
must be $\Omega(n)$. Since Alice sends at most $s(\mathcal{R},2n+1)$ bits to Bob, we obtain
$s(\mathcal{R},2n+1) \geq \Omega(n)$ and hence $s(\mathcal{R},n) \geq \Omega(n)$.
 \end{proof}
 Every element of the bicyclic monoid $B$ can be uniquely written as $b^m a^n$ for $m,n \geq 0$ and the multiplication is defined by
 the following rule: 
 \begin{equation} \label{eq-bicyclic}
 (b^k a^\ell) (b^m a^n) = \begin{cases}
 b^k a^{\ell-m+n} \text{ if } \ell \geq m, \\
 b^{k + m - \ell} a^n \text{ if } \ell < m .
 \end{cases}
 \end{equation}
 
\begin{theorem} \label{thm-bcm}
The bicyclic monoid $B$ has a deterministic distinguisher with space complexity $\mathcal{O}(\log n)$.
Moreover, for every fixed constant $0 < \epsilon< 1/2$,
 every $\varepsilon$-distinguisher for the bicyclic monoid has space complexity $\Omega(\log n)$.
\end{theorem} 
 
 \begin{proof}
The upper bound follows from the above description of $B$ in \eqref{eq-bicyclic} by storing the exponents of the generators $a$ and $b$
in binary notation. For the lower bound we make a reduction from the randomized one-way communication complexity of $\GT_n$.
Assume that $\mathcal{R} = (\mathcal{A}_n)_{n \ge 0}$ 
is an $\epsilon$-distinguisher for $B$ with $\mathcal{A}_n = (Q_n,\{a,b\},\iota_n,\delta_n)$ and $0 < \epsilon< 1/2$.
We will construct a randomized one-way communication protocol for $\GT_n$. 
Let $x \in [1,n]$ be the input of Alice and $y \in [1,n]$ be the input of Bob.
Let $m = 3n$. Alice chooses an initial state $q_0$ of $\mathcal{A}_n$ according to the distribution $\iota_m$,
computes the state $q_1 = \delta_m(q_0, a^x)$ and sends it to Bob.
Bob then computes the state $q_2 = \delta_m(q_1, b^y a^y) = \delta_m(q_0, a^x b^y a^y)$ and outputs $1$ if $q_1 \neq q_2$ and $0$ otherwise.

Note that the word $a^x b^y a^y$ has length at most $3n = m$ and that 
\begin{equation*}
a^x b^y a^y \equiv_B 
\begin{cases}
a^{x-y} a^y \equiv_B a^x & \text{ if } x \geq y, \\
b^{y-x} a^y \not\equiv_B a^x & \text{ if }  x < y .
\end{cases}
\end{equation*}
Hence, if $x < y$ then $q_1 \neq q_2$ holds with probability at least $1 - \epsilon$, i.e., Bob outputs $1$ with 
probability at least $1 - \epsilon$. On the other hand, if $x \geq y$ then Bob outputs $0$ with 
probability at least $1 - \epsilon$. Hence, the protocol is correct. By Theorem~\ref{theorem-randCC} the cost of the protocol
must be $\Omega(\log n)$. Since Alice sends at most $s(\mathcal{R},3n)$ bits to Bob, we obtain
$s(\mathcal{R},3n) \geq \Omega(\log n)$, i.e., $s(\mathcal{R},n) \geq \Omega(\log n)$.
 \end{proof}
Note that the bicyclic monoid as well as the free monogenic inverse monoid have polynomial growth. 
This shows that Theorem~\ref{thm-cancellative-poly-growth} does not extend to all monoids of polynomial growth.

\subsection{Lower bounds for free products}

We will show that the restrictions in Theorems~\ref{thm-GP} (left-cancellativity) and~\ref{thm-GP-S} (every $M_i \setminus \{1\}$ is an ideal) cannot be completely 
avoided. This is already true for the free product of two monoids.

The bicyclic monoid is not cancellative and has a deterministic distinguisher with space complexity 
$\mathcal{O}(\log n)$ (Theorem~\ref{thm-bcm}). Let $\mathcal{B}$ be the class of monoids $M$ for which there exist $x,y \in M$ such that 
 $xy=1$ and $yx \neq 1$. This is exactly the class of all monoids that contain the bicyclic monoid  $B$ as a submonoid \cite[Lemma~1.31]{ClPrest61}
or, equivalently, the class of monoids $M$ such that $M \setminus U(M)$ is not an ideal.
Let $\mathcal{C}$ be the class of all monoids $M$ for which there exist
 $x,y \in M \setminus \{1\}$ with $xy=1$. In other words: $M \setminus \{1\}$ is not an ideal.
 We have $\mathcal{B} \subseteq \mathcal{C}$.
 
 \begin{theorem} \label{thm-lower-bound-free-product}
 Let $M_1 \in \mathcal{B}$ and $M_2 \in \mathcal{C}$. 
  For every fixed constant $0 < \epsilon< 1/2$, every $\epsilon$-distinguisher
 for the free product $M_1 \ast M_2$   has space complexity $\Omega(n)$.
 \end{theorem}

 \begin{proof}
Let $a,b\in M_1$ such that $a\neq 1 \neq b$, $ab=1$ and $ba\neq1$,
and let $x,y\in M_2 \setminus \{1\}$ such that $xy=1$.
Assume that $\mathcal{R} = (\mathcal{A}_n)_{n \ge 0}$ 
is an $\epsilon$-distinguisher for $M_1*M_2$ with $\mathcal{A}_n = (Q_n,\Sigma,\iota_n,\delta_n)$ where $0 < \epsilon< 1/2$ and 
w.l.o.g.~$a,b,x,y\in \Sigma$ . We will construct a randomized one-way communication protocol for $\AIDX_n$. 

 Let $s=x_1 x_2 \cdots x_n$ with $x_i\in \{0,1\}$ be the input word for Alice, whereas Bob has as input $i\in [1,n]$ and the promised suffix $x_{i+1}\cdots x_n$  of $s$. Let $m=6n$. The protocol works as follows:
 \begin{itemize}
\item Alice chooses an initial state $q_0$ according to the distribution $\iota_m$, computes the state $q_1=\delta_m(q_0,u)$ for the word $u=a^{1+x_1}xa^{1+x_2}x\cdots xa^{1+x_n}$ and sends $q_1$ to Bob. 
\item Bob now computes $q_2=\delta_m(q_1,v)$ for 
$v=b^{1+x_n}yb^{1+x_{n-1}}y\cdots b^{1+x_{i+1}}y$ and $q_3=\delta_m(q_2,b^2a^2)$ and outputs $1$ if $q_2=q_3$ and $0$ otherwise. 
\end{itemize}
Notice that the word $uva^2b^2$ has length at most $6n=m$. 	
 
 Let $w=uv$ and hence $q_2=\delta_m(q_0,w)$ and $q_3=(q_0,wb^2a^2)$. Moreover, $w = a^{1+x_1}x\cdots a^{1+x_{i-1}}xa^{1+x_i}$ in $M_1 * M_2$.  If $x_{i}=1$, then 
 \[ w=a^{1+x_1}x \cdots a^{1+x_{i-1}}xa^2 = wb^2 a^2\] in $M_1*M_2$. Therefore, $q_2=q_3$ with probability at least $1-\epsilon$, i.e.,  Bob outputs $1$ with probability at least $1-\epsilon$. 
 
 On the other hand, if $x_i=0$ then 
 \[ w = a^{1+x_1}x \cdots a^{1+x_{i-1}}xa \neq   a^{1+x_1}x \cdots a^{1+x_{i-1}}x ba^2 = wb^2 a^2\] in $M_1*M_2$. Therefore, $q_2 \neq q_3$ with probability at 
 least $1-\epsilon$, i.e., Bob outputs $0$ with probability at least $1-\epsilon$. Hence the protocol is correct. 
 Since Alice sends at most $s(\mathcal{R},6n)$ bits to Bob, we obtain $s(\mathcal{R}, 6n) \geq \Omega(n)$ by 
 Theorem~\ref{theorem-randCC}.
 \end{proof}
 Theorem~\ref{thm-lower-bound-free-product} implies for instance, that every $\epsilon$-distinguisher (for $0 < \epsilon< 1/2$) for 
 the free product $B \ast \mathbb{Z}$ has space complexity $\Omega(n)$, although $B$ and $\mathbb{Z}$ both have a deterministic
 distinguisher with space complexity $\Theta(\log n)$.

 \begin{oproblem}
Theorems~\ref{thm-GP}, \ref{thm-GP-S}, and \ref{thm-lower-bound-free-product} still leave space for improvements.
In particular, it is open whether Theorem~\ref{thm-GP-S} can be generalized to the case
where  $M_i \setminus U(M_i)$ is an ideal for every $i \in [1,c]$.
\end{oproblem}

\subsection{Lower bounds for wreath products}

By the following result, the restriction to an abelian group $A$ in Corollary~\ref{thm-wreath-by-abelian} cannot be relaxed.
 We state the result for the randomized streaming space complexity of the word problem, which might be stronger than stating
 the lower bound for randomized distinguishers (see Lemma~\ref{lemma-injective-0} and Problem~\ref{op-distinguisher-WP}).
 
\begin{theorem} \label{lower-wreath}
Let $H$ be an f.g.~non-abelian group and $G$ be an f.g.~infinite group. Then the randomized streaming space
complexity of $\WP(H \wr G)$ is $\Theta(n)$.
\end{theorem}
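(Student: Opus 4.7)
The plan is to show the upper bound is trivial and focus on the lower bound via reduction from disjointness, which has randomized two-way communication complexity $\Theta(n)$.

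First, I note that every language has randomized streaming space complexity $O(n)$ (store the input verbatim), so only the matching lower bound requires work. For the lower bound, since $H$ is non-abelian I pick $h_1,h_2\in H$ with $c := [h_1,h_2]\neq 1$. Since $G$ is infinite, its Cayley graph is locally finite, connected and of unbounded diameter, so for each $n$ I can fix a simple path $1 = g_0, g_1, \ldots, g_{n-1}$ of length $n-1$ in $C(G,\Sigma_G)$, with $g_i = g_{i-1} a_i$ for generators $a_i \in \Sigma_G$; the elements $g_0,\ldots,g_{n-1}$ are pairwise distinct.

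Next I encode the players' inputs. For $u\in\{0,1\}^n$ and $k\in\{1,2\}$, set
\[
W_k(u) \;:=\; h_k^{u[1]}\,a_1\,h_k^{u[2]}\,a_2\,\cdots\,a_{n-1}\,h_k^{u[n]}\,a_{n-1}^{-1}a_{n-2}^{-1}\cdots a_1^{-1},
\]
a word of length $\Theta(n)$ over $\Sigma_G\cup\Sigma_H$. Tracking the cursor through the wreath product semantics, $W_k(u)$ evaluates to an element $(f,1)\in H\wr G$ whose support lies in $\{g_0,\ldots,g_{n-1}\}$ and satisfies $f(g_{i-1}) = h_k^{u[i]}$. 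Writing $A(u):=W_1(u)$ and $B(v):=W_2(v)$, I then form the commutator word
\[
W(u,v) \;:=\; A(u)\,B(v)\,A(u)^{-1}\,B(v)^{-1},
\]
of length $\Theta(n)$. Because all four factors have cursor component $1$, multiplication in $H\wr G$ becomes pointwise multiplication of mappings, and the entry at position $g_{i-1}$ is $[h_1^{u[i]},h_2^{v[i]}]$, which equals $c\neq 1$ if $u[i]=v[i]=1$ and equals $1$ otherwise. Hence $W(u,v)\in\WP(H\wr G,\Sigma_G\cup\Sigma_H)$ if and only if $u$ and $v$ are disjoint.

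Finally, I give the reduction. Suppose $\mathcal R=(\mathcal A_N)_{N\ge 0}$ is a randomized streaming algorithm for $\WP(H\wr G,\Sigma_G\cup\Sigma_H)$ with space complexity $s(N)$. On inputs $u,v$, set $N = |W(u,v)| = \Theta(n)$. Using public coins, Alice simulates $\mathcal A_N$ on $A(u)$ and sends the resulting state to Bob ($s(N)$ bits); Bob continues on $B(v)$ and sends back the state; Alice continues on $A(u)^{-1}$ and sends the state; Bob finishes on $B(v)^{-1}$ and outputs acceptance. This is a constant-error randomized protocol for disjointness of length $n$ using $3\,s(N) = O(s(n))$ bits, so the $\Omega(n)$ lower bound for disjointness yields $s(n)\ge\Omega(n)$. (The public/private coin distinction costs at most an additive $O(\log n)$ by Newman's theorem, which does not affect the bound.)

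The main obstacle is making the encoded word have length $O(n)$ rather than $O(n^2)$: a naive encoding would visit each $g_i$ by a geodesic of length up to $i$ from the identity. This is overcome by placing the $h_k$'s along a single simple path and using a single backward traversal at the end, which works uniformly without assuming $G$ has an element of infinite order (in particular it covers infinite torsion groups). A secondary subtlety is verifying the commutator identity in $H\wr G$: this reduces to the observation that all four factors end with cursor at $1$, so the only issue is the pointwise behaviour at each $g_{i-1}$, which is a direct computation.
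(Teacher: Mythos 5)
Your proof is correct and follows essentially the same route as the paper: place $H$-generators along an infinite simple path in the Cayley graph of $G$ to encode the two bit strings, form the commutator word (of length $\Theta(n)$) so that membership in the word problem is equivalent to set disjointness, and then port the $\Omega(n)$ randomized communication lower bound for disjointness through a four-message protocol that passes the streaming state back and forth. The only cosmetic differences are that the paper writes out $u[g^{-1}]$ explicitly instead of using the literal word inverse $A(u)^{-1}$, and your aside on Newman's theorem is unnecessary since the protocol is already a private-coin one.
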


\begin{proof}
Let $\mathcal{R} = (\mathcal{A}_n)_{n \ge 0}$  be a randomized streaming algorithm for  $\WP(H \wr G)$, where $\mathcal{A}_n = (Q_n,\Sigma,\iota_n,\delta_n, F_n)$.
We show that we obtain a randomized communication protocol for the disjointness problem with communication cost $3 \cdot s(\mathcal{R},12n-8)$.

Fix $n \geq 1$ and two elements $g, h \in H$ with $[g,h] \neq 1$. We can w.l.o.g.~assume that $g$ and $h$ belong to the finite generating set of $H$.
We also fix a finite generating set for $G$.
Let $s  \coloneqq t_1 t_2 \cdots t_{n-1}$ be a word over the generators of $G$ such that
 $t_1 t_2 \cdots t_i \not\equiv_G t_1 t_2 \cdots t_j$ whenever $i,j \in [0,n-1]$ with $i \neq j$. Such a word exists for every $n$ since the Cayley graph of $G$
is an infinite locally finite graph and hence contains an infinite ray by K\"onig's lemma.
For a word $x = a_1 a_2 \cdots a_{n}  \in \{0,1\}^n$ and an element $c \in \{g,h,g^{-1},h^{-1}\}$ we define the word
\begin{equation} \label{eq def x[c]}
x[c] = c^{a_1} t_1 c^{a_2} t_2 \cdots c^{a_{n-1}} t_{n-1} c^{a_{n}} s^{-1},
\end{equation}
where $c^0 = \varepsilon$.
It represents the element $(f_{x,c}, 1) \in H \wr G$ with 
\[
f_{x,c}(g) = \begin{cases}
 c & \text{ if $g = t_1 \cdots t_{i-1}$ in $G$ and $a_i=1$ for some $i \in [1,n]$,} \\
 1 & \text{ otherwise.}
\end{cases}
\]
Therefore, for two words $x,y \in \{0,1\}^n$ we have $x[g] y[h] x[g^{-1}] y[h^{-1}] = 1$ in $H \wr G$ if and only if 
there is no position $i \in [1,n]$ with $x[i] = y[i] = 1$. Note that the length of the word
$x[g] y[h] x[g^{-1}] y[h^{-1}]$ is $4(3n-2) = 12n-8$.
Let $m = 12n-8$.

Our randomized communication protocol for the disjointness problem works as follows, where $x \in \{0,1\}^n$ is the input for Alice
and $y \in \{0,1\}^n$ is the input for Bob.
\begin{itemize} 
\item Alice guesses an initial state $q_0$ with respect to the distribution $\iota_m$ and
sends the state $q_1 = \delta_m(q_0, x[g])$ to Bob.
\item Bob sends the state $q_2 = \delta_m(q_1, y[h])$ to Alice.
\item Alice sends the state $q_3 = \delta_m(q_2, x[g^{-1}])$ to Bob. 
\item Bob computes the state $q_4 = \delta_m(q_3, y[h^{-1}])$
 and finally accepts if and only if $q_4 \in F_m$.
\end{itemize}
Clearly, the protocol is correct and its communication cost is $3 \cdot s(\mathcal{R},12n-8)$. Hence, by Theorem~\ref{theorem-randCC} we 
have $3 \cdot s(\mathcal{R},12n-8) \ge \Omega(n)$ which implies $s(\mathcal{R}, n) \ge \Omega(n)$.
\end{proof}

\begin{remark}
It follows from Theorem~\ref{lower-wreath} and Corollary~\ref{thm-wreath-by-abelian} that the randomized streaming space complexity of
the word problem for an f.g.~group
is not a quasi-isometric invariant. To explain this, we need some definitions; see \cite{Loeh17} for more details: Two metric spaces
$(X,d_X)$ and $(Y,d_Y)$ are called quasi-isometric if there exists a function 
$f : X \to Y$ (a so-called \emph{quasi-isometry}) and real constants $a \geq 1$, $b,c \geq 0$ 
with the following properties:
\begin{itemize}
\item $\forall x,x' \in X :  a^{-1} \cdot d_X(x,x') - b \le d_Y(f(x),f(x')) \le a \cdot d_X(x,x') + b$,
\item $\forall y \in Y \exists x \in X : d_Y(y, f(x)) \le c$.
\end{itemize}
One can show that quasi-isometry between metric spaces is an equivalence relation. 

Consider now an f.g.~group $G$ with a finite generating set $\Sigma$.
We identify the Cayley graph $C(G,\Sigma)$ with the metric space obtained by replacing every edge by a line segment of length $1$; this
is called the geometric realization of $C(G,\Sigma)$. If $\Sigma'$ is another finite generating set of $G$ then 
$C(G,\Sigma)$ and $C(G,\Sigma')$ are quasi-isometric. Finally, two f.g.~groups $G$ and $H$ are quasi-isometric if a Cayley graph
$C(G,\Sigma)$ is quasi-isometric to a Cayley graph $C(H,\Gamma)$ (for some, or equivalently, all finite generating sets $\Sigma$ and 
$\Gamma$ of $G$ and $H$, respectively).

A \emph{quasi-isometric invariant} is a mapping $\mathcal{I}$, whose domain is the class of all f.g.~groups and such that $\mathcal{I}(G) = \mathcal{I}(H)$ for all f.g.~quasi-isometric groups $G$ and $H$.
A well-known quasi-isometric invariant is the growth type: say that two functions
$f,g : \mathbb{N} \to \mathbb{R}_{\ge 0}$ are quasi-equivalent, $f \sim g$ for short, 
if there are real constants $a,c>0$, $b,d \geq 0$ such that
$f(n) \leq a \cdot g(a \cdot n+b)+b$ and $g(n) \leq c \cdot f(c \cdot n+d)+d$ for all $n \in \mathbb{N}$. This yields an equivalence relation on
functions $f : \mathbb{N} \to \mathbb{R}_{\ge 0}$. For an f.g.~group $G$ with a finite generating set $\Sigma$, we define
the growth type of $G$ as the equivalence class $[\gamma_{G,\Sigma}(n)]_\sim$. This definition
does not depend on the chosen finite generating set $\Sigma$. As mentioned above, the growth type of an f.g.~group is 
a quasi-isometric invariant. By Theorem~\ref{thm-det-growth}, also the $\sim$-equivalence class of the deterministic streaming space complexity 
of the word problem of an f.g.~group is a quasi-isometric invariant.
For the randomized streaming space complexity this is not true. Take for instance the symmetric group $S_3$ of order 6 (the smallest non-abelian finite group)
and consider the groups $G = S_3 \wr \mathbb{Z}$ and $H = \mathbb{Z}_6 \wr \mathbb{Z}$.
By Corollary~\ref{thm-wreath-by-abelian}, the randomized streaming space complexity of $\WP(H)$ is $\Theta(\log n)$
whereas by  Theorem~\ref{lower-wreath}, the randomized streaming space complexity of $\WP(G)$ is $\Theta(n)$.
On the other hand, $G$ and $H$ are quasi-isometric by \cite[Theorem~1.2]{EsFiWr13}.
\end{remark}
The following result is a direct corollary of Theorem~\ref{lower-wreath}.
\begin{corollary} \label{coro-pre-thompson}
Let $H$ be a  finitely generated group and assume that $G$ is an infinite group 
such that $H \wr G$ embeds into $H$. Then 
the randomized streaming space complexity of $\WP(H)$ is $\Theta(n)$.
\end{corollary}

\begin{proof}
Since $H \wr G$ is non-abelian, also $H$ must be non-abelian. Hence, the corollary follows directly from 
Theorem~\ref{lower-wreath}.
\end{proof}
In 1965 Richard Thompson introduced three finitely presented groups $F < T < V$ acting  on the
unit-interval, the unit-circle and the Cantor set, respectively.
Of these three groups, $F$ received most attention (the reader should not confuse $F$ with a free group). 
This is mainly due to the still open conjecture that
$F$ is not amenable, which would imply that $F$ is another counterexample to a famous conjecture 
of von Neumann (a counterexample was found by  Ol'shanskii). 
The group $F$ consists of all homeomorphisms of the unit interval that
are piecewise affine, with slopes a power of $2$ and dyadic
breakpoints. It is a finitely presented group:
\begin{eqnarray}
F & = &  \langle a, b \mid [a b^{-1} \!\!\:,\,  a^{-1} b a],  [a b^{-1} \!\!\:,\, a^{-2} b a^2]   \rangle .
 \label{presentation-F}
\end{eqnarray}
An infinite natural presentation is $F = \langle a_0, a_1, a_2, \ldots \mid  a_i^{-1} a_k a_i  a^{-1}_{k+1} (i<k)  \rangle$.
The group $F$ is orderable (so in particular torsion-free), its derived subgroup $[F,F]$ is simple and the center of $F$ is trivial (in particular, $F$ is non-abelian);
see \cite{CaFlPa96} for more details. Important for us is the fact that 
$F$ contains a copy of $F\wr\Z$ \cite[Lemma~20]{GubaSapir99}. Hence, Corollary~\ref{coro-pre-thompson}
implies:

\begin{corollary} \label{coro-thompson}
The randomized streaming space complexity of $\WP(F)$ (for Thompson's group $F$) is $\Theta(n)$. 
\end{corollary}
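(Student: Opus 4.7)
The plan is to combine Theorem~\ref{lower-wreath} with the Guba--Sapir embedding $F \wr \mathbb{Z} \hookrightarrow F$ cited in the excerpt. For the upper bound, the trivial streaming algorithm that simply buffers the entire input uses $\mathcal{O}(n)$ space; since the word problem of $F$ is decidable (it is even co-context-free by \cite{LehSchw07}), the resulting DFA accepts exactly $\WP(F,\Sigma) \cap \Sigma^{\le n}$, so the deterministic, and hence randomized, streaming space complexity of $F$ is $\mathcal{O}(n)$.

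For the matching $\Omega(n)$ lower bound, I would first apply Theorem~\ref{lower-wreath} to the wreath product $F \wr \mathbb{Z}$. The two hypotheses are immediate: $\mathbb{Z}$ is a f.g.~infinite group, and $F$ is non-abelian (for example, its center is trivial, or $[F,F]$ is a non-trivial simple group, as recalled just before the corollary). Theorem~\ref{lower-wreath} therefore yields that the randomized streaming space complexity of $F \wr \mathbb{Z}$ is $\Theta(n)$, and in particular $\Omega(n)$.

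It remains to transfer the lower bound from the subgroup $F \wr \mathbb{Z}$ to $F$ itself. This is a routine variant of Lemma~\ref{lemma-gen-set}: if $H \le G$ is a f.g.~subgroup with generating set $\Sigma_H$, and each $a \in \Sigma_H$ is written as a word $w_a \in \Sigma_G^\ast$ of length at most $c$, then any randomized streaming algorithm $(\mathcal{A}_{cn})_{n \ge 0}$ for $\WP(G,\Sigma_G)$ yields one for $\WP(H,\Sigma_H)$ on inputs of length $n$ by simulating $\mathcal{A}_{cn}$ on the substituted word. Thus the randomized streaming space complexity of $H$ at length $n$ is at most that of $G$ at length $cn$. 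Applying this with $H = F \wr \mathbb{Z}$ and $G = F$ (using the Guba--Sapir embedding \cite[Lemma~20]{GubaSapir99}) converts the $\Omega(n)$ lower bound for $F \wr \mathbb{Z}$ into an $\Omega(n)$ lower bound for $F$, completing the proof.

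The argument has no real obstacle: everything reduces to citing Theorem~\ref{lower-wreath}, the subgroup-transfer principle for streaming algorithms, and the Guba--Sapir embedding. The only nontrivial external input is the Guba--Sapir result that $F$ contains a copy of $F \wr \mathbb{Z}$, which is taken from the literature.
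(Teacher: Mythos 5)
Your proof is correct and follows exactly the paper's route: the paper applies Theorem~\ref{lower-wreath} to the copy of $F \wr \mathbb{Z}$ inside $F$ (via the cited Guba--Sapir embedding) to get the $\Omega(n)$ lower bound, with the $\mathcal{O}(n)$ upper bound being trivial. The subgroup-transfer step you spell out is indeed the implicit routine variant of Lemma~\ref{lemma-gen-set} that the paper leaves unstated.
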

For the case that $G$ is finite, we can prove the following variant of Corollary~\ref{coro-pre-thompson}.

\begin{theorem} \label{thm-wreath-finite}
Let $H$ be a finitely generated group and assume there is a non-trivial finite group $G$ such that
$H \wr G$ embeds into $H$. Then there is a constant $0 < c < 1$ such that
the randomized streaming space
complexity of $\WP(H)$ is in $\Omega(n^c)$.
\end{theorem}

\begin{proof}
We can assume that $G = \Z_k$ for some $k \geq 2$.
We fix an embedding $\phi : H \wr \Z_k \to H$. Let $\tau$ be the generator of $\Z_k$ and let $\Sigma$ be a finite generating set
for $H$.
We use a construction from \cite{BartholdiFLW20} that yields an embedding 
$\phi_m :  H \wr \Z_{k^m} \to H$ for every $m \geq 1$. More precisely, it is shown in \cite[Lemma~9.5]{BartholdiFLW20} that the following mapping $\phi_m$
(where $\tau_m$ is the generator of $\Z_{k^m}$ in $H \wr \Z_{k^m}$) defines an embedding of 
$H \wr \Z_{k^m}$ into $H$:
\begin{eqnarray*}
\phi_m(\tau_m) &=& \phi^{m}(\tau) \phi^{m-1}(\tau) \cdots \phi^{2}(\tau) \phi(\tau), \\
\phi_m(a) &=& \phi^m(a) \text{ for } a \in \Sigma . 
\end{eqnarray*}
Fix $\lambda \ge 2$ such that every group element
$\phi(\tau)$ and $\phi(a)$ for $a \in \Sigma$ can be represented by a word over the alphabet 
$\Sigma$ of length at most $\lambda$.
Then each of the group elements
$\phi_m(\tau_m)$ and $\phi_m(a)$ ($a \in \Sigma$) can be 
represented by a word of length at most 
$\sum_{i=1}^m \lambda^i \leq \lambda^{m+1}$.

Let us now fix an $n$. We want to get a randomized communication protocol for the disjointness problem on inputs of length $n$.
For this we choose $m = \lceil \log_k n \rceil$ so that $k^m \geq n$.  Now observe that the protocol from the proof of Theorem~\ref{lower-wreath}
also works if the group $G$ (the right factor of the wreath product)  is $\mathbb{Z}_\ell$ for some $\ell \geq n$.
In particular, we can take the copy of 
$H \wr \Z_{k^m}$ in $H$.  Note that $H$ must be non-abelian since the wreath product $H \wr \Z_k$ is non-abelian.

In our situation, the length of the word $u[g] v[h] u[g^{-1}] v[h^{-1}]$ from the proof of
Theorem~\ref{lower-wreath} blows up to $\mathcal{O}(n  \lambda^m) = \mathcal{O}(n^{1+1/\log_\lambda k})$, 
since every generator of $H \wr \Z_{k^m}$ becomes a word of length at most $\mathcal{O}(\lambda^{m}) = \mathcal{O}(n^{1/\log_\lambda k})$ in the group $H$. If $s_H(n)$ is the randomized streaming space complexity of $\WP(H)$, we obtain
\[ 
3 \cdot s_H( \mathcal{O}(n^{1+1/\log_\lambda k})) \geq \Omega(n) ,
\]
which yields $s_H(n) \ge \Omega(n^c)$, where one can take $c = \log_\lambda(k) / (\log_\lambda(k)+1)$.
 \end{proof}
 Theorem~\ref{thm-wreath-finite} can be applied to a large class of self-similar groups acting on regularly branching infinite trees.
 In particular, it is shown in \cite[Lemma~9.8]{BartholdiFLW20} that if $G$ is a weakly branched group whose branching subgroup $K$ contains elements of finite order (see \cite{BartholdiFLW20} for definitions), then $K$ contains a copy of $K \wr \mathbb{Z}_k$ for some $k \ge 2$. Hence we get:

 \begin{corollary} \label{coro-weakly-branched}
 Let $G$ is a weakly branched group whose branching subgroup $K$ contains elements of finite order. Then there is a constant $0 < c < 1$ such that
the randomized streaming space complexity of $\WP(G)$ is in $\Omega(n^c)$.
 \end{corollary}
 An important example of a group covered by Corollary~\ref{coro-weakly-branched} is the  Grigorchuk group;
 introduced by Grigorchuk in \cite{Grigorchuk80}. It is defined as an f.g.~group of automorphisms
of the infinite binary tree; the generators are usually denoted $a,b,c,d$ and satisfy the identities $a^2 = b^2 = c^2 = d^2 = 1$
and $bc = cb = d, bd = db = c, dc = cd = b$ (we do not need the precise definition).
The Grigorchuk group is an f.g. infinite torsion group and was the first example of a group with intermediate growth as well as the 
first example of a group that is amenable but not elementary amenable.
For the Grigorchuk group we can provide the following lower and upper bound on the constant $c$ from 
Corollary~\ref{coro-weakly-branched}:

\begin{theorem}  \label{thm-grig}
Let $G$ be the Grigorchuk group. Then the following hold:
\begin{itemize}
\item The deterministic streaming space complexity of $\WP(G)$ is $\mathcal{O}(n^{0.768})$.
\item The randomized streaming space complexity of $\WP(G)$ is $\Omega(n^{1/3})$.
\end{itemize}
\end{theorem}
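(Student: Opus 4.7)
The proof splits into the deterministic upper bound and the randomized lower bound, and the two halves have very different flavors.

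For the upper bound I plan to apply Theorem~\ref{thm-det-growth} directly. Bartholdi's theorem from \cite{Bartholdi98} (already cited just before Section~\ref{sec-injective}) gives the upper growth estimate $\gamma_{G,\Sigma}(n) \le \exp(C n^{0.768})$ for some constant $C$ and any fixed generating set $\Sigma$. Plugging this into Theorem~\ref{thm-det-growth} yields that the deterministic streaming space complexity of $\WP(G,\Sigma)$ is at most $\lceil \log_2 S(n) \rceil \le \log_2 \gamma_{G,\Sigma}(\lfloor n/2 \rfloor) + \mathcal{O}(1) = \mathcal{O}(n^{0.768})$, which is the first bullet.

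For the randomized lower bound I plan to reduce the disjointness problem of communication complexity to the word problem, following the template of the proof of Theorem~\ref{lower-wreath}. The input for the reduction is a pair $u,v \in \{0,1\}^m$ with $m = \lfloor c\sqrt{n}\rfloor$ for a suitably small constant $c$. The key ingredient is the branching structure of Grigorchuk's group: at every level $k$ the $2^k$ rigid stabilizers $\mathrm{Rist}_G(v)$ of the level-$k$ vertices $v$ of the binary tree are non-trivial subgroups that commute elementwise, and each of them contains a pair of non-commuting elements of word length $\mathcal{O}(2^k)$ in the generators $a,b,c,d$. This is a standard consequence of the defining recursions $b \mapsto (a,c)$, $c \mapsto (a,d)$, $d \mapsto (1,b)$: iterating the rule $d\mapsto(1,b)$ together with conjugation by $a$ produces, at depth $k = \lceil\log_2 m\rceil$, elements $g_i,h_i \in \mathrm{Rist}_G(v_i)$ (for $i=0,\dots,m-1$, where $v_0,\dots,v_{m-1}$ enumerate the depth-$k$ vertices) of length $\mathcal{O}(m)$ such that $[g_i,h_i]\ne 1$ while $g_i$ commutes with $g_j$, $h_j$ and $h_i$ commutes with $h_j$ whenever $i\ne j$. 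Once such elements are in place, given $u,v\in\{0,1\}^m$ I form the word
\[
W(u,v) = \Big(\prod_{i=0}^{m-1} g_i^{u[i]}\Big)\Big(\prod_{i=0}^{m-1} h_i^{v[i]}\Big)\Big(\prod_{i=0}^{m-1} g_i^{-u[i]}\Big)\Big(\prod_{i=0}^{m-1} h_i^{-v[i]}\Big)
\]
over $\Sigma = \{a,b,c,d\}$, of length $\mathcal{O}(m^2) = \mathcal{O}(n)$. The commutation identities force $W(u,v) =_G 1$ iff $u$ and $v$ are disjoint, so a randomized streaming algorithm of space complexity $s(n)$ yields a three-round randomized communication protocol for disjointness on $m$ bits of cost $\mathcal{O}(s(n))$. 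The $\Omega(m)$ randomized communication lower bound for disjointness then gives $s(n) = \Omega(\sqrt{n}) = \Omega(n^{0.5})$, as claimed.

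The main obstacle is the group-theoretic construction of the $g_i$ and $h_i$ with the length bound $\mathcal{O}(m)$: this is where the self-similarity and contraction of Grigorchuk's group are exploited. The construction itself is routine given the standard recursions for $a,b,c,d$ and the fact that all rigid stabilizers at a common level are non-trivial and commute elementwise, but the careful bookkeeping of word lengths across $\log_2 m$ levels of the recursion, together with verifying that distinct rigid stabilizers do not accidentally centralize the chosen elements, is the delicate part of the argument.
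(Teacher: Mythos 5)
The upper bound half of your proposal matches the paper exactly and is fine. The lower bound also follows the same overall strategy as the paper (a reduction from set disjointness exploiting the branch structure of the Grigorchuk group), but it rests on a claim that is not established and that I believe is substantially stronger than what the paper actually proves. You assert that at depth $k = \lceil\log_2 m\rceil$ one can find, in each of the $m = 2^k$ level-$k$ rigid stabilizers, a non-commuting pair $g_i, h_i$ of word length $\mathcal{O}(m) = \mathcal{O}(2^k)$, and you describe this as a "routine" consequence of the defining recursions. It is not routine, and it is the crux of the whole lower bound. The paper instead works inside the branching subgroup $K = \langle t,v,w\rangle$ and uses the explicit homomorphism $\phi : K \times K \to K$ whose generator images have word length at most $4$, giving the iterated embedding $\phi_k : K^{2^k} \to K$. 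Under that embedding, a single nontrivial slot element (your $g_i$) has length on the order of $4^{k}$, i.e.\ $m^2$, not $\mathcal{O}(m)$. If you then form the product $\prod_i g_i^{u[i]}$ of $m$ such words, you obtain a word of length $\Theta(m^3)$, which would only give $\Omega(n^{1/3})$ rather than $\Omega(n^{1/2})$.

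The paper sidesteps this precisely by \emph{not} taking a product of individual slot elements. It feeds the entire tuple through $\phi_k$ at once, producing the single word $x[s] = \phi_k(s^{a_0},\dots,s^{a_{n-1}})$; because $\phi_k$ is a homomorphism and the generator substitutions have bounded length, the total length of $\phi_k(\overline{s})$ is controlled (the paper states $4^k = m^2$) even though each individual slot image could be that long on its own. That combination trick is what makes $\Omega(\sqrt{n})$ come out, and it is entirely absent from your plan. So the gap in your proposal is concrete: you would need to justify the existence of pairwise-commuting non-abelian "blocks" of length $\mathcal{O}(2^k)$ in the level-$k$ rigid stabilizers, and nothing you cite (the recursions $b\mapsto(a,c)$, etc., plus conjugation by $a$) gives anything close to that bound — the obvious lifts double or worse the word length at each level, not stay linear. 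A minor additional issue: you say you must verify "that distinct rigid stabilizers do not accidentally centralize the chosen elements" — in fact elements in rigid stabilizers of disjoint subtrees always centralize one another, which is exactly what makes the commutator $W(u,v)$ factor into independent per-coordinate pieces; what actually has to be ensured is only $[g_i,h_i]\ne 1$ within a single stabilizer. This misstatement suggests the commutation structure of the argument was not fully worked through.
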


\begin{proof}
The first statement follows from Theorem~\ref{thm-det-growth} and the fact that the growth of the Grigorchuk group is upper bounded by 
$\exp(n^{0.768})$ \cite{Bartholdi98}. For the second statement we use the subgroup $K \le G$ generated  by $t= (ab)^2$, $v=(bada)^2$, and $w= (abad)^2$.
We show that the randomized streaming space complexity of $K$ is $\Omega(n^{1/3})$.
This subgroup $K$ has the following properties that can be all found in \cite{Harpe00}:
\begin{itemize}
\item $K$ is not abelian; for instance $t v \neq vt$.
\item $K$ contains a copy of $K \times K$. More precisely, the mapping $\phi$ with 
\begin{alignat*}{2}
\phi(t,1) & = v  \qquad & \phi(1,t) & = w \\
\phi(v,1) & = v^{-1} t^{-1} v t \qquad & \phi(1,v) & = w^{-1} t w t^{-1} \\
\phi(w,1) & = v t v^{-1} t^{-1} \qquad & \phi(1,w) & = w t^{-1} w^{-1} t
\end{alignat*}
defines an injective homomorphism $\phi : K \times K \to K$ \cite[p.~262]{Harpe00}. 
\end{itemize}
The embedding $\phi$ can be used to define for every $k \geq 1$ an embedding 
 $\phi_k : K^{2^k} \to K$  inductively by $\phi_1 = \phi$ and $\phi_{k+1}( \overline{x}, \overline{y} ) = \phi( \phi_k(\overline{x}), \phi_k(\overline{y}))$
for all $\overline{x}, \overline{y} \in K^{2^k}$. Note that for a $2^k$-tuple $\overline{x} \in \{1, t,v,w,t^{-1},v^{-1},w^{-1}\}^{2^k}$ 
we have $|\phi_k(\overline{x})| \le 8^k$ when $\phi_k(\overline{x})$ is viewed as a word over $\{t,v,w,t^{-1},v^{-1},w^{-1}\}$.

We can now prove the second statement of the theorem using arguments similar to those from
the proof of Theorem~\ref{lower-wreath}. Let $\mathcal{R} = (\mathcal{A}_n)_{n \ge 0}$ be a 
randomized streaming algorithm for $\WP(K, \{t,v,w,t^{-1},v^{-1},w^{-1}\})$. 
We show that we obtain a randomized communication protocol for the disjointness problem with communication cost $3 \cdot s(\mathcal{R}, 4n^3)$.
Fix $n \geq 1$ and assume that $n=2^k$ is a power of two.
For a word $x = a_1 a_2 \cdots a_{n}  \in \{0,1\}^n$ and an element $s \in \{t,v,t^{-1},v^{-1}\}$ define the word
$$
x[s] = \phi_k(s^{a_1}, s^{a_2}, \ldots, s^{a_{n-1}}, s^{a_{n}}).
$$
For two words $x,y \in \{0,1\}^n$ we have $x[t] y[v] x[t^{-1}] y[v^{-1}] = 1$ in $K$ if and only if 
there is no position $i \in [1,n]$ with $x[i] = y[i] = 1$ (recall that $t$ and $v$ do not commute). 
Note that the length of the word $x[t] y[v] x[t^{-1}] y[v^{-1}]$ is $4 \cdot 8^{k} = 4 n^3$. 

Using this word and the semiPFA $\mathcal{A}_{4 n^3}$,
the randomized communication protocol for the disjointness problem works in the same way as in the proof of Theorem~\ref{lower-wreath}. Three states
of $\mathcal{A}_{4 n^3}$ will be exchanged between Alice and Bob. Therefore, 
the communication cost of the protocol is $3 \cdot s(\mathcal{R}, 4n^3)$.
Hence, we get $3 \cdot s(\mathcal{R},4n^3) \ge \Omega(n)$ which implies $s(\mathcal{R}, n) \ge \Omega(n^{1/3})$.
\end{proof}

\medskip

\begin{oproblem}
For the randomized streaming space complexity of $\WP(G)$ (for $G$ the Grigorchuk group) we proved the lower bound 
$\Omega(n^{1/3})$ and the upper bound $\mathcal{O}(n^{0.768})$ (the upper bound even holds for the 
deterministic streaming space complexity); see Theorem~\ref{thm-grig}. This leaves a gap that we would
like to close.
\end{oproblem}
We conclude this section with a lower bound on distinguishers for a wreath product $M \wr G$, where $M$ is a monoid
that this not cancellative or not commutative. This lower bound nicely contrasts Corollary~\ref{thm-wreath-cancel-comm}.

\begin{theorem} \label{lower-bound-non-cancel-non-commute}
Let $G$ be a finitely generated infinite group and $M$ be a finitely generated monoid which is not cancellative or not commutative. Then, 
for every fixed $0 < \epsilon < 1/2$, every $\epsilon$-distinguisher for $M \wr G$  has space complexity $\Omega(n)$.
\end{theorem}

\begin{proof}
Fix a finite generating set $\Sigma$ for the group $G$ and a generating set $\Gamma$ for $M$ so that
$M \wr G$ is generated by $\Sigma \uplus \Gamma$.
Let $\mathcal{R}=(\mathcal{A}_n)_{n\geq 0}$ be an $\epsilon$-distinguisher for $M \wr G$
 We will derive  a randomized  communication protocol for the disjointness problem in the non-commutative case with communication cost $3 \cdot s(\mathcal{R},6n-4)$, and a randomized one-way communication protocol for the index problem in the non-cancellative case with communication cost $s(\mathcal{R},6n-4)$.

\medskip
\noindent
\emph{Case 1:} $M$ is not commutative. The proof is very similar to the proof of Theorem~\ref{lower-wreath}. 
We reduce from the communication problem $\DIS_n$. Fix $n\geq 1$ and two elements $a,b \in M$ such that $ab \neq ba$ in $M$. W.l.o.g.~we can assume that $a, b \in \Gamma$. 
Define the word $s \coloneqq t_1 t_2\cdots t_{n-1} \in \Sigma^{n-1}$ as in the proof of Theorem~\ref{lower-wreath}, so that
the prefixes of $s$ represent pairwise different elements of $G$.
For a bit string $x = a_1 a_2 \cdots a_{n}  \in \{0,1\}^n$ and $c \in \{a,b\}$ we define the word $x[c]$ as in  \eqref{eq def x[c]}.
Then for two bit strings $x, y \in \{0,1\}^n$ we have 
$x[a] y[b] = y[b] x[a]$ in $M \wr G$ if and only if 
there is no position $i \in [1,n]$ with $x[i] = y[i] = 1$. Note that the length of the words $x[a] y[b]$, $y[b] x[a]$ is at most $6n =: m$.

In the randomized communication protocol for the disjointness problem, Alice first guesses an initial state $q_0$ of $\mathcal{A}_m$ according
to its initial state distribution. By exchanging four states of  $\mathcal{A}_m$, Bob can obtain the states
$q_1 = \delta_m(q_0, x[a] y[b])$ and $q_2 = \delta_m(q_0, y[b]x[a])$, where $x \in \{0,1\}^n$  is the input of Alice and 
$y \in \{0,1\}^n$  is the input of Bob. At the end, Bob accepts if and only if $q_1 = q_2$. The protocol solves 
 $\DIS_n$ with error probability at most $\epsilon$ and its communication cost is $4 \cdot s(\mathcal{R},6n)$. Hence, by Theorem~\ref{theorem-randCC} we must have $4 \cdot s(\mathcal{R},6n) \ge \Omega(n)$ which implies $s(\mathcal{R}, n) \ge \Omega(n)$.

\medskip
\noindent
\emph{Case 2:} $M$ is not cancellative. 
We can assume that $M$ is commutative, otherwise we are done by  Case 1. Let $a,b,c,d \in M$ such that $a \neq b$ and
$$
ac = bc =: d
$$
in $M$. W.l.o.g.~we can assume that $a,b,c,d \in \Gamma$.
Let us choose the word $s = t_1 t_2 \cdots t_{n-1} \in \Sigma^*$ as in Case 1. 
For a bit string $x  \in \{0,1\}^n$ we define the word
\begin{equation*} 
x[a,b] = y_1 t_1 y_2 t_2 \cdots y_{n-1} t_{n-1} y_n s^{-1} \in (\Sigma \uplus \Gamma)^*.
\end{equation*}
where $y_i = a$ if $x[i] =0$ and $y_i = b$ if $x[i] = 1$.
It represents the element $(f, 1) \in M \wr G$ with 
\[
f(g) = \begin{cases} 
a & \text{if $g \equiv_G t_1 t_2 \cdots t_{i-1}$ and $x[i] = 0$ for some $i \in [1,n]$,} \\
b & \text{if $g \equiv_G t_1 t_2 \cdots t_{i-1}$ and $x[i] = 1$ for some $i \in [1,n]$,} \\
1 & \text{in all other cases.} 
\end{cases}
\]
In addition, for a position $i \in [1,n]$ we define the word
\[ v_i = c \, t_1  \, c  \, t_2 \cdots  c  \, t_{i-1}   \, t_{i}  \, c \cdots t_{n-1}  \, c ,\]  
which multiplies the elements sitting at ray positions $t_1 \cdots t_{j-1}$ for $j \in [1,n] \setminus \{i\}$ with $c$,  and similary, 
\[ w_i = d  \, t_1  \, d  \, t_2 \cdots d  \, t_{i-1}  \, b  \, t_{i}  \, d \cdots t_{n-1}  \, d  
\]
(recall that $d=ac = bc$ in $M$).
We then have $x[a,b] v_i = w_i$ in $M \wr G$ if and only if  $x[i] = 1$.
Note that the lengths of the words $x[a,b] v_i$ and $w_i$ is at most $6n =: m$.

We can now construct a randomized one-way communication protocol for the index problem $\IDX_n$.  
The input of Alice is  $x \in \{0,1\}^n$, whereas the input of Bob is a position $i\in [1,n]$ and he has to verify whether $x[i]=1$. 
After Alice has guessed the initial state $q_0$ of $\mathcal{A}_m$, she sends the states $q_0$ and $q_1 = \delta_m(q_0, x[a,b])$ to Bob,
who can then compute the states 
$q_2 = \delta_m(q_1, v_i) = \delta_m(q_0, x[a,b] v_i)$ and $q_3 = \delta_m(q_0, w_i)$ and accept if and only if $q_2 = q_3$.
This one-way protocol solves $\IDX_n$ with error probability at most $\epsilon$ and its communication cost is $2 \cdot s(\mathcal{R},6n)$. Hence,  by Theorem~\ref{theorem-randCC} we must have 
$2 \cdot s(\mathcal{R},6n) \ge \Omega(n)$ which implies $s(\mathcal{R}, n) \ge \Omega(n)$.
\end{proof}

\section{Part E: Randomized streaming algorithms for membership problems} \label{sec member}

Let $G$ be an f.g. group with a finite symmetric generating set $\Sigma$ and let $A \subseteq G$ be a 
subset of $G$. As before, $\pi_G : \Sigma^* \to G$ is the morphism that maps a word $w \in \Sigma^*$ to 
the group element represented by $w$. We define the language 
\[ \MP(G, A, \Sigma) = \{ w\in \Sigma^* \colon \pi_G(w) \in A \} .\]
$\MP$ stands for membership problem.
Note that $\MP(G, \{1\}, \Sigma) = \mathsf{WP}(G, \Sigma)$.
One can easily show  a statement for the streaming space complexity of $\MP(G, A, \Sigma)$ 
analogously to Lemma~\ref{lemma-gen-set}, which allows us to skip the generating set $\Sigma$ 
and just write $\MP(G, A)$ in the following.

\subsection{Subgroup membership problem for free groups} \label{sec-MPs}

In the following we are mainly interested in randomized streaming algorithms for $\MP(G, H)$ when $H$ is a subgroup of $G$.
The main result of this section states that for every finitely generated free group $F(\Gamma)$ and every
finitely generated subgroup $G \leq F(\Gamma)$ 
 there exists a randomized streaming algorithm for 
$\MP(F(\Gamma), G)$ with space complexity $\mathcal{O}(\log n)$. For this we first need a few 
more definitions concerning finite automata.

We fix the finite alphabet $\Gamma$ in this section. As usual, $\Gamma^{-1} = \{ a^{-1} \colon a \in \Gamma \}$ is a set
of formal inverses. Let $\Sigma = \Gamma \cup \Gamma^{-1}$. Recall from Section~\ref{sec-groups} that we identified the free group $F(\Gamma)$
with the set $\IRR(\Sigma)$ of all irreducible words over the alphabet $\Sigma$ and that the reduced normal form $\red(w)$
is the element of $F(\Gamma)$ represented by a word $w \in \Sigma^*$.

In the following we have to deal with a special class of finite automata over the alphabet
$\Sigma$. A \emph{partial DFA} is defined as an ordinary DFA except that
the transition function $\delta : Q \times \Sigma \to Q$ is only partially defined. 
As for (total) DFAs we extend the partial
transition function $\delta : Q \times \Sigma \to Q$ to a partial function
$\delta : Q \times \Sigma^* \to Q$. 
For $q \in Q$ and $w \in \Sigma^*$ we write
$\delta(q,w) = \bot$ if $\delta(q,w)$ is undefined, which means that one cannot read the word $w$ into the automaton
$\mathcal{A}$ starting from state $q$.
A \emph{partial inverse automaton} $\mathcal{A} = (Q, \Sigma, q_0, \delta, q_f)$ over the alphabet $\Sigma = \Gamma \cup \Gamma^{-1}$ is a partial DFA 
with a single final state $q_f$ and such that
for all $p,q \in Q$ and $a \in \Sigma$,  $\delta(p, a) = q$ implies $\delta(q,a^{-1})=p$. 

The main technique to deal with finitely generated subgroups of a free group is Stallings' folding \cite{KaMya02}. We do not need the details
of the technique.  All we need is that for every finitely generated subgroup
$G \leq F(\Gamma)$ there exists a partial inverse automaton $\mathcal{A}_G$ over the alphabet $\Sigma$ 
such that for every word $w \in \IRR(\Sigma)$ we have: $w \in G$ if and only if $w \in L(\mathcal{A}_G)$. We call $\mathcal{A}_G$ the 
\emph{Stallings automaton} for $G$. It
can be constructed quite efficiently from a given set of generators for $G$ \cite{Tou06}, but we do not need this
fact since $G$ will be fixed and not considered to be part of the input in our main result, 
Theorem~\ref{thm-sbmp-free} below.\footnote{This setting is more natural in our context, where we consider
streaming algorithms for languages. In Theorem~\ref{thm-sbmp-free} below, we will consider the language $\MP(F(\Gamma), G)$ of all words representing an element from the subgroup $G$.} The Stallings automaton has the additional property that its final state is also  the initial state.

Let $G$ be a fixed finitely generated subgroup of $F(\Gamma)$ and let
$\mathcal{A}_G = (Q, \Sigma, q_0, \delta, q_0)$ be its Stallings automaton in the following.
An important property of $\mathcal{A}_G$ is the following: If $q, q' \in Q$ and $u \in \Sigma^*$ ($u$ is not necessarily 
irreducible) are such that $\delta(q, u) = q'$ then also $\delta(q, \mathsf{red}(u)) = q'$. This follows from the fact
that $\delta(q, aa^{-1})=q$ for every $q \in Q$ and $a \in \Sigma$.
In particular, if $\delta(q_0, u) \neq \bot$, then $u \in L(\mathcal{A}_G)$ if and only if
$\mathsf{red}(u) \in L(\mathcal{A}_G)$ if and only if $\mathsf{red}(u) \in G$.

\begin{lemma} \label{lemma-stallings1}
Let $u,v \in \Sigma^*$ and $a \in \Sigma$ 
such that $\delta(q_0, u) = q_1 \in Q$, $\delta(q_1, a) = \bot$ and $v$ has no prefix $x$ with 
$\mathsf{red}(ax) = \varepsilon$. Then $\mathsf{red}(uav) \notin G$. 
\end{lemma}

\begin{proof}
 Let $u' = \mathsf{red}(u)$. Then we also have $\delta(q_0, u') = q_1$
and $\mathsf{red}(uav) \notin G$ if and only if $\mathsf{red}(u'av) \notin G$. We can therefore assume
for the rest the proof that $u \in \IRR(\Sigma)$.
Moreover, observe that $u$ cannot end with the symbol $a^{-1}$: if $u = u'a^{-1}$, then 
$\bot = \delta(q_1, a) = \delta(\delta(q_0, u'a^{-1}), a) = \delta(q_0, u') \neq \bot$.

Assume now that $v$ has no prefix $x$ with 
$\mathsf{red}(ax) = \varepsilon$. We claim that $\mathsf{red}(av)$ begins with the symbol $a$.
Assume for a moment that this is already shown. Write $\mathsf{red}(av) = ay$ for some word $y$.
Then $uav$ reduces to $uay$. The latter word is irreducible, since $ua$ is irreducible ($u$ is irreducible and
$u$ does not end with $a^{-1}$) and $ay$ is also irreducible. But $uay \notin L(\mathcal{A})$, because $\delta(q_0, uay) = \bot$.
Hence, we have $uay \notin G$ and thus $\mathsf{red}(uav) \notin G$.

It therefore remains to show that $\mathsf{red}(av)$ begins with the symbol $a$. We prove by induction that
for every prefix $x$ of $v$, $\mathsf{red}(ax)$ begins with the symbol $a$. For $x = \varepsilon$ this is clear.
Now assume that $xb$ is a prefix of $v$ ($b \in \Sigma$) and we have already shown that $\mathsf{red}(ax) = ax'$ for some word $x'$.
We obtain $\mathsf{red}(axb) = \mathsf{red}(ax'b)$. 

If $x' = \varepsilon$ then $\mathsf{red}(axb) = \mathsf{red}(ab)$.
If $b = a^{-1}$ then we obtain $\mathsf{red}(axb) = \varepsilon$. This leads to a contradiction, since $xb$ is a prefix of $v$. 
Hence, we have $b \neq a^{-1}$ and thus $\mathsf{red}(axb) = ab$ starts with $a$.

Let us now assume that $x' \neq \varepsilon$ and write $x' = x''c$ for a symbol $c \in \Sigma$.
Since $a x'' c = a x'$ is irreducible, we obtain
\[ 
\mathsf{red}(axb) = \begin{cases}
a x'' c b & \text{if } c \neq b^{-1} \\
a x'' & \text{if } c = b^{-1} .
\end{cases}
\]
In both cases, $\mathsf{red}(axb)$ starts with the symbol $a$.
This concludes the proof of the lemma.
\end{proof}

\begin{definition} \label{def-AG-factor}
For a word $w \in \Sigma^*$ we define the $\mathcal{A}_G$-factorization of $w$
 uniquely as either
\begin{enumerate}[(i)]
\item $w = w_0 a_1 u_1 \; w_1 a_2 u_2 \cdots w_{k-1} a_k u_k \; w_k$ or
\item $w = w_0 a_1 u_1 \;  w_1 a_2 u_2 \cdots w_{k-1} a_k u_k \;  w_k a_{k+1} \alpha$
\end{enumerate}
such that $k \geq 0$ and the following properties hold, where we set $\ell=k$ in case (i) and $\ell=k+1$ in case (ii):
\begin{itemize}
\item $w_0, \ldots, w_k, u_1, \ldots, u_k,\alpha \in \Sigma^*$, $a_1, \ldots, a_\ell \in \Sigma$,
\item there are states $q_1, \ldots, q_{k+1} \in Q$ such that
$\delta(q_i, w_i) = q_{i+1}$ for all $i \in [0,k]$ (recall that $q_0$ is the initial state of $\mathcal{A}_G)$,
\item $\delta(q_i,a_i) = \bot$ for all $i \in [1,\ell]$,
\item for all $i \in [1,k]$, $\mathsf{red}(a_i u_i) = \varepsilon$ but there is no prefix $u \neq u_i$ of $u_i$ with $\mathsf{red}(a_i u) = \varepsilon$, and
\item in case (ii), $\alpha$ has no prefix $x$ with $\mathsf{red}(a_{k+1} x) = \varepsilon$.
\end{itemize}
\end{definition}
Depending on which of the two cases (i) and (ii)
in Definition~\ref{def-AG-factor} holds, we say that $w$ has an $\mathcal{A}_G$-factorization of type (i) or type (ii).

Let us explain the intuition of the $\mathcal{A}_G$-factorization of $w$; see also Figures~\ref{fig-A-G} and \ref{fig-A-G2}.
We start reading the word $w$ into the automaton $\mathcal{A}_G$, beginning at state $q_0$, as long as possible.
If it turns out that $\delta(q_0,w)$ is defined, then the $\mathcal{A}_G$-factorization of $w$ consists of the single factor $w_0 = w$
and we obtain type (i) with $k=0$.
Otherwise, there is a shortest prefix $w_0$ of $w$ (the first factor of the $\mathcal{A}_G$-factorization) such that
after reading $w_0$ we reach the state $\delta(q_0, w_0) = q_1$ of $\mathcal{A}_G$ and $\delta(q_1,a_1) = \bot$, where $a_1$
is the symbol following $w_0$ in $w$.
In other words, when trying to read $a_1$, we escape the automaton $\mathcal{A}_G$ for the first time. 
At this point let $w = w_0 a_1 x$.
We then take the shortest prefix $u_1$ of $x$ such that $a_1 u_1$
evaluates to the identity in the free group $F(\Gamma)$ (if such a prefix does not exist,
we terminate in case (ii) with $\alpha=x$). This yields
a new factorization $w = w_0 a_1 u_1 y$. We then repeat this process with the word $y$ starting from the state $q_1$
as long as possible. There are two possible terminations of the process: starting from state $q_k$ we can read the whole
remaining suffix into $\mathcal{A}_G$ (and arrive in state $q_{k+1}$). This suffix then yields the last factor $w_{k}$ (it can be the empty word) 
and we are in case (i); see Figure~\ref{fig-A-G}.
In the other case, we leave the automaton $\mathcal{A}_G$ with the symbol $a_{k+1}$ from state $q_{k+1}$ ($\delta(q_{k+1},a_{k+1}) = \bot$)
and the remaining suffix has no prefix $x$ such that $a_{k+1}x$ evaluates to the identity in the free group $F(\Gamma)$. The remaining suffix
then yields the last factor $\alpha$ and we are in case (ii); see Figure~\ref{fig-A-G2}.

Note that every factor $u_i$ in Definition~\ref{def-AG-factor} must end with $a_i^{-1}$, because otherwise we would get a proper factorization $a_i u_i = xy$
with $x\neq \varepsilon \neq y$ and $\red(x) = \red(y) = \varepsilon$.

\colorlet{lightgrey}{white!90!black}
\begin{figure}[t]
\tikzset{dot/.style={draw, circle, inner sep = 1pt, minimum size = 7pt}}
\tikzset{smalldot/.style={draw, circle, inner sep = 1pt, minimum size = 3pt}}
\tikzset{line/.style={thick,->,darkgreen}}
\begin{center}
\begin{tikzpicture}
    \draw[fill=lightgrey] (2,0) ellipse (3 and 1.7) ;
    \node at (2, 0)   (a) {$\mathcal{A}_G$};
    \node[dot,fill=white] at ($(2,0)+(180:3 and 1.7)$) (q0) {$q_0$};
    \node[dot,fill=white] at ($(2,0)+(120:3 and 1.7)$) (q1) {$q_1$};
    \node[dot,fill=white] at ($(2,0)+(60:3 and 1.7)$) (q2) {$q_2$};
    \node[dot,fill=white] at ($(2,0)+(0:3 and 1.7)$) (q3) {$q_3$};
    \node[dot,fill=white] at ($(2,0)+(-60:3 and 1.7)$) (q4) {$q_4$};
    \node[dot, fill=white, above left = .4cm and 1.5cm of q4]  (q5) {$q_5$};
    
    \draw (q0) edge[line, bend right]  node[above=1mm,pos=0.4] {$w_0$} (q1);
    \draw (q1) edge[line, bend right]  node[above=-.5mm] {$w_1$} (q2);
    \draw (q2) edge[line, bend right]  node[above=.5mm,pos=0.6] {$w_2$} (q3);
    \draw (q3) edge[line, bend right]  node[above=1.3mm,pos=0.65] {$w_3$} (q4);
    \draw (q4) edge[line, bend right]  node[above=-.5mm,pos=.5] {$w_4$} (q5);
        
     \node[smalldot, fill, above left = .5cm and .5cm of q1]  (p1) {};
     \node[smalldot, fill, above left = .6cm and .4cm of q2]  (p2) {};
     \node[smalldot, fill, above right = .5cm and .5cm of q3]  (p3) {};
    \node[smalldot, fill, below right = .5cm and .5cm of q4]  (p4) {};

    \draw (q1) edge[line, red, bend left]  node[left=-1mm] {$a_1$} (p1);
    \draw (q2) edge[line, red, bend left]  node[left=-1mm] {$a_2$} (p2);
    \draw (q3) edge[line, red, bend left]  node[above,pos=0.4] {$a_3$} (p3);
    \draw (q4) edge[line, red, bend left]  node[right=-.8mm,pos=0.5] {$a_4$} (p4);

    \draw (p1) edge[line, blue, out = 90, in = 50]  node[right] {$u_1$} (q1);
    \draw (p2) edge[line, blue, out = 80, in = 30]  node[right] {$u_2$} (q2);
    \draw (p3) edge[line, blue, out = 15, in = -30]  node[right] {$u_3$} (q3);
    \draw (p4) edge[line, blue, out = -80, in = -120]  node[below=-.5mm] {$u_4$} (q4);

  \end{tikzpicture}
\end{center}
\caption{\label{fig-A-G} An $\mathcal{A}_G$-factorization of type (i) for $k=4$. The red-blue loops outside of $\mathcal{A}_G$ are loops
in the Cayley graph of $F(\Gamma)$.}
\end{figure}

\colorlet{lightgrey}{white!90!black}
\begin{figure}[t]
\tikzset{dot/.style={draw, circle, inner sep = 1pt, minimum size = 7pt}}
\tikzset{smalldot/.style={draw, circle, inner sep = 1pt, minimum size = 3pt}}
\tikzset{line/.style={thick,->,darkgreen}}
\begin{center}
\begin{tikzpicture}
    \draw[fill=lightgrey] (2,0) ellipse (3 and 1.7) ;
    \node at (2, 0)   (a) {$\mathcal{A}_G$};
    \node[dot,fill=white] at ($(2,0)+(180:3 and 1.7)$) (q0) {$q_0$};
    \node[dot,fill=white] at ($(2,0)+(120:3 and 1.7)$) (q1) {$q_1$};
    \node[dot,fill=white] at ($(2,0)+(60:3 and 1.7)$) (q2) {$q_2$};
    \node[dot,fill=white] at ($(2,0)+(0:3 and 1.7)$) (q3) {$q_3$};
    \node[dot,fill=white] at ($(2,0)+(-60:3 and 1.7)$) (q4) {$q_4$};
     \node[dot,fill=white] at ($(2,0)+(-120:3 and 1.7)$) (q5) {$q_5$};

    \draw (q0) edge[line, bend right]  node[above=1mm,pos=0.4] {$w_0$} (q1);
    \draw (q1) edge[line, bend right]  node[above=-.5mm] {$w_1$} (q2);
    \draw (q2) edge[line, bend right]  node[above=.5mm,pos=0.6] {$w_2$} (q3);
    \draw (q3) edge[line, bend right]  node[above=1.3mm,pos=0.65] {$w_3$} (q4);
    \draw (q4) edge[line, bend right]  node[above=-.5mm,pos=.5] {$w_4$} (q5);
        
     \node[smalldot, fill, above left = .5cm and .5cm of q1]  (p1) {};
     \node[smalldot, fill, above left = .6cm and .4cm of q2]  (p2) {};
     \node[smalldot, fill, above right = .5cm and .5cm of q3]  (p3) {};
    \node[smalldot, fill, below right = .5cm and .5cm of q4]  (p4) {};
      \node[smalldot, fill, below right = .6cm and .4cm of q5]  (p5) {};
      \node[smalldot, fill, left = 1cm  of p5]  (p6) {};
    
    \draw (q1) edge[line, red, bend left]  node[left=-1mm] {$a_1$} (p1);
    \draw (q2) edge[line, red, bend left]  node[left=-1mm] {$a_2$} (p2);
    \draw (q3) edge[line, red, bend left]  node[above,pos=0.4] {$a_3$} (p3);
    \draw (q4) edge[line, red, bend left]  node[right=-.8mm,pos=0.5] {$a_4$} (p4);
    \draw (q5) edge[line, red, bend left]  node[right=-.8mm,pos=0.5] {$a_5$} (p5);

    \draw (p1) edge[line, blue, out = 90, in = 50]  node[right] {$u_1$} (q1);
    \draw (p2) edge[line, blue, out = 80, in = 30]  node[right] {$u_2$} (q2);
    \draw (p3) edge[line, blue, out = 15, in = -30]  node[right] {$u_3$} (q3);
    \draw (p4) edge[line, blue, out = -80, in = -120]  node[below=-.5mm] {$u_4$} (q4);
    \draw (p5) edge[line, blue, out = -90, in = -90]  node[above=-.8mm,pos=0.5] {$\alpha$} (p6);

  \end{tikzpicture}
\end{center}
\caption{\label{fig-A-G2} An $\mathcal{A}_G$-factorization of type (ii) for $k=4$. The red-blue loops outside of $\mathcal{A}_G$ are loops
in the Cayley graph of $F(\Gamma)$.}
\end{figure}

\begin{lemma} \label{lemma-stallings2}
Let $w \in \Sigma^*$ and assume that the $\mathcal{A}_G$-factorization of $w$ and
the states $q_1, \ldots, q_{k+1}$ are as in  Definition~\ref{def-AG-factor}.
\begin{itemize}
\item If the $\mathcal{A}_G$-factorization of
$w$ is of type (i) then $\mathsf{red}(w) \in G$ if and only if $q_{k+1} = q_0$ (the initial and final state of $\mathcal{A}_G$).
\item If the $\mathcal{A}_G$-factorization of
$w$ is of type (ii) then $\mathsf{red}(w) \notin G$.
\end{itemize}
\end{lemma}

\begin{proof}
Let us first assume that the  $\mathcal{A}_G$-factorization of
$w$ is of type (i). Then the word $w$ can be reduced to $w_0 w_1 \cdots w_k$.
Moreover, we have $\delta(q_0, w_0 w_1 \cdots w_k) = q_{k+1}$.
Hence, we have $\mathsf{red}(w) \in G$ if and only if 
$\mathsf{red}(w_0 w_1 \cdots w_k) \in G$ if and only if
$w_0 w_1 \cdots w_k \in L(\mathcal{A}_G)$  if and only if
$q_{k+1} = q_0$.

Now assume that the  $\mathcal{A}_G$-factorization of
$w$ is of type (ii). The word $w$ can be reduced to $w_0 w_1 \cdots w_{k-1} w_k a_{k+1} \alpha$. 
Let $w' = w_0 w_1 \cdots w_k$.
Note that $\delta(q_0, w') = q_{k+1}$, $\delta(q_{k+1}, a_{k+1}) = \bot$ and 
$\alpha$ has no prefix $x$ with $\mathsf{red}(a_{k+1} x) = \varepsilon$.
Hence, Lemma~\ref{lemma-stallings1} yields $\mathsf{red}(w' a_{k+1} \alpha) \notin G$.
Since $\mathsf{red}(w' a_{k+1} \alpha) = \mathsf{red}(w)$, we obtain $\mathsf{red}(w) \notin G$.
This concludes the proof of the lemma.
\end{proof}
We now come to the main result of this section.

\begin{theorem} \label{thm-sbmp-free}
Let $G$ a fixed finitely generated subgroup of $F(\Gamma)$. Then for every $c > 0$ there exists a $1/n^c$-correct randomized streaming algorithm for the language $\MP(F(\Gamma), G)$ with space complexity $\mathcal{O}(\log n)$.
\end{theorem}

\begin{proof}
Take the Stallings automaton $\mathcal{A}_G = (Q, \Sigma, q_0, \delta, q_0)$ and note that $|Q|$ is a constant since $G$ is fixed.
We would like to use $\mathcal{A}_G$ as a streaming algorithm for $\MP(F(\Gamma), G)$. The problem is that we cannot
assume that the input word is irreducible. We solve this problem by using a $(1/n^{c+1},0)$-distinguisher 
$(\mathcal{B}_n)_{n \ge 0}$ for $F(\Gamma)$ with space complexity $\mathcal{O}(\log n)$.\footnote{Note that although 
$(\mathcal{B}_n)_{n \ge 0}$ has a one-sided error, our final randomized streaming algorithm for 
$\MP(F(\Gamma), G)$ will have a two-sided error.}
It exists by Theorem~\ref{thm-lin} since finitely generated
free groups are linear.

Fix an input length $n$ and let $\mathcal{B}_n = (R_n, \Sigma, \lambda_n, \sigma_n)$.
Consider an input word $w \in \Sigma^{\le n}$.
Our randomized streaming algorithm for $\MP(F(\Gamma), G)$ is
Algorithm~\ref{algo-stallings}.

\begin{algorithm}[t]
\SetKwComment{Comment}{(}{)}
\SetKwInput{KwGlobal}{global variables}
\SetKwInput{KwInit}{initialization}
\SetKwInput{KwNext}{next input letter}
\BlankLine
\KwGlobal{$q  \in Q$,  $p,r \in R_n$, $\phi \in \{0,1\}$} 
\BlankLine
\KwInit{}
$q  \coloneqq  q_0$ \label{line-init-q} \\
$\phi  \coloneqq  1$ \label{line-init-phi}\\
guess $r \in R_n$ according to the initial state distribution $\lambda_n$ of $\mathcal{B}_n$ \label{line-guess-r} \\
\BlankLine
\KwNext{$a \in \Sigma$}
\If{$\phi = 1$ and $\delta(q,a)=\bot$}{$\phi  \coloneqq  0$ \label{line-set-phi-0} \\ $p  \coloneqq  r$ \label{line-reset-p}}
\eIf{$\phi = 1$}{$q  \coloneqq  \delta(q,a)$ \label{line-sim-A_G}}{$p  \coloneqq  \sigma_n(p,a)$}
\If{$\phi = 0$ and $p=r$ \label{line-of-phi=0-p=r}}{$\phi  \coloneqq  1$ \label{line-set-phi-1}}
 accept if $\phi=1$ and $q = q_0$ \label{line-accept}
\caption{A randomized streaming algorithm for $\MP(F(\Gamma), G)$.  \label{algo-stallings}}
\end{algorithm}

The space needed by Algorithm~\ref{algo-stallings} is $\mathcal{O}(\log n)$. The variables $q$ and $\phi$ need constant space
and $p$ and $r$ both need $\mathcal{O}(\log n)$ bits. Let us now show that the error probability of Algorithm~\ref{algo-stallings} is bounded
by $1/n^c$. 
For this, assume that the $\mathcal{A}_G$-factorization of $w$
and the states $q_1, \ldots, q_{k+1} \in Q$ are as in Definition~\ref{def-AG-factor}.
Let $S$ be the set of all non-empty prefixes of the factors $a_i u_i$ and  $a_{k+1} \alpha$ (the latter only in case the 
$\mathcal{A}_G$-factorization of $w$ has type (ii)). Note that $|S| \le n$.
For the initially guessed state $r \in R_n$ (line~\ref{line-guess-r})
we have
\[ 
\Prob_{r \sim \lambda_n}\big[ \bigwedge_{s \in S} \red(s)=\varepsilon \Leftrightarrow \sigma_n(r,s)=r \big]
\geq 1- \frac{1}{n^{c+1}} \cdot n = 1 - \frac{1}{n^c}.
\]
Let us assume for the further consideration that the guessed state $r$ is such that
$\red(s)=\varepsilon \Leftrightarrow \sigma_n(r,s)=r$ holds for all $s \in S$.
We claim that under this assumption, Algorithm~\ref{algo-stallings}
accepts in line~\ref{line-accept} after reading $w$ if and only if $\mathsf{red}(w) \in G$. 
By Lemma~\ref{lemma-stallings2} it suffices to show the following:
\begin{enumerate}[(a)]
\item If the $\mathcal{A}_G$-factorization of
$w$ is of type (i) then after reading $w$ we have $\phi=1$ and $q = q_{k+1}$ in Algorithm~\ref{algo-stallings}.
\item If the $\mathcal{A}_G$-factorization of
$w$ is of type (ii) then after reading $w$ we have $\phi=0$ in Algorithm~\ref{algo-stallings}.
\end{enumerate}
To see this, observe that Algorithm~\ref{algo-stallings} starts to simulate
the Stallings automaton $\mathcal{A}_G$ on $w$ as long as possible (lines~\ref{line-init-q} and \ref{line-sim-A_G}).
If this is possible for the whole input $w$ 
(i.e., $\delta(q_0, w) \neq \bot$) then $w$ has an $\mathcal{A}_G$-factorization of type (i) consisting of the single factor $w$ (i.e., $k=0$).
Moreover, after processing $w$ by Algorithm~\ref{algo-stallings}, we have $\phi=1$ and the program variable $q$ holds
$\delta(q_0, w) = q_1 = q_{k+1}$. We obtain the above case (a).

Assume now that $\delta(q_0, w) = \bot$. Then the $\mathcal{A}_G$-factorization of
$w$ starts with $w_0 a_1$, where $\delta(q_0, w_0)=q_1$ and $\delta(q_1, a_1) = \bot$.
After processing $w_0$ by Algorithm~\ref{algo-stallings} we have $q = q_1$.
While processing the next letter $a_1$, Algorithm~\ref{algo-stallings} sets $\phi$ to $0$ (line~\ref{line-set-phi-0}) and $p$ to the initially
guessed state $r$ of $\mathcal{B}_n$ (line~\ref{line-reset-p}). Let us write
$w = w_0 a_1v$. Since the flag $\phi$ was set to $0$, 
Algorithm~\ref{algo-stallings} starts the simulation of $\mathcal{B}_n$
on input $a_1v$ starting from state $r$, whereas $\mathcal{A}_G$ is stalled.
If no non-trivial prefix of $a_1v$ is trivial in the free group then we have $\alpha = v$ and the 
$\mathcal{A}_G$-factorization of $w$ is of type (ii) with $k=0$.
Our assumption that  $\red(a_1 s)=\varepsilon \Leftrightarrow \sigma_n(r,a_1s)=r$ for all prefixes of $\alpha$
ensures that the flag $\phi$ will be never set to $1$ in line~\ref{line-set-phi-1}. We obtain the above case (b).

On the other hand, if a non-trivial prefix of $a_1v$ is trivial in the free group, then 
the $\mathcal{A}_G$-factorization of $w$ starts with $w_0 a_1 u_1$ 
By assumption, we have $\red(a_1 s)=\varepsilon \Leftrightarrow \sigma_n(r,a_1s)=r$ for all prefixes $s$ of $u_1$.
Hence, after reading $a_1$, the semiPFA $\mathcal{B}_n$ returns to the state $r$ for the first time after reading $u_1$.
Then the if-condition in line~\ref{line-of-phi=0-p=r} becomes true and the algorithm switches $\phi$ back to $1$ and resumes the simulation of 
$\mathcal{A}_G$ in state $q_1$ on the factor $w_1$.
This process now repeats and we see that the algorithm correctly locates the factors of the $\mathcal{A}_G$-factorization of $w$.
This shows the above points (a) and (b) and concludes the proof of the theorem.
\end{proof}

\subsection{Distinguishers for Schreier graphs}

For an f.g.~group $G$ with the finite generating set $\Sigma$ and a subgroup $H \le G$, 
the \emph{Schreier coset graph} $\Sch(G,H,\Sigma)$ is the following deterministic automaton:
The set of states is the set of cosets $\{ Hg \colon g \in G \}$, the initial state is $H$ and the transition
function $\delta$ is defined by $\delta(Hg, a) = Hga$ for $a \in \Sigma$.

The following result generalizes Theorem~\ref{thm-sbmp-free}:

\begin{theorem} \label{thm-schreier-dist}
Let $G$ be an f.g.~subgroup of the f.g.~free group $F(\Gamma)$. 
Then, for every $c > 0$ there is a $1/n^c$-distinguisher for  $\Sch(F(\Gamma),G,\Sigma)$ with space complexity $\mathcal{O}(\log n)$.
\end{theorem}

For the proof we need the following notations and a generalization of Lemma~\ref{lemma-stallings2}.
We build on the notations from Section~\ref{sec-MPs}. Note that two words $u,v \in \Sigma^*$ lead to the same state in 
$\Sch(F(\Gamma),G,\Sigma)$ if and only if $\red(uv^{-1}) \in G$.

Consider a word $w \in \Sigma^*$, its $\mathcal{A}_G$-factorization and the states $q_i \in Q$ of $\mathcal{A}_G$
from Definition~\ref{def-AG-factor}. We say that the $\mathcal{A}_G$-factorization of $w$ ends in state $q_{k+1}$. In addition,
 if the $\mathcal{A}_G$-factorization of $w$ is of type (ii), then the suffix $a_{k+1}\alpha$ if $w$ is called the 
\emph{$\mathcal{A}_G$-leaving suffix} of $w$.

\begin{lemma} \label{lemma-Schreier-dist}
Two words $u,v \in \Sigma^*$ satisfy $\red(uv^{-1}) \in G$ if and only if the following hold:
\begin{itemize} 
\item The $\mathcal{A}_G$-factorizations of $u$ and $v$ have the same type ((i) or (ii)) and 
end in the same state of $\mathcal{A}_G$. 
\item If the $\mathcal{A}_G$-factorizations of $u$ and $v$ are both of type (ii) then their  $\mathcal{A}_G$-leaving suffixes 
are equivalent in the free group $F(\Gamma)$.
\end{itemize}
\end{lemma}

\begin{proof}
First assume that $\red(uv^{-1}) \in G$. Hence, by Lemma~\ref{lemma-stallings2} 
the $\mathcal{A}_G$-factorization of $uv^{-1}$ is of type (i) and ends
in $q_0$. Let us write the $\mathcal{A}_G$-factorization of $uv^{-1}$ as 
$w_0 a_1 u_1 w_1 a_2 u_2 \cdots w_{k-1} a_k u_k w_k$ with the same notation as in Definition~\ref{def-AG-factor}.
In particular, the states $q_i$ have the same meaning as in Definition~\ref{def-AG-factor}, where $q_{k+1} = q_0$.
Recall that $a_i u_i$ can be written as $a_i u_i = v_i^{-1} a_i^{-1}$ for some word $v_i$ so that $(a_i u_i)^{-1} = a_i v_i$.
We make a case distinction on the position of the cut between $u$ and $v^{-1}$ in the $\mathcal{A}_G$-factorization of $uv^{-1}$.

If $u$ and $v^{-1}$ can be written as 
\begin{eqnarray}
u & = & w_0 a_1 u_1 \cdots w_{i-1} a_i u_i x \quad \text{ and } \label{eq proof factorization of uv' 1} \\
v^{-1} & = & y a_{i+1} u_{i+1} w_{i+1} \cdots a_k u_k w_k \label{eq proof factorization of uv' 2}
\end{eqnarray}
 with $w_i = x y$ (where $x = \varepsilon$ or $y = \varepsilon$ is possible)
then $w_0 a_1 u_1 \cdots w_{i-1} a_i u_i x$ is the 
$\mathcal{A}_G$-factorization of $u$ and 
$w_k^{-1} a_k v_k \cdots  w_{i+1}^{-1} a_{i+1} v_{i+1} y^{-1}$ is the $\mathcal{A}_G$-factorization of $v$.  
For $u$ this follows immediately from the definition of the $\mathcal{A}_G$-factorization. For $v$ this can be seen as follows:
Assume that the $\mathcal{A}_G$-factorization of $u$ ends in the state $q \in Q$. Hence, the word 
$w_0 w_1 \cdots w_{i-1} x$ spells a path from $q_0$ to $q$ in $\mathcal{A}_G$, whereas $y w_{i+1} \cdots w_k$ spells a path from $q$ back
to $q_0$. Since $\mathcal{A}_G$ is a partial inverse automaton, $w_k^{-1} \cdots w_{i+1}^{-1} y^{-1}$  spells the reversed path in $\mathcal{A}_G$, starting in $q_0$ and ending in $q$.
More precisely, we have $\delta(q_0, w_k^{-1}) = q_k$, $\delta(q_{j+1}, w_j^{-1}) = q_j$ for $i+1 \le j \le k-1$ and $\delta(q_{i+1}, y^{-1}) = q$.
Finally, we have $\delta(q_i, a_i) = \bot$, $\red(a_i v_i) = \red((a_i u_i)^{-1}) = \varepsilon$ and for every 
factorization $v_i = s t$ with $t \neq \varepsilon$ we have $\red(a_i s) \neq \varepsilon$, because otherwise
we would have $\red(t^{-1}) = \varepsilon$ for the proper and non-empty prefix $t^{-1}$ of $a_i u_i$.
We have shown that the $\mathcal{A}_G$-factorizations of $u$ and $v$ are both of type (i) and end in the same state $q$.

The other case happens if $u$ and $v^{-1}$ can be written as 
\begin{eqnarray*}
u & = & w_0 a_1 u_1 \cdots w_{i-2} a_{i-1} u_{i-1} w_{i-1} a_i x \text{ and } \\
v^{-1} & = & y  w_i a_{i+1} u_{i+1}  \cdots w_{k-1} a_k u_k w_k .
\end{eqnarray*}
 with $u_i = x y$ and $y \neq \varepsilon$.
We can argue similarly as for \eqref{eq proof factorization of uv' 1} and \eqref{eq proof factorization of uv' 2}.
This time, the $\mathcal{A}_G$-factorization of $u$ is $w_0 a_1 u_1 \cdots w_{i-2} a_{i-1} u_{i-1} w_{i-1} a_i x$ and the 
$\mathcal{A}_G$-factorization of $v$ is
$w_k^{-1} a_k v_k w_{k-1}^{-1} \cdots a_{i+1} v_{i+1} w_i^{-1} y^{-1}$.
Both of them  are of type (ii), they
both end in the state $q_i$ and their  $\mathcal{A}_G$-leaving suffixes are $a_i x$ and $y^{-1}$, respectively, which
satisfy $\red(a_ix) = \red(y^{-1})$ since $\red(a_i xy) = \varepsilon$.

Let us now assume that the two points in the lemma are satisfied. We have to show that $\red(uv^{-1}) \in G$.
By Lemma~\ref{lemma-stallings2} we have to show that the $\mathcal{A}_G$-factorization of $uv^{-1}$ is of type (i) and ends in $q_0$.

First assume that the $\mathcal{A}_G$-factorizations of $u$ and $v$ are of type (i) and 
end in the same state $q$ of $\mathcal{A}_G$. Hence, we can write the $\mathcal{A}_G$-factorizations of $u$ and $v$
as 
\begin{eqnarray*}
u & = & w_0 \;  a_1 u_1 \cdots w_{k-1} \; a_k u_k \; w_k \text{ and } \\
v & = & x_0 \; b_1 v_1 \cdots x_{l-1} \; b_l v_l \; x_l 
\end{eqnarray*} 
We can write $(b_i v_i)^{-1}$ as $b_i v'_i$.
It then follows that the $\mathcal{A}_G$-factorization of $u v^{-1}$ is
\[
w_0 \; a_1 u_1 \cdots w_{k-1} \;  a_k u_k \;  (w_k x_l^{-1}) \;  b_l v'_l \;  x_{l-1}^{-1} \cdots b_1 v'_1 \;  x_0^{-1} 
\]
and this $\mathcal{A}_G$-factorization is indeed of type (i) and ends in $q_0$. 

Finally, assume that the $\mathcal{A}_G$-factorizations of $u$ and $v$ are of type (ii),
end in the same state $q$ of $\mathcal{A}_G$, and their $\mathcal{A}_G$-leaving suffixes represent the same
element of the free group $F(\Gamma)$. 
Hence, we can write the $\mathcal{A}_G$-factorizations of $u$ and $v$
as 
\begin{eqnarray*}
u & = & w_0 \; a_1 u_1 \cdots w_{k-1} \; a_k u_k \; w_k \; a_{k+1} \alpha \text{ and } \\
v & = & x_0 \; b_1 v_1 \cdots x_{l-1} \; b_l v_l \; x_l \; b_{l+1} \beta ,
\end{eqnarray*} 
where $a_{k+1} \alpha$ and $b_{l+1} \beta$ are the $\mathcal{A}_G$-leaving suffixes.
We claim that the $\mathcal{A}_G$-factorization of $u v^{-1}$ is
\begin{equation} \label{eq-A_G-fact of uv-1}
w_0 \;  a_1 u_1 \cdots w_{k-1} \; a_k u_k \; w_k \; a_{k+1} (\alpha\beta^{-1} b_{l+1}^{-1})  \; x_l^{-1}  \; b_l v'_l \; x_{l-1}^{-1} \cdots b_1 v'_1 \; x_0^{-1} 
\end{equation}
To see this, note that $\red(a_{k+1} \alpha\beta^{-1} b_{l+1}^{-1})=\varepsilon$ since $\red(a_{k+1} \alpha) = \red(b_{l+1} \beta)$.
Furthermore, no proper non-empty prefix of $a_{k+1} \alpha\beta^{-1} b_{l+1}^{-1}$ can be reduced to the empty word.
For a non-empty prefix of $a_{k+1} \alpha$ this follows from the definition of the $\mathcal{A}_G$-factorization of $u$.
Moreover, if we can write $\beta^{-1} b_{l+1}^{-1} = st$ with $s \neq \varepsilon \neq t$ and $\red(a_{k+1} \alpha s) = \varepsilon$
then $\red(t) = \varepsilon$ and hence $\red(t^{-1})=\varepsilon$. But $t^{-1}$ is a proper non-empty prefix of $b_{l+1}\beta$, which
contradicts the definition of the $\mathcal{A}_G$-factorization of $v$.

Finally note that the $\mathcal{A}_G$-factorization \eqref{eq-A_G-fact of uv-1} is of type (i) and ends in $q_0$.
 \end{proof}
We can now prove Theorem~\ref{thm-schreier-dist}.

\medskip
\noindent
\emph{Proof of Theorem~\ref{thm-schreier-dist}.}
Our distinguisher for $\Sch(F(\Gamma),G,\Sigma)$ is
 the randomized streaming algorithm from the proof of Theorem~\ref{thm-sbmp-free} (Algorithm~\ref{algo-stallings})
except that line~\ref{line-accept} is removed. Moreover, for $(\mathcal{B}_n)_{n \ge 0}$ we have to take a $(1/(n^c (2n+1)),0)$-distinguisher 
 for $F(\Gamma)$ with space complexity $\mathcal{O}(\log n)$.

Consider two input words $u, v \in \Sigma^{\le n}$ and let 
\[
w_0 \, a_1 u_1 \, w_1 \, a_2 u_2 \cdots w_{k-1} \, a_k u_k \, w_k \quad\text{or} \quad
w_0 \, a_1 u_1 \,  w_1 \, a_2 u_2 \cdots w_{k-1} \, a_k u_k \,  w_k \, a_{k+1} \alpha
\]
be the $\mathcal{A}_G$-factorization of $u$ and 
\[
x_0 \, b_1 v_1 \, x_1 \, b_2 v_2 \cdots x_{l-1} \, b_l v_l \, x_l \text{ or }
x_0 \, b_1 v_1 \,  x_1 \, b_2 v_2 \cdots x_{l-1} \, b_l v_l \,  x_l \, b_{l+1} \beta
\]
be the $\mathcal{A}_G$-factorization of $v$.

Let $S$ be the set of all non-empty prefixes of the words $a_i u_i$, $b_i v_i$ 
and $a_{k+1} \alpha$, $b_{l+1} \beta$ (if they exist). We have $|S| \le 2n$.
Then with probability at least $1 - 1/n^c$ the following $2n+1$ many equivalences hold for the randomly chosen state $r$ of $\mathcal{B}_n$:
\begin{enumerate}[(a)]
\item $\red(s)=\varepsilon \Leftrightarrow \sigma_n(r,s)=r$ for all $s \in S$,
\item $\red(a_{k+1} \alpha) = \red(b_{k+1} \beta) \Leftrightarrow \sigma_n(r,a_{k+1} \alpha)=\sigma_n(r,b_{k+1} \beta)$
in case the $\mathcal{A}_G$-factorizations of $u$ and $v$ are both of type (ii). 
\end{enumerate}
Let us assume for the further consideration that the guessed state $r$ is such that these conditions 
are satisfied. We claim that $uv^{-1} \in G$ if and only if Algorithm~\ref{algo-stallings} arrives in the same memory
states after reading $u$ and $v$, respectively. In the proof of Theorem~\ref{thm-sbmp-free} we argued that
assumption (a) ensures that Algorithm~\ref{algo-stallings} correctly
locates the factors of the  $\mathcal{A}_G$-factorizations of $u$ and $v$, respectively.

If $uv^{-1} \in G$ then the two points in Lemma~\ref{lemma-Schreier-dist} hold. Conditions (a) and (b) then ensure that
 Algorithm~\ref{algo-stallings} arrives in the same memory states after reading $u$ and $v$, respectively.
 On the other hand, if $uv^{-1} \notin G$ then one of the two points in Lemma~\ref{lemma-Schreier-dist} does not hold.
 If the $\mathcal{A}_G$-factorizations of $u$ and $v$ have different types, then the values of the flag $\phi$ after reading
 $u$ and $v$, respectively, differ. If the $\mathcal{A}_G$-factorizations of $u$ and $v$ are both of type (i) then the values
 of the program variable $q$ (the state in $\mathcal{A}_G$) after reading
 $u$ and $v$, respectively, differ. Finally, if the $\mathcal{A}_G$-factorizations of $u$ and $v$ are both of type (ii) then 
 the $\mathcal{A}_G$-leaving suffixes $a_{k+1} \alpha$ and $b_{k+1} \beta$ must represent different elements in the 
 $F(\Gamma)$. By point (b) this implies  $\sigma_n(r, a_{k+1} \alpha) \neq \sigma_n(r, b_{k+1} \beta)$. Hence,
 the values of the  program variable $p$ (the state in $\mathcal{B}_n$) after reading
 $u$ and $v$, respectively, differ.
 \qed

\subsection{Lower bounds for membership problems}
It is not possible to generalize Theorem~\ref{thm-sbmp-free} to subgroups of $F(\Gamma)$ that are the normal closure $N(R)$
of a finite subset $R \subseteq F(\Gamma)$.
Let $F_2 = F(\{a,b\})$ be the free group generated by two elements. In the following we make use of Thompson's group $F$ (be aware of the fact
that $F$ is Thompson's group, whereas $F_2$ is the free group generated by two elements).
Recall that the randomized streaming space complexity of $\WP(F)$ is $\Theta(n)$;
see Corollary~\ref{coro-thompson}. Moreover, Thompson's group $F$ is finitely presented and generated by two elements $a$ and $b$; 
see \eqref{presentation-F}.\footnote{For our arguments we could take any finitely presented  2-generator group whose randomized streaming space complexity is $\Omega(n)$.}
Let $R = \{ [a b^{-1} \!\!\:,\,  a^{-1} b a],  [a b^{-1} \!\!\:,\, a^{-2} b a^2] \}$ be the set of 
relators from \eqref{presentation-F} and let $N =N(R)$ be the normal closure of $R$ in the free group $F(\{a,b\})$.
Thus, we have $F \cong F(\{a,b\})/N$.

\begin{theorem} \label{thm-thompson2}
For the subgroup $N \leq F_2$ the randomized streaming space
complexity of the language $\MP(F_2, N)$ is $\Theta(n)$. 
\end{theorem}
 
 \begin{proof}
 A word $w \in \{a,b,a^{-1},b^{-1}\}^*$ represents in the free group $F_2$ an element of $N$ if and only if in Thompson's group $F$, $w$
 represents the group identity. The theorem follows directly from  Corollary~\ref{coro-thompson}. 
 \end{proof}
 We now consider the direct product of $F_2 \times F_2$ of two free groups of rank two.
 It is a linear group of exponential growth, hence the randomized streaming space
complexity of $\WP(F_2 \times F_2)$ is in $\Theta(\log n)$. 
For the subgroup membership problem, this fact no longer holds by the following theorem.
We make use of a construction of Miha{\u\i}lova from \cite{Mih66}, where she constructed
a finitely generated subgroup of $F_2 \times F_2$ with an undecidable subgroup membership problem.
 
\begin{theorem} \label{thm-mihailova}
There is a finitely generated subgroup $G$ of $F_2 \times F_2$ such that the randomized streaming space
complexity of  $\MP(F_2 \times F_2, G)$ is in $\Theta(n)$. 
\end{theorem} 

\begin{proof}
Take again Thompson's group $F$ and let $N \leq F(\{a,b\})$ and $R$ be as defined above. 
We make use of Miha{\u\i}lova's construction from \cite{Mih66}. Let
\[
D = \{ (r,1) \colon r \in R \} \cup \{ (a,a), (b,b) \},
\]
which is viewed as a finite subset of $F_2 \times F_2$. Miha{\u\i}lova   \cite{Mih66} showed  that for every element $g \in F_2$:
\[
g \in N \ \Longleftrightarrow \ (g,1) \in \langle D \rangle \leq F_2 \times F_2.
\]
Hence, the theorem follows from Theorem~\ref{thm-thompson2}.
\end{proof}
So far, we only considered the membership problem for subgroups. Membership problems in groups have been also considered
for more general classes of subsets, e.g., finitely generated submonoids or rational subsets that are defined by finite automata, whose
transitions are labelled by group generators. For the free group $F_2 = F(\{a,b\})$ it turns out that already the membership problem in the 
submonoid $\{a,b\}^* \subseteq F(\{a,b\})$ has randomized streaming space complexity $\Theta(n)$:

 \begin{theorem}
The randomized streaming space
complexity of  $\MP(F(\{a,b\}), \{a,b\}^*)$ is in $\Theta(n)$. 
 \end{theorem}
 
 \begin{proof}
 The result is shown by a reduction from the communication complexity of 
 the augmented index problem $\AIDX_n$. 
 
 Assume that $\mathcal{R} = (\mathcal{A}_n)_{n \ge 0}$ with $\mathcal{A}_n = (Q_n, \{a,b,a^{-1}, b^{-1}\}, \iota_n, \delta_n, F_n)$
is a randomized streaming algorithm for $\MP(F(\{a,b\}), \{a,b\}^*)$.
We will construct a  randomized one-way communication protocol for $\AIDX_n$. 

 Let $s=a_1 a_2 \cdots a_n$ with $a_i\in \{a,b\}$ be the input word for Alice, whereas Bob's input is a position $i\in [1,n]$ and the promised suffix $a_{i+1}\cdots a_n$  of $s$. Bob's goal is to find out $a_i$. 
 Let $m=2n$. The protocol works as follows:
 \begin{itemize}
\item Alice guesses an initial state according to the distribution $\iota_m$,
reads his input $s$ into the semiPFA $\mathcal{A}_m$ and sends the resulting state $q_1$ to Bob.
\item Bob continues from $q_1$ and reads $a_n^{-1} a_{n-1}^{-1} \cdots a_{i+1}^{-1} a^{-1}$ into $\mathcal{A}_m$.
Let $q_2$ be the resulting state. Then Bob accepts if and only if $q_2 \in F_m$.
\end{itemize}
Note that $a_i = a$ if and only if the word $s \,a_n^{-1} a_{n-1}^{-1} \cdots a_{i+1}^{-1} a^{-1}$ 
represents an element from the submonoid $\{a,b\}^*$. Hence, if $a_i = a$ then Bob will accept with probability at least $2/3$, whereas
if $a_i = b$ then Bob will accept  with probability at most $1/3$.
Hence, the protocol is correct. 
Since Alice sends at most $s(\mathcal{R},2n)$ bits to Bob, we obtain $s(\mathcal{R}, 2n) \geq \Omega(n)$ by 
 Theorem~\ref{theorem-randCC}.
 \end{proof}

 \section{Open problems}
 
 We have mentioned already several open problems. Here are some further problems that deserve to be investigated.

\subsection*{Hyperbolic groups.}
Hyperbolic groups are one of the most important classes in geometric group theory.
The word problem for a hyperbolic group belongs to the complexity class {\sf LogCFL} (the closure of the context-free languages under logspace
reductions) \cite{Lo05ijfcs}, which is contained in $\mathsf{DSPACE}((\log n)^2)$,
and it is not known whether for every hyperbolic group the word problem belongs to logspace.
What is the space complexity of randomized streaming algorithms for hyperbolic groups? 
In particular, is the randomized streaming space complexity of the word problem for a hyperbolic group 
in $\mathcal{O}(\log n)$? It is known that there exist non-linear hyperbolic groups.

\subsection*{Residually finite groups.}
  All the groups for which we have constructed randomized distinguishers with space complexity $o(n)$ so far
  are residually finite; see the discussion at the end of Section~\ref{sec-fundamental} for logspace distinguishers.
  In addition, an f.g.~group is residually finite if and only if it faithfully acts on a rooted locally finite tree; see e.g.\cite{button2019}.
  This includes for instance the Grigorchuk group, which has a deterministic distinguisher with space complexity $o(n)$
  due to its intermediate growth.
  
Erschler \cite{Erschler04} constructed non-residually finite groups $G$ of intermediate growth. These groups are
quasi-isometric to the Grigorchuk group and therefore have growth functions that are equivalent to 
the growth function of the Grigorchuk group. Therefore, there is a non-residually finite group $G$
having a deterministic distinguisher with space complexity $\mathcal{O}(n^{0.768})$; see Theorem~\ref{thm-grig}.
Is there also a non-residually finite group having a randomized distinguisher with space complexity $\mathcal{O}(\log n)$?
An interesting concrete non-residually finite group is the Baumslag-Solitar group 
$\mathsf{BS}(2,3) = \langle a,t \mid t^{-1} a^2 t = a^3 \rangle$. The word problem for 
every Baumslag-Solitar group $\mathsf{BS}(p,q) = \langle a,t \mid t^{-1} a^p t = a^q \rangle$ can be solved in logspace
\cite{Weiss16BS}.

\subsection*{More subgroup membership problems.}
So far, we only proved the existence of randomized logspace streaming algorithms for subgroup membership problems in free groups.
It would be certainly interesting to find more groups with such algorithms. A promising candidate might be f.g.~nilpotent groups.
Their subgroup membership problems belong to $\TC^0$ \cite{MyasnikovW22}.

Moreover, there are plenty of open problems concerning distinguishers for Schreier coset graphs. Take for instance an f.g.~group $G$
with a logspace $\epsilon$-distinguisher (with $\epsilon$ sufficiently small) and let $H \leq G$ be a finite subgroup. Does 
$\Sch(G,H,\Sigma)$ have a logspace $\epsilon'$-distinguisher for some small $\epsilon'$ depending on $\epsilon$?

\subsection*{Acknowledgements.} This work has been supported by the DFG research project LO748/13-2 (Streaming Automata Theory).

  
 \def\cprime{$'$} \def\cprime{$'$}

\end{document}